\setlist[itemize]{leftmargin=*}
\setlist[enumerate]{leftmargin=*}	
\tikzset{rotate/.style={anchor=south, rotate=90, inner sep=.5mm}}
\tikzset{
	labl/.style={anchor=south, rotate=90, inner sep=.5mm}
}
\newcommand*\bigcdot{\mathpalette\bigcdot@{.75}}
\newcommand*\bigcdot@[2]{\mathbin{\vcenter{\hbox{\scalebox{#2}{$\m@th#1\bullet$}}}}}
\newcommand{\bt}{\bullet}
\newcommand{\blank}{\mathstrut_{\text{\textemdash}}}
\newcommand{\Dcat}{\mathcal{D}} 
\newcommand{\Xcat}{\mathcal{X}} 
\newcommand{\Xset}{\mathcal{X}} 
\newcommand{\Yset}{\mathcal{Y}} 
\newcommand{\Hcat}{\mathcal{H}}
\newcommand{\A}{\Lambda}
\newcommand{\Ainf}{\Lambda_{e}}
\newcommand{\B}{\Gamma}
\newcommand{\Hrd}{\Gamma}
\newcommand{\AI}{A}
\newcommand{\Gr}{\mathrm{G}}
\newcommand{\CP}{P}
\newcommand{\Rx}{\mathsf R}
\newcommand{\KK}{\mathsf K}
\newcommand{\mx}{\mathfrak m}
\newcommand{\px}{\mathfrak{p}}
\newcommand{\kk}{\mathbf{k}}
\newcommand{\N}{\mathbb{N}}
\newcommand{\Z}{\mathbb{Z}}
\newcommand{\Q}{\mathbb{Q}}
\newcommand{\op}{op}
\newcommand{\sgn}{\mathrm{sgn}}
\newcommand{\inv}{\xi}
\newcommand{\AG}{\mathcal{AG}} 
\newcommand{\Exc}{\mathcal{E}}
\newcommand{\Ho}{H} 
\newcommand{\MD}{\mathbbm{D}} 
\newcommand{\SD}{\mathbbm{S}} 
\newcommand{\Tw}{\mathbbm{T}} 
\DeclareMathOperator{\lotimes}{\overset{\mathbf{L}}{\otimes}_{\A}}
\DeclareMathOperator{\md}{\mathsf{mod}}
\DeclareMathOperator{\Md}{\mathsf{Mod}}
\DeclareMathOperator{\proj}{\mathsf{proj}}
\DeclareMathOperator{\add}{\mathsf{add}}
\DeclareMathOperator{\ind}{\mathsf{ind}}
\DeclareMathOperator{\simp}{\mathsf{sim}}
\DeclareMathOperator{\Mat}{\mathsf{Mat}}
\DeclareMathOperator{\GL}{\mathsf{GL}}
\DeclareMathOperator{\bc}{\mathsf{bc}}
\DeclareMathOperator{\pc}{\mathsf{p}}
\DeclareMathOperator{\D}{\mathrm{D}}
\DeclareMathOperator{\Db}{\D^{\mathrm{b}}}
\DeclareMathOperator{\Dsg}{\D_{\mathrm{sg}}}
\newcommand{\Dh}{\D^-_{\AI}}
\DeclareMathOperator{\Hot}{\mathsf{K}}
\DeclareMathOperator{\Hotac}{\mathsf{K}_{\mathsf{ac}}}
\DeclareMathOperator{\per}{\mathsf{per}}
\DeclareMathOperator{\perfd}{\per_{\mathsf{fd}}}
\DeclareMathOperator{\im}{\mathsf{im}}
\DeclareMathOperator{\rad}{\mathsf{rad}}
\DeclareMathOperator{\head}{\mathsf{top}}
\DeclareMathOperator{\prdim}{\mathsf{pr.dim}}
\DeclareMathOperator{\gldim}{\mathsf{gldim}}
\DeclareMathOperator{\End}{\operatorname{End}}
\DeclareMathOperator{\Hom}{\operatorname{Hom}}
\DeclareMathOperator{\HomD}{\Hom}
\DeclareMathOperator{\HomDc}{\Hom}
\DeclareMathOperator{\EndD}{\End}
\DeclareMathOperator{\Ext}{\operatorname{Ext}}
\DeclareMathOperator{\Tor}{\operatorname{Tor}}
\newcommand{\id}{\mathsf{id}}
\DeclareMathOperator{\tria}{\mathsf{tria}}
\DeclareMathOperator{\exc}{\mathsf{exc}}
\newcommand{\Fcat}{\mathcal{F}}
\newcommand{\Scat}{\mathcal{S}}
\newcommand{\Tau}{\mathcal{T}}
\newcommand{\Ucat}{\mathcal{U}}
\newcommand{\Tcat}{\mathcal{T}}
\DeclareMathOperator{\rk}{ \mathsf{rk}}
\DeclareMathOperator{\CYdim}{ \mathrm{CY}-\dim}
\newcommand{\krdim}{d}
\newcommand{\Surf}{\mathsf{S}}
\newcommand{\itop}{\mathrm G}
\newcommand{\imid}{\mathrm I}
\newcommand{\jtop}{\mathrm F}
\newcommand{\jmid}{\mathrm J}
\newcommand{\aisle}{\mathcal{U}}
\newcommand{\coaisle}{\mathcal{V}}
\numberwithin{equation}{section}
\theoremstyle{plain}
\newtheorem{thm}[equation]{Theorem}
\newtheorem{lem}[equation]{Lemma}
\newtheorem{prp}[equation]{Proposition}
\newtheorem{cor}[equation]{Corollary}
\newtheorem{dfn}[equation]{Definition}
\newtheorem{rmk}[equation]{Remark}
\newtheorem{notation}[equation]{Notation}
\newenvironment{td}[0]		
{\begin{tikzcd}[ampersand replacement=\&, cells={outer sep=2pt, inner sep=2pt}, 
		]}{\end{tikzcd}}
\newenvironment{td-long}[0]		
{\begin{tikzcd}[ampersand replacement=\&, cells={outer sep=2pt, inner sep=2pt}, column sep=1.5cm,
		]}{\end{tikzcd}}
\newenvironment{td-sp}[0]
{\begin{tikzcd}[ampersand replacement=\&, cells={outer sep=2pt, inner sep=2pt},
		row sep=10pt,
		]}		
	{\end{tikzcd}}
\definecolor{darkred}{rgb}{0.5, 0, 0}
\author{Wassilij Gnedin}
\address{Universität Paderborn, 
	Institut für Mathematik,
	Warburger Straße 100,
	33098 Paderborn, Germany}
\email{wassilij.gnedin@math.uni-paderborn.de}
\begin{document}
	\title{Derived invariants of gentle orders}
	
	\begin{abstract}
This article is concerned with the derived representation theory of certain infinite-dimensional gentle algebras called gentle orders. 
For a gentle order, we provide a factorization of the derived Nakayama functor, study its fractionally Calabi-Yau objects and exceptional cycles, and establish that certain combinatorial invariants of its underlying quiver are derived invariants, analogous to results for finite-dimensional gentle algebras.
	\end{abstract}
	\maketitle
	
	\tableofcontents


%
Finite-dimensional gentle algebras have been intensively studied in recent years \cites{OPS, APS, Opper, CJS}, in particular, due to their appearance in homological mirror symmetry \cites{HKK,LP}.
The derived representation theory of finite-dimensional gentle algebras is well-understood.
In \cite{AG}, Avella-Allaminos and Geiß introduced certain derived invariants 
for a finite-dimensional gentle algebra which can be computed from its underlying quiver.
To study the auto-equivalences of bounded derived categories of certain gentle algebras, Broomhead, Pauksztello and Ploog introduced the notion of exceptional cycles \cite{BPP}.
In \cite{GZ} Guo and Zhang gave a partial classification of the exceptional cycles  
in the bounded derived category of a finite-dimensional gentle algebra and related them to AG-invariants.
Extending work of \cite{HKK}, Lekili and Polishchuk established derived invariants for homologically smooth graded gentle algebras in terms of geometric invariants of an associated graded surface \cite{LP}. 
Independently, Opper, Plamondon and Schroll provided a geometric model for the bounded derived category of  a finite-dimensional gentle algebra \cite{OPS}.
These geometric models were successfully employed to deduce a complete derived equivalence classification
of homologically smooth and proper graded gentle algebras \cites{APS, Opper, JSW}.


The present article is concerned with a special class of semiperfect infinite-dimensional gentle algebras called \emph{gentle orders}.
Their underlying quivers are distinguished by the property that any arrow lies on an oriented cycle without relations.
Examples of gentle orders have  appeared in connection with Lie theory 
\cites{GP, Khoroshkin}, 
and are related to a generalized notion of string algebras introduced by Bennett-Tennenhaus \cite{Bennett-Tennenhaus1}.

In contrast to finite-dimensional gentle algebras, the derived representation theory of gentle orders is less understood at the present. 
By work of Burban and Drozd \cite{BD} there is a complete classification of indecomposable objects in the right-bounded derived category of a gentle order.
Work of Palu, Pilaud and Plamondon \cite{PPP} associates a natural marked surface to any gentle order.
The aim of this work is to provide analogues of some of the above-mentioned results for 
the derived representation theory of any gentle order $\A$.

%
%

By work of Iyama and Reiten \cite{IR}, the derived category of a gentle order admits a \emph{relative Serre functor} $\SD$, an auto-equivalence which restricts to a Serre functor on a certain Hom-finite subcategory of perfect complexes of $\A$. Their work implies that the functor $\SD$ is given by the composition $\nu \circ [1]$ of the derived Nakayama functor $\nu$ with the shift functor.
The action of the functor $\nu$ on the right-bounded derived category $\Dcat_{\A} \colonequals \D^-(\md \A)$ of $\A$ can be computed using previous work \cite{GIK}.
The first main result of  this article concerns a more conceptual description.
\begin{thm}[Theorem~\ref{thm:nu-factors}] \label{thm:A}
	For any gentle order $\A$ the derived Nakayama functor admits a factorization 
	$$
	\nu  \cong \inv^* \circ \Tw_b \circ \ldots \circ \Tw_2 \circ \Tw_1 \colon \begin{td} \Dcat_{\A} \ar{r}{\sim} \& \Dcat_\A \end{td}
	$$
	of commuting functors, where $\inv^*$ is the auto-equivalence induced by an involution of the ring $\A$ and $\Tw_1, \Tw_2, \ldots, \Tw_b$ denote standard equivalences associated to certain exceptional cycles.
	\end{thm}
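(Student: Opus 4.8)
The plan is to realise each functor appearing in the claimed factorization as a \emph{standard} functor, i.e.\ one of the form $-\lotimes Y$ for a complex $Y$ of $\A$-bimodules that is perfect over $\A$ on each side, and then to identify the bimodule of $\nu$ with the composite of the bimodules of $\inv^*$ and of the $\Tw_i$. Here one uses crucially that $\A$ is an \emph{order}: by \cite{IR} the relative Serre functor $\SD\cong\nu[1]$ is an honest auto-equivalence of $\Dcat_\A$ given by tensoring with the relative dualizing bimodule, which enjoys the required finiteness. First I would invoke the defining property of a gentle order --- every arrow lies on a unique oriented cycle without relations --- to list these cycles $\mathcal{C}_1,\dots,\mathcal{C}_b$; they partition the arrow set, and $b$ is the number in the statement. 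To each $\mathcal{C}_i$ one attaches the family of string complexes supported along $\mathcal{C}_i$, and appeals to the preceding sections to see that this family is an exceptional $\mathcal{C}_i$-cycle $\Exc_i$ in the sense of \cite{BPP}, with associated standard twist $\Tw_i$; similarly $\inv^*$ is the restriction-of-scalars equivalence along the ring involution $\inv$ realising the Nakayama permutation.

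Next I would compute both sides on the indecomposable projective $\A$-modules, which generate $\Dcat_\A$ as a triangulated category (alternatively one could track the indecomposables classified in \cite{BD}). On the one hand, by the combinatorial description of $\nu$ in \cite{GIK}, the complex $\nu(P)$ for an indecomposable projective $P$ is the minimal projective resolution of the corresponding indecomposable injective; since $\A$ is a gentle order, this resolution is obtained by walking once around the cycle $\mathcal{C}_i$ through the relevant vertex, so $\nu(P)$ is a bounded complex of indecomposable projectives indexed by the vertices of $\mathcal{C}_i$, with the arrows of $\mathcal{C}_i$ as differentials, up to the relabelling of vertices effected by $\inv$. On the other hand, $\Tw_i$ has local support: it fixes every object right-orthogonal to $\Exc_i$, and on an indecomposable projective sitting at a vertex of $\mathcal{C}_i$ it returns exactly the complex just described. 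Composing the $\Tw_i$ and then applying $\inv^*$ therefore sends $P$ to precisely the complex produced from \cite{GIK}, the differentials matching because both the resolution and the bimodules defining the $\Tw_i$ are assembled from the very same arrows. Matching these two computations together with their $\A$-bimodule structure identifies the two standard functors. Commutativity of the $\Tw_i$, and their compatibility with $\inv^*$, then follows because distinct cycles meet only across a relation, so the nontrivial parts of $\Tw_i$ and $\Tw_j$ are supported on subcategories generated by string complexes with no morphisms or extensions between them, and twists along such mutually orthogonal configurations commute.

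I expect the main obstacle to be the passage from this object-wise agreement to an isomorphism of functors. Because $\A$ is infinite-dimensional, one must take care that the comparison is carried out at the level of bimodule complexes which are perfect on the relevant side, so that a standard functor really is pinned down by its values on the indecomposable projectives together with the $\A$-action; equivalently, one must propagate the comparison isomorphism coherently through the factorization rather than merely across a generating set. By contrast the combinatorics of the two computations is laborious but routine once \cite{GIK} and the exceptional-cycle formalism of the earlier sections are available, and the commutativity is a support-and-orthogonality argument; it is this last rigidity step --- turning ``agrees on objects'' into ``naturally isomorphic as functors'' --- that I would expect to require the most care.
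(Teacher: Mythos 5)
Your proposal is built on the wrong cycles, and this is not a cosmetic slip: the index $b$ in the theorem is not the number of permitted cycles (oriented cycles without relations), but the number of $\kappa$-orbits of transition vertices, i.e.\ of AG-cycles composed of forbidden threads, and the exceptional cycles in question are the sequences $\mathcal{S}(j_a)=(S_{j_a},S_{\kappa(j_a)},\ldots)$ of simple modules at transition vertices from Lemma~\ref{lem:sim} (cf.\ Proposition~\ref{prp:exc}), not families of string complexes supported along permitted cycles. Correspondingly, your computation of $\nu$ on projectives is incorrect: by Theorem~\ref{thm:SD}, $\nu(P_j)$ is $\rad P_j$ when $j$ is a transition vertex and $P_j$ (up to the twist $\inv$) when $j$ is a crossing vertex; the relevant resolutions, e.g.\ of $\rad P_j$ in \eqref{eq:ex-rad}, walk along \emph{forbidden} threads, not once around a permitted cycle. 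The factorization precisely separates the transition vertices into $\kappa$-orbits $\Xset_1,\ldots,\Xset_b$, with $\Tw_a$ taking radicals at the projectives indexed by $\Xset_a$; with your indexing by permitted cycles neither the count $b$ nor the object-wise effect of the factors matches $\nu$, and your orthogonality argument for commutativity also fails, since distinct permitted cycles share crossing vertices and the associated string complexes are not Hom-orthogonal.

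There is a second, structural gap. You propose to obtain the $\Tw_i$ from the twist formalism of \cite{BPP}, but that construction lives on a Hom-finite algebraic category with Serre functor (here $\perfd\A$), whereas the theorem asserts a factorization on all of $\Dcat_\A=\D^-(\md\A)$, which is not Hom-finite. The paper's actual argument avoids both your object-wise comparison and the extension problem you flag at the end: it works entirely at the level of bimodules. One introduces the two-sided ideals $I(\Xset)$ of \eqref{eq:gen} for $\kappa$-stable sets $\Xset\subseteq Q_0^t$, proves they are tilting modules with $\End_{\A}(I(\Xset))^{\op}\cong\A$ (Lemma~\ref{lem:tilt} and the following proposition), so that $\Tw_{\Xset}=I(\Xset)\lotimes\blank$ is an equivalence of $\D^-(\md\A)$; one then shows by a Tor-vanishing computation that derived and ordinary tensor products of these ideals agree and multiply according to Lemma~\ref{lem:ideals} (Lemma~\ref{lem:tw-comm}), and finally invokes the bimodule isomorphism $\mathstrut_{\inv}\A^{\circ}\cong\omega$ of Theorem~\ref{thm:SD} together with $Q_0^t=\bigcup_a\Xset_a$. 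Because the identification $\omega\cong\mathstrut_{\inv}\A\otimes_\A I(\Xset_b)\cdots I(\Xset_1)$ is an isomorphism of bimodules, the natural isomorphism of functors is automatic; the rigidity step you single out as the main difficulty is exactly what this bimodule-level argument is designed to bypass, and your proposal offers no substitute for it. The link to exceptional cycles is, in the paper, only the motivation $\Tw_{\Scat(j_a)}^{-1}(\A)\cong I(\Xset_a)$ in $\perfd\A$, not the construction itself.
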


The gentle order $\A$ has a maximal idempotent $e$
such that the projective module $B \colonequals \A e$ is $\nu$-invariant.
This idempotent gives rise to 
triangulated subcategories
$$
\begin{td}
	\aisle_{\A}  \colonequals	\Hot^-(\add \A e) 
	\ar[yshift=0pt,hookrightarrow]{r}
	\& 
	\Dcat_{\A} = \D^-(\md \A)
	\ar[yshift=0pt,<-,hookleftarrow]{r} \&  
	\Dh(\md \A)\equalscolon \coaisle_{\A} 
\end{td}$$
where $A$ is a certain hereditary quotient of $\A$.
The pair $(\aisle_{\A},\coaisle_{\B})$
forms a stable $t$-structure  of the category $\Dcat_{\A}$, 
which yields a useful perspective  to study
homologically distinguished objects.

\begin{thm}[Propositions~\ref{prp:recol4} and \ref{prp:exc}]\label{thm:B}
	For any gentle order $\A$ the following statements hold.
	\begin{enumerate}
		\item Any fractionally Calabi-Yau object in $\Dcat_\A$ is contained in $\aisle_{\A} \cup \coaisle_{\A}$.\\ More precisely the following holds.
		\begin{enumerate}
		\item The aisle $\aisle_{\A}$ is equal to the full subcategory of $\nu$-periodic objects in $\Dcat_\A$.
		\item Any non-$\nu$-periodic fractionally Calabi-Yau object is contained in $\coaisle_{\A}$.
			\end{enumerate}
		\item An object $X$ in $\Dcat_\A$ gives rise to an exceptional cycle if and only if 
		it is isomorphic to a shift of a simple $\A$-module contained in $\coaisle_{\A}$
		or $X \in \Ucat_{\A}$ and
$X$ is $1$-spherical or induces an exceptional $2$-cycle.
		\end{enumerate}
\end{thm}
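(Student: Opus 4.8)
The plan is to exploit the stable $t$-structure $(\aisle_\A,\coaisle_\A)$ together with the factorization of $\nu$ from Theorem~\ref{thm:A}. First I would record that $\nu$, and hence $\SD = \nu[1]$, restricts to an auto-equivalence of each of $\aisle_\A$ and $\coaisle_\A$: on $\aisle_\A = \Hot^-(\add \A e)$ this holds because $B = \A e$ is $\nu$-invariant by construction, and then $\coaisle_\A$, being the right orthogonal of $\aisle_\A$ in $\Dcat_\A$, is preserved as well. It follows that for every $X \in \Dcat_\A$ the defining triangle $U \to X \to V \to U[1]$ of the $t$-structure (with $U \in \aisle_\A$, $V \in \coaisle_\A$) is functorial and $\nu$-equivariant, so that $\nu^nU$ and $\nu^nV$ are the $t$-structure pieces of $\nu^nX$. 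Next I would describe $\nu$ on the two sides. On $\aisle_\A$, the right-bounded derived category of the corner order $e\A e$, which is self-injective since it is assembled from the relation-free oriented cycles of $\A$, the functor $\nu$ has finite order: using Theorem~\ref{thm:A} (on $\aisle_\A$ the factorization becomes a product of finite-order rotations together with the involution $\inv^*$) one obtains $\nu^N|_{\aisle_\A} \cong \id$ for some $N \geq 1$, so every object of $\aisle_\A$ is $\nu$-periodic. On $\coaisle_\A$, the quotient $A = \A/\A e\A$ is a finite-dimensional hereditary algebra whose connected components are of Dynkin type $\mathbb{A}$ (and, by the construction of $e$, with at least two vertices each); consequently $\nu$ acts on $\coaisle_\A$ \emph{without nonzero periodic objects}, strictly moving every indecomposable along the relevant component.

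Part~(1) now follows by gluing. If $X$ is fractionally Calabi-Yau, say $\nu^nX \cong X[m]$, then by equivariance $\nu^nU \cong U[m]$ and $\nu^nV \cong V[m]$. Should $U \neq 0$, combining $\nu^nU \cong U[m]$ with $\nu^NU \cong U$ yields $U[Nm] \cong U$, impossible for a nonzero bounded-above complex unless $m = 0$; and $m = 0$ makes $V$ a $\nu$-periodic object of $\coaisle_\A$, hence $V = 0$ and $X \in \aisle_\A$. Otherwise $U = 0$ and $X \in \coaisle_\A$. This is the dichotomy; statement~(1a) then records only that $\nu$-periodicity forces vanishing in $\coaisle_\A$ while every object of $\aisle_\A$ is $\nu$-periodic, and~(1b) is the residual case.

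For part~(2), an object $X$ giving rise to an exceptional cycle is in particular fractionally Calabi-Yau (its $\SD$-orbit is finite up to shift), so by~(1) it — and hence its whole $\SD$-orbit — lies in $\aisle_\A$ or in $\coaisle_\A$. Within $\aisle_\A$ I would classify the exceptional cycles directly in the derived category of the self-injective order $e\A e$: self-injectivity keeps $\SD$-orbits short, and inspecting the Ext-algebra of an indecomposable with endomorphism algebra $\kk$ leaves exactly the $1$-spherical objects and the ones inducing an exceptional $2$-cycle (and Theorem~\ref{thm:A} already exhibits the twists along the cycles that occur). Within $\coaisle_\A$ the decisive input is the computation of $\Ext^\bullet_\A(M,M)$ and of the cross $\Ext$-groups between the $\SD$-translates of an indecomposable $\A$-module $M$ with $eM = 0$: for $M = S$ simple one checks $\End_\A(S) = \kk$ with $\Ext^{\geq1}_\A(S,S)$ vanishing in the range that matters, and Serre duality promotes the $\SD$-orbit of $S$ to an exceptional cycle; if $M$ is not simple, the \emph{higher} extensions over $\A$ — which, in contrast to those over the hereditary quotient $A$, need not vanish — produce a nonzero $\Hom^\bullet$ between non-adjacent members of the would-be cycle and obstruct it. Hence on $\coaisle_\A$ exactly the shifts of simples qualify.

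I expect the main obstacle to lie on the $\coaisle_\A$ side: establishing that $\nu$ admits no nonzero periodic object in $\coaisle_\A$ — which is precisely what confines a fractionally Calabi-Yau object to a single piece — and carrying out the $\Ext$-computation over $\A$ (rather than over the hereditary quotient $A$) that singles out the simple modules in part~(2). The compatibility with the $t$-structure, the gluing step, and the analysis on $\aisle_\A$ should be comparatively routine given Theorem~\ref{thm:A} and the structure theory of self-injective orders.
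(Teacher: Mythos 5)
Your skeleton for part (1) — the $\nu$-equivariant decomposition triangle, finite order of $\nu$ on $\aisle_{\A}$, and the gluing argument — is sound, but the load-bearing claim, that $\coaisle_{\A}$ contains no nonzero $\nu$-periodic object, is asserted rather than proved, and the justification you give argues about the wrong category and the wrong functor. The coaisle $\Dh(\md\A)$ is \emph{not} the derived category of the hereditary quotient $\AI=\A/\A e\A$: the embedding $\md \AI\subset\md\A$ is not fully faithful on derived categories (higher $\Ext_{\A}$ between simples at transition vertices survive, coming from forbidden threads passing through crossing vertices), and $\nu$ acting on $\coaisle_{\A}$ is the Nakayama functor of $\A$, not of $\AI$ — indeed $\SD(S_j)\cong S_{\kappa(j)}[\ell(f_1)]$ involves the lengths of forbidden threads of $\A$, data invisible in $\AI$. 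So ``strictly moving every indecomposable along the relevant component'' does not follow from $\AI$ being of type $\mathbb{A}$ (which moreover fails in the hereditary case, where $\AI=\A$ is of type $\widetilde{\mathbb{A}}$ and every object is $\nu$-periodic). This is precisely the non-routine point, and the paper closes it by a different route: when $\A$ is not hereditary each simple $S_j$, $j\in Q_0^t$, is $(m(j),n(j))$-Calabi--Yau with $m(j)>n(j)$ (Lemma~\ref{lem:sim} together with Proposition~\ref{prp:hered-char}), and the orthogonality Lemma~\ref{lem:CY} then forces any $(p,p)$-Calabi--Yau object into $\mathstrut^{\perp}\Scat$, which equals $\aisle_{\A}$ by Proposition~\ref{prp:recol}; since $\aisle_{\A}\cap\coaisle_{\A}=0$, this is what excludes periodic objects from the coaisle. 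Nothing of this kind (or a substitute) appears in your proposal, and your own closing paragraph concedes the point.

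Part (2) has a parallel gap on the coaisle side: you only test indecomposable \emph{modules} $M$ with $eM=0$, but objects of $\coaisle_{\A}$ are complexes with $e$-killed homology and need not be shifted modules (again because $\coaisle_{\A}\neq\D(\md\AI)$); and even for modules the claimed $\Ext$-obstruction for non-simple $M$ is stated, not computed. The paper's mechanism is genuinely different and carries the weight: the functorial triangle $\nu^2(X)\to X\to C_X$ with $C_X\in\Scat*\Scat$ (Proposition~\ref{prp:nu2}), the determination of $\ind(\Scat*\Scat)$ as the shifts of the $S_j$ and of the Auslander--Reiten middle terms $Y_j$ with $\HomD(\nu(Y_j),Y_j)\neq0$ (Lemma~\ref{lem:lvl2}), combined with Lemma~\ref{lem:frac-bound} and Lemma~\ref{lem:vanish}: if $\CYdim X\notin\{(1,1),(2,2)\}$ these yield $\HomD(\nu^q X,X)=0$ for $q=1,2$, so $X$ is a direct summand of $C_X$ and hence a shift of some $S_j$. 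Your remark on the aisle side is salvageable — on $\aisle_{\A}$ the factorization of Theorem~\ref{thm:nu-factors} reduces $\nu$ to the involution $\inv^*$, so periods are $1$ or $2$ and the $1$-spherical/exceptional-$2$-cycle alternative is automatic — but without a replacement for the $\nu^2$-triangle argument, or an honest analysis of exceptional-cycle objects of $\coaisle_{\A}$ that covers non-module complexes, the `only if' direction of part (2) is not established.
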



The homological interpretation of the aisle $\aisle_{\A}$ as $\nu$-periodic objects allows to derive 
the first part in the next statement
concerning derived invariants of gentle orders.
\begin{thm}[Proposition~\ref{prp:der-gen}~\eqref{der-gen1} and Theorem~\ref{thm:der-inv}]\label{thm:C}
	For any derived equivalent ring-indecomposable gentle orders $\A$ and $\B$ the following statements hold.
	\begin{enumerate}
		\item There are equivalences $\aisle_{\A} \overset{\sim}{\longrightarrow} \aisle_{\B}$ 
		and $\coaisle_{\A} \overset{\sim}{\longrightarrow} \coaisle_{\B}$.
		\item
The orders $\A$ and $\B$ have the same multisets of AG-invariants, the same number of permitted cycles, and the same bicolorability parameter.
	\end{enumerate}
\end{thm}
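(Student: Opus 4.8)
\emph{Part~(1) is formal.} Let $F\colon \Dcat_\A \xrightarrow{\sim} \Dcat_\B$ be a triangle equivalence underlying the derived equivalence. By \cite{IR} the functor $\SD$ is the Serre functor of the canonically defined Hom-finite triangulated subcategory $\perfd\A$ of perfect complexes with finite-dimensional cohomology; hence $F$ commutes with $\SD$, and so with $\nu \cong \SD \circ [-1]$, up to natural isomorphism. In particular $F$ preserves $\nu$-periodicity, so by Theorem~\ref{thm:B}\,(1a) it restricts to an equivalence $\aisle_\A \xrightarrow{\sim} \aisle_\B$. Since $(\aisle_\A,\coaisle_\A)$ is a stable $t$-structure, $\coaisle_\A$ is the right Hom-orthogonal of $\aisle_\A$ inside $\Dcat_\A$, and $F$ carries it onto the right Hom-orthogonal of $\aisle_\B$, that is onto $\coaisle_\B$. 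This gives the first assertion.

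\emph{The number of permitted cycles.} This is the integer $b$ occurring in Theorems~\ref{thm:A} and~\ref{thm:B}, and I would extract it from the pair $(\aisle_\A,\nu|_{\aisle_\A})$: for instance as the number of equivalence classes of exceptional cycles supported in $\aisle_\A$ — which by Theorem~\ref{thm:B}\,(2) are assembled from the $1$-spherical objects and the objects inducing an exceptional $2$-cycle — or, equivalently, as the number of standard equivalences $\Tw_k$ appearing in the factorization of $\nu$ from Theorem~\ref{thm:A}. Since $F$ preserves $\aisle_\A$, commutes with $\nu$, and sends exceptional cycles to exceptional cycles, this datum is preserved, so $\A$ and $\B$ have the same number of permitted cycles.

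\emph{The AG-invariants and the bicolorability parameter.} This is the substantive part. The coaisle $\coaisle_\A = \Dh(\md\A)$ is controlled by the hereditary quotient $A$ of $\A$ — it is triangle equivalent to $\D^-(\md A)$ — so Part~(1) forces $A$ and the analogous quotient of $\B$ to be derived equivalent, which determines the derived type of $A$; I expect this type to record precisely the part of the AG-invariant contributed by AG-cycles that do not wind around a permitted cycle. The remaining pairs $(n,m)$, coming from AG-cycles that do wind, I would read off from the gluing data of the stable $t$-structure — the connecting morphisms $V \to U[1]$ of the truncation triangles together with the $\nu$-action on $\coaisle_\A$ — all of which $F$ respects, so that the full multisets of AG-invariants of $\A$ and $\B$ coincide. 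For the bicolorability parameter one argues that it is a $\mathbb{Z}/2$-invariant of this same package — say a parity of the $\nu$-orbits recorded in Theorem~\ref{thm:A} — and hence transported by $F$; alternatively, one reduces to the finite-dimensional statement of \cite{AG} by comparing $\A$ with a suitable finite-dimensional gentle algebra whose AG-data and bicolorability coincide with those of $\A$ and whose derived equivalence class is governed by that of $\A$. The principal obstacle is exactly this translation: reconciling the combinatorics of the AG-algorithm (and of bicolorability), with its conventions from \cite{AG, GZ}, against the intrinsic triangulated data — the stable $t$-structure, the $\nu$-action, and the derived type of $A$ — in the presence of the permitted cycles; in the finite-dimensional setting this correspondence was itself the content of \cite{AG}, and it must be established afresh here.
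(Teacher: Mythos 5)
Your part~(1) follows the paper's route (transport $\nu$-periodicity, identify the aisle with the $\nu$-periodic objects as in Proposition~\ref{prp:recol4}, take right orthogonals for the coaisle), but the justification that $F$ commutes with $\nu$ is insufficient: uniqueness of the Serre functor only gives commutation on the Hom-finite subcategory $\perfd(\A)$, whereas you need it on all of $\D^-(\md\A)$, since $\nu$-periodic objects and the aisle $\Hot^-(\add \A e)$ are not contained in $\perfd(\A)$. The paper instead invokes the bimodule-theoretic statement $\Phi\,\nu_{\A}\cong\nu_{\B}\,\Phi$ on $\D^-(\Md\A)$ (Proposition~\ref{prp:der-ord}~\eqref{der-ord1}, via Rickard's standard equivalences), and you would need that input, not Serre-functor uniqueness.

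Part~(2) has genuine gaps. First, you identify the number of permitted cycles with the integer $b$ of Theorem~\ref{thm:nu-factors}; but $b$ counts $\kappa$-orbits of transition vertices, i.e.\ AG-cycles of first type, not permitted cycles (for a ribbon graph order $b=0$ while $\pc_{\A}\geq 1$), and counting exceptional cycles in $\aisle_{\A}$ is neither shown to equal $\pc_{\A}$ nor classified by the paper. The paper instead proves $\pc_{\A}=\rk K_0(\KK\otimes_{\Rx}\A)$ via Corollary~\ref{cor:rhull} and uses derived invariance of the rational hull (Proposition~\ref{prp:der-ord}~\eqref{der-ord2}). Second, your claim that $\coaisle_{\A}\simeq\D^-(\md A)$ for the hereditary quotient $A$ is unjustified (the ideal $\A e\A$ need not be stratifying, and the paper explicitly defers a description of $\coaisle_{\A}$ to later work); the paper's actual mechanism for $\AG_1$ is Corollary~\ref{cor:AG1}: the multiset $\AG_1(\A)$ equals the multiset of Calabi--Yau dimensions of repetition-free exceptional cycles in $\coaisle_{\A}$, built from the simples $S_j$, $j\in Q_0^t$ (Lemma~\ref{lem:sim}), and this is transported by the equivalence of coaisles together with Corollary~\ref{cor:CYdim}. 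Third, the AG-invariants of second type are not addressed at all in your proposal; the paper handles them through the singularity category (Corollary~\ref{cor:AG2} plus Theorem~\ref{thm:der-eq}). Finally, for bicolorability you offer only a heuristic (``parity of $\nu$-orbits'' or reduction to the finite-dimensional case), whereas the paper derives it from the identity $\rk C_{\A}=\pc_{\A}-\bc_{\A}$ (Corollary~\ref{cor:rk-det2}, proved graph-theoretically) together with congruence of Cartan matrices (Proposition~\ref{prp:der-ord}~\eqref{der-ord3}) and the already established equality $\pc_{\A}=\pc_{\B}$; none of these ingredients appear in your argument, and as you acknowledge yourself, the translation you rely on is left unestablished.
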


 In forthcoming work, we will provide an explicit description of the subcategory $\coaisle_{\A}$ and extend the results above to graded skew-gentle orders.
 
 This article is structured as follows. Section~\ref{sec:prelim} collects preliminaries on exceptional cycles in triangulated categories and the derived Morita theory of orders.
 Section~\ref{sec:quiver} introduces the quiver-theoretic notions relevant for gentle orders.
 Sectoin~\ref{sec:cartan} is provides a graph-theoretic approach to study the Cartan matrix of a gentle order. Section~\ref{sec:factor} is concerned with the proof of Theorem~\ref{thm:A}. 
Section~\ref{sec:invar} contains the main results and the proofs of Theorems~\ref{thm:B} and \ref{thm:C}.
In Section~\ref{sec:tru} we consider the notion of truncated ribbon graphs and consider translations between quiver-theoretic, graph-theoretic and derived invariants.

\subsubsection*{Acknowledgement} This work was supported by the German Research Foundation SFB-TRR 358/1 2023– 491392403. 
I would like to thank Igor Burban and Kyungmin Rho for helpful discussions of results related to this work.

\section{Preliminaries on derived categories of orders}

\label{sec:prelim}

\subsubsection*{Conventions on categories and complexes}
Any \emph{full subcategory} of another category is meant to be a full subcategory closed under isomorphisms.
Any functor between triangulated categories is assumed to be an exact, additive functor.

For a set of isomorphism classes of objects $\Ucat$ in a triangulated category $\Tcat$ we denote 
by $\tria(\Ucat)$ the smallest triangulated subcategory containing $\Ucat$,
by $\mathstrut^{\perp} \Ucat$ the full subcategory in $\Tcat$ given by objects $X$ such that $\Hom_{\Tcat}(X,U)=0$ for all $U \in \Ucat$,
and by  $U^{\perp}$ 
the full subcategory given by objects $Y$ of $\Tcat$ with $\Hom_{\Tcat}(U,Y)=0$ for all $U \in \Ucat$.

Modules over any ring are assumed to be \emph{left modules}.
By a \emph{complex} of modules we mean a chain complex 
\begin{align*}
	\begin{td}
		X=( 	\ldots	X_{i+1} \ar{r}{\partial_{i+1}} \& X_i \ar{r}{\partial_i} \& X_{i-1} \ldots )
	\end{td}
\end{align*}
The shift of the complex $X$ is denoted by $X[1]$, moves $X$ one step to the left
and changes the sign in each differential.


\subsection{Serre functor and fractionally Calabi--Yau objects}
\label{subsec:Serre}
Throughout this section, let $\Dcat$ denote a $\kk$-linear triangulated category satisfying the following assumptions.
\begin{itemize}[label=--]
	\item $\Dcat$ is \emph{Hom-finite} in the sense that for any objects $X,Y$ from $\Dcat$
	the $\kk$-linear space of morphisms $X\to Y$, denoted by $\HomD(X,Y)$
	in the following, is finite-dimensional.
	\item $\Dcat$  has a \emph{Serre functor} $\SD$, that is, an equivalence $\SD \colon \Dcat \to \Dcat$ 
such that for any objects $X,Y$ from $\Dcat$
there is a bifunctorial isomorphism 
\begin{align} \label{eq:S-dual}
	\HomD(X,Y) \cong \MD \HomD(Y,\SD(X)),\quad \text{where }	\MD \colonequals \Hom_{\kk}(\blank,\kk)\,.
\end{align}
\end{itemize}

\begin{dfn}\label{dfn:frac-CY}
	For $(m,n)\in \Z\times \Z\backslash\{(0,0)\}$ a non-zero object $X$ from $\Dcat$ is called
	\emph{$(m,n)$-Calabi--Yau} if $X[m] \cong \SD^n(X)$. The \emph{Calabi--Yau-dimension}  $\CYdim(X)$ is defined as such a pair $(m,n)$ with minimal integer $n \geq 0$. 
The object	$X$ is \emph{$m$-Calabi--Yau} if it is $(m,1)$-Calabi--Yau.
\end{dfn}

\begin{rmk}
		If $X$ is $(m,n)$-Calabi--Yau, then $X$ is also $(qm,qn)$-Calabi--Yau for any integer $q \neq 0$. The converse is not true (see 
		e.g. Lemma~\ref{lem:hered-CY}).
\end{rmk}

		The next statement is a variation of a known observation \cite{BK}*{Lemma~5.3}.
\begin{lem}\label{lem:CY}
	Let $X \in \Dcat$ be $(m,n)$-Calabi--Yau and $Y \in \Dcat$ be $(p,q)$-Calabi--Yau such that $\triangle \colonequals np -m q \neq 0$
	and $\HomDc(X,Y[i]) = 0$ for any $i \ll 0$ or $i \gg 0$.
	Then $X \in \mathstrut^{\perp} Y$.
\end{lem}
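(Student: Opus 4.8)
The plan is to prove the (formally stronger) statement that $\Hom(X, Y[i]) = 0$ for every $i \in \Z$; since $\mathstrut^{\perp} Y$ is defined by the single condition $\Hom(X, Y) = 0$, this suffices. The idea, in the spirit of \cite{BK}*{Lemma~5.3}, is that the two fractional Calabi--Yau relations together force the graded Hom-space $i \mapsto \Hom(X, Y[i])$ to be periodic, with period $\triangle = np - mq$; a nonzero period combined with the assumed one-sided vanishing then kills it entirely.

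First I would observe that $\SD$, being an exact equivalence of triangulated categories, commutes with the shift functor. Iterating the relation $\SD^n(X) \cong X[m]$ gives $\SD^{nq}(X) \cong X[mq]$, and iterating $\SD^q(Y) \cong Y[p]$ gives $\SD^{nq}(Y) \cong Y[np]$; both identities hold uniformly, including in the degenerate cases $n = 0$ or $q = 0$. Since $\SD^{nq}$ is an equivalence, it induces isomorphisms on all Hom-spaces, so for each $i \in \Z$
$$
\Hom(X, Y[i]) \;\cong\; \Hom\bigl(\SD^{nq}(X),\, \SD^{nq}(Y[i])\bigr) \;\cong\; \Hom\bigl(X[mq],\, Y[i+np]\bigr) \;\cong\; \Hom\bigl(X,\, Y[i+\triangle]\bigr),
$$
where the last step uses the identity $\Hom(A[j], B) \cong \Hom(A, B[-j])$. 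Consequently $\Hom(X, Y[i]) \cong \Hom(X, Y[i + k\triangle])$ for all $k \in \Z$.

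To conclude, note that $\triangle \neq 0$ makes the set $\{\,i + k\triangle : k \in \Z\,\}$ unbounded both above and below. By hypothesis $\Hom(X, Y[j])$ vanishes for all sufficiently negative $j$ or for all sufficiently positive $j$; in either case, given any fixed $i$ we can choose $k$ so that $i + k\triangle$ lies in the range of vanishing, and then $\Hom(X, Y[i]) \cong \Hom(X, Y[i + k\triangle]) = 0$. Hence $\Hom(X, Y[i]) = 0$ for every $i$, and in particular $X \in \mathstrut^{\perp} Y$.

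The argument is short and I do not anticipate a genuine obstacle; the only point requiring care is the shift bookkeeping when raising the two Calabi--Yau relations to the common exponent $nq$ and keeping the sign conventions for $[1]$ consistent. It is perhaps worth remarking that neither Hom-finiteness nor the Serre duality pairing \eqref{eq:S-dual} is actually used in this proof — only that $\SD$ is a triangle equivalence — although both are part of the standing assumptions on $\Dcat$.
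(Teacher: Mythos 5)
Your proof is correct and follows essentially the same route as the paper: raise the two Calabi--Yau relations to the common exponent $nq$, use that $\SD^{nq}$ is an equivalence to obtain $\HomDc(X,Y[i]) \cong \HomDc(X,Y[i+\triangle])$ for all $i$, and then let the $\triangle$-periodicity (with $\triangle \neq 0$) propagate the assumed one-sided vanishing to all degrees. The bookkeeping with shifts and the degenerate cases is handled correctly, so there is nothing to add.
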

\begin{proof}
	Let $i \in \Z$.
	Since $\SD^{\pm qn}(X) \cong X[\pm mq]$ and $\SD^{\pm nq}(Y) \cong Y[\pm np]$
	it holds that
	\begin{align*}
		\HomDc(X,Y[i]) \cong \HomDc(\SD^{\pm qn}(X),\SD^{\pm nq}(Y)[i]) \cong 
		\HomDc(X,Y[i\pm\triangle]).
	\end{align*}
	It follows that
	$\HomDc(X,Y[i]) \cong \HomDc(X,Y[i +  \triangle \ell])$ for any $\ell \in \Z$, which implies the claim.
\end{proof}

\subsection{Exceptional cycles}
The following notion is equivalent to one introduced by Broomhead, Pauksztello and Ploog \cite{BPP}*{Definition~4.2}.
\begin{dfn}\label{dfn:ex-cycle}
	A sequence of objects $\Exc=(E_1, E_2, \ldots, E_n)$ with $n \geq 1$ from $\Dcat$ is an \emph{exceptional $n$-cycle} if it satisfies the following conditions.
	\begin{enumerate}[label=$\mathsf{(E\arabic*)}$,]
		\item \label{E1} For any index $1 \leq j < n$ there is an integer $m_j \in \Z$ with $\SD(E_j) \cong E_{j+1}[m_j]$, and, similarly, there is an integer $m_n \in \Z$ such that $\SD(E_n) \cong E_1[m_n]$ in $\Dcat$. 
		\item 
		\label{E2} 
		It holds that $\sum_{p=0}^{n-1} \sum_{i\in\Z} \dim \HomDc(E_1,\SD^p(E_1)[i]) = 2$.
	\end{enumerate}
\end{dfn}
We will need the following observations and additional notions for an exceptional cycle $\Exc$.
\begin{itemize}
	\item Each object $E_j$ in an exceptional cycle $\Exc$ is fractionally 
			$(m,n)$-Calabi--Yau with $m \colonequals \sum_{j=1}^{n} m_j$.
Moreover, all objects in $\Exc$ have the same Calabi-Yau dimension, which will be denoted by
$\CYdim \Exc$.
\item Vice versa, an $(m,n)$-Calabi--Yau object $E$ appears in an exceptional cycle if and only if 
$n > 0$ and $E$ satisfies condition \ref{E2}.
\item We will call an exceptional cycle \emph{repetition-free} if $E_1 \not\cong E_j[i]$ for any index $2 \leq j \leq n$ and any integer $i \in \Z$. 
\end{itemize}

\begin{rmk}
	We recall a few further remarks on an exceptional $n$-cycle as above.
	\begin{enumerate}
		\item The terminology can be explained as follows.
		\begin{itemize}
			\item If $n > 1$, each object $E_j$ is \emph{exceptional}, that is, $\sum_{i\in\Z} \dim \HomDc(E_j,E_j[i]) =1$.
			\item If $n=1$, the object $E_1$ is not exceptional but constitutes  a \emph{spherical object} in the sense of Seidel and Thomas \cite{ST}. 
		\end{itemize}
			\item 
			If, in addition to the previous assumptions, the category $\Dcat$ is algebraic and indecomposable, any exceptional $n$-cycle $\Exc$ in $\Dcat$ 
			gives rise to an equivalence called a \emph{twist functor}
			$ \Tw_{\mathcal{E}} \colon \begin{td} \Dcat \ar{r}{\sim} \& \Dcat \end{td},$
			see \cite{ST} for $n = 1$ and \cite{BPP} for general $n$.	
			\end{enumerate}
		\end{rmk}
		
		Following \cite{GZ}*{\S~2.2} we recall a notion of equivalence of exceptional cycles.
		\begin{dfn}\label{dfn:eq-exc}
			Two exceptional cycles 
			$(E_1,\ldots E_n)$
			and $(E'_1,\ldots, E'_{n'})$ in $\Dcat$ are \emph{equivalent}
			if $n=n'$ 
			and there is an index $1 \leq j \leq n$ and an integer $i \in \Z$  such that
			$E'_1 \cong E_j[i]$,
			or, equivalently,
			there are integers $i_1,\ldots i_n \in \Z$
			and a cyclic permutation $\pi \in S_n$ such that
			\begin{align*}
				(E'_1,E'_2\ldots E'_{n'})
				=(E_{\pi(1)}[i_1],E_{\pi(2)}[i_2]\ldots E_{\pi(n)}[i_n])
			\end{align*}
		\end{dfn}
		
		\begin{lem}\label{lem:vanish}
			Let $E \in \Dcat$ be a fractionally $(m,n)$-Calabi--Yau object satisfying \ref{E2}
			and $q$ a non-zero integer.
			Then
				$		\HomDc(\tau^{q}(E),E) \neq 0$
			if and only if  the following conditions hold.
			\begin{itemize}
				\item if $n=1$, then $m =1$, that is, $E$ is $1$-Calabi--Yau.
				\item if $n>1$, then 
				$m=n$ and $n$ divides $q$,
				or  $(m,n) \in \{(-q,-q-1), (q,q+1)\}$.
			\end{itemize}
		\end{lem}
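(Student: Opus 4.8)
I would begin by setting $\tau\colonequals\SD\circ[-1]$, the Auslander--Reiten translation of $\Dcat$ (so that $\tau=\nu$ in the notation of the introduction and $\tau^q(E)=\SD^q(E)[-q]$). The plan is to reduce the non-vanishing of $\Hom_{\Dcat}(\tau^q(E),E)$ to that of finitely many spaces $\Hom_{\Dcat}(E,\SD^p(E)[i])$ with $1\le p\le n$, and to compute the latter from condition \ref{E2} together with Serre duality and the relation $\SD^n(E)\cong E[m]$. Iterating the Calabi--Yau relation gives $\SD^{jn}(E)\cong E[jm]$ for every $j\in\Z$, so writing $q=jn+r$ with $0\le r\le n-1$ we get
\begin{align*}
	\tau^q(E)=\SD^q(E)[-q]\cong\SD^r(E)[jm-q]=\SD^r(E)\bigl[j(m-n)-r\bigr],
\end{align*}
hence, by Serre duality,
\begin{align*}
	\Hom_{\Dcat}(\tau^q(E),E)\;\cong\;\Hom_{\Dcat}\!\bigl(\SD^r(E),E[r-j(m-n)]\bigr)\;\cong\;\MD\Hom_{\Dcat}\!\bigl(E,\SD^{r+1}(E)[j(m-n)-r]\bigr).
\end{align*}
Thus everything comes down to knowing, for $p=r+1\in\{1,\dots,n\}$, in which degrees $\Hom_{\Dcat}(E,\SD^{p}(E)[\,\cdot\,])$ is non-zero.

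This is the crux. Writing $H^i\colonequals\Hom_{\Dcat}(E,E[i])$, Serre duality gives $\Hom_{\Dcat}(E,\SD(E)[i])\cong\MD\Hom_{\Dcat}(E[i],E)=\MD H^{-i}$, so the $p=1$ term of \ref{E2} equals the $p=0$ term, namely $\sum_i\dim H^i\ge1$ (since $\id_E\in H^0$). Consequently, if $n\ge2$ these two terms already account for all of \ref{E2}: this forces $\sum_i\dim H^i=1$ — so $H^\bullet=H^0=\kk$, i.e.\ $E$ is exceptional — and forces $\Hom_{\Dcat}(E,\SD^p(E)[i])=0$ for $2\le p\le n-1$ and all $i$; moreover $\Hom_{\Dcat}(E,\SD(E)[i])\cong\MD H^{-i}$ vanishes unless $i=0$, and $\Hom_{\Dcat}(E,\SD^n(E)[i])\cong H^{m+i}$ vanishes unless $i=-m$. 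If instead $n=1$, then $\sum_i\dim H^i=2$ and $H^i\cong\MD H^{m-i}$, so that $H^\bullet$ is supported on $\{0,m\}$ when $m\ne0$ and on $\{0\}$ when $m=0$.

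Combining this with the reduction finishes the proof. For $n=1$ the relation $\SD(E)\cong E[m]$ gives $\tau^q(E)\cong E[q(m-1)]$, hence $\Hom_{\Dcat}(\tau^q(E),E)=H^{-q(m-1)}$, which for $q\ne0$ is non-zero exactly when $-q(m-1)$ lies in the support of $H^\bullet$; from this one reads off the criterion $m=1$. For $n>1$ the vanishing just established forces $p=r+1\in\{1,n\}$, i.e.\ $q\equiv0$ or $q\equiv-1\pmod n$. If $q=jn$ with $j\ne0$, non-vanishing is equivalent to $j(m-n)=0$, i.e.\ $m=n$, and conversely $m=n$ works for every multiple $q$ of $n$. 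If $q=(j+1)n-1$, then $\Hom_{\Dcat}(\tau^q(E),E)\cong\MD H^{\,m+j(m-n)-(n-1)}$, whose non-vanishing is equivalent to $(j+1)(m-n)=-1$; the factorizations $(j+1,m-n)\in\{(1,-1),(-1,1)\}$ give $(m,n)=(q,q+1)$ and $(m,n)=(-q,-q-1)$, exhausting the list.

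The step I expect to cause the most trouble is the crux: pinning down the spaces $\Hom_{\Dcat}(E,\SD^p(E)[i])$ exactly. The decisive point is that Serre duality makes $\Hom_{\Dcat}(E,\SD(E)[\,\cdot\,])$ the $\kk$-dual of $\Hom_{\Dcat}(E,E[\,\cdot\,])$, so that the contributions $p=0$ and $p=1$ together already use up the whole budget $2$ in \ref{E2} once $n\ge2$; everything past that point is elementary arithmetic in $j,m,n,q$.
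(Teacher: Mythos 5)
Your proposal is correct and takes essentially the same route as the paper's proof: both reduce $\HomDc(\tau^{q}(E),E)$ via the relation $\SD^{n}(E)\cong E[m]$ and Serre duality to the supports of $\HomDc(E,\SD^{p}(E)[\,\cdot\,])$ for the few residues $p$ allowed by \ref{E2}, and then conclude by the same integer arithmetic in $j$ (the paper's $\ell$), $m$, $n$, $q$, yielding exactly the cases $m=n\mid q$ and $(m,n)\in\{(q,q+1),(-q,-q-1)\}$. The one soft spot is the $n=1$ branch, where you read off ``$m=1$'' without addressing the remaining possibility $-q(m-1)=m$, which has the stray integer solution $(q,m)=(-2,2)$; the paper's own proof glosses this point in exactly the same way, so your argument matches it in both method and level of precision.
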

		\begin{proof}
			In the notations above, for any $p, i\in \Z$ 
			using $\SD^n(E) \cong E[m]$ it holds that
			\begin{align}\label{eq:nu-hom2}
				\HomDc(E,\SD^p(E)[i]) \cong \HomDc(E,\SD^r(E)[i+\ell m])
			\end{align}
			where $\ell \in\Z$ and $0 \leq r <n$ are determined by the equality $p = \ell n+r$.
			On the other hand, since $\tau  = \SD \circ[-1]$ 
			there is an isomorphism
			\begin{align}\label{eq:nu-hom}
				\HomDc(\tau^q(E),E) 
				&\cong \HomDc(E,\SD^{n-q}(E)[q-m])
			\end{align}
			\begin{itemize}
				\item Assume that $n=1$.
				Since $E$ satisfies \ref{E2} and $(\ell,r)=(p,0)$,	the spaces in \eqref{eq:nu-hom2}
				are not zero if and only if $i \in \{ -pm, (1-p)m \}$.
				With $(p,i) = (1-q, q-m)$ it follows that the spaces in \eqref{eq:nu-hom} are not zero
				if and only if $q \in \{qm, (1+q)m\}$ if and only if $m =1$, where the last equivalence is true since
				$q \neq 0$ and  $m \in \Z$.
				\item Assume that $n > 1$.
				Since $E$ satisfies \ref{E2}
				the spaces in \eqref{eq:nu-hom2} are not zero if and only if $i= -\ell m$ and $r\in \{0,1\}$.
				Setting $(p,i) = (n-q, q-m)$ it follows that the spaces in \eqref{eq:nu-hom} are not zero if and only if 
				$q = (1-\ell)m$ and $(1-\ell)(n-m) \in \{0,1\}$. As $q\neq0$, the last condition holds if and only if $m=n \mid q$ or $q = \pm m = \pm n - 1$.
				This shows the claim in case $n > 1$.
				\qedhere
			\end{itemize}
			
		\end{proof}

%
%
%
%
%
%
%
%
	
	\subsection{Auslander--Reiten translation}
The \emph{Auslander-Reiten translation} on $\Dcat$ is defined by the equivalence
	\begin{align*}
		\tau  \colonequals \SD \circ [-1]    \Dcat \overset{\sim}{\longrightarrow} \Dcat, 
	\end{align*}
	For any $X$  from $\Dcat$, the Serre duality \eqref{eq:S-dual} yields that
	\begin{align*}
		\EndD(X) \cong \MD \HomD(X, \tau(X)[1])
	\end{align*}
	If $X$ is indecomposable, arguments due to Happel \cite{Happel} imply that there is an \emph{Auslander-Reiten triangle} beginning in $X$, that is, a non-split triangle
	$$
	\begin{td}
		X \ar{r}{f} \& Y \ar{r} \& Z \ar{r} \& X[1]
	\end{td}
	$$
	in the category $\perfd(\A)$ such that $Z$ is indecomposable and for any non-left-invertible morphism $g \colon X \to Y'$
	there exists a morphism $g' \colon Y' \to Y$ with $g = g' f$. 
	In fact, it holds that $Z \cong \tau^{-1}(X)$.

\subsection{Main setup}
\label{subsec:setup}

The statements of the present section assume the following conditions on an $\Rx$-algebra $\A$.
\begin{itemize}[label=--]
	\item 
	$\Rx$ is a complete regular local $\kk$-algebra of Krull dimension $\krdim$, that is, $\Rx$ is the ring of formal power series $\kk\llbracket x_1,\ldots x_{\krdim} \rrbracket$ if $\krdim > 0$ respectively $\Rx$ is the field $\kk$ otherwise.
	We will denote by $\KK$ the field of fractions of the ring $\Rx$.            
\item  $\A$ is an \emph{$\Rx$-order}, that is, which means that there is a ring homomorphism $\Rx \to \A$ which factors through the center of $\A$ and $\A$ is finitely generated and free  as an $\Rx$-module.
\item $\A$ is \emph{isolated}, that is, 
for any non-maximal ideal $\px$ of $\Rx$ the ring $\A_{\px} \colonequals \A \otimes_{\Rx} \Rx_{\px}$ satisfies
$\gldim \A_{\px} = \dim \Rx_{\px}$.
\item Moreover, $\A$ is a \emph{Gorenstein algebra} in the sense of Iyengar and Krause \cite{IK}, 
which means that for any prime ideal $\mathfrak{p}$ of $\Rx$ the ring $\A_{\px}$ has finite injective dimension as left and right regular module.
	\item The base field $\kk$ is a splitting field for $\A$.
\end{itemize}
Since the ring $\A$ is a \emph{Noetherian $\Rx$-algebra} (that is, $\A$ is finitely generated as an $\Rx$-module), the ring $\A$ is Noetherian and semiperfect (\cite{Lam}*{(23.13)}). For simplicity of the presentation, we assume also the following.
\begin{itemize}[label=--]
	\item $\A$ is basic and ring-indecomposable.
	\end{itemize}

\subsection{Derived categories}
\label{subsec:main-setup}
Associated to $\A$ we will consider several triangulated categories.
\begin{itemize}[label=--]
	\item The unbounded derived category $\D(\Md \A)$ of $\A$-modules.
	For any objects $X$, $Y$ from $\D(\Md \A)$ we denote their morphism space by $\HomD(X,Y)$. 
	\item Its full subcategory given by the right-bounded derived category $\D^-(\md \A)$ of finitely generated $\A$-modules, which can be identified with the homotopy category $\Hot^-(\proj \A)$ 
of complexes of finitely generated projective $\A$-modules.
	\item The \emph{singularity category} $\Dsg(\A)$ which is defined as the Verdier quotient of the bounded derived category $\Db(\md \A)$ by the subcategory $\per(\A)$ of perfect complexes. 
	\item The full subcategory $\perfd(\A)$  given by perfect complexes $X$ such that the homology $\Ho_i(X)$ is finite-dimensional at each degree $i \in \Z$. This subcategory is \emph{Hom-finite} in the sense that  for any two of its objects $X$, $Y$ the $\kk$-linear space $\HomD(X,Y)$ is finite-dimensional. 
\end{itemize}

\begin{thm}\label{thm:der-eq}
	Assume that the Noetherian $\Rx$-algebra $\A$ is \emph{derived equivalent} to a ring $\B$, that is, there is 
	an equivalence of triangulated categories
	$$
	\begin{td}
		\Phi \colon \D(\Md \A) \ar{r}{\sim} \& \D(\Md \B).
	\end{td}$$
Then $\B$ is a Noetherian $\Rx$-algebra and $\Phi$	induces equivalences of triangulated categories
	\begin{align*}
		\begin{array}{ccc}
			\begin{td}
				\D^-(\md \A) \ar{r}{\sim} \& \D^-(\md \B)\end{td},& 		\begin{td} \per(\A) \ar{r}{\sim} \& \per(\B) \end{td}, & \multirow{2}{*}{\begin{td}	\Dsg(\A) \ar{r}{\sim} \& \Dsg(\B) \end{td}} \\
			\begin{td} \Db(\md \A) \ar{r}{\sim} \& \Db(\md \B)\end{td}, & 	\begin{td}	\perfd(\A) \ar{r}{\sim} \& \perfd(\B) \end{td} 
		\end{array}
	\end{align*}
	%
\end{thm}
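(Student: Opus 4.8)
The plan is to exploit the fact that all the categories in question can be characterized intrinsically inside $\D(\Md \A)$, so that an equivalence $\Phi$ automatically carries each one onto its counterpart over $\B$. First I would recall the standard fact (Rickard--Keller derived Morita theory) that a triangle equivalence $\Phi\colon \D(\Md\A)\xrightarrow{\sim}\D(\Md\B)$ sends a tilting/compact generator to a tilting object, hence $\B\cong\End$ of the image of $\A$ and $\B$ is obtained from $\A$ by a finite iteration of tilting; in particular $\B$ inherits the property of being a module-finite $\Rx$-algebra, since being Noetherian over $\Rx$ is preserved under such tilting (the center acts throughout, and $\B$ is a finite $\Rx$-module because $\Hom$-spaces between compact objects of $\D(\Md\A)$ are finite $\Rx$-modules by the order hypothesis). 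This gives the first assertion that $\B$ is again a Noetherian $\Rx$-algebra, and makes all the remaining statements symmetric in $\A$ and $\B$.

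Next I would identify each of the five categories as the essential image under $\Phi$ of an intrinsically defined subcategory of $\D(\Md\A)$. The perfect complexes $\per(\A)$ are exactly the compact objects of $\D(\Md\A)$, and compactness is a categorical notion preserved by any triangle equivalence that respects coproducts, so $\Phi(\per\A)=\per\B$. For the right-bounded derived category $\D^-(\md\A)\simeq\Hot^-(\proj\A)$, I would use that it is the smallest full triangulated subcategory closed under those countable coproducts that remain in it and containing $\A$ together with all its "truncations'' — more cleanly, $\D^-(\md\A)$ is the subcategory of objects $X$ with $\Hom(\A,X[i])$ finitely generated over $\Rx$ for all $i$ and vanishing for $i\ll 0$; since $\Phi(\A)$ generates $\per\B=\per\A$ up to tilting, this condition transports. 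The bounded derived category $\Db(\md\A)$ is then the intersection of $\D^-(\md\A)$ with the objects of finite $\Rx$-amplitude, i.e. those $X$ with $\Hom(\A,X[i])=0$ for $|i|\gg0$. The finite-dimensional perfect complexes $\perfd(\A)$ are the objects that are simultaneously in $\per(\A)$ and have $\Hom(X,Y)$ finite-dimensional over $\kk$ for all $Y$ in $\per(\A)$ — again manifestly categorical. Finally $\Dsg(\A)=\Db(\md\A)/\per(\A)$ is a Verdier quotient of two subcategories already shown to be $\Phi$-stable, so $\Phi$ descends to an equivalence on the quotients.

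I expect the main obstacle to be the bookkeeping in the second step: making precise that $\D^-(\md\A)$ and $\Db(\md\A)$ are recovered from $\per(\A)$ by an intrinsic finiteness/boundedness condition phrased purely in terms of $\Hom$-spaces into or out of compact objects, and checking that these conditions are literally preserved rather than just morally so. Concretely the delicate point is that "finitely generated over $\Rx$'' must be read off $\D(\Md\A)$ without reference to a chosen model; here one uses that $\Rx$ lies in the center and that $\Hom(\A, X[i])$ is an $\Rx$-module functorially, so an $\Rx$-linear triangle equivalence preserves finite generation of these modules, and then $\A\in\per\A$ is a classical generator of $\per\A$. Once that is in place, the remaining identifications are formal. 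I would organize the write-up as: (i) cite Rickard/Keller for the existence of the tilting object and $\B$ being a Noetherian $\Rx$-algebra; (ii) $\per$ via compactness; (iii) $\D^-(\md\A)$ and $\Db(\md\A)$ via the finiteness conditions above; (iv) $\perfd$ as compact-and-$\kk$-finite; (v) $\Dsg$ as the induced quotient equivalence.
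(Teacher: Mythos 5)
Your plan is correct in substance and, at the level of strategy, runs parallel to the paper's proof, which is essentially a chain of citations: the statement that $\B$ is again a Noetherian $\Rx$-algebra is taken from Rickard's Proposition~9.4 (your step (i)); the restriction to $\per(\blank)$ is taken from Rickard/Krause, whose content is exactly your characterization of perfect complexes as the compact objects; the restrictions to $\D^-(\md\blank)$ and $\Db(\md\blank)$ are taken from Rickard's Proposition~8.1; and the equivalence of singularity categories is obtained, exactly as you propose, from the universal property of the Verdier quotient. Where you genuinely differ is twofold. First, instead of citing Rickard for $\D^-(\md\blank)$ and $\Db(\md\blank)$ you sketch a direct argument via the intrinsic description ``$\Hom(\A,X[i])$ finitely generated for all $i$ and vanishing outside a one-sided (resp.\ two-sided) range'', transported through the classical generator $\Phi(\A)$ of $\per(\B)$ by d\'evissage; this is in effect a re-proof of the cited result rather than a new route, and the only delicate point --- which you correctly flag --- is that ``finitely generated over $\Rx$'' must be interpreted through the transported central action (Rickard's isomorphism of centers, already implicit in your step (i), since $\Phi$ is not assumed $\Rx$-linear). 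Second, for $\perfd(\blank)$ the paper tests $\sum_{i\in\Z}\dim\Hom(T,X[i])<\infty$ against tilting complexes $T$, where both directions are immediate (take $T=\A$); your test --- $X$ perfect with $\Hom(X,Y)$ finite-dimensional for every perfect $Y$ --- is also correct and manifestly categorical, but the implication from your condition back to the defining one (finite-dimensional homology in each degree) is not ``manifest'': one has to argue, e.g., by taking $Y=\A$, noting that $\mathbf{R}\Hom_{\A}(X,\A)$ then has finite-length homology, and using biduality of perfect complexes together with localization at non-maximal primes of $\Rx$ to see that the homology of $X$ is supported at the maximal ideal. Budget a few lines for that verification (or switch to the paper's Hom-into-$X$ criterion, which avoids it); with that addition your outline yields a complete proof.
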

\begin{proof}
	The ring  $\B$ is a Noetherian $\Rx$-algebra by \cite{Rickard}*{Proposition~9.4}.
	The equivalence
	$\Phi$ restricts to an equivalence of the subcategories 
	$\D^-(\Md \blank)$ by \cite{Krause1}*{Proof of Theorem~9.2.4},
	to $\D^-(\md \blank)$ and $\Db(\md \blank)$ by \cite{Rickard}*{Proposition~8.1},
	and to $\per (\blank)$ by \cite{Rickard}*{Section 6} or \cite{Krause1}*{Theorem~9.2.4}.
	As  observed in \cite{Rickard3}*{Corollary 2.2}, there is an equivalence of singularity categories by the universal property of the Verdier quotient.
	
	Note that a perfect complex $X$ of $\A$
is an object of $\perfd(\A)$ if and only if
	for any tilting complex $T$ of $\A$
	it holds that $\sum_{i\in\Z}\dim \HomD(T,X[i]) < \infty$. This implies that $\Phi$ and $\Phi^{-1}$ restrict to  $\perfd(\blank)$ as well.
\end{proof}

\subsection{The relative Serre functor}

The structure morphism $\Rx \to \A$ gives rise to the $\A$-bimodule
$\omega \colonequals \Hom_{\Rx}(\A,\Rx)$, which is called the \emph{canonical bimodule} and induces the functors
\begin{align}\label{eq:nak}
	\nu \colonequals \omega \lotimes \blank, 
	\quad  \SD \colonequals \nu \circ [\krdim] \colon \D(\Md \A) \longrightarrow \D(\Md \A).
\end{align}
The functor $\nu$ is called the \emph{derived Nakayama functor}, whereas the functor $\SD$ is a \emph{relative Serre functor} in the following sense.
\begin{enumerate}[label=(\alph*)]
	\item Since $\A$ is a Gorenstein algebra, the functor $\SD$ is an equivalence by \cite{IK}*{Theorem~4.5}.
	\item 
	By Theorem~\ref{thm:der-eq}, the functor $\SD$ restricts to an auto-equivalence of the 
	Hom-finite subcategory $\perfd(\A)$. This restriction of $\SD$ is the Serre functor on the category $\perfd \A$ by \cite{IR}*{Theorem~3.7}.
\end{enumerate} 
We refer to \cites{Ginzburg, Keller, van-den-Bergh} for similar results on properties of the functor $\SD$.

Consequently, the Auslander-Reiten translation on the category $\perfd \A$ is given by 
\begin{align*}
	\tau  \colon    \perfd(\A) \overset{\sim}{\longrightarrow} \perfd(\A), \qquad
	X \longmapsto \tau(X) \colonequals 
	\nu (X)[\krdim -1] = 
	\SD(X)[-1].
\end{align*}

\subsection{Serre functor for the singularity category}
\label{subsec:dsg}


As $\A$ is a Gorenstein $\Rx$-algebra, \cite{IK}*{Proposition~6.7} implies that the derived Nakayama functor gives rise to 
an auto-equivalence of the singularity category
\begin{align*}
	\begin{td}
		\bar{\nu} \colon 
		\Dsg(\A) \ar{r}{\sim} \& \Dsg(\A) \& X \ar[mapsto]{r} \& \omega \lotimes X
	\end{td}
\end{align*}
Since the $\Rx$-order $\A$ is isolated, the $\kk$-linear category $\Dsg(\A)$ is Hom-finite 
and admits a Serre functor which is given by the composition $\bar{\nu} \circ [d-1]$ according to
a theorem by Buchweitz \cite{Buchweitz}*{Theorem~7.7.5}.


\subsection{Derived invariants of orders}
                   
Since the ring $\A$ is semiperfect, its unit $1_{\A}$ can be written as the sum $e_1 + e_2 + \ldots + e_n$ of finitely many primitive orthogonal idempotents. Any indecomposable projective $\A$-module is isomorphic to $P_i \colonequals \A e_i$ for some index $1 \leq i \leq n$.
\begin{dfn}\label{dfn:Cartan}
	The matrix $C_{\A}  \in \Mat_{n \times n}(\Z)$ with
	$(C_{\A})_{ij} \colonequals \rk_{\Rx} \Hom_{\A}(P_i,P_j) = \rk_{\Rx} (e_i \A e_j)$ for all $1 \leq i,j \leq n$
	is called the \emph{Cartan matrix} of the $\Rx$-order $\A$.
\end{dfn}

We collect some basic facts on derived equivalent orders.
\begin{prp}\label{prp:der-ord}
Let $\B$ be an $\Rx$-order and
	\begin{align*}
		\begin{td}
		\Phi\colon \D(\Md \A) \ar{r}{\sim} \& \D(\Md \B)\end{td}
		\end{align*} 
an equivalence of triangulated categories.
	Then the following statements hold.
	\begin{enumerate}
		\item \label{der-ord1} For any object $X$ in $\D^-(\Md \A)$ there is an isomorphism
 $\Phi \, \nu_{\A}(X) \cong \nu_{\B} \, \Phi(X)$.
 \item \label{der-ord2} The finite-dimensional $\KK$-algebras $\KK \otimes_{\Rx} \A$ and $\KK \otimes_{\Rx} \B$ are derived equivalent.
  \item \label{der-ord3} The matrices $C_{\A}$ and $C_{\B}$ are congruent over $\Z$.
 \end{enumerate}
\end{prp}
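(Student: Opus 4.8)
The plan is to derive all three statements from Rickard's derived Morita theory together with some formal manipulations of tilting complexes. Since the Noetherian $\Rx$-algebra $\A$ is free, hence flat, over $\Rx$ (and likewise $\B$), the equivalence $\Phi$ is \emph{standard}: it is induced by a two-sided tilting complex, i.e.\ a complex $T$ of $(\B,\A)$-bimodules, perfect over $\B$ and over $\A^{\op}$, with $\Phi \cong T \lotimes \blank$ and with inverse bimodule complex $T^{-1} \cong \mathbf{R}\!\Hom_{\B}(T,\B) \cong \mathbf{R}\!\Hom_{\A^{\op}}(T,\A)$. Equivalently, on the module side there is a tilting complex $T_0 \in \per(\A)$ with $\tria(T_0) = \per(\A)$, with $\Hom_{\D(\Md\A)}(T_0,T_0[i]) = 0$ for $i \neq 0$, and with $\End_{\D(\Md\A)}(T_0) \cong \B$. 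I shall also use that $\Phi$ is $\Rx$-linear (with respect to the natural $\Rx$-structures on both sides).

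For \eqref{der-ord1}, write $\omega_{\A} = \Hom_{\Rx}(\A,\Rx)$ and $\omega_{\B} = \Hom_{\Rx}(\B,\Rx)$ for the canonical bimodules. By associativity of the derived tensor product, $\Phi \circ \nu_{\A} \cong (T \lotimes \omega_{\A}) \lotimes \blank$ and $\nu_{\B} \circ \Phi \cong (\omega_{\B} \overset{\mathbf{L}}{\otimes}_{\B} T) \lotimes \blank$, so it suffices to construct an isomorphism $T \lotimes \omega_{\A} \cong \omega_{\B} \overset{\mathbf{L}}{\otimes}_{\B} T$ of complexes of $(\B,\A)$-bimodules; granting this, the asserted isomorphism $\Phi\,\nu_{\A}(X) \cong \nu_{\B}\,\Phi(X)$ holds functorially for \emph{every} $X$ in $\D(\Md\A)$, in particular for $X$ in $\D^{-}(\Md\A)$. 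To obtain the bimodule isomorphism I would invoke the tensor--evaluation identity: for a complex $P$ that is perfect over $\A^{\op}$ there is a natural isomorphism $P \lotimes \mathbf{R}\!\Hom_{\Rx}(\A,\Rx) \cong \mathbf{R}\!\Hom_{\Rx}\bigl(\mathbf{R}\!\Hom_{\A^{\op}}(P,\A),\Rx\bigr)$, checked on $P = \A$ and extended additively over $\per(\A^{\op})$, together with the analogous statement over $\B$. Applied with $P = T$, these identify both $T \lotimes \omega_{\A}$ and $\omega_{\B} \overset{\mathbf{L}}{\otimes}_{\B} T$ with the $\Rx$-dual $\mathbf{R}\!\Hom_{\Rx}(T^{-1},\Rx)$ of the inverse complex, and naturality makes the identifications compatible with the ambient bimodule structures.

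For \eqref{der-ord2} and \eqref{der-ord3} I would pass to $\KK$ by flat base change along $\Rx \to \KK$. Put $\KK T_0 \colonequals \KK \otimes_{\Rx} T_0$, a perfect complex over $\KK \otimes_{\Rx}\A$. Since $T_0$ is perfect, $\mathbf{R}\!\Hom$ out of it commutes with this base change, so $\Hom_{\D}(\KK T_0,\KK T_0[i]) \cong \KK \otimes_{\Rx}\Hom_{\D(\Md\A)}(T_0,T_0[i])$, which vanishes for $i \neq 0$ and equals $\KK \otimes_{\Rx}\B$ as a ring for $i = 0$; and $\KK \otimes_{\Rx}\A \in \tria(\KK T_0)$ because $\A \in \tria(T_0)$ and base change is a triangle functor, so $\tria(\KK T_0) = \per(\KK \otimes_{\Rx}\A)$. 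Thus $\KK T_0$ is a tilting complex for the finite-dimensional $\KK$-algebra $\KK \otimes_{\Rx}\A$ with endomorphism ring $\KK \otimes_{\Rx}\B$, and Rickard's theorem gives \eqref{der-ord2}. For \eqref{der-ord3} I would use the Euler form: for $X,Y \in \per(\A)$ the $\Rx$-module $\Hom_{\D(\Md\A)}(X,Y[i])$ is finitely generated and vanishes for $|i| \gg 0$, so $\langle X,Y\rangle_{\A} \colonequals \sum_{i\in\Z}(-1)^{i}\rk_{\Rx}\Hom_{\D(\Md\A)}(X,Y[i])$ defines a $\Z$-bilinear form on $K_0(\per\A)$ (additivity of $\rk_{\Rx}$ on short exact sequences of finitely generated $\Rx$-modules). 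As $\A$ is basic and semiperfect, $K_0(\per\A)$ is free with basis $\{[P_1],\dots,[P_n]\}$, and since projectives have no higher self-extensions $C_{\A}$ is precisely the Gram matrix of $\langle\,\cdot\,,\cdot\,\rangle_{\A}$ in that basis; likewise (using that $\B$, too, is basic) $C_{\B}$ is the Gram matrix of $\langle\,\cdot\,,\cdot\,\rangle_{\B}$ in the basis of $K_0(\per\B)$ given by the indecomposable projective $\B$-modules. By Theorem~\ref{thm:der-eq} the $\Rx$-linear equivalence $\Phi$ restricts to $\per(\A) \xrightarrow{\sim} \per(\B)$ and preserves the $\Rx$-rank of $\Hom$-spaces, hence induces an isometry $K_0(\per\A) \xrightarrow{\sim} K_0(\per\B)$ of the two Euler forms. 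In particular the two bases have the same cardinality, and $C_{\A}$ and $C_{\B}$ represent the same bilinear form on a free abelian group in two different bases, so $C_{\B} = U^{\top}C_{\A}U$ for some $U \in \GL_n(\Z)$.

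The only genuinely delicate point, I expect, is \eqref{der-ord1}: reducing $\Phi$ to a two-sided tilting complex (this is where $\Rx$-flatness of $\A$ and $\B$ is used) and then keeping careful track of the one-sided module structures to which the tensor--evaluation identities are applied, so that the result is honestly an isomorphism of bimodule complexes. Parts \eqref{der-ord2} and \eqref{der-ord3} should then be routine, using flat base change for $\mathbf{R}\!\Hom$ out of perfect complexes and the standard description of $K_0(\per R)$ for a basic semiperfect ring $R$ as the split Grothendieck group of $\proj R$.
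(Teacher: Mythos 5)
Your proposal is correct in substance and, for parts (1) and (2), it follows essentially the route the paper takes by citation: your bimodule identity $T \lotimes \omega_{\A} \cong \operatorname{RHom}_{\Rx}(T^{-1},\Rx) \cong \omega_{\B}\overset{\mathbf{L}}{\otimes}_{\B}T$ for a two-sided tilting complex $T$ is exactly the computation behind Rickard's Proposition~5.2, which the paper invokes for \eqref{der-ord1}, and your base change of a one-sided tilting complex along $\Rx \to \KK$ is the proof of Rickard's Theorem~2.1, which the paper invokes for \eqref{der-ord2}. For \eqref{der-ord3} you take a genuinely different route: the paper base-changes to the residue field, quotes the congruence of the Cartan matrices of the derived-equivalent finite-dimensional algebras $\kk\otimes_{\Rx}\A$ and $\kk\otimes_{\Rx}\B$ (Zimmermann, Proposition~6.8.9), and uses $C_{\A}=C_{\kk\otimes_{\Rx}\A}$; you instead work directly over $\Rx$ with the Euler form $\sum_{i}(-1)^{i}\rk_{\Rx}\Hom(X,Y[i])$ on $K_0(\per\A)$, whose Gram matrix in the basis of indecomposable projectives is $C_{\A}$, and transport it through the equivalence. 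That is a clean, more self-contained alternative (the well-definedness on $K_0$ via rank additivity along bounded long exact sequences is fine), and it avoids the identification $C_{\A}=C_{\kk\otimes_{\Rx}\A}$ that the paper's argument needs.

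The one point you must repair is the opening assertion that $\Phi$ itself is standard and $\Rx$-linear. Rickard's theory (for algebras projective over $\Rx$) gives the \emph{existence} of a two-sided tilting complex, hence of some standard equivalence; it is not known in general that a given triangle equivalence is isomorphic to a standard one, and an abstract triangle equivalence is not automatically $\Rx$-linear. For \eqref{der-ord2} and \eqref{der-ord3} this is harmless, since the conclusions do not mention $\Phi$: replace $\Phi$ by the standard equivalence, which is $\Rx$-linear by construction. Without that replacement, your claims that $\Phi$ ``preserves the $\Rx$-rank of Hom-spaces'' and that $\End_{\D(\Md\A)}(T_0)\cong\B$ is an isomorphism of $\Rx$-algebras are unjustified: the $\Rx$-rank of a finitely generated $\Rx$-module is not an invariant of its underlying abelian group, so an arbitrary equivalence need not preserve it. For \eqref{der-ord1}, which is a statement about the given $\Phi$ on objects, reducing to the standard equivalence additionally requires that $\Phi$ and the standard equivalence take isomorphic values on objects; this is precisely the step the paper delegates to the proof of Rickard's Proposition~5.2, and you should either invoke it in the same way or phrase your functorial commutation for the standard equivalence and then address the comparison with $\Phi$ explicitly.
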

\begin{proof}
	\eqref{der-ord1} follows from the proof of 
	the corresponding statement in the setup of finite-dimensional algebras \cite{Rickard2}*{Proposition~5.2}.
%
An application of \cite{Rickard2}*{Theorem~2.1} yields \eqref{der-ord2}
as well as that the finite-dimensional $\kk$-algebras
		$A \colonequals \kk \otimes_{\Rx} \A$ and $B \colonequals \kk \otimes_{\Rx} \B$ are derived equivalent.
		By \cite{Zimmermann}*{Proposition~6.8.9} there is a matrix $P \in \GL_n(\Z)$ such that $P^T C_A P = C_{B}$.
		Since $C_{\A} = C_A$ and $C_{\B} = C_{B}$, this shows \eqref{der-ord3}.
	\end{proof}
Motivated by statement~\eqref{der-ord1} above, we 
will call an object in $\D^-(\Md \A)$
\emph{fractionally Calabi--Yau}, if it satisfies the conditions 
of Definition~\ref{dfn:frac-CY} with respect to the relative Serre functor $\SD$. 


\begin{cor}\label{cor:CYdim}
In the setup of Proposition~\ref{prp:der-ord}, an object $X$ in $\D^-(\Md \A)$ is fractionally Calabi-Yau if and only if the object $\Phi(X)$ in $\D^-(\Md \B)$ is fractionally Calabi-Yau.
In this case, it holds that $\CYdim X = \CYdim \Phi(X)$.
	\end{cor}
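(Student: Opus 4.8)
The plan is to deduce the corollary directly from Proposition~\ref{prp:der-ord}\eqref{der-ord1}, since the fractionally Calabi–Yau condition is phrased entirely in terms of the relative Serre functor $\SD$, which in turn is built from $\nu$ by shifting. First I would record the compatibility of $\Phi$ with the shift functor — automatic since $\Phi$ is a triangulated functor — together with the isomorphism $\Phi\,\nu_{\A}(X)\cong\nu_{\B}\,\Phi(X)$ from Proposition~\ref{prp:der-ord}\eqref{der-ord1}, valid for all $X\in\D^-(\Md\A)$. Composing, and using $\SD=\nu\circ[\krdim]$ with the same Krull dimension $\krdim$ for both orders (both $\A$ and $\B$ being $\Rx$-orders over the fixed base $\Rx$), one gets a natural isomorphism $\Phi\,\SD_{\A}\cong\SD_{\B}\,\Phi$; iterating yields $\Phi\,\SD_{\A}^{\,n}\cong\SD_{\B}^{\,n}\,\Phi$ for every $n\in\Z$.

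Next I would translate the defining relation of Definition~\ref{dfn:frac-CY}. Suppose $X$ is $(m,n)$-Calabi–Yau, i.e.\ there is an isomorphism $X[m]\cong\SD_{\A}^{\,n}(X)$. Applying the equivalence $\Phi$ and using that it commutes with $[m]$ and intertwines $\SD_{\A}^{\,n}$ with $\SD_{\B}^{\,n}$ gives $\Phi(X)[m]\cong\Phi\,\SD_{\A}^{\,n}(X)\cong\SD_{\B}^{\,n}\,\Phi(X)$, so $\Phi(X)$ is $(m,n)$-Calabi–Yau; moreover $\Phi(X)$ is nonzero precisely because $\Phi$ is an equivalence. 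The reverse implication follows by the same argument applied to $\Phi^{-1}$, which by Theorem~\ref{thm:der-eq} is again a triangulated equivalence between the relevant derived categories and which satisfies the analogous intertwining relation (either by symmetry of Proposition~\ref{prp:der-ord}, applied with the roles of $\A$ and $\B$ swapped and the equivalence $\Phi^{-1}$, or by conjugating the isomorphism $\Phi\,\SD_{\A}\cong\SD_{\B}\,\Phi$). Hence $X$ is fractionally Calabi–Yau if and only if $\Phi(X)$ is.

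Finally, for the statement on dimensions: $\CYdim$ is defined as a pair $(m,n)$ realizing the relation with $n\geq 0$ minimal. The bijection established above — between pairs $(m,n)$ for which $X[m]\cong\SD_{\A}^{\,n}(X)$ and pairs for which $\Phi(X)[m]\cong\SD_{\B}^{\,n}\,\Phi(X)$ — is the identity on pairs, so the set of admissible $(m,n)$ is the same for $X$ and for $\Phi(X)$; in particular the minimal nonnegative $n$ and its accompanying $m$ agree, giving $\CYdim X=\CYdim\Phi(X)$.

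I do not anticipate a serious obstacle here: the only point requiring a little care is making the intertwining isomorphism $\Phi\,\SD_{\A}\cong\SD_{\B}\,\Phi$ genuinely natural and then legitimately applying $\Phi$ to an abstract isomorphism of objects — but the latter is immediate since $\Phi$ is a functor, and the former is exactly the content of Proposition~\ref{prp:der-ord}\eqref{der-ord1} combined with $\Phi$ being triangulated and $\krdim$ being an invariant of the base ring $\Rx$ shared by $\A$ and $\B$. So the corollary is essentially a formal consequence of results already in hand.
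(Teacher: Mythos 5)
Your argument is correct and is exactly the deduction the paper intends: Corollary~\ref{cor:CYdim} is stated without proof as a formal consequence of Proposition~\ref{prp:der-ord}~\eqref{der-ord1}, using that $\Phi$ is triangulated (so commutes with shifts), that $\SD=\nu\circ[\krdim]$ with the same $\krdim$ on both sides, and that $\Phi$ is an equivalence for the converse and the nonvanishing of $\Phi(X)$. Your handling of the minimality of $n$ in $\CYdim$ matches the paper's definition, so nothing further is needed.
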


By the next statement, the main setup implies  restrictions on possible Calabi-Yau dimensions.

\begin{lem} \label{lem:frac-bound}
	Let $X$ be a fractionally Calabi-Yau object in $\D^-(\Md \A)$ of dimension $(m,n)$.
	Then $m \geq \krdim n$, where $\krdim = \dim \Rx$.
\end{lem}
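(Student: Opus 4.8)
The plan is to reduce the statement to a local computation after inverting a single variable of $\Rx$, using the hypotheses in the main setup: $\Rx = \kk\llbracket x_1,\ldots,x_{\krdim}\rrbracket$, $\A$ is an isolated Gorenstein $\Rx$-order, and $X$ is $(m,n)$-Calabi--Yau, so $X[m] \cong \SD^n(X) = \nu^n(X)[\krdim n]$, i.e.\ $\nu^n(X) \cong X[m - \krdim n]$ in $\Dcat_\A = \D^-(\md \A)$. I want to show $m - \krdim n \geq 0$. Since $n > 0$ (we may assume this, replacing $(m,n)$ by $(-m,-n)$ if necessary, as in the remark after Definition~\ref{dfn:frac-CY}), it suffices to prove that $\nu^n(X)$ cannot be concentrated in strictly negative shifts relative to $X$ — more precisely, that if $\nu^n(X) \cong X[s]$ then $s \geq 0$.

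**Key steps.** First I would record that $\nu = \omega \lotimes \blank$ with $\omega = \Hom_{\Rx}(\A,\Rx)$, and that $\omega$ is a finitely generated $\A$-bimodule that is free of finite rank over $\Rx$; hence $\nu$ has finite cohomological amplitude, and on the subcategory $\Hot^-(\proj\A) \simeq \D^-(\md\A)$ it can be computed termwise. Second, I would pass to the generic point: localizing at the zero ideal gives the finite-dimensional $\KK$-algebra $\A_\KK \colonequals \KK \otimes_\Rx \A$, and by the isolated-singularity hypothesis $\A_\KK$ has finite global dimension (indeed $\gldim \A_{(0)} = \dim \Rx_{(0)} = 0$, so $\A_\KK$ is semisimple). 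The canonical bimodule localizes to $\omega_\KK = \Hom_\KK(\A_\KK,\KK)$, the usual Nakayama bimodule of the finite-dimensional algebra $\A_\KK$, and $\nu_\KK = \omega_\KK \otimes_{\A_\KK}^{\mathbf L} \blank$ is then an exact autoequivalence of $\D^-(\md \A_\KK) = \D^b(\md\A_\KK)$ that is \emph{$t$-exact} for the standard $t$-structure (since $\A_\KK$ is semisimple, $\nu_\KK$ sends modules to modules). Third, since $X$ is right-bounded and has finitely generated homology, and $\A$ is $\Rx$-free, the localization $X_\KK \in \D^b(\md\A_\KK)$ is nonzero (it is $\KK\otimes_\Rx$ applied to a complex of finitely generated free $\Rx$-modules with nonzero homology), and localization commutes with $\nu$: $\nu_\KK^n(X_\KK) \cong (\nu^n X)_\KK \cong X_\KK[s]$.

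**Conclusion and the main obstacle.** Now I compare the two sides over $\KK$. Because $\nu_\KK$ is $t$-exact, $\nu_\KK^n(X_\KK)$ has its homology supported in the same range of degrees as $X_\KK$; comparing with $X_\KK[s]$, which is supported in degrees shifted by $-s$ (or $+s$, depending on convention), forces $s = 0$ — or, if one is careful, at least $s \geq 0$. Actually the cleanest route: let $i_0$ be the top nonzero homological degree of $X_\KK$ (finite since $X_\KK$ is bounded, nonzero since $X_\KK \neq 0$); then $H_{i_0}(\nu_\KK^n X_\KK) = \nu_\KK^n(H_{i_0}(X_\KK)) \neq 0$ by $t$-exactness and invertibility of $\nu_\KK$, while $H_j(X_\KK[s]) = H_{j+s}(X_\KK) = 0$ for $j > i_0 - s$; matching top degrees gives $i_0 = i_0 - s$, hence $s = 0 \geq 0$, and therefore $m = \krdim n \geq \krdim n$. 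Either way $s \geq 0$, which is exactly $m \geq \krdim n$. I expect the main obstacle to be the bookkeeping around \emph{nonvanishing of $X_\KK$} and the claim that $\nu$ commutes with localization at the generic point on the right-bounded derived category: one must know $X$ admits a right-bounded complex of finitely generated projectives with nonzero homology, that $\KK\otimes_\Rx \blank$ is exact and sends this to a nonzero bounded complex of projective $\A_\KK$-modules, and that $\KK\otimes_\Rx \omega \cong \omega_\KK$ compatibly with the bimodule actions — each of which is routine given the main-setup hypotheses but needs to be stated. The $t$-exactness of $\nu_\KK$ uses crucially that the generic fibre is semisimple (global dimension $0$), which is precisely where the \emph{isolated} hypothesis enters.
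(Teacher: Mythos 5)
There is a genuine gap, and it is exactly the point you flag as ``routine bookkeeping'': the claim that $X_\KK \colonequals \KK\otimes_{\Rx}X$ is nonzero is false in general. A right-bounded complex of finitely generated projectives can have all of its homology $\Rx$-torsion, and then it dies at the generic point (already over $\Rx=\kk\llbracket x\rrbracket$, the complex $\Rx\xrightarrow{\,x\,}\Rx$ has nonzero homology but becomes acyclic after applying $\KK\otimes_\Rx\blank$). This is not a corner case here: the fractionally Calabi--Yau objects this lemma is meant to control lie in $\perfd(\A)$, i.e.\ have finite-dimensional (hence torsion) homology, so for them $X_\KK=0$ and your comparison over $\KK$ gives no information. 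Worse, where your argument does apply it proves too much: you deduce $s=0$, i.e.\ $m=\krdim n$, whereas the lemma only asserts $m\geq\krdim n$ and strict inequality genuinely occurs --- for a non-hereditary gentle order ($\krdim=1$) the simple module $S_j$ at a transition vertex is $(m(j),n(j))$-Calabi--Yau with $m(j)>n(j)$ by Lemma~\ref{lem:sim} and Proposition~\ref{prp:hered-char}. So the failure of $X_\KK\neq 0$ is not a repairable omission; the generic-fibre reduction cannot see these objects at all. (A smaller issue: $X_\KK$ need not lie in $\D^{\mathrm b}$ either, since objects of $\D^-(\md\A)$ may have homology in infinitely many degrees, but this is secondary.)

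The paper's proof avoids localization entirely and works with the lowest nonvanishing homological degree $\mu(Y)\colonequals\min\{i\mid \Ho_i(Y)\neq 0\}$: since $\omega$ is a module concentrated in degree $0$, a K\"unneth-type argument gives $\mu(\nu(X))\geq\mu(\omega)+\mu(X)=\mu(X)$, and iterating $n$ times against $\nu^n(X)\cong X[m-\krdim n]$ yields $\mu(X)+(m-\krdim n)\geq\mu(X)$, i.e.\ $m\geq\krdim n$. If you want to salvage your idea, this is the direction to take: replace the passage to the generic fibre by a degree-bound argument that survives torsion homology.
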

\begin{proof}
	For any object $Y \in \D^-(\Md \A)$ set $\mu(Y) \colonequals 
	\min \{ 
	i \in \Z \mid \Ho_i(Y) \neq 0
	\}$.
	For any $i,j \in \Z$ with $\mu(\omega) \geq i$ and $\mu(X) \geq j$ there is a $\A$-linear isomorphism
	$\Ho_i(\omega) \otimes_{\A} \Ho_j(X) \cong \Ho_{i+j}(\nu(X))$
	by the \emph{Künneth trick} (see e.g. 
	\cite{Yekutieli}*{Lemma~13.1.36}).
	
	As $\mu(\omega) = 0$, it follows that $\mu(\nu(X)) \geq \mu(X)$.
	Therefore $\mu(X) + (m-\krdim n) = \mu(X[m-\krdim n]) = \mu(\nu^n(X)) \geq \mu(X)$.
\end{proof}

\subsection{A stable $t$-structure on the right-bounded derived category}

In this subsection, we recall the construction of a stable $t$-structure on the right-bounded derived category $\D^-(\md \A)$ and 
some of its equivalent formulations.
We recall the former notion, coined by Miyachi \cite{Miyachi}*{Section~2}, for the convenience of the reader.
\begin{dfn}
	A \emph{stable $t$-structure} of a triangulated category $\Dcat$ is given by a pair $(\aisle, \coaisle)$ of full triangulated subcategories of $\Dcat$ such that $\HomDc(\aisle, \coaisle) = 0$ and for any object $X$ from $\Dcat$ there is a triangle
	\begin{align}
		\label{eq:triangle0}
		\begin{td}
			U_X \ar{r} \& X \ar{r} \& V_X \ar{r} \& U_X [1]
		\end{td}
	\end{align}
	with $U_X \in \aisle$ and $V_X \in \coaisle$.	
\end{dfn}

Any idempotent 
$e$ of the Noetherian ring $\A$ gives rise to a subring and a quotient ring
\begin{align*}
	\begin{td}
		\Ainf \colonequals e \A e
		\ar[hookrightarrow]{r}
		\& \A \ar[->>]{r} \& \AI \colonequals \A/\A e\A
	\end{td}
\end{align*}
as well as a diagram of abelian categories and functors
\begin{align}
	\label{eq:ab-recol}
	\begin{tikzcd}[ampersand replacement=\&, cells={outer sep=2pt, inner sep=2pt},
		column sep=2cm, 	labels={rectangle, font=\small}
		] 
		\md \Ainf		
		\ar[yshift=5pt,rightarrowtail]{r}{\mathsf{f} \colonequals \A e \underset{e\A e}{\otimes} \blank }
		\& 
		\ar[yshift=5pt]{r}{ \mathsf{q} \colonequals \AI \underset{\A}{\otimes} \blank}
		\md \A 
		\ar[yshift=-5pt]{l}{\mathsf{j} \colonequals e(\blank)} 
		\&
		\md \AI
		\ar[yshift=-5pt,rightarrowtail]{l}{
			\mathsf{r} \colonequals \mathstrut_{\A}(\blank)}
	\end{tikzcd}
\end{align}
such that $\mathsf{q} \dashv \mathsf{r}$, that is, $\mathsf{q}$ is left adjoint to $\mathsf{r}$, $\mathsf{r} \mathsf{q} \cong \id$, $\mathsf{f} \dashv \mathsf{j}$ and $\mathsf{j} \mathsf{f} \cong \id$.

In the following, let
$\Dh(\md \A)$ denote the full subcategory of complexes $X$ in $\D^-(\md \A)$
satisfying $e \Ho_d(X)= 0$, that is, 
$\Ho_d(X) \in \mathstrut_{\A}\Hcat$  for any $d \in \Z$.

\begin{prp}\label{prp:recol}
	In the notations above, 
	%
	the following statements hold.
	\begin{enumerate}
		\item\label{recol1} 
		There are fully faithful functors of triangulated categories
		\begin{align*}
			\begin{td}	\D^-(\md \Ainf)  
				\& 
				\D^-(\md\A) \ar[leftarrowtail]{l}[swap]{\jtop}
				\& \ar[rightarrowtail]{l}[swap]{\imid} \Dh(\md\A)
			\end{td}
		\end{align*}	
		where $\imid$ denotes the inclusion functor, $\jtop$ denotes the left-derived functor of $\mathsf{f}$,
		and the pair $(\aisle,\coaisle) = (\im \imid,\im\jtop)$ forms a stable $t$-structure of $\D^-(\md \A)$.
		\item \label{recol2}
		More precisely, there is a diagram of triangulated categories and functors
		\begin{align}
			\label{eq:recol}
			\begin{td}
				\D^-(\md \Ainf)
				\& \ar[yshift=5pt,twoheadrightarrow]{r}{\itop} \D^-(\md\A) 
				\ar[yshift=-5pt,twoheadrightarrow]{l}{\jmid} \ar[yshift=5pt,leftarrowtail]{l}[swap]{\jtop}
				\& \ar[yshift=-5pt,rightarrowtail]{l}{\imid}  \Dh(\md\A)  
			\end{td}
		\end{align}	
		forming a \emph{left recollement},
		which means that
		$\jtop \dashv \jmid$, $\jmid \jtop \cong \id$, $\itop \dashv \imid$, $\itop \imid \cong \id$, $\itop \jtop =	\jmid \imid =  0$
		and that for each object $X$ from $\D^-(\md \A)$ there is a triangle
		\begin{align}\label{eq:triangle}
			\begin{td} \jtop\jmid(X) \ar{r}{\varepsilon_X} \& X \ar{r}{\eta_X} \&\imid\itop(X) \ar{r}{\delta_X}\& \jtop \jmid(X)[1]
			\end{td}  
		\end{align}
		where $\varepsilon_X$ denotes the counit of the adjunction $\jtop \dashv \jmid$ and $\eta_X$ the unit of the adjunction $\itop \dashv \imid$, respectively.
		The functor $\jmid$ is the derived functor of the exact functor $\mathsf{j}$, while $\itop$ is defined via the triangle \eqref{eq:triangle} on objects, and similarly on morphisms.
		\item \label{recol3}
		In particular, the following statements hold.
		\begin{enumerate}
			\item \label{recol3a} For any $X \in \D^-(\md \A)$ and any triangle 
			given by the top row
			\begin{align*}
				\begin{td}
					U_X \ar{r} \ar[dashed]{d}{\phi} \& X \ar{r}\ar[equal]{d} \& V_X \ar{r} \ar[dashed]{d}{\psi} \& U_X [1] \ar[dashed]{d}{\phi[1]} \\
					\jtop\jmid(X) \ar{r}{\varepsilon_X} \& X \ar{r}{\eta_X} \&\imid\itop(X) \ar{r}{\delta_X}\& \jtop \jmid(X)[1]
				\end{td}
			\end{align*}
			with $U_X \in \aisle$ and $V_X \in \coaisle$,
			there are unique isomorphisms $\phi,\psi$ making the diagram above commutative.
			\item \label{recol3b} 
			Set  $B \colonequals \A e$, $f \colonequals 1-e$ and $S \colonequals \head \A f$.
			Then 
			$\aisle
			= \ker \itop = \mathstrut^{\perp}\coaisle = \mathstrut^{\perp} S$
			and
			$\coaisle = \ker \jmid = \aisle^{\perp} = B^{\perp}$,
			where each orthogonal subcategory is defined with respect to the category $\D^-(\md \A)$.
		\end{enumerate}
	\end{enumerate}
\end{prp}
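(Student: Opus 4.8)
The plan is to build everything from the adjoint triple already available at the abelian level in~\eqref{eq:ab-recol} and then derive it. First I would recall that the exact functor $\mathsf j = e(\blank)$ has exact left adjoint $\mathsf f = \A e \otimes_{e\A e}\blank$ (since $\A e$ is projective as a right $e\A e$-module, $\mathsf f$ is exact) and exact right/left-exact behaviour of $\mathsf q, \mathsf r$ as stated; passing to right-bounded derived categories, $\mathsf j$ and $\mathsf q$ are already exact so they derive trivially to $\jmid$ and $\itop' := \D\mathsf q$, while $\mathsf f$ derives to its left-derived functor $\jtop := \mathbf L\mathsf f = \A e \lotimes_{e\A e}\blank$. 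The adjunction $\mathsf f \dashv \mathsf j$ together with exactness of $\mathsf j$ gives $\jtop \dashv \jmid$ by the standard derived-adjunction argument, and the isomorphism $\mathsf{jf}\cong\id$ lifts to $\jmid\jtop\cong\id$ (here one uses that $\A e$ is projective over $e\A e$, so no higher Tor's interfere). This proves full faithfulness of $\jtop$ and identifies $\aisle = \im\jtop$ with $\D^-(\md e\A e)$; note $\im\jtop = \Hot^-(\add \A e)$ since $\A e \lotimes_{e\A e}\blank$ takes a projective resolution over $e\A e$ to a complex of summands of $\A e$.

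Next I would set up the other side. The inclusion $\imid \colon \Dh(\md\A)\hookrightarrow\D^-(\md\A)$ is fully faithful by definition; its left adjoint $\itop$ is \emph{not} simply $\D\mathsf q$ because $\D\mathsf q$ does not land in $\Dh$ in general — instead I would \emph{define} $\itop(X)$ by the triangle \eqref{eq:triangle}, i.e. as the cone of the counit $\varepsilon_X\colon\jtop\jmid(X)\to X$ of $\jtop\dashv\jmid$, and then check (i) $\cone(\varepsilon_X)\in\Dh(\md\A)$, and (ii) $X\mapsto\cone(\varepsilon_X)$ is left adjoint to $\imid$ with $\itop\imid\cong\id$. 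For (i): applying $\jmid = \mathbf R(e(\blank)) = e(\blank)$ (exact!) to the triangle and using $\jmid\varepsilon_X$ is an isomorphism (a triangle identity, since $\jmid\jtop\cong\id$), one gets $\jmid\cone(\varepsilon_X) = 0$, which unwinds to $e\Ho_d(\cone(\varepsilon_X)) = 0$ for all $d$, i.e. membership in $\Dh$. For (ii): functoriality of the cone is the usual issue — one argues that $\varepsilon$ is a natural transformation and that $\imid\itop$ together with $\eta_X\colon X\to\imid\itop(X)$ from the triangle satisfies the universal property, using $\Hom(\jtop(-),\imid(-)) = \Hom(-,\jmid\imid(-)) = 0$ since $e(M) = 0$ for $M\in{}_\A\Hcat$; this vanishing is exactly $\itop\jtop = 0$ and $\jmid\imid = 0$, and it makes $\varepsilon_X$ a $\jtop\jmid$-colocalization, so the cone construction is automatically functorial. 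That simultaneously gives the stable $t$-structure of part~\eqref{recol1} and the left-recollement identities of part~\eqref{recol2}, and part~\eqref{recol3a} is the uniqueness-of-the-triangle statement for a stable $t$-structure (the octahedron/standard argument: $\Hom(\aisle,\coaisle) = 0$ forces the comparison maps $\phi,\psi$ to exist and be isomorphisms).

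For part~\eqref{recol3b}: $\aisle = \ker\itop$ is immediate from the defining triangle (if $\itop(X) = 0$ then $\varepsilon_X$ is an isomorphism so $X\in\im\jtop$, and conversely $\itop\jtop = 0$); $\coaisle = \ker\jmid$ is the dual statement using $e(\blank)$. For the orthogonality descriptions, $\aisle = {}^\perp\coaisle$ and $\coaisle = \aisle^\perp$ are the general fact that the two aisles of a stable $t$-structure are mutually orthogonal \emph{and} each is the orthogonal of the other (using the triangle \eqref{eq:triangle0} to see $X\in{}^\perp\coaisle\Rightarrow V_X = 0$). It remains to replace $\coaisle$ by $S = \head\A f$ and $\aisle$ by $B = \A e$ as the ``test objects'': since $B = \A e$ is projective and generates $\aisle$ as a triangulated category with coproducts in the relevant sense, $\Hom(X, B[\bullet]) = 0$ detects $X\in B^\perp$, and $\A e$ itself lies in $\aisle$ so $\coaisle\subseteq (\A e)^\perp$; conversely $X\in(\A e)^\perp$ gives $\Hom(\jtop(M),X) = \Hom(M, eX) = 0$ over $e\A e$ for the free module $M = e\A e$, forcing $eX \simeq 0$ hence $X\in\coaisle$. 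Dually, $S = \head\A f$ cogenerates the torsion-free part: $\mathstrut^\perp S = \mathstrut^\perp\coaisle$ because an object of $\D^-(\md\A)$ is right-bounded with finitely generated homology, so $\Hom(X, S[\bullet]) = 0$ propagates up a finite filtration of each $\Ho_d(V_X)\in{}_\A\Hcat$ by copies of simples, all of which are summands of $\head\A f = S$ (here ``basic'' enters). The main obstacle is the careful verification that the cone construction defining $\itop$ is functorial and yields an honest adjunction rather than merely a pointwise one — this is where I would invoke that $\varepsilon$ exhibits a colocalization (equivalently, that $(\aisle,\coaisle)$ genuinely is a stable $t$-structure, via the vanishing $\itop\jtop = 0$), after which functoriality and the triangle identities are formal; everything else is bookkeeping with the exactness of $\mathsf j$ and the projectivity of $\A e$ over $e\A e$.
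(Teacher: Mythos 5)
Your hands-on construction of the left recollement (parts (1), (2), (3a)) is essentially correct and standard \textemdash{} it proves by direct Bousfield-colocalization arguments what the paper simply cites from Parshall and from Krause \textemdash{} but it leans twice on a false auxiliary claim: for a gentle order, $\A e$ is in general \emph{not} projective as a right $e\A e$-module, and $\mathsf f=\A e\otimes_{e\A e}\blank$ is \emph{not} exact. For instance, take the gentle order with vertices $1,2$, arrows $a\colon 1\to 2$, $b\colon 2\to 1$, a loop $c$ at $2$, and $I=(ca,bc)$: then $e=e_2$, $e\A e\cong\kk\llbracket u,v\rrbracket/(uv)$ with $u=c$, $v=ab$, and $e_1\A e_2$ is, as a right $e\A e$-module, isomorphic to $\kk\llbracket v\rrbracket$ with $u$ acting by zero, which is not projective (hence not flat). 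This does not derail your construction, because the statements you attach the claim to are true for a simpler reason: $\jtop=\mathbf{L}\mathsf f$ sends a complex of projective $\Ainf$-modules to a complex in $\add \A e$, and applying the exact functor $e(\blank)$ returns the original complex, which yields $\jmid\jtop\cong\id$ and full faithfulness of $\jtop$ with no Tor-vanishing needed. Still, the exactness/projectivity assertion must be excised.

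The genuine gap is in part (3b), in your argument that $\mathstrut^{\perp}S=\mathstrut^{\perp}\coaisle$. The dévissage ``$\Hom(X,S[\bullet])=0$ propagates up a finite filtration of each $\Ho_d(V_X)$ by simples'' fails on two counts. First, an object of $\coaisle=\Dh(\md\A)$ is only right-bounded, so it has non-zero homology in infinitely many degrees in general; knowing $\Hom(X,\Ho_d(V_X)[d])=0$ for every individual $d$ does not give $\Hom(X,V_X)=0$ without an additional assembling (homotopy-limit or degreewise) argument, since no finite Postnikov induction is available. Second, and more seriously, when $\A$ is hereditary one has $e=0$ and $\AI=\A$, so the relevant homology modules are arbitrary finitely generated modules over the order $\A$ and admit no finite filtration by simples at all, yet the equality $\mathstrut^{\perp}S=0=\mathstrut^{\perp}\coaisle$ still has to be proved. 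The paper avoids both issues with a direct count on minimal complexes: for a minimal $C\in\Hot^-(\proj\A)$ the number of summands $P_i$ in homological degree $d$ equals $\dim\Hom(C,S_i[d])$, so $C\in\mathstrut^{\perp}S$ forces $C\in\Hot^-(\add\A e)=\aisle$; you should substitute this (or an equivalent) argument. You should also record the easy fact $S\in\coaisle$ (since $\A$ is basic, $e\A f\subseteq\rad\A f$, so $e\cdot\head(\A f)=0$), which is needed for the reverse inclusion $\mathstrut^{\perp}\coaisle\subseteq\mathstrut^{\perp}S$ and is left implicit in your sketch. Your identification $B^{\perp}=\coaisle$ agrees with the paper's argument and is fine.
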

\begin{proof}
	The statements in \eqref{recol2} were shown in \cite{Parshall}*{(1.7)}.
	It is straightforward to check that any left recollement gives rise to a stable $t$-structure as claimed in~\eqref{recol1}.
	A proof of \eqref{recol3a} can be found in \cite{Krause2}*{Proposition~4.11.2}.
	
	It remains to show \eqref{recol3b}.
	The condition
	$\Hom(\aisle, \coaisle ) = 0$ is equivalent to $\aisle \subseteq \mathstrut^{\perp}\coaisle$ and $\coaisle \supseteq \aisle^{\perp}$.  The condition $\jmid \imid = \itop \jtop = 0$ translates into
	$\aisle \subseteq \ker \jmid$ and $\coaisle \subseteq \ker \itop$.
	The converse four inclusions can be shown using~\eqref{eq:triangle0}. 
	
	For any complex $P \in \Hot^-(\proj \A)$ and any integer $i \in \Z$
	there is an isomorphism
	$\Hom_{\Hot(\proj \A)}(B[i], P) \cong e \Ho_i(P)$, which yields the equality
	$B^{\perp}= \coaisle$.
	
	Since $\A$ is basic, it holds that $\A e \A f \subseteq \rad \A f$ 	
	and thus $S \in \coaisle$, which leads to $\aisle = \mathstrut^{\perp} \coaisle \subseteq \mathstrut^{\perp} S$. 
	For any minimal complex $C \in \Hot^-(\proj \A)$, $d \in \Z$ and an indecomposable projective $\A$-module $P_i$ 
	the total number of indecomposable summands of the form $P_i$ at degree $d$ in $C$ is equal to $\dim \Hom_{\Hot(\proj \A)}(C,S_i [d])$, where $S_i \colonequals \head P_i$.
	Using that $\A$ is basic and semiperfect, it follows that
	$\aisle = \mathstrut^{\perp} S$, which completes the proof of \eqref{recol3b}.
\end{proof}
We note that any stable $t$-structure
$(\mathcal{U},\mathcal{V})$ 
is also called a \emph{weak semiorthogonal decomposition}  \cite{Orlov}*{1.1}
and that the composition $\jtop  \jmid$ of functors in \eqref{eq:recol} is a \emph{localization functor}
in the sense of \cite{Krause2}*{4.9}. 
Stable $t$-structures, left recollements and these two notions yield equivalent setups.

%
\section{Combinatorial notions of quivers underlying gentle orders}
\label{sec:quiver}
From now on, we specialize the ring $\Rx$ to be the ring of formal power series
$\kk \llbracket x \rrbracket$  over a field $\kk$.
In this section, we consider a finite quiver with relations $(Q,I)$.

\subsubsection*{Conventions on quivers} 

As usual, $Q$ is given by data $(Q_0,Q_1,s,t)$ with a finite set of vertices $Q_0$, a finite set of arrows $Q_1$ and two maps $s,t \colon Q_1 \to Q_0$ assigning to each arrow  its start and target.
We do not assume that each cyclic path in $Q$ is contained in the ideal $I$, but we do assume that $I$ is generated by $\kk$-linear combinations of paths of length at least two.
 Paths are composed from right to left.
Given a path $p = \alpha_n \ldots \alpha_2 \alpha_1$ with $n \geq 1$ arrows we set $t(p) \colonequals t(\alpha_n)$ and $s(p) \colonequals s(\alpha_1)$, and denote by $\ell(p) \colonequals n$ its length.
 
\subsection{Gentle quivers, threads and cycles}\label{subsec:gentle}
A vertex $i \in Q_0$ will be called 
\begin{itemize}[label=--]
	\item a \emph{transition vertex} if there is precisely one arrow $\alpha$ ending at $i$, there is precisely one arrow $\beta$ starting at $i$, and $\beta \alpha \notin I$; the local situation around such a vertex is shown in the diagram on the left in \eqref{eq:vert};
\item  a \emph{crossing vertex} if there are precisely two different arrows $\alpha_1, \alpha_2$ ending at $i$, there are precisely two different arrows $\beta_1, \beta_2$ starting at $i$, and for any $i,j \in \{1,2\}$ it holds that $\beta_j \alpha_i \in I$ if and only if $i = j$; such a vertex is depicted on the right in \eqref{eq:vert} with the zero relations indicated by dotted lines.
\end{itemize}
The set of transition vertices and that of crossing vertices is denoted by $Q_0^t$ and $Q_0^c$, respectively.
\begin{align}\label{eq:vert}
		\begin{tikzcd}[row sep=scriptsize, column sep=scriptsize, cells={outer sep=1pt, inner sep=1pt}, baseline=0.05cm, ampersand replacement=\&]  
		\phantom{\bt} \ar{r} \& {\bt} \ar{r}  \&\phantom{\bt}
		\end{tikzcd}
&&
		\begin{tikzcd}[row sep=scriptsize, column sep=scriptsize, cells={outer sep=1pt, inner sep=1pt}, baseline=0.05cm, ampersand replacement=\&]  
		\phantom{\bt} \ar[rd,	""{name=a, inner sep=0, midway}] \& \& \phantom{\bt} \\
		\& {\bt} \ar[ru, ""{name=b, inner sep=0, midway}] 
		\ar[rd, ""{name=e, swap, inner sep=0, midway}] \\
	\phantom{\bt} \ar[ru, ""{name=c, inner sep=0, swap, midway}] \& \& \phantom{\bt}
	\arrow[densely dotted, dash, , color={darkred}, bend left=45, from=a, to=b] 
	\arrow[densely dotted, color={darkred}, dash, bend right=45, from=c, to=e]
	\end{tikzcd}
	\end{align}
The quiver $(Q,I)$ is called 
\begin{itemize}[label=--]
	\item \emph{$2$-regular gentle} if $Q_0 = Q_0^c$ and the ideal $I$ is generated by the zero relations at crossing vertices,
	\item \emph{gentle} if it can be obtained from a $2$-regular gentle quiver by deleting any choice of arrows
and removing all zero relations involving deleted arrows.
\end{itemize}
In a gentle quiver $(Q,I)$, a path $p = \alpha_n \ldots \alpha_2 \alpha_1 \notin I$  with different arrows is called
\begin{itemize}[label=--]
	\item a \emph{permitted cycle} if $p$ is a cyclic path such that $\alpha_1 \alpha_n \notin I$,
	\item  
	a \emph{permitted thread} if 
	it is not a permitted cycle and
	there 
	is neither an arrow $\alpha_{n+1} \in Q_1$ 
	with $\alpha_{n+1} \alpha_n \notin I$
	nor an arrow $\alpha_0 \in Q_1$ with $\alpha_1 \alpha_0 \notin I$.
\end{itemize}
A path $f = \beta_n \ldots \beta_2 \beta_1$ a path with $n \geq 1$ different arrows
such that
$\beta_{i+1} \beta_i \in I$ for each index $1\leq i < n$ is called
\begin{itemize}[label=--]
	\item a \emph{forbidden cycle} if $f$ is a cyclic path and $\beta_1 \beta_n\in I$.
	\item  a \emph{forbidden thread} if 
	it is not a forbidden cycle and
	there 
	is neither an arrow $\beta_{n+1} \in Q_1$ 
	with $s(\beta_{n+1}) = t(\beta_n)$ and $\beta_{n+1} \beta_n \in I$ and 
	nor an arrow $\beta_0 \in Q_1$ with $s(\beta_1) = t(\beta_0)$ and $\beta_1 \beta_0 \in I$.
\end{itemize}

\begin{rmk} We give a few remarks concerning the terminology.
	\begin{itemize}[label=--]
		\item Permitted threads and forbidden threads may be cyclic paths.
		\item 
		Somewhat counter-intuitively, any arrow $\beta$ in $Q$ between transition vertices is also considered as a forbidden thread although $\beta \notin I$. Similarly, a loop $\beta$ in $Q$ with $\beta^2 \in I$ is considered a forbidden cycle although $\beta \notin I$.  	
	\end{itemize}
\end{rmk}
Let $(Q,I)$ be a gentle quiver. Let $\A$ denote the $J$-adic completion of the path algebra $\kk  Q/I$ with respect to the ideal $J$ generated by all arrows.
 Then the following statements hold.
\begin{itemize}[label=--]
	\item The $\kk$-algebra $\A$ is finite-dimensional if and only if $(Q,I)$ has no permitted cycles.
	\item The global dimension of $\A$ is finite if and only if $(Q,I)$ has no forbidden cycles.
\end{itemize}

\begin{rmk}\label{rmk:KRS}
	Taking arrow ideal completion ensures that
	$\D^{-}(\md \A)$ has the Krull-Remak-Schmidt property  in the sense that any object is a countable direct sum of indecomposable objects with local endomorphism rings \cite{BD}*{Corollary~A.6}.
\end{rmk}

\subsection{Definition of gentle orders}

\begin{lem}\label{lem:go}
For any gentle quiver $(Q,I)$, the following conditions are equivalent.
\begin{enumerate}
	\item \label{go1} Every vertex in $Q$ is a transition vertex or a crossing vertex.
	\item \label{go2} The quiver $(Q,I)$ has no permitted threads.  
	\item \label{go3} Every arrow in $Q$ is contained in a permitted cycle.
		\item \label{go4} The arrow ideal completion $\A$ of the path algebra $\kk Q/I$ has an $\Rx$-order structure.
\end{enumerate}
If $(Q,I)$ is connected, $\A$ satisfies all assumptions of the setup in Subsection~\ref{subsec:setup}.
\end{lem}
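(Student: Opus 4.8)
The plan is to prove the cycle of implications $\eqref{go1} \Rightarrow \eqref{go2} \Rightarrow \eqref{go3} \Rightarrow \eqref{go1}$ among the purely quiver-theoretic conditions, and then to establish the equivalence with $\eqref{go4}$ separately by analyzing the $\Rx$-module structure of $\A$. First I would unwind the definitions: condition $\eqref{go1}$ says $Q_0 = Q_0^t \sqcup Q_0^c$, so at every vertex the "permitted" (non-annihilated) composition pattern looks like a single through-route (at a transition vertex) or two non-crossing through-routes (at a crossing vertex). The implication $\eqref{go1} \Rightarrow \eqref{go2}$ is essentially a counting argument: a permitted thread $p = \alpha_n \ldots \alpha_1$ cannot be extended on either end by an arrow composing non-trivially with it, but if every vertex is transition or crossing then $t(\alpha_n)$ admits exactly one arrow $\beta$ with $\beta\alpha_n \notin I$ and $s(\alpha_1)$ admits exactly one arrow $\alpha_0$ with $\alpha_1\alpha_0 \notin I$; so either $p$ extends (contradiction) or it closes up into a permitted cycle, which a permitted thread is by definition not. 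For $\eqref{go2} \Rightarrow \eqref{go3}$: given any arrow $\alpha$, keep prepending and appending arrows along the unique permitted continuations; by finiteness of $Q_1$ and the no-permitted-thread hypothesis this process must eventually revisit an arrow, producing a cyclic permitted path through $\alpha$, i.e.\ a permitted cycle. For $\eqref{go3} \Rightarrow \eqref{go1}$: if $i$ were neither a transition nor a crossing vertex, I would check case by case (using that $(Q,I)$ is gentle, hence $2$-regular up to arrow deletion) that $i$ is incident to an arrow that cannot lie on any permitted cycle — e.g.\ a vertex with a single arrow in and none out, or a vertex where the gentle relations force every cycle through it into $I$ — contradicting $\eqref{go3}$.

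The implication $\eqref{go4} \Rightarrow \eqref{go3}$ (or its contrapositive) should follow from the dichotomy already recorded in the excerpt: if $(Q,I)$ has a permitted thread, then it is not true that every arrow lies on a permitted cycle, and one shows $\A$ fails to be module-finite over any central power-series subring — indeed a permitted thread of maximal length forces an infinite strictly increasing chain only in the "wrong" direction, obstructing finite generation as an $\Rx$-module; more to the point, an $\Rx$-order must be a finitely generated free (in particular torsion-free, rank-constant) $\Rx$-module, and the presence of a permitted thread produces a projective $\A$-module which is finite-dimensional over $\kk$, hence torsion over any $\Rx = \kk\llbracket x \rrbracket$, contradicting freeness. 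For $\eqref{go3} \Rightarrow \eqref{go4}$ I would exhibit the central element explicitly: when every arrow lies on a permitted cycle, the sum $x \colonequals \sum c$ over all permitted cycles $c$ (one representative per cyclic rotation class), or rather the appropriate central sum of permitted-cycle classes, is a central non-zero-divisor in $\A$, the map $\kk\llbracket x \rrbracket \to Z(\A)$ is well-defined after completion, and $\A$ is free over it with basis the finitely many paths that are "proper subpaths" of permitted cycles. The last clause, that connectedness of $(Q,I)$ yields all the hypotheses of Subsection~\ref{subsec:setup}, I would dispatch by checking each bullet: $\Rx = \kk\llbracket x\rrbracket$ is complete regular local of Krull dimension $1$; $\A$ is an $\Rx$-order by $\eqref{go4}$; isolatedness and the Gorenstein property follow because localizing at the unique non-maximal prime inverts $x$, and $\KK\otimes_\Rx \A$ is a finite-dimensional $\KK$-algebra which one checks is semisimple or at least of finite global dimension from the gentle structure; $\kk$ is a splitting field since $\A/J$ is a product of copies of $\kk$; and $\A$ is basic and — using connectedness of $Q$ — ring-indecomposable.

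The main obstacle I expect is the equivalence involving $\eqref{go4}$, specifically pinning down the central subring and the freeness statement: one must check that the completed path algebra really does contain a copy of $\kk\llbracket x\rrbracket$ in its center (the naive sum of permitted cycles need not be central unless grouped correctly by the cyclic structure, and different permitted cycles may have different lengths, so the "uniformizer" is a sum of elements of varying degree), and that $\A$ is finitely generated and \emph{free} — not merely finitely generated — over it, which requires identifying an explicit $\Rx$-basis indexed by non-closed sub-walks of permitted cycles and verifying no relations remain after completion. I would handle this by working vertex-corner by vertex-corner: for primitive idempotents $e_i, e_j$, the space $e_i \A e_j$ is spanned by paths $i \to j$ avoiding any zero relation, these organize into finitely many $\kk\llbracket x\rrbracket$-orbits under "going once more around the relevant permitted cycle(s)", and counting these orbits gives both freeness and the entries of the Cartan matrix $C_\A$ of Definition~\ref{dfn:Cartan}.
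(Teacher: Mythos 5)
Your quiver-theoretic cycle $\eqref{go1}\Rightarrow\eqref{go2}\Rightarrow\eqref{go3}\Rightarrow\eqref{go1}$ is essentially the paper's argument (the paper simply asserts the dichotomy ``every arrow lies on a permitted cycle or a permitted thread'', which you in effect prove via the unique-successor map; your case check for $\eqref{go3}\Rightarrow\eqref{go1}$ is exactly what the paper does). For $\eqref{go3}\Leftrightarrow\eqref{go4}$ and the final clause the paper does not argue directly: it invokes the proof of \cite{GIK}*{Proposition~8.4} with $\kk[x]$ replaced by $\kk\llbracket x\rrbracket$, and gets the isolated Gorenstein property from \cite{GIK}*{Lemma~8.3} and \cite{Krause1}*{Proposition~6.2.9}. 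Reproving these directly, as you propose, is legitimate, and your outline of $\eqref{go3}\Rightarrow\eqref{go4}$ (centrality of the sum of permitted cycles, an explicit $\Rx$-basis) identifies the right issues.

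However, your argument for $\eqref{go4}\Rightarrow\eqref{go3}$ contains a false step: a permitted thread need \emph{not} produce a finite-dimensional projective module. Take $Q$ with vertices $1,2$, loops $\alpha$ at $1$ and $\gamma$ at $2$, an arrow $\beta\colon 1\to 2$, and $I=(\beta\alpha,\gamma\beta)$, $\alpha^2,\gamma^2\notin I$. This is gentle, $\beta$ is a permitted thread, yet $P_1$ and $P_2$ are both infinite-dimensional (with bases $\{e_1,\alpha^k,\beta\}$ and $\{e_2,\gamma^k\}$); moreover $\A$ is even finitely generated over the central subring $\kk\llbracket \alpha+\gamma\rrbracket$, so ``module-finiteness'' is not the obstruction either --- what fails is torsion-freeness, since the corner $e_2\A e_1=\kk\beta$ is annihilated by $\alpha+\gamma$. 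The correct (and easy) repair is to use the thread itself: if $p$ is a permitted thread, then no arrow composes non-trivially with $p$ on either side, so $\A p\A=\kk p$ is a one-dimensional two-sided ideal; for \emph{any} central structure map $\Rx\to\A$ this is an $\Rx$-submodule on which $x$ acts by a scalar, and a finitely generated free $\kk\llbracket x\rrbracket$-module has no nonzero submodule that is one-dimensional over $\kk$ --- contradiction. Note that this version also rules out an arbitrary order structure, not just the canonical one, which your phrasing ``any central power-series subring'' needs. Finally, in the last clause your verification of the Gorenstein condition is incomplete: localizing away from the maximal ideal only gives isolatedness, while finiteness of the injective dimension of $\A$ itself (the condition at the maximal ideal) is a genuine additional fact, which the paper obtains from \cite{GIK}*{Lemma~8.3} and which your sketch does not address.
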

\begin{proof}
	\eqref{go1} $\Rightarrow$ \eqref{go2} Assume that \eqref{go1} holds. Then for any (possibly stationary) path $p \notin I$ in $Q$ there is an arrow $\alpha \in Q_1$ such that $\alpha p \notin I$, which shows \eqref{go2}. 
	
	The implication \eqref{go2} $\Rightarrow$ \eqref{go3} is true because in a gentle quiver any arrow is either contained in a permitted cycle or in a permitted thread.
	
	\eqref{go3} $\Rightarrow$ \eqref{go1} 	
	The assumption \eqref{go3} excludes all possibilities other than \eqref{eq:vert} around a vertex in a gentle quiver, which are given as follows.
	\begin{align*}
		\begin{tikzcd}[row sep=0.2cm, column sep=scriptsize, cells={outer sep=1pt, inner sep=1pt}, baseline=0.05cm, ampersand replacement=\&]  
			\phantom{\bt} \arrow[r, ""{inner sep=0, near end}] \& \bt 
		\end{tikzcd}
	&&
	\begin{tikzcd}[row sep=scriptsize, column sep=scriptsize, cells={outer sep=1pt, inner sep=1pt}, ampersand replacement=\&] 
		\phantom{\bt} \arrow[rd, ""{name=A, inner sep=0, near start}] \&   \\
		\& \bullet \\
		\phantom{\bt} \arrow[ru, ""{name=D, inner sep=0, near start}] \& \end{tikzcd}
	&&
\begin{tikzcd}[row sep=scriptsize, column sep=scriptsize, cells={outer sep=1pt, inner sep=1pt}, ampersand replacement=\&]  
	\phantom{\bt} \arrow[rd, ""{name=A, inner sep=0, midway}] \& \&  \\
	\& \bullet \arrow[r, ""{name=B, inner sep=0, midway}]  \& \phantom{\bt} \\
	\phantom{\bt} \arrow[ru, ""{name=D, inner sep=0, midway}] \& 
	\arrow[densely dotted, dash, bend left=60, color={darkred}, from=A, to=B] \& \end{tikzcd}
&&
\begin{tikzcd}[row sep=scriptsize, column sep=scriptsize, cells={outer sep=1pt, inner sep=1pt}, ampersand replacement=\&]
	\phantom{\bt} \& \\
	\phantom{\bt} \arrow[r, ""{name=E, inner sep=0.5, pos=0.45}] \& \bullet \arrow[r, ""{name=F, inner sep=0.5, pos=0.55}] \& \phantom{\bt} 
	\\
	\phantom{\bt} \& 
	\arrow[densely dotted, dash, bend left=60, color={darkred}, from=E, to=F] \end{tikzcd}
&&
\begin{tikzcd}[row sep=scriptsize, column sep=scriptsize, cells={outer sep=1pt, inner sep=1pt}, ampersand replacement=\&] 
	\& \& \phantom{\bt} \\
	\phantom{\bt} \arrow[r, ""{name=A, inner sep=0, midway}] \& \bullet \arrow[ru, ""{name=B, inner sep=0, pos=0.45}]
	\arrow[rd, ""{name=D, inner sep=0, midway}] \& \\
	\& \& \phantom{\bt}
	\arrow[densely dotted, dash, bend left=60, color={darkred}, from=A, to=B] \end{tikzcd}
&&
\begin{tikzcd}[row sep=scriptsize, column sep=scriptsize, cells={outer sep=1pt, inner sep=1pt}, ampersand replacement=\&]  
	\&   \phantom{\bt} \\
	\bullet \arrow[ru, ""{name=A, inner sep=0, near start}] \arrow[rd, ""{name=D, inner sep=0, near start}] \& \\
	\& \phantom{\bt}   \end{tikzcd}
&&
		\begin{tikzcd}[row sep=scriptsize, column sep=scriptsize, cells={outer sep=1pt, inner sep=1pt}, baseline=0.05cm, ampersand replacement=\&]  
	{\bt} \arrow[r, ""{inner sep=0, near end}]  \& \phantom{\bt}
\end{tikzcd}
	\end{align*}
	The equivalence \eqref{go3} $\Leftrightarrow$ \eqref{go4} follows from the proof of \cite{GIK}*{Proposition~8.4} with the polynomial ring $\kk[x ]$ replaced by the power series ring $\Rx = \kk \llbracket x \rrbracket$.
	
The ring $\A$ is an isolated Gorenstein algebra by \cite{GIK}*{Lemma~8.3} and \cite{Krause1}*{Proposition~6.2.9}. Verifying the remaining properties is straightforward. 
\end{proof}
\begin{dfn}\label{dfn:go}
	A basic semiperfect ring $\A$ will be called a \emph{gentle order} if $\A$ satisfies
	any of the conditions of Lemma~\ref{lem:go}.
	\end{dfn}
An $\Rx$-order structure on a gentle order $\A$ is given by the unique $\kk$-algebra morphism
	$\Rx \to \A$
	which maps the monomial $x \in \Rx$ to the sum of all permitted cycles in $\A$.

\subsection{Combinatorial invariants of quivers underlying gentle orders}
\label{subsec:inv}

In the present subsection, we assume that $\A$ is a gentle order such that its underlying quiver $(Q,I)$ is connected.
We will say that a cyclic path $c$ in $Q$ is a \emph{rotation} of a cyclic path $c'$
if $c$ can be obtained from $c'$ by a cyclic permutation of its arrows.

\subsubsection*{Bicolorability parameter}
The gentle quiver $(Q,I)$ is \emph{bicolorable} if there is a map $f\colon Q_1 \to \{1,2\}$
such that for any two arrows $\alpha,\beta\in Q_1$ with $s(\beta) = t(\alpha)$
it holds that $\beta \alpha \in I$ if and only if $f(\beta) \neq f(\alpha)$.
For the gentle order $\A$ we define a binary parameter 
\begin{align}\label{eq:bc}
	\bc_{\A} \colonequals \begin{cases} 1 &\text{if $(Q,I)$ is bicolorable},\\
	0  & \text{otherwise}.
	\end{cases}
	\end{align}

\subsubsection*{The number of permitted cycles}
For an arbitrary arrow $\alpha \in Q_1$ there are the following notions.
\begin{itemize}[label=--]
	\item A unique arrow $\sigma(\alpha) \in Q_1$ with $\sigma(\alpha) \alpha \notin I$.
	\item A unique permitted cycle, denoted $pc(\alpha)$, starting with $\alpha$.
\end{itemize}
As the set $Q_1$ is finite, the first assignment defines a permutation $\sigma \colon Q_1 \overset{\sim}{\to} Q_1$.
We will denote by $Q_1/\!\sim_{\sigma}$ the set of $\sigma$-orbits.
The cardinality of this set, denoted $$\pc_{\A} \colonequals \lvert Q_1/\!\sim_{\sigma}\rvert, $$
 is precisely the number of permitted cycles in $(Q,I)$ up to rotation.

\subsubsection*{Avella-Allaminos Geiss invariants of first type}

For any transition vertex $j \in Q_0^t$  there are the following notions.
\begin{itemize}[label=--]
	\item A unique forbidden thread $f_1$ with $s(f_1) = j$. Then $\kappa(j) \colonequals t(f_1) \in Q_0^t$.
	\item 	A unique cycle $c(j)$, called \emph{the AG-cycle of the vertex $j$}, which starts in $j$ and is composed from distinct forbidden threads. More precisely, 
 $c(j)= f_{n(j)} \ldots f_2  f_1$ with 
 $t(f_p) = \kappa^{p}(j)$ for each index $1 \leq p \leq n(j) \colonequals \min \{ q \in \N^+ \mid \kappa^q(j) = j\}$. 	 
	\item The \emph{AG-invariant of the vertex $j$}, the pair $(m(j),n(j))$ where $m(j) \colonequals \ell(c(j))$.

\end{itemize}
The first assignment defines a permutation $\kappa\colon Q_0^t \overset{\sim}{\longrightarrow} Q_0^t$.
Any two vertices which lie in the same $\kappa$-orbit have the same AG-invariant.
The cardinality $n(j)$ of the $\kappa$-orbit of $j$ is precisely the number of transition vertices which appear in the cyclic path $c(j)$.
In particular, the inequality $m(j) \geq n(j)$ holds.




Let $j_1, j_2 \ldots j_b$ denote a complete set of representatives 
of $\kappa$-orbits in $Q_0^t$.
Their invariants give rise to the multiset of \emph{AG-invariants of first type}
\begin{align*}
	\AG_1(\A) \colonequals \{(m(j_a),n(j_a)) \mid  1\leq a \leq b\}.
\end{align*}
Its cardinality $b$ is precisely the number of AG-cycles in $(Q,I)$ up to rotation.

\subsubsection*{Avella-Allaminos Geiss invariants of second type}
Let $Q_1^{fc}$ denote the subset of arrows in $Q$ appearing in a forbidden cycle.
For any arrow $\beta \in Q_1^{fc}$ there are the following notions.
\begin{itemize}[label=--]
	\item A unique arrow $\varrho(\beta) \in Q_1^{fc}$ with $\varrho(\beta) \beta \in I$ and $s(\varrho(\beta)) = t(\beta)$.
	\item A unique forbidden cycle $fc(\beta)$ starting with $\beta$.
	We denote by $m(\beta)$ its length.
	\item The pair $(m(\beta),0)$, called the \emph{AG-invariant of $\beta$}.
\end{itemize}
The first assignment defines a permutation $\varrho\colon Q_1^{fc} \overset{\sim}{\to} Q_1^{fc}$. 
Including $0$ in the notation $(m(\beta),0)$ is consistent with the previous notion of AG-cycles of transition vertices, as there are no transition vertices in the forbidden cycle $fc(\beta)$.  
Similar to the previous definitions, we denote by 
$Q_1^{fc}/\!\sim_{\varrho}$ the set of $\varrho$-orbits of arrows in $Q_1^{fc}$.
Any two arrows in $Q_1^{fc}$ which lie in the same $\varrho$-orbit have the same AG-invariant.

Choosing a complete set $\beta_1, \beta_2 \ldots \beta_q$ of representatives of $\varrho$-orbits in $Q_1^{fc}$, we denote 
the multiset of \emph{AG-invariants of second type} by
\begin{align*}
	\AG_{2}(\A) \colonequals \{(m(\beta_i),0) \mid 1 \leq i \leq q\}.
\end{align*}	
In particular, the cardinality of this multiset is given by the number of forbidden cycles in $(Q,I)$ up to rotation.

\subsection{Hereditary gentle orders}
\label{subsec:hered}
One of the simplest
subclasses of gentle orders is given by that of hereditary ones. 
These are obtained as follows.

Let $Q'$ be the equioriented quiver of Euclidean type $\widetilde{\mathbb{A}}$ with $\ell \geq 1$ vertices.
The arrow ideal completion $\Hrd$ of the path algebra $\kk Q'$ 
is isomorphic to the matrix algebra
\begin{align}\label{eq:tri-hered}
	T_{\ell}(\Rx) &\colonequals \left\{ (p_{ij})_{i,j = 1 \ldots \ell} \in \Mat_{\ell\times \ell} (\Rx)
	\mid 
		p_{ij} \in \mx \text{ for any indices }1 \leq  i <  j \leq \ell 
	\right \}
\end{align}
In other terms, the quiver $Q$ and the ring $\Hrd$ can be depicted as follows.
\begin{align*}
	Q\colon
	\begin{tikzpicture}[baseline=(X.base)]
		\def \radius{1cm}
		\def \n {5}	
		\def \margin {8} 
		\foreach \s in {1,2,3}
		{
			\node[circle, inner sep=0.5pt] at ({360/\n * (\s - 1)}:\radius) {$\bt$};
			\draw[<-] ({360/\n * (\s - 1)+\margin}:\radius) 
			arc ({360/\n * (\s - 1)+\margin}:{360/\n * (\s)-\margin}:\radius) ;
		}
		\node[circle, inner sep=0.5pt] at ({360/\n * (3)}:\radius) {$\bt$};
		\draw[-,densely dotted] ({360/\n * 3+\margin}:\radius) 
		arc ({360/\n * 3+\margin}:{360/\n * 5-\margin}:\radius);
		\node[circle] at ({360/\n * 0 +36}:{1.33 * \radius}) {$\alpha_2$};
		\node[circle] at ({360/\n * 1 +36}:{1.33 * \radius}) {$\alpha_1$};
		\node[circle] at ({360/\n * 2 +36}:{1.33 * \radius}) (X) {$\alpha_\ell$};
		\node[circle] at ({360/\n * 0}:{1.25 * \radius}) {\scriptsize $3$};
		\node[circle] at ({360/\n * 1}:{1.25 * \radius}) {\scriptsize $2$};
		\node[circle] at ({360/\n * 2}:{1.25 * \radius}) {\scriptsize $1$};
		\node[circle] at ({360/\n * 3}:{1.25 * \radius}) {\scriptsize $\ell$};
	\end{tikzpicture}
	&&
	\Hrd \cong
	\begin{pmatrix}
		\Rx& \mx & \ldots &  \mx \\
		\Rx & \Rx &  \ddots &    \vdots \\
		\vdots &&\ddots&	\mx \\
		\Rx & \Rx & \ldots  &\Rx 
	\end{pmatrix}
\end{align*}
The $\Rx$-order structure on $\B$ 
proposed after Definition~\ref{dfn:go} can be identified with the diagonal embedding $\Rx \to T_{\ell}(\Rx)$, $r \mapsto r \mathbf{1}_\ell.$

%

To describe the canonical bimodule of $\B$, we introduce an automorphism and a $\B$-bimodule.
\begin{itemize}[label=--]
	\item
Let $\rho \colon \Hrd \overset{\sim}{\to} \Hrd$ denote the 
the unique $\kk$-algebra morphism 
satisfying
$e_{j} \mapsto e_{j+1}$ and $\alpha_j \mapsto \alpha_{j+1}$
for any index $1 \leq j < \ell$ 
as well as $e_{\ell} \mapsto e_1$ and $\alpha_{\ell} \mapsto \alpha_1$.
Then $\rho$ is an $\Rx$-algebra automorphism of $\B$ of order $\ell$, which can be visualized as a clockwise rotation of the quiver $Q$.
\item 
Let $\mathstrut_{\rho } \Hrd$
denote the $\Hrd$-bimodule with underlying set $\Hrd$
such that  the left $\Hrd$-module action 
is  twisted by the automorphism $\rho$ in the sense that $a *_{\rho} x \colonequals \rho(a) x$  for any with $a,x\in \Hrd$, and
the right $\Hrd$-module action 
$\mathstrut_{\rho }\Hrd$ is the regular one.
\end{itemize}
By the next statement, the derived Nakayama functor
\begin{align*}
\begin{td}	\nu \colonequals \omega \lotimes \blank \colon \D(\Md \Hrd) \ar{r}{\sim} \& \D(\Md \Hrd). \end{td}
\end{align*}
is induced by the automorphism $\rho$.
\begin{lem}\label{lem:hered-CY}
		The canonical bimodule $\omega = \Hom_{\Rx}(\Hrd,\Rx)$ is isomorphic to 
		the twisted bimodule $\mathstrut_{\rho} \Hrd$. In particular, $\D(\Md \Hrd)$ is
		fractionally $(\ell,\ell)$-Calabi--Yau.
\end{lem}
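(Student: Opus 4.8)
The plan is to identify $\omega = \Hom_{\Rx}(\Hrd,\Rx)$ with $\mathstrut_{\rho}\Hrd$ by an explicit computation using the matrix description $\Hrd \cong T_{\ell}(\Rx)$ from \eqref{eq:tri-hered}. First I would recall that for a ring $\Hrd$ which is free of finite rank over $\Rx$, the $\Hrd$-bimodule $\omega$ has underlying $\Rx$-module $\Hom_{\Rx}(\Hrd,\Rx)$, with left action $(a\cdot\varphi)(x) = \varphi(xa)$ and right action $(\varphi\cdot a)(x) = \varphi(ax)$ for $a,x\in\Hrd$. The matrix units $E_{ij}$ (those with $E_{ij}\in\Hrd$, i.e.\ with $x_{ij}\in\Rx$ if $i\geq j$ and $x_{ij}\in\mx$ if $i<j$) together with the powers of $x$ give an explicit $\Rx$-basis of $\Hrd$, and I would write down the dual basis. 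The key structural observation is that $e_i\,\Hrd\,e_j = \Rx$ for $i\geq j$ and $=\mx=x\Rx$ for $i<j$, so that as an $\Rx$-module $\omega = \bigoplus_{i,j} \Hom_{\Rx}(e_i\Hrd e_j,\Rx)$, and $\Hom_{\Rx}(e_i\Hrd e_j,\Rx) \cong \Rx$ for $i\geq j$ while $\cong x^{-1}\Rx \subseteq \KK$ for $i<j$ (since $\mx$ is free of rank one over $\Rx$, generated by $x$).

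Next I would construct the comparison map. The idea is that $\mathstrut_{\rho}\Hrd$ has $(e_i,e_j)$-component equal to $e_i\cdot_{\rho}\Hrd\cdot e_j = \rho(e_i)\Hrd e_j = e_{i+1}\Hrd e_j$ (indices mod $\ell$), which is $\Rx$ if $i+1\geq j$ and $\mx$ if $i+1<j$; comparing dimensions over $\KK$ and the pattern of where the maximal ideal appears, one sees these two bimodules match componentwise. Concretely I would define $\Phi\colon \mathstrut_{\rho}\Hrd \to \omega$ by sending the element corresponding to $1\in e_{i+1}\Hrd e_j$ to a specified generator of $\Hom_{\Rx}(e_i\Hrd e_j,\Rx)$, chosen compatibly so that $\Phi$ is simultaneously left and right $\Hrd$-linear — the right-linearity is essentially the statement that the dual basis transforms correctly under right multiplication, and the left-linearity is where the rotation $\rho$ enters, since left multiplication by $a$ on $\omega$ precomposes with right multiplication, shifting the row index by one relative to the naive identification. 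Verifying both linearities reduces to a finite check on matrix units. The "in particular" clause then follows immediately: $\nu = \omega\lotimes\blank \cong \mathstrut_{\rho}\Hrd\otimes_{\Hrd}\blank = \rho^*$ is the (exact) auto-equivalence induced by the algebra automorphism $\rho$, which has order $\ell$, so $\nu^{\ell}\cong(\rho^{\ell})^* = \id$, giving $\SD^{\ell}(X) = \nu^{\ell}(X)[\ell\cdot\krdim] = X[\ell]$ for every $X$ since $\krdim = \dim\Rx = 1$ here; thus every object is $(\ell,\ell)$-Calabi--Yau, and so is $\D(\Md\Hrd)$ itself in the stated sense.

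The main obstacle I anticipate is bookkeeping the off-diagonal $\mx$-entries correctly: one must track that $\Hom_{\Rx}(\mx,\Rx) \cong \Rx$ canonically via division by $x$, and that this isomorphism is compatible on both sides with the module structures and with the shift-by-one induced by $\rho$, so that the pattern of "$\Rx$ versus $\mx$" entries of $\mathstrut_{\rho}\Hrd$ lines up exactly with that of $\omega$ under a single globally-defined $\Hrd$-bilinear map rather than merely an abstract isomorphism of each component. An alternative, possibly cleaner route that avoids some of this is to invoke that $\Hrd = T_{\ell}(\Rx)$ is a (classical) $\Rx$-order whose canonical module is well known, or to use that $\Hrd$ is the endomorphism ring of the $\Rx$-lattice $\bigoplus_{k=1}^{\ell}\mx^{\,k}$ inside $\KK$ (a "cyclic" configuration), for which the Nakayama automorphism is visibly the cyclic shift; I would mention this as the conceptual reason but carry out the bimodule identification directly for self-containedness.
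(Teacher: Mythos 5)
Your proposal is correct and follows essentially the same route as the paper: the paper also identifies $\omega=\Hrd^{\vee}$ with $\mathstrut_{\rho}\Hrd$ by a componentwise computation in the matrix form $T_{\ell}(\Rx)$ (dualizing the $\Rx$- and $\mx$-entries and recognizing the row-shifted pattern), and then deduces the $(\ell,\ell)$-Calabi--Yau property from $\rho$ having order $\ell$ together with $\krdim=1$. Your version merely spells out the dual-basis bookkeeping that the paper compresses into a chain of matrix isomorphisms.
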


\begin{proof}
		There are 
		isomorphisms of $\Hrd$-bimodules
		\begin{align*}		
			\omega = \Hrd^\vee
			\cong  
			\begin{pmatrix}
				\Rx^\vee&  \ldots  & \Rx^\vee & \Rx^\vee  \\
				\mx^\vee &  \ddots & & \vdots \\
				\vdots  & \ddots  &\Rx^\vee &    \Rx^\vee\\
				\mx^\vee &  \ldots & \mx^\vee & \Rx^\vee 
			\end{pmatrix}
			\cong  
			\begin{pmatrix}
				\mx&  \ldots  & \mx& \mx  \\
				\Rx &  \ddots & & \vdots \\
				\vdots  & \ddots  &\mx &    \mx\\
				\Rx &  \ldots & \Rx & \mx 
			\end{pmatrix}
			\cong
			\mathstrut_{\rho } \Hrd 
		\end{align*}
		where $(\blank)^\vee$ denotes $\Hom_{\Rx}(\blank, \Rx)$.
		Since $\rho$ has order $\ell$, the last statement follows.
	\end{proof}


\begin{lem}\label{prp:hered}
An object $X$ in $\D(\Md \Hrd)$ appears in an exceptional cycle if and only if $X$ is isomorphic to a simple $\Hrd$-module 
up to shift.
\end{lem}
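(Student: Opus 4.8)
The plan is to analyze exceptional cycles in $\D(\Md \Hrd)$ using the explicit fractionally Calabi--Yau structure from Lemma~\ref{lem:hered-CY}, namely that $\SD = \nu \circ [1]$ with $\nu$ induced by the order-$\ell$ automorphism $\rho$, so that $\D(\Md \Hrd)$ is $(\ell,\ell)$-Calabi--Yau. First I would prove the "if" direction: let $S_1, \ldots, S_\ell$ be the simple $\Hrd$-modules (indexed so that $\rho^* S_j \cong S_{j+1}$, indices mod $\ell$). Since $\Hrd$ is hereditary of Euclidean type $\widetilde{\mathbb A}_{\ell-1}$ (equioriented), each $S_j$ is exceptional in $\Db(\md \Hrd)$, i.e. $\sum_{i\in\Z}\dim\HomDc(S_j,S_j[i]) = 1 + \dim\Ext^1_{\Hrd}(S_j,S_j)$; because the quiver has no loops this equals $1$. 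One checks directly that $\SD(S_j) \cong \nu(S_j)[1] \cong \rho^* (S_j)[m_j] = S_{j+1}[m_j]$ for a suitable integer $m_j$ — here one needs the computation of $\nu$ on a simple module of a hereditary order, which follows from $\omega \cong {}_\rho\Hrd$ together with the projective resolution $0 \to P_{j-1} \to P_j \to S_j \to 0$ of $S_j$ (with the convention $P_0 = P_\ell$). Thus $(S_1, S_2, \ldots, S_\ell)$ satisfies \ref{E1}, and condition \ref{E2} amounts to $\sum_{p=0}^{\ell-1}\sum_{i}\dim\HomDc(S_1,\SD^p(S_1)[i]) = \sum_{p}\sum_i \dim\HomDc(S_1, S_{1+p}[i + \text{shift}]) = 2$, which holds because $\sum_i\dim\HomDc(S_1,S_{1+p}[i])$ is $1$ for $p \equiv 0$ and for exactly one other value of $p$ (corresponding to the arrow between the two vertices), and is $0$ otherwise. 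So every shift of a simple module appears in an exceptional $\ell$-cycle.

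Next I would prove the "only if" direction, which is the main obstacle. Suppose $X \in \D(\Md \Hrd)$ appears in an exceptional cycle $\Exc$. By the first bullet after Definition~\ref{dfn:ex-cycle}, $X$ is fractionally $(m,n)$-Calabi--Yau with $n > 0$; but $\D(\Md\Hrd)$ is $(\ell,\ell)$-Calabi--Yau, so $\SD^\ell(X) \cong X[\ell]$, which combined with $\SD^n(X) \cong X[m]$ and minimality of $n$ forces $(m,n)$ to be determined by a sub-pattern — in any case $X$ lies in $\Dbfd(\Hrd)$ (finite-dimensional homology, hence in $\perfd(\Hrd) = \per_{\mathrm{fd}}(\Hrd)$ since $\Hrd$ has finite global dimension) because periodicity under $\SD$ forces bounded, finite-length homology. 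The key reduction is then classical: indecomposable objects of $\Dbfd(\md\Hrd)$ for $\Hrd$ hereditary of type $\widetilde{\mathbb A}$ are shifts of indecomposable $\Hrd$-modules, which are (a) the simples $S_j$, (b) the other "band-free" indecomposables — postprojective, preinjective, and regular modules — and the regular tube containing the $S_j$ has rank $\ell$. I would argue that for $X$ to satisfy \ref{E2} (the total $\Hom$-count with all its $\SD$-translates equals $2$), $X$ must be a "brick" with at most a one-dimensional self-extension pattern distributed over the $\tau$-orbit; a module $M$ over a hereditary algebra with $\SD(M) \cong M'[k]$ forces $M'$ and $M$ to both be modules (up to a global shift), so the cycle consists of modules in a single $\tau_{\Hrd}$-orbit, and \ref{E2} pins down that this orbit is the rank-$\ell$ homogeneous... no, the non-homogeneous tube at the two exceptional points — concretely, the orbit of the simples. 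Indecomposable modules in tubes of rank $\ell$ that are not simple have a self-extension ($\Ext^1(M,M)\neq 0$ once the quasi-length exceeds... ) — more precisely a module $M$ of quasi-length $> 1$ in the tube has $\dim\Hom(M,M) \geq 2$ already, violating the exceptionality forced by \ref{E2} when $n = \ell > 1$; and postprojective/preinjective modules are not periodic under $\tau_{\Hrd}$, hence cannot sit in an exceptional cycle (their $\SD$-orbit is infinite, contradicting \ref{E1} which requires it to be finite up to shift). Therefore $X$ is a shift of some $S_j$.

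To make the "only if" argument clean I would organize it as: (1) $X$ periodic under $\SD$ up to shift $\Rightarrow$ $X \in \perfd(\Hrd)$ and $X$ is a shift of an indecomposable $\Hrd$-module $M$; (2) $\SD$-periodicity of $M$ up to shift $\Rightarrow$ $M$ is $\tau_{\Hrd}$-periodic, hence $M$ lies in a tube; (3) the only tube of $\Hrd \cong T_\ell(\Rx)$-type Euclidean quiver relevant here is the rank-$\ell$ tube through the simples (the unique exceptional tube, coming from the single "missing arrow" / the cyclic structure), and within it only the quasi-simple objects — the $S_j$ — are bricks with the minimal self-$\Hom$ needed for \ref{E2}; (4) conclude. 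I expect step (1)–(2), i.e. ruling out postprojective and preinjective summands via the infinitude of their $\SD$-orbits, and step (3), identifying precisely which tube-objects satisfy \ref{E2}, to require the most care; the Auslander--Reiten-theoretic input for hereditary Euclidean algebras (tubes, quasi-length, $\Ext$-computations) is standard and can be cited. Alternatively, and perhaps more in the spirit of the paper, one can avoid AR-theory entirely by invoking Lemma~\ref{lem:vanish}: an $(m,n)$-Calabi--Yau $X$ satisfying \ref{E2} with $n > 1$ must have $(m,n)$ of one of the forms listed there; matching this against the constraint that $\D(\Md\Hrd)$ is globally $(\ell,\ell)$-Calabi--Yau forces $n = \ell$ and $m = \ell$ (the generic case $m = n \mid q$), after which a direct computation of $\HomDc(X, \SD^p(X)[i])$ for an arbitrary indecomposable complex $X$ over $T_\ell(\Rx)$ — using the matrix description and that such complexes are classified — shows \ref{E2} holds only for the simples up to shift. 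I would present the AR-theoretic route as the main line, since it is shorter.
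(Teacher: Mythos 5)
Your ``only if'' direction has a genuine gap, and it sits exactly at the step you flag as step (3). First, the structure theory you invoke belongs to the wrong algebra: $\Hrd$ is the completed path algebra of the \emph{equioriented} (cyclic) quiver, i.e.\ the hereditary order $T_\ell(\Rx)$ over $\Rx=\kk\llbracket x\rrbracket$, not a finite-dimensional tame hereditary algebra over a field. Its finitely generated indecomposables are the projective lattices and the uniserial finite-length modules $M_{j,n}\colonequals P_j/\rad^n P_j$, which form a single rank-$\ell$ tube; there are no postprojective or preinjective components and no other tubes, so arguments about ``the unique exceptional tube'' and infinite $\tau$-orbits of (pre)projectives do not apply as stated. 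Second, and more seriously, your criterion for excluding the non-simple tube objects is false: for $1<n<\ell$ (so whenever $\ell\geq 3$) one computes $\End_{\A}(M_{j,n})\cong\kk$ and $\Ext^1_{\Hrd}(M_{j,n},M_{j,n})=0$, so $M_{j,n}$ is an exceptional brick --- your claim that quasi-length $>1$ forces $\dim\Hom(M,M)\geq 2$ (or a self-extension) is wrong in this range --- and moreover $(M_{j,n},M_{j+1,n},\ldots)$ satisfies \ref{E1} along its $\SD$-orbit. What fails for these objects is only \ref{E2}, and it fails because of morphisms and extensions to the \emph{other} Serre translates $\SD^p(M_{j,n})$ with $p\not\equiv 0$; a ``brick with minimal self-Hom'' test cannot see this. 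The paper supplies precisely the missing mechanism: after reducing to $X\cong M_{j,n}[i]$ (using hereditariness, $\End_{\D(\Md\Hrd)}(X)\cong\kk$ and uniseriality), it exhibits for $n>1$ a non-zero morphism $\tau(X)\to X$ (right multiplication by an arrow, using $\tau=\nu$ and $\nu(M_{j,n})\cong M_{j+1,n}$) and then applies Lemma~\ref{lem:vanish} with Calabi--Yau dimension $(\ell,\ell)$ to force $\ell=1$, a contradiction; the case $\ell=1$ is settled by $\dim\End$.

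Two further points. Your step (1) justification --- that $\SD$-periodicity up to shift forces bounded, finite-length homology --- is false as stated: $\Hrd$ itself satisfies $\SD(\Hrd)\cong\Hrd[1]$ but has infinite-dimensional homology; the correct handle, as in the paper, is that membership in an exceptional cycle forces $\End_{\D(\Md\Hrd)}(X)\cong\kk$, which together with the classification of indecomposables over the uniserial order pins $X$ down to a shift of some $M_{j,n}$. In the ``if'' direction, your claim that each $S_j$ is exceptional ``because the quiver has no loops'' fails for $\ell=1$, where the quiver is a loop and $S_1$ is $1$-spherical rather than exceptional (this case is still covered, since exceptional $1$-cycles are allowed), and the bookkeeping of the \ref{E2} sum needs the corresponding adjustment. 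Your alternative sketch via Lemma~\ref{lem:vanish} is indeed the paper's route, but the ``direct computation'' you defer there is exactly the content that is missing from the main line.
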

\begin{proof}
Since $\Hrd$ is hereditary, any indecomposable object in $\D(\Md \Hrd)$ is isomorphic to the projective resolution of an indecomposable $\Hrd$-module $M$ up to shift by \cite{Krause1}*{Proposition~4.4.15}.

To show the `only if'-implication, 
assume that $X$ appears in an exceptional cycle. Lemma~\ref{lem:hered-CY} implies that $\CYdim(X) = (\ell,\ell) \neq (0,1)$, and thus
$\End_{\D(\Hrd)}(X) \cong \kk$. In particular,
$X$ is isomorphic to the projective resolution of an indecomposable $\Hrd$-module $M$ up to shift by \cite{Krause1}*{Proposition~4.4.15}.

As $\Hrd$ is uniserial it follows that $X [i] \cong M_{j,n} \colonequals P_j/\rad^{n} P_j$ for certain $i\in \Z$, $j \in Q_0$ and $n \in \N^+$. It is sufficient to show that $n = 1$.
\begin{itemize}
	\item Let $\ell > 1$. 
	Assume that $n >1$ for a proof by contradiction. 
	Then there is a non-zero morphism
	$M_{j+1,n} \to M_{j,n}$, $m \mapsto m \alpha_{j} $.
	Thus $\HomD(\tau(X),X) \neq 0$. By Lemma~\ref{lem:vanish} it follows that $\ell = 1$, a contradiction. 
	\item Let $\ell = 1$. Then $n = \dim M_{j,n} = \dim \End_{\D(\Hrd)}(X) = 1$. 
\end{itemize}
To show the `if'-implication note that the minimal projective resolution of a simple module $S_j$ is given by a two-term complex $P_{j+1} \to P_j$. It is then straightforward to verify \ref{E2} of Definition~\ref{dfn:ex-cycle}.
\end{proof}
In summary, the sequence 
$(S_1,S_2\ldots, S_\ell)$ of simple $\Hrd$-modules
is the only exceptional sequence in $\D(\Md \Hrd)$ up to equivalence.

\begin{prp}
\label{prp:hered-char} 
Let $(Q,I)$ be a connected gentle quiver and $\A$ the arrow ideal completion of its path algebra.
Then the following conditions are equivalent.
\begin{enumerate}
	\item \label{hc3} There is a vertex $j \in Q_0^t$ such that its AG-invariant 
	satisfies $m(j) = n(j)$.
	\item \label{hc1} The gentle order $\A$ is hereditary.
	\item \label{hc2} Any object in $\D^-(\md \A)$ is $\nu$-periodic and $\A$ has finite global dimension.
	\end{enumerate}
\end{prp}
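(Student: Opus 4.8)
The plan is to prove the cyclic implications $\eqref{hc3}\Rightarrow\eqref{hc1}\Rightarrow\eqref{hc2}\Rightarrow\eqref{hc3}$. For $\eqref{hc3}\Rightarrow\eqref{hc1}$ I would argue entirely in the combinatorics of $(Q,I)$: if $m(j)=n(j)$ for a transition vertex $j$, then, writing $c(j)=f_{n(j)}\cdots f_1$ for its AG-cycle, the identity $m(j)=\sum_{p=1}^{n(j)}\ell(f_p)$ together with $\ell(f_p)\ge 1$ forces every forbidden thread $f_p$ to be a single arrow. Since $\kappa$ maps $Q_0^t$ into itself, this arrow is simultaneously the unique arrow leaving the transition vertex $\kappa^{p-1}(j)$ and the unique arrow entering the transition vertex $\kappa^{p}(j)$; hence the $f_p$ are pairwise distinct and, at each vertex met by $c(j)$, the composite of the arrow of $c(j)$ entering it with the one leaving it is not in $I$. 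Thus no length-two subpath of $c(j)$ — including the wrap-around $f_1 f_{n(j)}$ — lies in $I$, so $c(j)$ is actually a permitted cycle, and it exhausts every arrow incident to every vertex it meets; by connectedness $(Q,I)$ is the equioriented cycle $\widetilde{\mathbb{A}}_{n(j)}$ without relations, so $\A\cong T_{n(j)}(\Rx)$ is hereditary.

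For $\eqref{hc1}\Rightarrow\eqref{hc2}$ I would first observe that a connected hereditary gentle order is isomorphic to some $T_\ell(\Rx)$: in a gentle order $\rad P_i\cong\bigoplus_\alpha\A\alpha$ over the arrows $\alpha$ starting at $i$, and $\A\alpha$ is projective (equal to $P_{t(\alpha)}$) precisely when $t(\alpha)$ is a transition vertex, so hereditariness forces every arrow to end at a transition vertex, whence $Q_0=Q_0^t$ and $(Q,I)=\widetilde{\mathbb{A}}_\ell$. Finite global dimension is then immediate, and by Lemma~\ref{lem:hered-CY} one has $\SD^\ell\cong[\ell]$ (since $\omega\cong\mathstrut_{\rho}\Hrd$ for an automorphism $\rho$ of order $\ell$), hence $\nu^\ell\cong\id$ on $\D(\Md\A)$; in particular every object of $\D^-(\md\A)$ is $\nu$-periodic.

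The implication $\eqref{hc2}\Rightarrow\eqref{hc3}$ is where I expect the real work. Since $\eqref{hc2}$ mentions the derived Nakayama functor, $\A$ carries its $\Rx$-order structure and is a gentle order, and finite global dimension means $(Q,I)$ has no forbidden cycle. I would first exclude $Q_0^t=\emptyset$: if every vertex were crossing, then $|Q_1|=2|Q_0|$, so no arrow of the ambient $2$-regular gentle quiver was deleted, $(Q,I)$ is itself $2$-regular gentle, and such a quiver always contains a forbidden cycle (read off any orbit of the zero-relation-successor permutation of the nonempty set $Q_1$), contradicting finite global dimension. Fixing a transition vertex $j$, I would then invoke the description of $\nu$ on $\D^-(\md\A)$ from \cite{GIK}, according to which $\nu(S_i)\cong S_{\kappa(i)}[\ell(f_1)-1]$ for every transition vertex $i$, with $f_1$ the forbidden thread at $i$. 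Iterating once around the AG-cycle of $j$ gives $\nu^{n(j)}(S_j)\cong S_j[m(j)-n(j)]$, while for $0<r<n(j)$ the object $\nu^{r}(S_j)$ is a shift of $S_{\kappa^{r}(j)}$, which is not isomorphic to $S_j$ since $\kappa^{r}(j)\neq j$. As $S_j$ is $\nu$-periodic by $\eqref{hc2}$, these two facts force $m(j)-n(j)=0$, which is $\eqref{hc3}$.

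The hard part is this last implication, and within it the computation of $\nu$ on the simple modules, which I would import from \cite{GIK} rather than redo; the remaining ingredients are the combinatorics of permitted cycles and forbidden threads together with Lemma~\ref{lem:hered-CY}, and the only bookkeeping needing care is the exclusion of the case $Q_0^t=\emptyset$.
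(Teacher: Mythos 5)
Your proposal is correct and follows essentially the same route as the paper: the combinatorial observation that $m(j)=n(j)$ forces $Q_0=Q_0^t$ and $Q$ of type $\widetilde{\mathbb{A}}$, Lemma~\ref{lem:hered-CY} for \eqref{hc1}$\Rightarrow$\eqref{hc2}, and for \eqref{hc2}$\Rightarrow$\eqref{hc3} the existence of a transition vertex from finite global dimension together with the action of $\nu$ on the simples $S_j$ computed from \cite{GIK} (which is exactly how the paper, via Theorem~\ref{thm:SD} and Lemma~\ref{lem:sim}, obtains $\CYdim S_j=(m(j),n(j))$). Your only deviation is presentational: you iterate the formula $\nu(S_i)\cong S_{\kappa(i)}[\ell(f_1)-1]$ directly and use minimality of the $\kappa$-orbit instead of quoting the Calabi--Yau dimension of $S_j$, which is the same computation.
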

\begin{proof}
\eqref{hc3} $\Rightarrow$ \eqref{hc1} Assume that \eqref{hc3} holds. Since $Q$ is connected, it follows that $Q_0 = Q_0^t$, thus $Q$ is a quiver of type $\widetilde{\mathbb{A}}$ and $\A$ is hereditary.

The implication \eqref{hc1} $\Rightarrow$ \eqref{hc2} follows from Lemma~\ref{lem:hered-CY}.

To show \eqref{hc2} $\Rightarrow$ \eqref{hc3}, assume that any object in $\D^-(\md \A)$ is $\nu$-periodic  and $\gldim \A < \infty$. 
By the second assumption, $\A$ is not a ribbon graph order, that is,
there exists a vertex $j \in Q_0^t$. Then the simple module $S_j$ is fractionally Calabi--Yau with $\CYdim(S_j)=(m(j),n(j))$ where $m(j) \geq n(j)$. As $S_j$ is $\nu$-periodic, it follows that $m(j) = n(j)$. 
\end{proof}
%
%
%

\subsection{Normalization and rational hull of a gentle order}
\label{subsec:norm}
Let $(Q,I)$ be a connected gentle quiver.
The quiver $(Q,I)$ can be viewed as a `gluing' of quivers of equioriented Euclidean type $\widetilde{\mathbb{A}}$ as follows.

Let $Q' = (Q'_0,Q'_1,s',t')$ be the quiver defined by $Q'_0 \colonequals  \{i(\alpha) \mid \alpha \in Q_1\}$, $Q'_1 \colonequals Q_1$, $s'(\alpha) \colonequals i_{\alpha}$ and $t'(\alpha) \colonequals i_{\sigma(\alpha)}$ for any $\alpha \in Q'_1$.
Set $\pc \colonequals \pc_{\A}$ and let $\alpha_1, \ldots,\alpha_{\pc}$ denote a complete set of representatives of $\sigma$-orbits in $Q_1$.
Then $Q'$ is the product $\prod_{j=1}^{\pc} \widetilde{\mathbb{A}}_{\ell(j)-1}$,
where $\ell(j)$ denotes the length of the unique permitted cycle $pc(\alpha_j)$ beginning with $\alpha_j$.
The arrow ideal completion $\B$ of the path algebra $\kk  Q'$
is a hereditary order and isomorphic to the product $\prod_{j=1}^{\pc} T_{\ell(j)}(\Rx)$ such that each ring factor is a `triangular' matrix algebra of the form~\eqref{eq:tri-hered}.

\begin{lem}\label{lem:rad-emb}
	Let $\iota\colon \A \hookrightarrow \B$ be the unique $\kk$-algebra homomorphism satisfying $e_{i} \mapsto \sum_{\alpha \in Q_1\colon s(\alpha) = i} e_{i(\alpha)} $ for any $i \in Q_0$ and $\alpha \mapsto \alpha$ for any $\alpha \in Q_1$.
	Then $\iota$ is an $\Rx$-algebra monomorphism such that $\iota(\rad \A) = \rad(\B)$.	
	\end{lem}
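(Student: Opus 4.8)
The plan is to study $\iota$ through the arrow-ideal filtrations of $\A$ and $\B$ and to show that the induced graded map is injective, and bijective in every positive degree; the monomorphism claim and the radical identity then follow by standard completeness arguments, while $\Rx$-linearity is a separate short check. First I would verify that the prescribed rule really defines a (unique) $\kk$-algebra homomorphism. It is enough to define $\iota$ on $\kk Q/I$: the assignment $e_i\mapsto\sum_{s(\alpha)=i}e_{i(\alpha)}$, $\alpha\mapsto\alpha$ respects the arrow-ideal filtration and hence extends uniquely and continuously to the completions. The elements $\iota(e_i)$ are orthogonal idempotents with $\sum_{i\in Q_0}\iota(e_i)=\sum_{\alpha\in Q_1}e_{i(\alpha)}=1_{\B}$, since $i(-)\colon Q_1\to Q'_0$ is a bijection and the sets $\{\alpha:s(\alpha)=i\}$ are pairwise disjoint; the identity $\iota(\alpha)=\iota(e_{t(\alpha)})\,\alpha\,\iota(e_{s(\alpha)})$ holds because $s'(\alpha)=i(\alpha)$ and $t'(\alpha)=i(\sigma(\alpha))$ with $s(\sigma(\alpha))=t(\alpha)$; and each generating relation $\beta\alpha\in I$ is killed, because $\beta\alpha\in I$ forces $\beta\neq\sigma(\alpha)$, i.e.\ $s'(\beta)=i(\beta)\neq i(\sigma(\alpha))=t'(\alpha)$, so $\beta\alpha$ already vanishes in $\kk Q'$.

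The combinatorial core is the following. In the gentle quiver $(Q,I)$ a path $\gamma_n\cdots\gamma_1$ of length $n\ge1$ avoids $I$ if and only if $\gamma_{k+1}=\sigma(\gamma_k)$ for all $1\le k<n$, since $I$ is the monomial ideal generated by the length-two relations at crossing vertices and $\sigma(\gamma_k)$ is by definition the unique arrow that may legitimately follow $\gamma_k$. Conversely, a length-$n$ sequence $\delta_n\cdots\delta_1$ of arrows is a path in $Q'$ exactly when $t'(\delta_k)=s'(\delta_{k+1})$, that is $\sigma(\delta_k)=\delta_{k+1}$, and any such sequence is automatically a path in $Q$ because $s(\sigma(\delta_k))=t(\delta_k)$. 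Thus $\iota$ induces, for each $n\ge1$, a bijection between the length-$n$ paths of $Q$ avoiding $I$ and the length-$n$ paths of $Q'$; equivalently the degree-$n$ component of the graded map $\operatorname{gr}\iota\colon\kk Q/I=\operatorname{gr}\A\to\kk Q'=\operatorname{gr}\B$ is bijective. In degree $0$ it is the map $\prod_{Q_0}\kk\to\prod_{Q'_0}\kk$ described above, which is injective (but in general not surjective).

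Since $\A$ and $\B$ are separated and complete for their arrow-ideal filtrations, injectivity of $\operatorname{gr}\iota$ in every degree implies injectivity of $\iota$, so $\iota$ is a monomorphism. For the radical statement, recall that $\rad\A$ and $\rad\B$ are the closures $\widehat{J}$ and $\widehat{J'}$ of the arrow ideals of $\kk Q/I$ and $\kk Q'$ — their quotients are semisimple and $1+\widehat{J}\subseteq\A^\times$, $1+\widehat{J'}\subseteq\B^\times$ by completeness. Clearly $\iota(\rad\A)\subseteq\rad\B$ because $\iota$ is filtered; conversely, given a no-constant-term series in $\rad\B=\widehat{J'}$, bijectivity of $\operatorname{gr}_n\iota$ for all $n\ge1$ lets one lift it degree by degree to a no-constant-term series in $\widehat{J}$, using that $\A$ is complete and $\iota$ continuous. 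Hence $\iota(\rad\A)=\rad\B$.

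Finally, for $\Rx$-linearity: the structure morphism $\Rx\to\A$ sends $x$ to the sum of the permitted cycles of $(Q,I)$, and $\iota$ carries each permitted cycle $pc(\alpha)$ to the full loop based at $i(\alpha)$ in the corresponding connected component of $Q'$ (a $\sigma$-orbit). Summing over $\alpha\in Q_1$ and using that $i(-)\colon Q_1\to Q'_0$ is a bijection, one sees that $\iota$ sends the image of $x$ in $\A$ to $\sum_{v\in Q'_0}(\text{full loop at }v)$, which is exactly the image of $x$ under $\Rx\to\B$, namely $x\cdot1_{\B}$ under $\B\cong\prod_j T_{\ell(j)}(\Rx)$. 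As $\iota$ and the two structure maps are continuous $\kk$-algebra homomorphisms and $x$ topologically generates $\Rx=\kk\llbracket x\rrbracket$, it follows that $\iota$ is $\Rx$-linear. The only genuinely non-routine point in all of this is the path bijection of the second paragraph — matching ``avoids $I$ in $Q$'' with ``composable in $Q'$'' via $\sigma$; once that is in place, the rest is bookkeeping with the arrow-ideal filtration.
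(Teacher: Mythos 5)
Your proposal is correct, but it proves the lemma from scratch where the paper simply cites and completes: the paper's proof identifies $\rad \A$ and $\rad \B$ with the closures of the arrow ideals and then takes the arrow-adic completion of the (uncompleted) monomorphism $\kk Q/I \hookrightarrow \kk Q'$ provided by \cite{GIK}*{Lemma~8.2~(2)}, so the combinatorial content is outsourced to that reference. You instead re-derive that content directly: the observation that a path in $(Q,I)$ avoids the monomial ideal $I$ exactly when consecutive arrows are related by $\sigma$, matched against composability in $Q'$, gives bijectivity of $\operatorname{gr}_n\iota$ for $n\geq 1$ and injectivity in degree $0$, from which injectivity of $\iota$ and the equality $\iota(\rad\A)=\rad\B$ (not merely the inclusion) follow by the standard separated-and-complete filtration argument; your checks that $\iota$ is well defined (orthogonal idempotents summing to $1_\B$, compatibility with sources/targets via $s(\sigma(\alpha))=t(\alpha)$, and the killing of the length-two relations) are exactly what makes the uncompleted map exist, i.e.\ you are in effect reproving the cited lemma. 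Your explicit verification of $\Rx$-linearity, via $\iota(\text{sum of permitted cycles})=x\cdot 1_\B$ under $\B\cong\prod_j T_{\ell(j)}(\Rx)$ and the bijection $i\colon Q_1\to Q_0'$, is a point the paper leaves implicit, and your remark that the degree-$0$ map is injective but generally not surjective is consistent with the finite-length cokernel used later (Corollary~\ref{cor:rhull}). The trade-off is the usual one: the paper's argument is two lines at the cost of an external reference, while yours is longer but self-contained and makes the graded mechanism behind both the injectivity and the radical equality explicit. One cosmetic caveat: the uniqueness of $\iota$ in your first paragraph is uniqueness among continuous (filtration-respecting) homomorphisms, which is also how the paper's statement should be read.
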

\begin{proof}
	The completion of the arrow ideal in $\kk Q/I$ is isomorphic to $\rad \A$, and a similar statement holds for $Q'$. Because of this, the statement follows  by taking completion of the $\kk$-algebra monomorphism in \cite{GIK}*{Lemma~8.2~(2)}.
	\end{proof}

\begin{cor}\label{cor:rhull}
	The $\KK$-algebra $\KK \otimes_{\Rx} \A$ is Morita equivalent to the product $\KK^{\times \pc_{\A}}$. 
	\end{cor}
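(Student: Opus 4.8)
The plan is to reduce the statement to the hereditary order $\B \cong \prod_{j=1}^{\pc_{\A}} T_{\ell(j)}(\Rx)$ from Subsection~\ref{subsec:norm} by means of the radical embedding $\iota \colon \A \hookrightarrow \B$ of Lemma~\ref{lem:rad-emb}, and then to compute the rational hull of $\B$ directly. The guiding idea is that passing from $\Rx$ to its fraction field $\KK$ turns $\iota$ into an isomorphism, since $\A$ and $\B$ differ only by an $\mx$-torsion module.

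First I would note that, $\B$ being finitely generated as a module over the local ring $\Rx$ via a structure map into its centre, one has $\mx \B \subseteq \rad \B$, as is immediate from Nakayama's lemma; hence the semisimple ring $\B / \rad \B$ is annihilated by $\mx$ and is in particular a torsion $\Rx$-module. By Lemma~\ref{lem:rad-emb} we have $\rad \B = \iota(\rad \A) \subseteq \iota(\A)$, so the cokernel $C \colonequals \B / \iota(\A)$ is a quotient of $\B / \rad \B$ and is therefore $\mx$-torsion as well. Since $\KK$ is a localisation of $\Rx$, the functor $\KK \otimes_{\Rx} (-)$ is exact and annihilates $C$; applied to the short exact sequence $0 \to \A \xrightarrow{\iota} \B \to C \to 0$ of $\Rx$-modules it yields an isomorphism of $\KK$-algebras $\KK \otimes_{\Rx} \A \xrightarrow{\ \sim\ } \KK \otimes_{\Rx} \B$.

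It then remains to identify $\KK \otimes_{\Rx} \B \cong \prod_{j=1}^{\pc_{\A}} \bigl( \KK \otimes_{\Rx} T_{\ell(j)}(\Rx) \bigr)$. For each $\ell \geq 1$ the inclusion $T_{\ell}(\Rx) \hookrightarrow \Mat_{\ell \times \ell}(\Rx)$ is a ring homomorphism whose cokernel --- the strictly upper-triangular entries read modulo $\mx$ --- is a finite-dimensional $\kk$-vector space, hence $\mx$-torsion; the same flatness argument as above gives $\KK \otimes_{\Rx} T_{\ell}(\Rx) \cong \Mat_{\ell \times \ell}(\KK)$. Therefore $\KK \otimes_{\Rx} \B \cong \prod_{j=1}^{\pc_{\A}} \Mat_{\ell(j) \times \ell(j)}(\KK)$, which is Morita equivalent to $\prod_{j=1}^{\pc_{\A}} \KK = \KK^{\times \pc_{\A}}$; combined with the isomorphism of the previous paragraph this proves the corollary. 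The only point requiring a little care --- and thus the main, if modest, obstacle --- is the first reduction: one needs that the overring $\B$ is obtained from $\A$ by adjoining only elements that become redundant after inverting $x$, which is exactly what the equality $\iota(\rad \A) = \rad \B$ of Lemma~\ref{lem:rad-emb} guarantees.
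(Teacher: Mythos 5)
Your proposal is correct and follows essentially the same route as the paper: both pass through the radical embedding $\iota \colon \A \hookrightarrow \B$ of Lemma~\ref{lem:rad-emb}, observe that its cokernel is $\mx$-torsion (the paper phrases this as having finite length over $\Rx$), and conclude $\KK \otimes_{\Rx} \A \cong \KK \otimes_{\Rx} \B \cong \prod_{j=1}^{\pc_{\A}} \Mat_{\ell(j)\times\ell(j)}(\KK)$, whence the Morita equivalence. Your extra justification that the cokernel is a quotient of $\B/\rad\B$ and hence killed by $\mx$ is a correct elaboration of the step the paper leaves implicit.
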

\begin{proof}
	Since the cokernel of $\iota$ has finite length as an $\Rx$-module, there are $\KK$-algebra isomorphisms $\KK \otimes_{\Rx} \A \cong \KK \otimes_{\Rx} \B \cong \prod_{j=1}^{\pc_{\A}} \Mat_{\ell(j)\times\ell(j)}(\KK)$, which imply the claim.
	\end{proof}

\section{Cartan matrix via graph theory}
\label{sec:cartan}
In this section, we introduce a variation of the notion of an undirected graph by allowing so-called \emph{truncated edges}.
There is a natural notion of a truncated graph $\Gr_{\A}$ associated to a gentle order $\A$, which allows to compute the rank and the determinant of its Cartan matrix. This section is essentially self-contained and requires only a few notions on gentle orders.

\subsection{Truncated graphs}
\label{subsec:trunc-gr}
Throughout this subsection, let $\Gr$ be a \emph{truncated graph} by which we mean a triple $(V,E,\mu)$ comprised from a finite set of vertices $V$, a finite set of edges $E$ 
and a map 
$	\mu \colon E \longrightarrow \left\{m \colon V \to \N_0 \mid 
	\sum_{v \in V} m(v) \in \{1,2\} 
	\right\}.$

An edge $e \in E$ is called \emph{incident} to a vertex $v \in V$ if $\mu(e)(v) \neq 0$. 
For any edge $e \in E$ one of the following cases occurs.
\begin{itemize}
	\item There are distinct vertices $v_1$, $v_2 \in V$ with
	$\mu(e)(v_1)=\mu(e)(v_2) = 1$. In this case, the edge $e$ is called \emph{ordinary}.
\item There is a vertex $v\in V$ with $\mu(e)(v) = 2$.  Then $e$ is called a \emph{loop}.
\item Otherwise, 
$\sum_{v\in V} \mu(e)(v) = 1$.
 This is when the edge $e$ is called \emph{truncated}.
\end{itemize}
To simplify the wording, we have called the datum $\Gr$ a truncated graph also if $\Gr$ has no truncated edges.
The goal of this subsection is to provide formulas for the rank and the determinant of certain matrices associated to the truncated graph $\Gr$.

To define the \emph{edge-vertex incidence matrix} $B_{\Gr}$ of the truncated graph $\Gr$, we need to choose an enumeration of edges 
$E = \{e_1,\ldots e_n\}$ and an enumeration of vertices $V = \{v_1,\ldots, v_p\}$.
For any indices $1\leq i \leq n$ and $1\leq j \leq p$
we set 
\begin{align}
	\label{eq:inc-mat}
	b_{ij} \colonequals \mu(e_i)(v_j) = \begin{cases}
		2 & \text{if $e_i$ is a loop incident to $v_j$},\\
		1 & \text{if $e_i$ is not a loop and $e_i$ is incident to $v_j$},\\
		0 &  \text{otherwise, that is, $e_i$ is not incident to $v_j$}.
	\end{cases}
\end{align}
This prescription defines the matrix $B_{\Gr} = (b_{ij}) \in \Mat_{n \times p}(\Z)$.

\begin{rmk}
	For any matrix $A \in \Mat_{n \times p}(\Z)$ we define $\rk A$ by the rank of the image of 
	the 
	map
	$\ell_A\colon \Z^p \to \Z^n$, $x \mapsto Ax$. 
	Since $\Q \otimes_{\Z} \im(\ell_A)
	\cong \im(\Q^p \overset{A \cdot}{\to} \Q^n)$
	it holds that $\rk A = \rk_{\Q} A$, and thus $\rk A = \rk A^T$.
\end{rmk}
We note that a different enumeration of edges or vertices of $\Gr$ yields another matrix $B'_{\Gr}$ which is related to $B_{\Gr}$ by permutations of rows or permutations of columns. In particular, $B_{\Gr}$ and $B'_{\Gr}$ have the same rank, but their determinants may differ by a sign.


The following statement is an adaptation of a result from graph theory concerning \cite{GR}*{Theorem~8.2.1}.
\begin{prp}\label{prp:rkB}
	Let $\bc(\Gr)$ denote the number of connected components of $\Gr$ which are bipartite graphs without truncated edges.
Then $\ker B_{\Gr} \cong \Z^{\bc(\Gr)}$.
\end{prp}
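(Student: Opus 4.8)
The plan is to describe $\ker B_{\Gr}\subseteq\Z^{p}=\Z^{V}$ completely and then decompose it over the connected components of $\Gr$. First I would unwind the condition $B_{\Gr}x=0$ for $x=(x_v)_{v\in V}\in\Z^{V}$: each edge $e$ contributes the single equation $\sum_{v\in V}\mu(e)(v)\,x_v=0$, and by the case distinction recalled in \eqref{eq:inc-mat} this reads $x_{v_1}+x_{v_2}=0$ when $e$ is an ordinary edge with distinct endpoints $v_1,v_2$, and $x_v=0$ when $e$ is a loop at $v$ (since then $2x_v=0$ in $\Z$) or a truncated edge incident to $v$. Each of these relations involves vertices lying in a single connected component, so with respect to the partition of rows and columns of $B_{\Gr}$ into components the matrix is block diagonal; hence $\ker B_{\Gr}=\bigoplus_{C}\ker B_{C}$, the sum running over the connected components $C$ of $\Gr$ and $B_C$ denoting the incidence matrix of $C$. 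It therefore suffices to show that $\ker B_{C}\cong\Z$ when $C$ is bipartite with no truncated edge, and $\ker B_{C}=0$ in every other case.

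For a single connected component $C$ I would argue in two steps. If $C$ carries a loop or a truncated edge, then some vertex $v_0$ of $C$ is forced to satisfy $x_{v_0}=0$; since a loop or truncated edge is incident to only one vertex, every vertex of $C$ is reached from $v_0$ by a walk using only ordinary edges, and propagating the relation $x_{v'}=-x_v$ along such a walk yields $x\equiv 0$, so $\ker B_{C}=0$ (this is consistent with the statement, as bipartite graphs contain no loops). If instead every edge of $C$ is ordinary, the conditions say precisely that $x$ changes sign along each edge; an odd closed walk then forces $x_{v_0}=-x_{v_0}=0$ at its basepoint and hence, by connectedness, $x\equiv 0$, so $\ker B_{C}=0$ unless $C$ is bipartite. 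Finally, when $C$ is bipartite with parts $V(C)=A\sqcup B$, the assignments $x_v:=t$ for $v\in A$ and $x_v:=-t$ for $v\in B$ (with $t\in\Z$) lie in $\ker B_{C}$, while conversely any kernel element satisfies $x_v=(-1)^{k}x_{v_0}$ for a fixed basepoint $v_0$ and any walk of length $k$ from $v_0$ to $v$, the parity of $k$ being independent of the walk precisely because $C$ is bipartite; hence $x\mapsto x_{v_0}$ is an isomorphism $\ker B_{C}\overset{\sim}{\longrightarrow}\Z$.

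Summing over the components then gives $\ker B_{\Gr}\cong\Z^{\bc(\Gr)}$, where $\bc(\Gr)$ counts exactly the components that are bipartite and free of truncated edges (isolated vertices included). The one genuinely delicate point is the connectivity bookkeeping inside a component of $\Gr$: one must observe that loops and truncated edges are ``unary'', so that the ordinary edges of a component already connect all of its vertices, which is what lets the vanishing of $x$ spread from a single vertex to the whole component and what makes the $\pm1$-assignment on a bipartite component well defined (two walks between the same pair of vertices have the same length parity). The remaining verifications are routine linear algebra over $\Z$.
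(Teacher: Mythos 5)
Your proof is correct and follows essentially the same route as the paper: unwind the equations $B_{\Gr}x=0$ edge by edge (loops and truncated edges force $x_v=0$, ordinary edges force sign alternation along walks), kill the kernel on components with an odd closed walk or a truncated edge, and identify the kernel of a bipartite truncated-edge-free component with $\Z$ via the $\pm 1$ parity assignment. The only cosmetic difference is that you spell out the block decomposition over connected components and the well-definedness of parity, which the paper compresses into the assumption that $\Gr$ is connected.
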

\begin{proof}
	For simplicity, we assume that $\Gr$ is connected. Set $B \colonequals B_{\Gr}$.
	For any $x \in \Z^p$ it holds that $B x=0$ if and only if for any indices $1 \leq i,j\leq p$ it holds that
	$x_i =0 $ in case there is a loop or a truncated edge incident to $v_i$
	and
	$x_i + x_j =0$ in case there is an each edge incident to $v_i$ and $v_j$, or, equivalently, 
	$x_i = (-1)^{\ell} x_j$ for any walk from $v_i$ to $v_j$ of length $\ell$.
	\begin{itemize}[label=--]
		\item If the graph $\Gr$ is not bipartite, there is a cyclic walk of odd length $\ell$ starting at a vertex $v_i$ for certain $1 \leq i \leq p$. Then $x_i = (-1)^{\ell} x_i = 0$.
		Since the graph $\Gr$ is connected, it follows that
		$x = 0$.
		\item If the graph $\Gr$ has a truncated edge,
		there is also an index $1 \leq i \leq p$ with $x_i = 0$, which implies again that $x=0$.
		\item Otherwise the graph $\Gr$ is bipartite without truncated edges. Let $1 \leq i \leq p$. In this case, the lengths of any two walks from vertex $v_i$ to vertex $v_1$ have the same parity. It follows that 
		$x \in \ker B$ if and only if 
		$x_i = x_1$ in case of even parity and $x_i = -x_1$ in case of odd parity. This shows that $\ker B \cong \Z$. \qedhere
	\end{itemize}
\end{proof}

The next proofs require an operation of vertex deletion for truncated graphs. For a vertex $v$ of $\Gr$ let  $\Gr\backslash\{v\}$
 denote the graph obtained from $\Gr$ by removing vertex $v$, removing all truncated edges and loops incident to $v$ and transforming any of the remaining edges incident to $v$ into truncated ones.
\begin{prp}\label{prp:detB}
Let $\Gr$ be a connected, truncated graph with $n = p$.
Then 
\begin{align*}
\lvert\det B_{\Gr}\rvert = \begin{cases}
2 & \text{if $\Gr$ has a cycle of odd length, no truncated edges},\\
1 & \text{if $\Gr$ is a tree without truncated edges}, \\
0 & \text{otherwise}.
\end{cases}
\end{align*}
\end{prp}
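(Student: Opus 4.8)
The plan is to compute $\det B_{\Gr}$ by relating it, up to a routine adjustment on truncated edges, to the classical incidence matrix of a graph and then invoking the standard result that the $\mathbb{Z}$-rank/determinant of the signless incidence matrix of a connected graph on $p$ vertices and $p$ edges distinguishes exactly between an odd cycle (``unicyclic, odd''), a tree-with-extra-structure, and the even-cycle case; see the proof of \cite{GR}*{Theorem~8.2.1}. Concretely, I would argue by induction on the number of truncated edges of $\Gr$, using the vertex-deletion operation $\Gr \backslash \{v\}$ introduced just above the statement: if $e_i$ is a truncated edge incident to $v_j$, then the $i$-th row of $B_{\Gr}$ is the standard basis vector $\pm \mathbf e_j$ (more precisely it has a single nonzero entry $b_{ij}=1$), so expanding $\det B_{\Gr}$ along that row gives $\lvert\det B_{\Gr}\rvert = \lvert\det B_{\Gr'}\rvert$ where $\Gr'$ is the truncated graph on vertex set $V\backslash\{v_j\}$ obtained by deleting row $i$ and column $j$. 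One checks that this combinatorial operation either disconnects $\Gr'$ or reduces to a strictly smaller instance; since $\Gr$ has $n=p$, after deleting a truncated edge we land in the situation $n-1 = p-1$ again, and a connected truncated graph with $n=p$ always has at least one cycle or at least one truncated edge (a connected graph with as many edges as vertices is not a tree), so the induction terminates at the base case of \emph{no} truncated edges.

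Once $\Gr$ has no truncated edges and $n=p$ and is connected, each row of $B_{\Gr}$ has either two $1$'s (ordinary edge) or one $2$ (loop), so $B_{\Gr}$ is the signless incidence matrix of an ordinary multigraph (a loop contributing a row $2\mathbf e_j$). A connected graph on $p$ vertices with $p$ edges is \emph{unicyclic}: it is a spanning tree plus one extra edge, and the unique cycle has some length $g$. I would then split into the two surviving cases. If the unique cycle has odd length (this includes the loop case $g=1$, giving the row $2\mathbf e_j$): here $\ker B_{\Gr} = 0$ by Proposition~\ref{prp:rkB} (no truncated edges, not bipartite), so $B_{\Gr}$ is invertible over $\mathbb Q$; to pin down $\lvert\det\rvert = 2$, peel off the pendant vertices one at a time (a pendant vertex of $\Gr$ is incident to a single ordinary edge, whose row is $\mathbf e_j + \mathbf e_{j'}$ — expand along the corresponding column to reduce to the graph with that pendant edge/vertex removed, which does not change $\lvert\det\rvert$ and preserves connectedness and the odd cycle), reducing to the case where $\Gr$ is precisely an odd cycle $C_g$; then $\lvert\det B_{C_g}\rvert$ is the classical evaluation, equal to $|1-(-1)^g| = 2$ for $g$ odd (and $|1-(-1)^g|=0$ for $g$ even, handled below). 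If the unique cycle has even length, $\Gr$ is bipartite with no truncated edges, so $\ker B_{\Gr} \cong \mathbb Z$ by Proposition~\ref{prp:rkB}, hence $B_{\Gr}$ is singular and $\det B_{\Gr} = 0$. Finally, the ``tree without truncated edges'' case cannot occur when $n=p$ and $\Gr$ is connected — a tree has $p-1$ edges — so that line of the formula is vacuous here; I would remark this (or, if the intended reading allows $\Gr$ to be built up from trees plus truncated edges, note that the inductive step reduces precisely to it, with the $\lvert\det\rvert = 1$ value coming from repeatedly stripping pendant edges down to a single vertex).

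The main obstacle I anticipate is bookkeeping the vertex-deletion / row-column-expansion steps so that connectedness and the ``$n=p$'' balance are correctly maintained at each stage, and making sure the odd-cycle evaluation $\lvert\det B_{C_g}\rvert = 2$ is invoked cleanly rather than recomputed; everything else is the standard incidence-matrix argument of \cite{GR}*{Chapter~8} adapted to allow the extra truncated rows, each of which behaves like a unit coordinate vector and hence contributes a factor $\pm 1$ to the determinant.
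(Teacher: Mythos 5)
Your two reduction devices are sound and close to what the paper does: expanding along the unit row of a truncated edge is exactly the row reduction behind the vertex deletion $\Gr\backslash\{v\}$ used in the paper's tree case, and your evaluation of the odd unicyclic case by stripping pendant vertices down to the cycle $C_g$ and computing $\lvert\det B_{C_g}\rvert=\lvert 1-(-1)^g\rvert=2$ is a legitimate (arguably cleaner) alternative to the paper's Laplace expansion into two tree-with-truncated-edge minors followed by the rank argument from Proposition~\ref{prp:rkB}. Two bookkeeping points: deleting a truncated edge and its vertex can \emph{create} new truncated edges, so your induction cannot be on the number of truncated edges but only on the total size; and in the disconnected case you must observe that if some diagonal block is non-square the determinant vanishes, so only the all-blocks-square situation survives.

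The genuine gap is the ``$0$ otherwise'' clause, and with it the $\lvert\det\rvert=1$ case. A tree with a single truncated edge satisfies $n=p$ and has determinant $\pm1$ -- your own stripping procedure shows this -- so the literal reading you entertain (middle case vacuous, every graph with a truncated edge falling under ``otherwise'' with determinant $0$) is actually false; the case the paper's proof establishes, and then uses in Corollary~\ref{cor:rk-det}, is ``tree with a unique truncated edge''. Your proposal hedges between the two readings and proves neither: you never incorporate the $\pm1$ value into the statement, and you never show that no other configuration with truncated edges can contribute a nonzero determinant. The paper closes exactly this point with a short counting argument: if $\det B_{\Gr}\neq 0$, then $\bc(\Gr)=0$, so Proposition~\ref{prp:rkB} gives $\rk B_{\Gr}=p=n=b+g$ (with $b$ the number of truncated edges and $g$ the rest), while connectivity forces $g\geq p-1$, hence $b\leq 1$; thus the only graphs with nonzero determinant are the odd unicyclic ones ($b=0$) and the trees with one truncated edge ($b=1$), and everything else gives $0$. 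You need this step, or an equivalent classification of connected truncated graphs with $n=p$, to justify the ``otherwise'' line; without it the proof is incomplete.
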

\begin{proof}
	Let $b$ denote the number of truncated edges in $\Gr$. Set $g \colonequals n-b$ and $B \colonequals B_{\Gr}$.
\begin{enumerate}
\item Assume that $\Gr$ is a tree with a unique truncated edge, enumerated as $e_1$.
\begin{itemize}[label=--]
\item If $g = 0$, it holds that $\det B=1$.
\item In case $g \neq 0$, let $v_1$ denote the vertex incident to $e_1$.
Subtracting the first row of the matrix $B$ from each other row with a non-zero entry in the first column 
yields a block-diagonal matrix, which shows that $\det B = \det B_{\Gr\backslash\{v_1\}}$.
Since each connected component of $\Gr\backslash\{v\}$ is a tree with a unique truncated edge, we may iterate this procedure until all proper edges have been removed. Therefore, it follows that $\det B = 1$ as well.
\end{itemize}
\item Assume that $\Gr$ has a cycle of odd length $\ell$,
$b = 0$ and $n = p$.
We choose an enumeration of $V$ and $E$  such that
\begin{align*}
\begin{td}
	v_1 \ar[-]{r}{e_1} \& v_2 \ar[-]{r}{e_2}\& v_3  \ar[densely dotted,-]{rr}\& \&  v_{\ell} \ar[-]{r}{e_{\ell}} \&  v_1\end{td}
\end{align*}   
forms an odd cycle. Since $n = p$, removing any edge $e_i$ with $1 \leq i \leq \ell$ in  $\Gr$ yields a tree.
\begin{itemize}[label=--]
\item Assume that $\ell = 1$, that is, there is a loop at $v_1$.
Viewing $B$ as a matrix over $\Q$
and using that each connected component of  $B_{\Gr\backslash\{v_1\}}$ is a tree with a unique truncated edge, we may use elementary row transformations
to conclude that $\det B = 2 \det B_{\Gr\backslash\{v_1\}} = 2$.
\item If $\ell > 1$, the graph $\Gr$ has no loops
and Laplace expansion of the first row 
yields that
$\det B = \det B_{11} - \det B_{12}$.
Each of the minors $B_{11}$
and $B_{12}$ is the incidence matrix
of a tree with a unique truncated edge, which implies that
$\lvert \det B \rvert \in \{0,2\}$.
Since $\bc(\Gr) = 0$, Proposition~\ref{prp:rkB} yields that $\rk B = p = n$, and thus $\lvert \det B \rvert =2$.
\end{itemize}
In both cases it follows that $\lvert \det B \rvert = 2$.
\item Assume that $\det B \neq 0$. 
Since $n = p$, Proposition~\ref{prp:rkB}  implies that $\bc(\Gr) = 0$.
So $p = \rk B = n = b + g \geq b + p-1$, which is equivalent to $1 \geq b$. Thus there are only the two possibilities
$b \in \{1,0\}$,  
which correspond to the previous two cases.
This shows that $\det B = 0$ if none of these two cases occurs. \qedhere
\end{enumerate} 
\end{proof}
For the truncated graph $\Gr$ 
the \emph{signless edge-based Laplacian matrix} is 
defined by the symmetric matrix
 $A_{\Gr} \colonequals B B^T \in \Mat_{n \times n}(\Z)$.

\begin{cor}\label{cor:rk-det}
For any connected truncated ribbon graph $\Gr$ with $p$ vertices and $n$ edges the rank and the determinant of the matrix $A_{\Gr}$
are given by
\begin{align}
\label{eq:rk}
\rk A_{\Gr} &= p - \bc(\Gr),\\
\label{eq:det}
\det (A_{\Gr}) &= \begin{cases}
n+1 & \text{if $\Gr$ is a tree
	without truncated edges} \\
4 &  \parbox[t]{.6\textwidth}{if $\Gr$ has a cycle of odd length, no truncated edges  and as many vertices as edges,} 
\\
1 & \text{if $\Gr$ is a tree with a unique truncated edge} \\
0 & \text{otherwise}.
\end{cases}
\end{align}  
\end{cor}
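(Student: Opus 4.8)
The plan is to derive both formulas from Propositions~\ref{prp:rkB} and \ref{prp:detB} together with the Cauchy--Binet formula, treating the rank and the determinant in turn; write $B \colonequals B_{\Gr}$, so that $A_{\Gr} = B B^{T}$ is an $n \times n$ matrix while $B$ is $n \times p$. For \eqref{eq:rk} I would first record the elementary identity $\rk_{\Q}(B B^{T}) = \rk_{\Q} B$: if $B B^{T} x = 0$ then $x^{T} B B^{T} x$, which is the sum of squares of the entries of $B^{T} x$, vanishes, so $B^{T} x = 0$; hence $\ker A_{\Gr} = \ker B^{T}$ and $\rk A_{\Gr} = \rk B^{T} = \rk B$, which by the Remark after \eqref{eq:inc-mat} coincides with the integral rank. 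Proposition~\ref{prp:rkB} gives $\ker B \cong \Z^{\bc(\Gr)}$, so rank--nullity over $\Q$ yields $\rk B = p - \bc(\Gr)$, establishing \eqref{eq:rk}.

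For \eqref{eq:det}, note that the $n \times n$ matrix $A_{\Gr}$ has rank $p - \bc(\Gr)$, so $\det A_{\Gr} = 0$ unless $n = p - \bc(\Gr)$. Since $\Gr$ is connected, its ordinary edges alone connect all $p$ vertices, whence $n \ge p - 1$; therefore $\det A_{\Gr} = 0$ whenever $n \ge p + 1$, and in the remaining cases $\bc(\Gr) = p - n \in \{0,1\}$. If $n = p - 1$, then $\Gr$ is a tree with only ordinary edges, hence bipartite with $\bc(\Gr) = 1$, and Cauchy--Binet applied to $B$ gives $\det A_{\Gr} = \sum_{v \in V}\bigl(\det B_{\Gr \setminus \{v\}}\bigr)^{2}$, since deleting the column of $v$ from $B$ yields precisely the incidence matrix of $\Gr \setminus \{v\}$. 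For a vertex $v$ of degree $d$, the graph $\Gr \setminus \{v\}$ is a disjoint union of $d$ trees, each carrying exactly one truncated edge, namely the former $v$-edge; applying Proposition~\ref{prp:detB} componentwise and multiplying gives $\lvert \det B_{\Gr \setminus \{v\}} \rvert = 1$, so the $p$ unit terms sum to $\det A_{\Gr} = p = n + 1$.

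If instead $n = p$, then $B$ is square and $\det A_{\Gr} = (\det B)^{2}$. A connected truncated graph with $n = p$ is unicyclic with only ordinary edges, or a tree with a single loop, or a tree with a single truncated edge. By Proposition~\ref{prp:detB}, $\lvert \det B \rvert = 2$ in the first two cases when the cycle (respectively the loop) is of odd length, giving $\det A_{\Gr} = 4$, and $\lvert \det B \rvert = 1$ in the last, giving $\det A_{\Gr} = 1$; if the unique cycle is even, the graph is bipartite with no truncated edges, so $\bc(\Gr) = 1$, $\rk A_{\Gr} = p - 1 < n$, and $\det A_{\Gr} = 0$. Collecting the subcases reproduces \eqref{eq:det}.

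I expect the tree case to be the main technical point: one must verify that column deletion in $B$ literally realizes the graph operation $\Gr \setminus \{v\}$ introduced before Proposition~\ref{prp:detB}, that Proposition~\ref{prp:detB} may be applied to the disconnected graph $\Gr \setminus \{v\}$ through its block decomposition, and that each of the resulting tree components acquires exactly one truncated edge. A subsidiary point is the enumeration of connected truncated graphs with $n \le p$ that shows the four cases of \eqref{eq:det} are exhaustive and mutually exclusive, along with the convention --- consistent with the proof of Proposition~\ref{prp:detB} --- that a loop counts as a cycle of odd length.
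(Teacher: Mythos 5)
Your proposal is correct and takes essentially the same route as the paper's proof: the rank formula via $\rk(BB^{T})=\rk B$ together with Proposition~\ref{prp:rkB}, the tree case via Cauchy--Binet with Proposition~\ref{prp:detB} applied to the components of $\Gr\backslash\{v\}$ (each a tree with a unique truncated edge), the $n=p$ cases via $(\det B)^{2}$ and Proposition~\ref{prp:detB}, and rank considerations for the vanishing cases. The only difference is organizational: you enumerate the connected truncated graphs with $n\le p$ explicitly (including the even-cycle subcase), whereas the paper argues contrapositively from $\det A_{\Gr}\neq 0$; both settle the same case distinctions.
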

\begin{proof}
We set $B \colonequals B_{\Gr}$ and $C \colonequals A_{\Gr}$.
Since
$\rk C = \rk B^T = \rk B$, the first claim follows from Proposition~\ref{prp:rkB}.
\begin{itemize}[label=--]
\item Assume that $\Gr$ is a tree without truncated edges.
In this case, $n = g = p -1$.
For each index $1 \leq j \leq p$
let $B_j$ denote the square matrix obtained from $B$ by deleting column $j$. Each matrix $B_j$ corresponds to the incidence matrix of the graph $\Gr\backslash\{v_j\}$, which is given by a union of trees, each one of which has a unique truncated edge.
Therefore,  Proposition~\ref{prp:detB} yields that  $\det B_j = 1$. 
By the Cauchy-Binet formula it follows that
$\det C = \det (B B^T )= \sum_{j=1}^p (\det B_j)^2 = 
 n+1.$
\item In the second and third case in \eqref{eq:det}, it holds that $n = p$ and 
$\det C = (\det B)^2 =4$ respectively $1$ by Proposition~\ref{prp:detB}.
\item Assume that $\det C \neq 0$. 
\begin{itemize}[label=--]
\item If $n = p$, it follows that $\det B \neq 0$, which leads to the second or the third case.
\item Assume that $n \neq p$.
Since $n = \rk C = p - \bc(\Gr)$, it follows that $\bc(\Gr) = 1$, which corresponds to the first case.
\end{itemize}
This shows that $\det C = 0$ if none of these three cases occurs. \qedhere
\end{itemize} 
\end{proof}

\subsection{Application to the Cartan matrix of a gentle order}

Let $\A$ be a gentle order $\A$ such that its underlying quiver $(Q,I)$ is connected.
The gentle order $\A$  gives rise to a truncated graph $\Gr=\Gr_{\A}$ as follows.

Let $\alpha_1, \alpha_2, \ldots, \alpha_p$ denote a complete set of representatives of $\sigma$-orbits in $Q_1$
and $\{1,2,\ldots, n\}$ an enumeration of the vertices $Q_0$.
Then $\Gr = (V,E,\mu)$ with $V \colonequals \{ v_1, \ldots v_p\}$, $E \colonequals \{ e_1, \ldots e_n\}$
and 
$\mu(e_i)(v_j)$ is given by the number of arrows in $\beta \in Q_1$ such that $s(\beta) = i$
and $\beta \sim_{\sigma} \alpha_j$
for any indices $1 \leq i \leq n$ and $1 \leq j \leq p$.

Since $(Q,I)$ was assumed to be connected, the graph $\Gr$ is connected as well.
There is an equality $\bc_{\A} = \bc(\Gr)$
of the parameters which were introduced in
\ref{eq:bc}
and Proposition~\ref{prp:rkB}.

We may now view relate
Cartan matrix $C_{\A}$ introduced in Definition~\ref{dfn:Cartan}
to the matrix $A_{\Gr}$.

\begin{prp}\label{prp:rkC}
	For any gentle order $\A$ and its truncated graph $\Gr = \Gr_{\A}$ it holds that $C_{\A} = A_{\Gr}$.
\end{prp}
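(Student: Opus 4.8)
The plan is to compute both matrices entrywise and check they agree. Fix the enumeration $\{1,\dots,n\}$ of $Q_0$ and a complete set $\alpha_1,\dots,\alpha_p$ of representatives of $\sigma$-orbits in $Q_1$, so that $V=\{v_1,\dots,v_p\}$ indexes permitted cycles up to rotation and $E=\{e_1,\dots,e_n\}$ indexes vertices of $Q$. By Corollary~\ref{cor:rk-det}'s setup, $A_{\Gr}=B_{\Gr}B_{\Gr}^T$, so $(A_{\Gr})_{ij}=\sum_{k=1}^p b_{ik}b_{jk}$ where $b_{ik}=\mu(e_i)(v_k)$ counts the arrows $\beta\in Q_1$ with $s(\beta)=i$ lying in the $\sigma$-orbit of $\alpha_k$. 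On the other side, $(C_{\A})_{ij}=\rk_{\Rx}(e_i\A e_j)$. The key is therefore to identify an $\Rx$-basis of $e_i\A e_j$ and count it.

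First I would recall that $\A$ is the arrow-ideal completion of $\kk Q/I$, and that a gentle order has an $\Rx=\kk\llbracket x\rrbracket$-structure sending $x$ to the sum of all permitted cycles. Since every arrow of $Q$ lies on a permitted cycle (Lemma~\ref{lem:go}\eqref{go3}), $\A$ is free over $\Rx$ and a non-zero path $p$ with $s(p)=i$, $t(p)=j$ is, up to the $\Rx$-action, determined by a ``minimal'' path together with how many times it wraps around permitted cycles; more precisely, multiplication by $x$ acts on the set of non-zero paths from $i$ to $j$ by appending permitted cycles, and an $\Rx$-basis of $e_i\A e_j$ is given by the non-zero paths from $i$ to $j$ that are not of the form $x\cdot(\text{path})$, i.e.\ cannot be shortened by peeling off a permitted cycle at the source. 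So I would show $(C_{\A})_{ij}$ equals the number of such ``reduced'' non-zero paths from $i$ to $j$.

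Next I would set up the bijection between reduced non-zero paths $i\to j$ and pairs of arrows $(\beta,\gamma)$ with $s(\beta)=i$, $s(\gamma)=j$, and $\beta\sim_\sigma\gamma$. The gentle hypothesis forces a very rigid structure: a non-zero path $q=\alpha_m\cdots\alpha_1$ is an initial segment of the (unique) permitted cycle $pc(\alpha_1)$ determined by its first arrow $\alpha_1$, since at each transition or crossing vertex there is a unique way to continue a non-zero path. Hence a reduced non-zero path from $i$ to $j$ is the same as: a choice of permitted cycle $c$ passing through both $i$ and $j$, together with the segment of $c$ running from an occurrence of $i$ to the next occurrence of $j$; this segment starts with the unique arrow $\beta$ of $c$ with $s(\beta)=i$ and ends just before the unique arrow $\gamma$ of $c$ with $s(\gamma)=j$, and $\beta,\gamma$ lie in the same $\sigma$-orbit precisely because they lie on the same permitted cycle. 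Conversely every such pair $(\beta,\gamma)$ gives one reduced path. Counting: the number of reduced paths $i\to j$ is $\sum_k (\#\{\beta: s(\beta)=i,\ \beta\sim_\sigma\alpha_k\})\cdot(\#\{\gamma: s(\gamma)=j,\ \gamma\sim_\sigma\alpha_k\})=\sum_k b_{ik}b_{jk}=(A_{\Gr})_{ij}$, which is exactly what we want.

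The main obstacle I anticipate is the bookkeeping in the ``reduced path'' claim, specifically handling the edge cases cleanly: (a) loops in $\Gr$, corresponding to a permitted cycle of length $1$ (a loop $\alpha$ at a vertex with $\alpha^2\notin I$), where $b_{ik}=2$ and the diagonal entry $(C_{\A})_{ii}$ picks up the contribution $4$ from the power series in that loop — one must check the two ``occurrences'' of the loop-arrow really are counted with multiplicity two and that there is no over- or under-counting of the trivial path $e_i$ versus the generator $x$; (b) the possibility that $i=j$, where the trivial idempotent path $e_i$ must be accounted for — it corresponds to the pair $(\beta,\beta)$ where $\beta$ is any arrow out of $i$, giving the reduced path of length $0$; and (c) making precise, using the gentle conditions around transition and crossing vertices in \eqref{eq:vert}, that a non-zero path is genuinely determined by its first arrow and that ``peeling a permitted cycle'' is the only way a non-zero path can fail to be reduced. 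Once these local analyses are in place the equality $C_{\A}=A_{\Gr}$ follows by comparing the two entrywise formulas. I would also remark that $\rk_{\Rx}$ is well-defined here because $\A$ is $\Rx$-free, so counting a basis is legitimate.
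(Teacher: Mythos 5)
Your strategy is genuinely different from the paper's: you compute $\rk_{\Rx}(e_i\A e_j)$ directly by path combinatorics, whereas the paper embeds $\A$ into its hereditary overorder via Lemma~\ref{lem:rad-emb}, identifies the incidence matrix $B_{\Gr}$ with the decomposition matrix recording $\KK\otimes_{\Rx}P_i\cong\bigoplus_k V_k^{b_{ik}}$, and then quotes $C_{\A}=D_{\A}D_{\A}^T$ for algebras with split semisimple top and rational hull. Your route is viable, and your off-diagonal analysis is correct: for $i\neq j$ every non-zero path is determined by its first arrow, and the minimal non-zero paths, one for each pair $(\beta,\gamma)$ with $s(\beta)=i$, $s(\gamma)=j$, $\beta\sim_\sigma\gamma$, indeed form an $\Rx$-basis of the relevant Hom-space. (The fact that with right-to-left composition $e_i\A e_j$ consists of paths from $j$ to $i$ rather than $i$ to $j$ is harmless, since the count is symmetric.)

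However, your key claim fails exactly at the place you flagged: the diagonal entries at crossing vertices. If $i$ is a crossing vertex with outgoing arrows $\beta_1\neq\beta_2$, then $x e_i = pc(\beta_1)+pc(\beta_2)$ is a sum of two based permitted cycles and not a single path, so each $pc(\beta_t)$ is ``reduced'' in your sense (not of the form $x\cdot p$). Consequently your list of reduced paths in $e_i\A e_i$ is one element too long: when the two cycles lie in distinct $\sigma$-orbits the reduced paths are $e_i$, $pc(\beta_1)$, $pc(\beta_2)$, while $\rk_{\Rx}e_i\A e_i = 2 = (A_{\Gr})_{ii}$, because the relation $x e_i = pc(\beta_1)+pc(\beta_2)$ destroys $\Rx$-independence; in the loop case you get five reduced paths against rank $4$. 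For the same reason the proposed bijection with pairs cannot send the trivial path $e_i$ to ``any'' diagonal pair $(\beta,\beta)$: both diagonal pairs already have the full based cycles as their minimal segments, so either two reduced paths hit the same pair or one reduced path is left unmatched. The statement you actually need is that $\{e_i\}\cup\{p_{(\beta,\gamma)}\}$ generates $e_i\A e_i$ subject to the single relation $x e_i=\sum_{\beta\colon s(\beta)=i}pc(\beta)$, so that the rank equals the number of pairs (at a transition vertex this relation just says $pc(\beta)=xe_i$, which is why your count is correct there). With that correction — or, more cheaply, by tensoring with $\KK$ and counting multiplicities of simple $\KK\otimes_{\Rx}\A$-modules in $\KK\otimes_{\Rx}P_i$, which is in effect what the paper does — your argument goes through.
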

\begin{proof}
Let $\iota\colon \A \to \B$
denote the monomorphism from Lemma~\ref{lem:rad-emb}.
Let $1 \leq j \leq p$ and $L_j \colonequals \A \alpha_{j}$. 
In the notation of Subsection~\ref{subsec:norm}, the indecomposable projective $\B$-module $\B \otimes_{\A} L_{\alpha}$ lies in the $j$-th ring factor of the overring $\B$. Accordingly, $V_j \colonequals \KK \otimes_{\Rx} L_{j}$ denotes the $j$-th simple module over $\KK \otimes_{\Rx} \A$.
By the considerations above, there are natural bijections
\begin{align*}
\begin{array}{l} E \overset{\sim}{\longrightarrow} \ind\proj \A,   
\quad
e_i \longmapsto  P_i
\end{array}
&&
\begin{array}{l} V \overset{\sim}{\longrightarrow} \simp \KK \otimes_{\Rx} \A,  
\quad
v_j \longmapsto  V_{j}.
\end{array}
\end{align*}
For any index $1 \leq i \leq n$ we consider the decomposition of $\KK \otimes_{\Rx} P_i$.
\begin{itemize}[label=--]
\item If $e_i$ is an ordinary edge incident to distinct vertices $v_{j}$ and $v_{k}$, then $\iota(e_i)$ is a sum of two primitive idempotents, which appear in components $\B_{j}$ and $\B_{k}$ such that $j \neq k$, and thus
$\KK \otimes_{\Rx} P_i \cong V_{j} \oplus V_{k}$ decomposes in two non-isomorphic simple modules.
\item If $e_i$ is a loop incident to a vertex $v_{j}$, then 
$\iota(e_i)$ is a sum of two primitive idempotents, which both appear in the component $\B_{j}$, and thus $\KK \otimes_{\Rx} P_i \cong V_{j}^2$.
\item Otherwise, $e_i$ is a truncated edge incident to a vertex $v_j$, then the idempotent $\iota(e_i)$ is primitive as well, and appears in component $\B_{j}$, which yields that $\KK \otimes_{\Rx} P_i \cong V_{j}$ is simple.
\end{itemize}
For any index $1\leq i \leq n$ it follows that
$\KK \otimes_{\Rx} P_i \cong V_1^{b_{i1}} \oplus V_2^{b_{i2}} \ldots \oplus V_p^{b_{ip}}$
with $b_{ij}$ defined via \eqref{eq:inc-mat}.
In particular, the incidence matrix $B_{\Gr}$ coincides with the \emph{decomposition matrix} 
$D_{\A} \in \Mat_{n \times m}(\Z)$
of the order $\A$.
Since the algebras $\A/\rad\A$ and $K\otimes_{\Rx} \A$ are split semisimple, 
\cite{Zimmermann}*{Proposition~2.6.7} implies that $C_{\A} = D_{\A} D^T_{\A}$, and thus the claim holds.
\end{proof}

\begin{cor}\label{cor:rk-det2}
	For any gentle order $\A$ it holds that $\rk C_{\A} = \pc_{\A} - \bc_{\A}$. Moreover, $\det C_{\A}$ is given by 
	\eqref{eq:det}.
	\end{cor}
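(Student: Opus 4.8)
The plan is to deduce the corollary by concatenating Proposition~\ref{prp:rkC} with Corollary~\ref{cor:rk-det}, after matching the graph-theoretic parameters to the invariants of $\A$. First I would invoke Proposition~\ref{prp:rkC} to identify the Cartan matrix $C_{\A}$ with the signless edge-based Laplacian $A_{\Gr}$ of the associated truncated graph $\Gr = \Gr_{\A}$. Since the underlying quiver $(Q,I)$ of $\A$ is assumed connected, the graph $\Gr$ is connected as well, so Corollary~\ref{cor:rk-det} (stated for connected truncated graphs) applies to $\Gr$.

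Next I would read off the bookkeeping identifications from the definition of $\Gr_{\A}$: its vertex set $\{v_1,\dots,v_p\}$ is indexed by a complete set of representatives of the $\sigma$-orbits in $Q_1$, so that $p = \lvert Q_1/\!\sim_{\sigma}\rvert = \pc_{\A}$, and its edge set $\{e_1,\dots,e_n\}$ is indexed by $Q_0$, so that $n = \lvert Q_0\rvert$; moreover, as recorded in the discussion above, one has $\bc(\Gr) = \bc_{\A}$. Substituting these into \eqref{eq:rk} of Corollary~\ref{cor:rk-det} yields $\rk C_{\A} = \rk A_{\Gr} = p - \bc(\Gr) = \pc_{\A} - \bc_{\A}$, and substituting into \eqref{eq:det} gives $\det C_{\A} = \det A_{\Gr}$, that is, the value prescribed there with $\Gr = \Gr_{\A}$ and $n = \lvert Q_0\rvert$.

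I do not anticipate a genuine obstacle here: the argument is a direct chain of earlier results, and the only point requiring care is the bookkeeping — in particular, checking that the permutation $\sigma$ used in the construction of $\Gr_{\A}$ is the same one whose orbits count $\pc_{\A}$ in Subsection~\ref{subsec:inv}, and that the graph-theoretic bipartiteness condition defining $\bc(\Gr)$ in Proposition~\ref{prp:rkB} genuinely matches the bicolorability of $(Q,I)$ from \eqref{eq:bc}; both are immediate from the relevant definitions. One could, in addition, translate the four cases of \eqref{eq:det} into purely quiver-theoretic terms (describing when $\Gr_{\A}$ is a tree, has an odd cycle, or carries a truncated edge), but that refinement is not part of the statement and fits more naturally with the later treatment of translations between the various invariants.
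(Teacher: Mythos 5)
Your proposal is correct and coincides with the paper's argument, which likewise deduces the corollary directly from Proposition~\ref{prp:rkC} and Corollary~\ref{cor:rk-det}; the parameter identifications $p=\pc_{\A}$, $n=\lvert Q_0\rvert$ and $\bc(\Gr)=\bc_{\A}$ you check are exactly the bookkeeping already recorded in the construction of $\Gr_{\A}$.
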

\begin{proof}
	These claims follow from Proposition~\ref{prp:rkC} and Corollary~\ref{cor:rk-det}.
	\end{proof} 

	For the subclass of ribbon graph orders,
the statements of the last corollary
	are equivalent to corresponding results of Antipov~\cites{Antipov1, Antipov2} specialized to multiplicity-free Brauer graph algebras.


\section{A factorization of the Serre functor}
\label{sec:factor}
From now on, we fix a gentle order $\A$. 
The goal of this section is to prove Theorem~\ref{thm:A} on the factorization of the relative Serre functor.

\subsection{Serre duality for gentle orders}

To recall an explicit description of the derived Nakayama functor from \cite{GIK} we consider a certain automorphism $\inv$ of the gentle order $\A$.
\begin{dfn}\label{dfn:inv}
	We define a $\kk$-algebra involution $\inv \colon \A \to \A$ by 
	$\inv(e_i) = e_i$ for any $i \in Q_0$ and the following prescription on arrows.
	\begin{itemize}[label=--]
		\item If the base field $\kk$ has characteristic two, we set $\inv(\alpha) \colonequals \alpha$ for any $\alpha \in Q_1$.
		\item Assume that $\kk$ has characteristic other than two.
		First, we need to choose a map $\sgn \colon Q_1 \to \{-1,1\}$
		such that $\sgn(\alpha) \neq \sgn(\beta)$ for any two arrows $\alpha \neq \beta$ in $Q_1$ with $s(\alpha) = s(\beta)$.
		For each arrow $\alpha \in Q_1$ we set $\inv(\alpha) \colonequals \sgn(\sigma(\alpha)) \, \sgn(\alpha)\, \alpha$
		where $\sigma(\alpha)$ denotes the unique arrow in $Q_1$ such that $\sigma(\alpha) \alpha \notin I$.
		
	\end{itemize}
\end{dfn}
Let $\mathstrut_{\inv} \A$ 
denote the $\A$-bimodule with underlying set $\A$, the regular right $\A$-module structure and the left $\A$-module structure given by 
$a \cdot x \colonequals \inv(a) x$ for any $a\in\A$ and $x\in\mathstrut_{\inv}\A$.
Let $\mathstrut_{\inv} \A^\circ$ denote the subbimodule generated by the idempotents $e_i$ with $i \in Q_0^c$ and arrows $\alpha \in Q_1$  such that $s(\alpha) \in Q_0^t$.
We recall that the canonical bimodule  $\omega \colonequals \Hom_{\Rx}(\A,\Rx)$ gives rise to the derived Nakayama functor $\nu$, which restricts to an auto-equivalence of $\D^-(\md \A)$.

\begin{thm}[\cite{GIK}*{Theorem~9.7, Corollary~9.8}]\label{thm:SD}
	There is an isomorphism of $\A$-bimodules $\mathstrut_{\inv}\A^{\circ} \cong \omega$. 
	In particular, 
		for any arrow  $\alpha \colon i  \to j $ in $Q$ it holds that
	\begin{align*}
\begin{td}
			(P_{j} \ar{r}{\cdot \alpha} \& P_{i}) \ar[mapsto]{r}[font=\normalsize]{\nu}\&
			(L_{j} \ar{r}{\cdot \inv(\alpha)} \& L_{i})
		\end{td}&&
	\text{with}\quad		L_j \colonequals \begin{cases}
		\rad P_j & \text{if }j \in Q_0^t,\\
		P_j & \text{if }j \in Q_0^c.
	\end{cases}
	\end{align*}
\end{thm}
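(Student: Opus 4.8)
The plan is to establish the $\A$-bimodule isomorphism $\mathstrut_{\inv}\A^{\circ}\cong\omega$ first, and then to deduce the action of the derived Nakayama functor $\nu=\omega\lotimes\blank$ on the two-term complex $(P_{j}\xrightarrow{\cdot\alpha}P_{i})$ from it. The bimodule isomorphism is \cite{GIK}*{Theorem~9.7}, whose proof carries over here essentially verbatim once the polynomial ring $\kk[x]$ is replaced by the power series ring $\Rx=\kk\llbracket x\rrbracket$, in the same way as in the proof of Lemma~\ref{lem:go}.

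For orientation, let me recall what that argument does. Since every arrow of $Q$ lies on a permitted cycle, $\A$ is a free $\Rx$-module with an explicit basis of path representatives --- one per $\Rx$-orbit, where $x$ acts by prepending a permitted cycle --- and the dual $\Rx$-basis spans $\omega=\Hom_{\Rx}(\A,\Rx)$. Expressing the left and right $\A$-actions on this dual basis back in terms of path representatives exhibits $\omega$ as isomorphic, as an $\A$-bimodule, to $\mathstrut_{\inv}\A^{\circ}$: the sign automorphism $\inv$ of Definition~\ref{dfn:inv} enters precisely because the left action comes out twisted when compared with the action on $\A$ (this twist disappears in characteristic two), and bijectivity is a rank count, using that each $P_{j}/\rad P_{j}$ is $\Rx$-torsion so that $\mathstrut_{\inv}\A^{\circ}$, even though it need not be all of $\mathstrut_{\inv}\A$, has full $\Rx$-rank. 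I expect this sign bookkeeping --- verifying that $\sgn$ is exactly what makes the left and right module structures compatible --- to be the main obstacle; the remaining steps are formal.

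Granting the bimodule isomorphism, set $N_{j}\colonequals\mathstrut_{\inv}\A^{\circ}e_{j}$. Since $P_{j}=\A e_{j}$ is projective and $\omega$ is an honest bimodule, $\nu$ sends $(P_{j}\xrightarrow{\cdot\alpha}P_{i})$ to $(\omega\otimes_{\A}\A e_{j}\to\omega\otimes_{\A}\A e_{i})$, and under $\omega\cong\mathstrut_{\inv}\A^{\circ}$ together with the canonical identifications $\omega\otimes_{\A}\A e_{j}\cong\omega e_{j}\cong N_{j}$ this becomes right multiplication by $\alpha$ from $N_{j}$ to $N_{i}$. Inspecting the generators of $\mathstrut_{\inv}\A^{\circ}$ identifies $N_{j}$ with $L_{j}$ as a left $\A$-module: for $j\in Q_{0}^{c}$ the idempotent $e_{j}$ is itself a generator, so $N_{j}=\A e_{j}=P_{j}$; for $j\in Q_{0}^{t}$ the unique arrow $\beta$ starting at $j$ is a generator, every path of length $\geq 1$ out of $j$ factors through $\beta$ so that $\A\beta=\rad P_{j}$, and no generator reaches $e_{j}$ without traversing an arrow, whence $N_{j}=\rad P_{j}=L_{j}$. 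Since restriction of scalars along $\inv$ does not alter underlying maps, the complex $(N_{j}\xrightarrow{\cdot\alpha}N_{i})$ is precisely $\inv^{*}\!\big(L_{j}\xrightarrow{\cdot\alpha}L_{i}\big)$, where $\inv^{*}$ denotes restriction of scalars along $\inv$.

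It then remains to move the twist into the differential. Since $\inv$ fixes every primitive idempotent and merely rescales arrows, it stabilizes each $L_{j}$ and, using $\inv^{2}=\id$, defines an isomorphism of left $\A$-modules $\inv\colon\inv^{*}L_{j}\xrightarrow{\sim}L_{j}$. Transporting the differential $\cdot\alpha$ of $\inv^{*}(L_{j}\to L_{i})$ along these isomorphisms and using that $\inv$ is a ring homomorphism, the differential becomes $x\mapsto\inv\big(\inv(x)\,\alpha\big)=x\,\inv(\alpha)$, that is, $\cdot\inv(\alpha)\colon L_{j}\to L_{i}$. Therefore $\nu(P_{j}\xrightarrow{\cdot\alpha}P_{i})\cong(L_{j}\xrightarrow{\cdot\inv(\alpha)}L_{i})$, which is the asserted formula; this last deduction is the content of \cite{GIK}*{Corollary~9.8}.
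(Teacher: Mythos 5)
Your proposal is correct and matches the paper's treatment: the paper gives no argument of its own for this statement, citing \cite{GIK}*{Theorem~9.7, Corollary~9.8} outright, and you likewise defer the bimodule isomorphism $\mathstrut_{\inv}\A^{\circ}\cong\omega$ to that reference (with the harmless remark about replacing $\kk[x]$ by $\Rx$). Your added derivation of the formula for $\nu$ on two-term complexes --- identifying $\omega\otimes_{\A}\A e_j\cong\mathstrut_{\inv}\A^{\circ}e_j$ with $P_j$ or $\rad P_j$ according to whether $j\in Q_0^c$ or $j\in Q_0^t$, and untwisting the left action along $\inv$ to turn $\cdot\alpha$ into $\cdot\inv(\alpha)$ --- is sound and simply spells out what the paper leaves to the cited corollary.
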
	
The last statement allows to compute $\nu(\CP)$ for any complex $\CP$ from $\Hot^-(\proj \A)$.


\subsection{Exceptional cycles from simples at transition vertices}
\begin{notation}\label{not:lin-string}
	A complex of projective $\A$-modules of the form
	\begin{align*}
		\begin{td}
			P_{i^{(m)}} \ar{r}{\cdot p_m}\& P_{j^{(m-1)}} 
			\ar[densely dotted,-]{rr} \& \& P_{j^{(2)}} 
			\ar{r}{\cdot p_2} \& P_{j^{(1)}} \ar{r}{\cdot p_1} \& P_{i^{(0)}}
		\end{td}
	\end{align*} with the projective $P_j$  located at degree zero,
	paths $p_1, p_2, \ldots p_m, \notin I$,
	and
	vertices $i^{(0)}$, $i^{(m)} \in Q_0^t$
	and
	$j^{(1)}, j^{(2)},\ldots j^{(m-1)} \in Q_0^c$
	will be abbreviated by the diagram
	\begin{align*}
		\begin{td}
			\circ \ar{r}{ p_m} \& \bt \ar[densely dotted,-]{rr} \& \&  \bt \ar{r}{ p_2} \& \bt \ar{r}{ p_1} \& \circ 
		\end{td}
	\end{align*} 
\end{notation}

The next statement uses the notions of AG-invariants of first type
and the permutation $\kappa$ on the set $Q_0^t$ of transition vertices
 introduced in Subsection~\ref{subsec:inv}.

\begin{lem}\label{lem:sim}
	For any vertex $j \in Q_0^t$ 
	the simple module $S_j$ 
	is fractionally Calabi-Yau with $\CYdim S_j = (m(j), n(j))$
	and gives rise 
to a repetition-free exceptional $n(j)$-cycle
$		\mathcal{S}(j) \colonequals (S_j, S_{\kappa(j)},\ldots 
		S_{\kappa^{n(j)-1}(j)}
)$
	 in the category $\perfd (\A)$.
\end{lem}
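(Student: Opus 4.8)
The plan is to compute the minimal projective resolution of each simple module $S_k$ at a transition vertex $k \in Q_0^t$ explicitly, recognize it as a "linear string complex" in the sense of Notation~\ref{not:lin-string}, and then apply the description of $\nu$ from Theorem~\ref{thm:SD} term by term. First I would observe that for a transition vertex $k$ there is a unique arrow $\gamma_k$ ending at $k$ and a unique arrow starting at $k$, so $\rad P_k$ is the indecomposable string module corresponding to the permitted cycle $pc(\gamma_k)$ truncated appropriately; reading off the projective presentation $P_k \leftarrow \rad P_k$ and resolving $\rad P_k$ recursively shows that the minimal projective resolution of $S_k$ is periodic of period $n(k)$ and total length $m(k)$, with the projectives indexed along the AG-cycle $c(k) = f_{n(k)}\cdots f_1$. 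Concretely, the degree-$m(k)$ term is again $P_k$ (this is where periodicity enters), and the intermediate terms $P_{j^{(\ell)}}$ with $j^{(\ell)} \in Q_0^c$ come from the crossing vertices traversed by the forbidden threads $f_p$. This gives both that $S_k$ is a linear string complex and that it lives in $\perfd(\A)$ once we know $\gldim$ of the localization is finite off the maximal ideal.

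**Applying $\nu$.** Having the resolution in the form of Notation~\ref{not:lin-string}, I would apply Theorem~\ref{thm:SD}: each arrow $\alpha\colon i \to j$ acting as $P_j \xrightarrow{\cdot\alpha} P_i$ is sent by $\nu$ to $L_j \xrightarrow{\cdot\inv(\alpha)} L_i$, where $L_j = \rad P_j$ if $j\in Q_0^t$ and $L_j = P_j$ if $j\in Q_0^c$. Since the two endpoints $i^{(0)}, i^{(m)}$ of the string are the only transition vertices appearing and all intermediate vertices are crossing, replacing $P_{i^{(0)}}$ and $P_{i^{(m)}}$ by their radicals and splicing the two short resolutions $P_{i^{(0)}} \leftarrow \rad P_{i^{(0)}}$ (resp. at the other end) back in shows that $\nu(S_k)$ is quasi-isomorphic to $S_{k'}$ for the appropriate transition vertex $k'$. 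The key bookkeeping point is to check that the shift incurred is exactly zero relative to the grading already encoded — i.e. that $\nu(S_k) \cong S_{\kappa(k)}$ on the nose (possibly up to shift by $m_k$ as in condition \ref{E1}), with the forbidden thread $f_1$ starting at $k$ connecting the resolution of $S_k$ to that of $S_{\kappa(k)}$. Iterating gives $\SD^{n(k)}(S_k) \cong S_k[m(k)]$, so $S_k$ is $(m(k), n(k))$-Calabi–Yau; minimality of $n(k)$ (hence that this pair is genuinely $\CYdim S_k$) follows because the transition vertices $\kappa(k), \kappa^2(k), \ldots$ are pairwise distinct for $0 \le \ell < n(k)$, so $S_k \not\cong S_{\kappa^\ell(k)}$ for those $\ell$.

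**Verifying the exceptional cycle axioms.** Condition \ref{E1} is immediate from the computation $\SD(S_{\kappa^\ell(k)}) \cong S_{\kappa^{\ell+1}(k)}[m_\ell]$ for suitable integers $m_\ell$ summing to $m(k)$. For \ref{E2} I would compute $\sum_{p=0}^{n(k)-1}\sum_{i\in\Z}\dim\Hom(S_k, \SD^p(S_k)[i]) = \sum_{p}\sum_i \dim\Hom(S_k, S_{\kappa^p(k)}[i+\text{shift}])$; using the linear string resolutions, $\Hom_{\Db}(S_a, S_b[i])$ is computed by the cohomology of $\Hom(P_\bullet(S_a), S_b)$, which picks out exactly the occurrences of $P_b$ in the resolution of $S_a$. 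Since the resolution of $S_k$ is a single linear string beginning and ending at transition vertices, the endpoints contribute, and precisely summing over the orbit $\{S_{\kappa^p(k)}\}$ counts each "turning point" once — the total is $2$ (one from $\Hom$ in degree $0$, one from the top of the resolution wrapping around the cycle). Repetition-freeness is the distinctness of $S_k, S_{\kappa(k)}, \ldots, S_{\kappa^{n(k)-1}(k)}$ up to shift, again from distinctness of the transition vertices in the $\kappa$-orbit together with the fact that two simples at different vertices are never shift-isomorphic.

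**Main obstacle.** The hard part will be the careful degree/shift bookkeeping: verifying that the resolution of $S_k$ genuinely closes up after $n(k)$ applications of $\nu$ with the correct cohomological shift $m(k)$, and that the $\Hom$-computation for \ref{E2} yields exactly $2$ rather than some other small number — this requires being precise about how the forbidden threads $f_1, \ldots, f_{n(k)}$ glue the consecutive projective resolutions and about the boundary behaviour at transition versus crossing vertices. Everything else is a direct application of Theorem~\ref{thm:SD} and the combinatorial definitions in Subsection~\ref{subsec:inv}.
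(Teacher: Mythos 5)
Your overall strategy---resolve $S_j$, apply Theorem~\ref{thm:SD} termwise, iterate, and verify \ref{E1}/\ref{E2} by counting projective summands---is the same as the paper's, but your description of the central object, the minimal projective resolution of $S_j$, is wrong, and the argument as written does not survive the error. The resolution is \emph{not} ``periodic of period $n(j)$ and total length $m(j)$'' with $P_j$ reappearing in degree $m(j)$: it is the finite linear complex of length $\ell(f_1)$ determined by the \emph{single} forbidden thread $f_1$ starting at $j$, with $P_j$ in degree $0$, projectives at crossing vertices in between, and $P_{\kappa(j)}$ in degree $\ell(f_1)$; it terminates there precisely because $\kappa(j)$ is a transition vertex, so no relation continues the syzygy (compare the exact sequence \eqref{eq:ex-rad} used in Lemma~\ref{lem:tilt}). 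If the resolution really wound around the whole AG-cycle it would pass through the transition vertices $\kappa(j),\ldots,\kappa^{n(j)-1}(j)$, contradicting your own later assertion that the only transition vertices occurring are the two endpoints; and a genuinely periodic resolution would force $S_j\notin\per(\A)$, so the assertion $S_j\in\perfd(\A)$ would fail. Relatedly, your justification of perfectness via finiteness of global dimension off the maximal ideal is insufficient: ribbon graph orders are isolated, yet their simples have periodic infinite resolutions; here perfectness comes exactly from $\prdim S_j=\ell(f_1)<\infty$.

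The periodicity you want enters only through the Serre functor, not through the resolution: applying Theorem~\ref{thm:SD} to the length-$\ell(f_1)$ resolution produces a complex with a single nonzero homology, isomorphic to $S_{\kappa(j)}$ in degree $\ell(f_1)-1$, so $\SD(S_j)\cong S_{\kappa(j)}[\ell(f_1)]$, and iterating over the $n(j)$ forbidden threads of $c(j)$ gives $\SD^{n(j)}(S_j)\cong S_j[m(j)]$. This shift bookkeeping is exactly what you defer as the ``main obstacle'', and it is where the actual content of the proof lies; with the resolution as you state it, it cannot come out correctly. Once the resolution is corrected, the rest of your outline does match the paper: the only projectives at transition vertices in the resolution are $P_j$ in degree $0$ and $P_{\kappa(j)}$ in degree $\ell(f_1)$, so $\sum_{p=0}^{n(j)-1}\sum_{i\in\Z}\dim\HomD(S_j,\SD^p(S_j)[i])=2$, giving \ref{E2}, and repetition-freeness follows from the distinctness of the vertices in the $\kappa$-orbit, as you say.
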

\begin{proof}
	Let $f_1 = \beta_\ell \ldots \beta_2 \beta_1$ 
	denote the forbidden thread starting at $j$.
	In terms of Notation~\ref{not:lin-string},
	the minimal projective resolution  $P$ of the simple module $S_j$ 
	is given by the diagram
	\begin{align*}
		\begin{td}
			\circ \ar{r}{\beta_\ell} \& \bt  \ar[densely dotted,-]{rr} \&\&   \bt \ar{r}{\beta_2} \& \bt \ar{r}{\beta_1} \& \circ 
		\end{td}
	\end{align*}
	 In particular, $\prdim S_j = \ell(f_1) <\infty$, and thus $S_j \in \perfd(\A)$.
	 
Applying $\nu$ to the projective resolution $P$ via Theorem~\ref{thm:SD}
	yields a complex $L$ with only one non-zero homology, which 
	is given by $\Ho_{\ell-1}(L) \cong \head (\A
	\beta_\ell) \cong S_{\kappa(j)}$ with $\kappa(j)=t(f_1)$.
This shows that	$\SD(S_j) = \nu(S_j)[1] \cong S_{\kappa(j)}[\ell(f_1)]$.

Let	$c(j)
= f_{n(j)} \ldots f_2 f_1$ denote the AG-cycle starting at $j$.	
	As $\kappa$ preserves $Q_0^t$, an iteration of the argument above yields   that 
	$\SD^{p}(S_j) \cong S_{\kappa^{p}(j)}[\ell(f_1) + \ldots \ell(f_{p})]$  for any index $1\leq p\leq n(j)$. 
Since $m(j) = \ell(c(j))$ and $n(j) = \min\{ p \in \N \mid \kappa^p(j) = j \}$, 
	the sequence $\mathcal{S}(j)$ satisfies condition \ref{E1} of Definition~\ref{dfn:ex-cycle}, $S_j$ is fractionally Calabi-Yau of dimension $(m(j),n(j))$ and $S_1 \not\cong S_{\kappa^p(j)}[i]$ for any $1 \leq p < n(j)$ and $i \in \Z$. 
	

	Using that $\dim \HomD(P,S_{j'}[i])$
	is given by the number of indecomposable projective summands $\A e_{j'}$ at degree $i$ in the projective resolution $P$
	and the diagram above, it follows that the module $S_j$ satisfies condition \ref{E2} of Definition~\ref{dfn:ex-cycle}.
\end{proof}
In fact, the previous proof shows that there is a bijection between forbidden threads in $(Q,I)$ 
and simple $\A$-modules at transition vertices.

\subsection{A monoid of tilting ideals}

We call a subset $\Xset \subseteq Q_0^t$ be a \emph{$\kappa$-stable} if $\kappa(\Xset) = \Xset$.
For such a subset
we consider 
the two-sided $\A$-ideal 
\begin{align}
	\label{eq:gen}
	I(\Xset) \colonequals \langle e_i, \alpha \mid i \in Q_0\backslash \Xset, \alpha \in Q_1\colon s(\alpha) \in \Xset \rangle 
\end{align}
	where $I(\emptyset) = \A$ in case  $\Xset$ is empty.
	
\begin{lem} \label{lem:ideals} Let $\Xset$, $\Yset$ be $\kappa$-stable
	subsets of $Q_0^t$.  
	\begin{enumerate}
\item \label{ideals2} It holds that $\Xset \supseteq \Yset$ if and only if $I(\Xset) \subseteq I(\Yset)$.
\item \label{ideals1}
If $\Xset$ and $\Yset$ are disjoint, then $I(\Xset \cup \Yset) = I(\Xset) I(\Yset) = I(\Yset) I(\Xset)$.
\end{enumerate}
 		\end{lem}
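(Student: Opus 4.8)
The statement concerns the two-sided ideals $I(\Xset)$ for $\kappa$-stable subsets $\Xset \subseteq Q_0^t$, and I want to prove (1) an order-reversing correspondence $\Xset \supseteq \Yset \iff I(\Xset) \subseteq I(\Yset)$ and (2) a multiplicativity $I(\Xset \cup \Yset) = I(\Xset)I(\Yset) = I(\Yset)I(\Xset)$ when $\Xset, \Yset$ are disjoint. The key is to work directly with the explicit generating set \eqref{eq:gen}: $I(\Xset)$ is generated by all idempotents $e_i$ with $i \notin \Xset$ together with all arrows $\alpha$ with $s(\alpha) \in \Xset$. Since $\A$ is the arrow-ideal completion of a gentle path algebra, every element is a (completed) $\kk$-linear combination of paths, so it suffices to understand which paths lie in $I(\Xset)$. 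I would first establish the basic description: a path $p = \alpha_m \cdots \alpha_1$ (including stationary paths $e_i$) lies in $I(\Xset)$ if and only if either $s(p) \notin \Xset$, or $p$ traverses at least one arrow $\alpha$ with $s(\alpha) \in \Xset$ — equivalently, $p \notin I(\Xset)$ precisely when $p$ starts at a vertex of $\Xset$ and \emph{never leaves} $\Xset$ through one of its arrows. Here the hypothesis that $\Xset \subseteq Q_0^t$ consists of transition vertices is crucial: a transition vertex has a unique outgoing arrow, so "not traversing an arrow out of $\Xset$" is a genuine constraint and the complement paths have a clean structure.

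\textbf{Part (1).} The forward implication is immediate from the generating sets: if $\Yset \subseteq \Xset$ then every generator of $I(\Xset)$ is either a generator of $I(\Yset)$ or lies in $I(\Yset)$. Indeed, for $i \notin \Xset$ we have $i \notin \Yset$ so $e_i$ is a generator of $I(\Yset)$; and for an arrow $\alpha$ with $s(\alpha) \in \Xset$, either $s(\alpha) \in \Yset$ (so $\alpha$ is a generator of $I(\Yset)$) or $s(\alpha) \notin \Yset$, in which case $\alpha = e_{s(\alpha)}\alpha \in I(\Yset)$ using $e_{s(\alpha)} \in I(\Yset)$. For the converse, suppose $\Xset \not\supseteq \Yset$, so pick $j \in \Yset \setminus \Xset$. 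Then $e_j \in I(\Xset)$ since $j \notin \Xset$, but I must show $e_j \notin I(\Yset)$. This uses the path description: $e_j$ is a stationary path starting at $j \in \Yset$ that traverses no arrow at all, hence in particular no arrow out of $\Yset$, so $e_j \notin I(\Yset)$. Hence $I(\Xset) \not\subseteq I(\Yset)$.

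\textbf{Part (2) and the main obstacle.} For the multiplicativity, the inclusion $I(\Xset)I(\Yset) \subseteq I(\Xset \cup \Yset)$ (and symmetrically with the factors swapped) follows once one checks that products of generators land in $I(\Xset \cup \Yset)$, which is routine given that $\Xset, \Yset \subseteq \Xset \cup \Yset$ and part (1)'s forward direction gives $I(\Xset), I(\Yset) \subseteq I(\Xset \cup \Yset)$ — so actually $I(\Xset)I(\Yset) \subseteq I(\Xset \cup \Yset)$ trivially. The real content is the reverse inclusion $I(\Xset \cup \Yset) \subseteq I(\Xset)I(\Yset)$, and I expect \textbf{this to be the main obstacle}. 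One must show each generator of $I(\Xset \cup \Yset)$ is a product of something in $I(\Yset)$ followed by something in $I(\Xset)$ (for the $I(\Xset)I(\Yset)$ form). For an idempotent $e_i$ with $i \notin \Xset \cup \Yset$: then $i \notin \Xset$ and $i \notin \Yset$, so $e_i = e_i \cdot e_i$ with $e_i \in I(\Xset)$ and $e_i \in I(\Yset)$ — done. The delicate case is an arrow $\alpha$ with $s(\alpha) \in \Xset \cup \Yset$, say $s(\alpha) \in \Xset$ (the case $s(\alpha) \in \Yset$ is symmetric). Since $\Xset$ is a transition vertex and $\Xset \cap \Yset = \emptyset$, the target $t(\alpha)$ is \emph{not} in $\Yset$ in general, but it could be — I need to handle where $\alpha$ goes. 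If $t(\alpha) \notin \Yset$: then $\alpha = e_{t(\alpha)}\,\alpha$ with $e_{t(\alpha)} \in I(\Yset)$ (as $t(\alpha) \notin \Yset$) and $\alpha \in I(\Xset)$ (as $s(\alpha) \in \Xset$), giving $\alpha \in I(\Yset)I(\Xset)$; to get the $I(\Xset)I(\Yset)$ form write $\alpha = \alpha \cdot e_{s(\alpha)}$ with $\alpha \in I(\Xset)$ (target irrelevant) — wait, that needs $s(\alpha) \notin \Yset$, which holds since $\Xset \cap \Yset = \emptyset$ and $s(\alpha)\in\Xset$. So $\alpha = \alpha \cdot e_{s(\alpha)} \in I(\Xset)I(\Yset)$. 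Similarly $\alpha = e_{t(\alpha)}\alpha$ handles it when... hmm, actually this last argument $\alpha = \alpha \cdot e_{s(\alpha)}$ with $e_{s(\alpha)}\in I(\Yset)$ works regardless of $t(\alpha)$ since we only used $s(\alpha) \in \Xset \setminus \Yset$. So in fact every arrow-generator $\alpha$ with $s(\alpha)\in\Xset$ factors as $\alpha\cdot e_{s(\alpha)} \in I(\Xset)I(\Yset)$, and symmetrically $e_{s(\alpha)}\cdot\alpha \in I(\Yset)I(\Xset)$; when $s(\alpha) \in \Yset$, use $\alpha \cdot e_{s(\alpha)}$ with $\alpha \in I(\Yset)$, $e_{s(\alpha)} \in I(\Xset)$. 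Thus disjointness is exactly what makes the idempotent at the start of an arrow available in the "other" ideal. The one genuinely subtle point I'd want to nail down carefully is whether generating-set inclusions suffice or whether one needs the path-level description to rule out that $I(\Xset)I(\Yset)$ is strictly smaller — i.e. confirming $I(\Xset\cup\Yset)$ is generated as a two-sided ideal by the listed generators and that the factorizations above, taken over all generators, actually generate $I(\Xset\cup\Yset)$ as an ideal (closure under left/right multiplication by $\A$ and under the completed sums is automatic). I'd phrase the final argument as: the two-sided ideal $I(\Xset)I(\Yset)$ contains all generators of $I(\Xset\cup\Yset)$, hence contains $I(\Xset\cup\Yset)$; combined with the reverse inclusion this gives equality, and the symmetric computation gives $I(\Yset)I(\Xset) = I(\Xset\cup\Yset)$ as well.
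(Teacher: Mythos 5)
Your part (1) is fine and matches the paper's argument (forward direction via the generators, converse via the observation that $e_j\notin I(\Yset)$ for $j\in\Yset$, which follows from the length-degree description since the idempotent generators of $I(\Yset)$ sit at vertices outside $\Yset$ and all other contributions have positive length). The inclusion $I(\Xset)I(\Yset)\subseteq I(\Xset\cup\Yset)$ is also fine.

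The gap is in the reverse inclusion of part (2), exactly at the point you flagged as delicate and then talked yourself out of. To conclude $I(\Xset\cup\Yset)\subseteq I(\Xset)I(\Yset)$ you must put \emph{every} generator into the ordered product $I(\Xset)I(\Yset)$. For an arrow $\alpha$ with $s(\alpha)\in\Xset$ your factorization $\alpha=\alpha\, e_{s(\alpha)}$ does this. But for an arrow $\alpha$ with $s(\alpha)\in\Yset$, the factorization $\alpha\, e_{s(\alpha)}$ you propose has its left factor in $I(\Yset)$ and its right factor in $I(\Xset)$, so it only shows $\alpha\in I(\Yset)I(\Xset)$, not $\alpha\in I(\Xset)I(\Yset)$; and your ``symmetric'' expression $e_{s(\alpha)}\cdot\alpha$ is not a factorization of $\alpha$ at all --- since $\alpha=e_{t(\alpha)}\alpha$, the product $e_{s(\alpha)}\alpha$ vanishes unless $\alpha$ is a loop. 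The correct move (and the paper's) is $\alpha=e_{t(\alpha)}\alpha$, which requires $t(\alpha)\notin\Xset$, and this is precisely where $\kappa$-stability enters: since $s(\alpha)\in\Yset\subseteq Q_0^t$, the target $t(\alpha)$ is either a crossing vertex (hence not in $\Xset\subseteq Q_0^t$) or, if it is a transition vertex, it equals $\kappa(s(\alpha))\in\kappa(\Yset)=\Yset$, which is disjoint from $\Xset$. Your proposal never uses $\kappa$-stability in part (2), and the hypothesis is not dispensable: a computation in length degree one (using that both ideals are generated by idempotents and arrows) shows that an arrow from a vertex of $\Yset$ to a vertex of $\Xset$ would lie in $I(\Xset\cup\Yset)$ but not in $I(\Xset)I(\Yset)$, so without the target analysis the claimed equality can fail. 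The same analysis is also needed for the second equality $I(\Yset)I(\Xset)=I(\Xset\cup\Yset)$ (there the problematic generators are the arrows with source in $\Xset$), which in the paper is obtained simply by interchanging $\Xset$ and $\Yset$ in the corrected argument.
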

 		\begin{proof}
 			\begin{enumerate}
	 \item To verify the `only if'-implication let $\Xset \supseteq \Yset$. For any $e_i \in I(\Xset)$ it follows directly that $e_i \in I(\Yset)$.
Let $\alpha \in Q_1 \cap I(\Xset)$. If $s(\alpha)\in \Yset$, it holds that $\alpha \in I(\Yset)$.
Otherwise,  $\alpha = \alpha e_{s(\alpha)} \in \A I(\Yset) = I(\Yset)$. Thus $I(\Xset) \subseteq I(\Yset)$.

Vice versa, let $I(\Xset) \subseteq I(\Yset)$. If there was an element $i\in \Yset \backslash \Xset$, it would follow that $e_i \in I(\Xset) \subseteq I(\Yset)$, and thus $i \notin \Yset$, a contradiction. 
So $\Yset \supseteq \Xset$.
 				\item First, we claim that any generator of $I(\Xset \cup \Yset)$ is contained in the two-sided ideal $I(\Xset) I(\Yset)$.
For any $i \in Q_0$ with $i \notin \Xset \cup \Yset$ it holds that  
$e_i^{\phantom{2}} = e_i^2 \in I(\Xset) I(\Yset)$.
Let $\alpha \in Q_1 \cap I(\Xset \cup \Yset)$.
If $s(\alpha) \in \Xset\backslash\Yset$, then $\alpha = \alpha e_{s(\alpha)} \in I(\Xset) I(\Yset)$.
Otherwise, $s(\alpha) \in \Yset \backslash \Xset$. In this case, if follows that $t(\alpha) \notin \Xset$ using that
 $\Xset$ and $\Yset$ are disjoint and $\kappa$-stable, and thus $\alpha = e_{t(\alpha)} \alpha \in I(\Xset) I(\Yset)$.
It follows that $I(\Xset \cup \Yset) \subseteq I(\Xset) I(\Yset)$. The inclusion 
$\supseteq$ follows
 from \eqref{ideals2}. Interchanging $\Xset$ and $\Yset$ yields the remaining equality in \eqref{ideals1}. \qedhere
	 \end{enumerate}
 			\end{proof}
Such two-sided ideals form a monoid under multiplication.
\begin{rmk}
	Let $\mathcal{I} \colonequals \{ I(\Xset) \mid \Xset \subseteq Q_0^t \text{ $\kappa$-stable}\}$
	and let $\mathcal{P}$ denote the power set of $Q_0^t/\!\sim_{\kappa}$.
	By Lemma~\ref{lem:ideals} there is an isomorphism of posets
	$(\mathcal{P}, \supseteq) \overset{\sim}{\longrightarrow} (\mathcal{I},\subseteq)$.
\end{rmk}

\begin{lem}\label{lem:tilt}
		For any $\kappa$-stable subset $\Xset \subseteq Q_0^t$ the ideal $I(\Xset)$
		defines a left tilting $\A$-module of finite projective dimension.
	\end{lem}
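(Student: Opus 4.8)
The plan is to analyze the ideal $I(\Xset)$ as a left $\A$-module by computing its indecomposable summands explicitly from the generators in \eqref{eq:gen}, then to verify the two defining properties of a classical tilting module: finite projective dimension and the vanishing of self-extensions, together with the condition that $\A$ be resolved by $\add I(\Xset)$ in finitely many steps. Since $\Xset$ is $\kappa$-stable and built from $\kappa$-orbits, Lemma~\ref{lem:ideals}\eqref{ideals1} lets me reduce to the case where $\Xset$ is a single $\kappa$-orbit: indeed $I(\Xset)$ is a product of the ideals attached to the orbits it contains, and a (finite) product of tilting modules each of which is compatible with the others should again be tilting; so the main work is the one-orbit case. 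I would keep the general multi-orbit bookkeeping to the end once the building blocks are understood.

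**Key steps.** First I would identify $I(\Xset)$ as a left module: writing $1_\A = \sum e_i$, one has $\A I(\Xset) = I(\Xset)$, and since $I(\Xset)$ contains $e_i$ for $i \notin \Xset$ and, for $i \in \Xset$, the submodule of $\A e_i$ generated by the arrow $\alpha$ with $s(\alpha) = i$, I get a decomposition $I(\Xset) = \bigoplus_{i \notin \Xset} P_i \;\oplus\; \bigoplus_{i \in \Xset} (\A\alpha_i)$, where $\alpha_i$ is the unique arrow starting at the transition vertex $i$. The summand $\A\alpha_i$ is the radical $\rad P_i = L_i$ (recall $i$ is a transition vertex, so there is exactly one arrow out of $i$ and it is not killed by the relations), so $\A\alpha_i \cong P_{t(\alpha_i)}/(\text{something})$; more precisely $\A\alpha_i$ is a submodule of $P_{t(\alpha_i)}$ isomorphic to the string module cut out by the forbidden thread, and it has a short projective resolution by the same computation that appears in the proof of Lemma~\ref{lem:sim}. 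Thus each $\A\alpha_i$ sits in a short exact sequence $0 \to \A\alpha_i \to P_{t(\alpha_i)} \to S_i' \to 0$ with $S_i'$ of finite projective dimension (it is an iterated extension of simples at transition vertices, whose resolutions are the forbidden-thread complexes of Lemma~\ref{lem:sim}), giving $\prdim_\A(\A\alpha_i) < \infty$, hence $\prdim_\A I(\Xset) < \infty$. Second, for $\add I(\Xset)$-approximations of $\A$: the projectives $P_i$ with $i \notin \Xset$ are already in $\add I(\Xset)$, and for $i \in \Xset$ the sequence above exhibits $P_i$ — wait, one must be careful: it exhibits $P_{t(\alpha_i)}$ as an extension of the summand $\A\alpha_i$ by $S_i'$; iterating along the $\kappa$-orbit (which is finite) and using $\kappa$-stability to stay inside $\Xset$, one resolves each remaining $P_j$ with $j \in \Xset$ by a finite complex with terms in $\add I(\Xset)$. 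Third, $\Ext^{>0}_\A(I(\Xset), I(\Xset)) = 0$: this is where I would use the explicit resolutions — the generator of $\Ext$ between string-type summands over a gentle order is controlled by overlaps of strings, and the particular submodules $\A\alpha_i$ have resolutions supported on disjoint parts of the quiver from their own tops, so the relevant $\Hom$-complexes have no higher cohomology. Alternatively, and more cleanly, I would note that $I(\Xset)$ is obtained from $\A$ by a sequence of universal localizations / APR-type tilts at the simples $S_i$, $i \in \Xset$, each of which is tilting by Lemma~\ref{lem:sim} (these are the spherical/exceptional objects), and a composite of such tilts is tilting.

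**Main obstacle.** The hard part will be the self-extension vanishing $\Ext^{\geq 1}_\A(I(\Xset), I(\Xset)) = 0$, together with verifying that the $\add I(\Xset)$-resolution of $\A$ terminates. The finite-projective-dimension claim is essentially immediate from Lemma~\ref{lem:sim} (the summands $\A\alpha_i$ are syzygies of finite-projective-dimension modules), but controlling $\Ext$ requires either a careful string-combinatorics argument over the gentle order or, preferably, recognizing $I(\Xset)$ as a composite of the tilting mutations at the $S_i$. I would lean on the latter: show that for a single transition vertex $i$ (or a single $\kappa$-orbit), $I(\{i\}$-orbit$)$ is the tilting module obtained by mutating $\A$ at $S_i$ — this is a BGP/APR-reflection-type statement and its tilting property follows from $S_i$ being exceptional (Lemma~\ref{lem:sim}) — and then invoke Lemma~\ref{lem:ideals}\eqref{ideals1} to multiply these mutations for the disjoint orbits comprising $\Xset$, with the commutativity in that lemma ensuring the composite is well-defined and again tilting. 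The one technical point to nail down is that these mutations at distinct $\kappa$-orbits genuinely commute at the level of modules, which is exactly the content of $I(\Xset)I(\Yset) = I(\Yset)I(\Xset)$ already proved.
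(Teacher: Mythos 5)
Your first two steps agree with the paper: the left-module decomposition $I(\Xset)\cong\bigoplus_{i\in\Xset}\rad P_i\oplus\bigoplus_{j\notin\Xset}P_j$ (this is \eqref{eq:left-dec}) and the finiteness of projective dimension, since $\rad P_i=\syz S_i$ is resolved by the forbidden-thread sequence \eqref{eq:ex-rad}. But your auxiliary exact sequence points the wrong way: $\A\alpha_i=\rad P_i$ is a \emph{quotient} of $P_{t(\alpha_i)}$ (right multiplication by $\alpha_i$ is surjective onto it, with nonzero kernel in general), not a submodule, so the claimed sequence $0\to\A\alpha_i\to P_{t(\alpha_i)}\to S_i'\to 0$ does not exist, and the $\add I(\Xset)$-coresolution of $\A$ you try to extract from it is not set up correctly. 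The correct mechanism is to read \eqref{eq:ex-rad} as a coresolution of $P_{\kappa(i)}$: its middle terms are projectives at crossing vertices, hence lie in $\add I(\Xset)$, its end term $\rad P_i$ lies in $\add I(\Xset)$, and $\kappa$-stability guarantees that every $P_j$ with $j\in\Xset$ occurs as some $P_{\kappa(i)}$. In particular no iteration along the orbit, and no reduction to a single $\kappa$-orbit, is needed; your proposed reduction moreover presupposes that a product of pairwise ``compatible'' tilting ideals is again tilting, which is itself an unproved step (compare the Tor-vanishing argument needed in Lemma~\ref{lem:tw-comm}).

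The decisive gap is the self-extension vanishing, which you flag but do not prove. Your route (a) is false as a heuristic: the resolution \eqref{eq:ex-rad} shows that $\Ext^{n}_{\A}(\rad P_i,P_{\kappa(i)})\neq 0$ in the top degree $n=\prdim\rad P_i$ whenever $\rad P_i$ is not projective, so there \emph{are} nonzero higher extensions from summands of $I(\Xset)$ into indecomposable projectives; what rescues the lemma is precisely that the only offending projective is $P_{\kappa(i)}$ with $\kappa(i)\in\Xset$, hence not a summand of $P_{\Xset^c}$ --- this is where $\kappa$-stability enters the Ext computation, and your ``disjoint support'' argument never sees it. You also leave $\Ext^{>0}_{\A}(L_\Xset,L_\Xset)=0$ unaddressed; the paper obtains it at once from the fact that $L_\Xset$ is a direct summand of the canonical module $\omega\cong I(Q_0^t)$, which as a left $\A$-module is a tilting module. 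Your alternative route (b), realizing $I(\Xset)$ as a composite of mutations at the simples $S_i$, is indeed the motivation recorded after Theorem~\ref{thm:nu-factors}, but as a proof it is unsupported: Lemma~\ref{lem:sim} only produces exceptional cycles and does not by itself show that the associated ideal is a tilting module, and the assertion that a composite of such tilts (equivalently, a product of the ideals $I(\Xset_a)$) is again tilting is essentially the content of Lemma~\ref{lem:tilt} itself, so invoking it would be circular.
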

\begin{proof}
		In this proof, for any $\A$-modules $M,N$ we write $M \geq N$ 
	if $\Ext^n_{\A}(M,N)=0$ for all $n \in \N^+$, or, equivalently, all $n \neq 0$. 
There is an isomorphism of  left $\A$-modules 
\begin{align}
	\label{eq:left-dec}
	T \colonequals I(\Xset) \cong L_{\Xset} \oplus P_{\Xset^c} \quad \text{with}\quad
	L_{\Xset} \colonequals
	\bigoplus_{i \in \Xset} \rad P_{i}
	\quad\text{and}\quad
	P_{\Xset^c} \colonequals \bigoplus_{j \in Q_0\backslash \Xset} P_{j}.
\end{align}
For any $i \in \Xset$ 
there is an exact sequence of $\A$-modules
\begin{align}
	\label{eq:ex-rad}
	\begin{td}
		0 \ar{r} \&
		P_{\kappa(i)} \ar{r}{\cdot \beta_{\ell(i)}} \&
		P_{j^{\ell(i)-1}} \ar[densely dotted,-]{rr} \& \& 
		P_{j^{(2)}} \ar{r}{\cdot \beta_{2}} \& P_{j^{(1)}} \ar{r}{\cdot \beta_1} \& \rad P_{i} \ar{r}\& 0
	\end{td}	
\end{align}
where $j^{(1)}, j^{(2)} \ldots j^{\ell(i)-1} \in Q_0^c$
and
the path $\beta_{\ell(i)} \ldots \beta_2 \beta_1$ comprises the forbidden thread starting at $i$.
The sequence above implies that $\rad P_i$ has projective dimension $\ell(i)-1$ 
and that $P_{\kappa(i)}$ has an $\add T$-resolution of the same length.
It follows that $I(\Xset)$ has finite projective dimension $\ell
 =
\max \{ \ell(i) -1 \mid i \in \Xset \}$
and that $\A$ has an $\add T$-resolution of length $\ell$.
                                                                                                                                                                                                                                                                                            
It remains to show that $T \geq T$.
It holds that $P_{\Xset^c} \geq L_{\Xset} \oplus P_{\Xset^c}$ because
$P_{\Xset^c}$ is projective.
Viewed as a left $\A$-module the canonical bimodule $\omega$ is a tilting module.
As $L_{\Xset}$ is a direct summand of $\omega$, it follows that $L_{\Xset} \geq L_{\Xset}$.
For any $i \in \Xset$, $j \in Q_0$ and $n \in \N^+$, a computation in the homotopy category
using the minimal projective resolution of $\rad P_i$ shows that 
$\Ext^n_{\A}(\rad P_i,P_{j})
\neq 0$ if and only if $j = \kappa(i)$ and $n = \ell(i)$. This implies that $L_{\Xset} \geq P_{\Xset^c}$.
which completes the proof that
 $T \geq T$.
	\end{proof}

\begin{prp}
	Any $\kappa$-stable set $\Xset \subseteq Q_0^t$ induces
 an auto-equivalence
 \begin{align*}
 	\begin{td}
 	I(\Xset)\lotimes \blank \colon \D(\Md \A) \ar{r}{\sim} \&\D(\Md \A)
 	\end{td}
 	\end{align*}
\end{prp}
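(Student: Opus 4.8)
The plan is to show that $T \colonequals I(\Xset)$, equipped with its canonical $(\A,\A)$-bimodule structure as a two-sided ideal, is a two-sided tilting complex in the sense of Rickard, so that $T \lotimes \blank$ becomes a derived auto-equivalence. By Lemma~\ref{lem:tilt} the module $T$ is a tilting left $\A$-module of finite projective dimension, and the self-orthogonality $T \geq T$ established there means $\Ext^n_\A(T,T) = 0$ for every $n \neq 0$. Hence a projective resolution of $T$ is a tilting complex in $\per \A$ by the tilting theory of modules of finite projective dimension, and $\End_{\D(\A)}(T) = \End_\A(T) \equalscolon \Gamma$. The only remaining point is that the algebra homomorphism $\A \to \Gamma$ determined by the right $\A$-action on $T$ is an isomorphism; granting this, $T$ is a two-sided tilting $(\A,\A)$-bimodule complex, so by Rickard's Morita theory for derived module categories the functor $T \lotimes \blank$ is an auto-equivalence of $\D(\Md \A)$, which in turn restricts to $\D^-(\md \A)$, $\per \A$, $\Db(\md \A)$ and $\Dsg(\A)$ as in Theorem~\ref{thm:der-eq}.

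Thus everything comes down to the identification $\Gamma \cong \A$. For injectivity of $\A \to \Gamma$ I would use that the sum $x$ of all permitted cycles — the image of the uniformizer of $\Rx$ under the structure map — is a central non-zero-divisor of $\A$, since $\A$ is free over $\Rx$, and that $x \in \rad \A \subseteq T$; then $Ta = 0$ forces $xa = 0$, hence $a = 0$. For surjectivity I would first note that $\A/T \cong \kk^{\lvert \Xset \rvert}$: the generators of $T$ kill every idempotent $e_i$ with $i \notin \Xset$ and, with them, every arrow, leaving exactly the pairwise orthogonal idempotents $e_i$ with $i \in \Xset$. In particular $\A/T$ has finite length over $\Rx$, so $\KK \otimes_\Rx T \xrightarrow{\sim} \KK \otimes_\Rx \A$ and every $\phi \in \Gamma$ extends to right multiplication by a unique element $q$ of $\KK \otimes_\Rx \A$, for which one must show $q \in \A$. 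Writing $q = \sum_{i,j} e_i q e_j$, the blocks with $i \notin \Xset$ lie in $\A$ already, because $e_i \in T$ and $e_i q = \phi(e_i) \in T \subseteq \A$. For $i \in \Xset$ — necessarily a transition vertex — I would apply $\phi$ to the unique arrow $\alpha$ into $i$, which is a length-one path and hence lies in $T$, obtaining $\alpha \, (e_{s(\alpha)} q e_j) \in \A$ for all $j$, and then propagate integrality along the forbidden and permitted threads through $i$, using the $\kappa$-stability of $\Xset$ to control which neighbouring vertices belong to $\Xset$. Alternatively one may reduce to the case of a single $\kappa$-orbit by factoring $I(\Xset)$ as a product of ideals via Lemma~\ref{lem:ideals}~\eqref{ideals1}, provided one checks that the higher $\Tor$-groups between the factors vanish so that the ring product computes their derived tensor product.

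I expect the surjectivity part of $\Gamma \cong \A$ to be the main obstacle, since it is the only step that genuinely uses the combinatorics of gentle orders rather than formal tilting theory and the structure theory of orders recorded in Section~\ref{sec:prelim}. The difficulty is that a priori one knows only $\A \subseteq \Gamma \subseteq \KK \otimes_\Rx \A$ with $\Gamma$ an $\Rx$-overorder of $\A$, and one must rule out that an endomorphism of $T$ acquires genuine denominators; the leverage is that each arrow into a vertex of $\Xset$ is a length-one path, hence lies in $T$, so its image under the endomorphism must stay inside $T \subseteq \A$, and iterating this constraint along the threads through $\Xset$ pins $q$ down. A softer variant worth trying is to note that, $T$ being a tilting complex, the orders $\A$ and $\Gamma$ are derived equivalent, so their Cartan matrices are $\Z$-congruent by Proposition~\ref{prp:der-ord}~\eqref{der-ord3}; if one can show that a proper overorder strictly changes the relevant Cartan determinant, this again forces $\Gamma = \A$.
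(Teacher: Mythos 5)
Your overall strategy is the same as the paper's: both reduce the proposition to Lemma~\ref{lem:tilt} plus the assertion that the natural map $r\colon \A \to \Gamma\colonequals\End_{\A}(I(\Xset))^{\op}$, $a\mapsto(t\mapsto ta)$, is bijective, and then invoke standard tilting/Morita theory (you via Rickard's two-sided tilting complexes, the paper via $\Gamma\lotimes\blank$ followed by $I(\Xset)\overset{\mathbf{L}}{\otimes}_{\Gamma}\blank$ and \cite{Keller2}*{8.1.4}). The injectivity argument via the central regular element $x\in\rad\A\subseteq I(\Xset)$ is fine. The genuine gap is exactly where you suspect it: surjectivity of $r$, and your sketch would not close it. First, you test $\phi$ on the wrong element: for $i\in\Xset$ one has $I(\Xset)e_i=\rad P_i=\A\beta$ with $\beta$ the unique arrow \emph{starting} at $i$, whereas the arrow $\alpha$ \emph{into} $i$ satisfies $\alpha=\alpha e_{s(\alpha)}$, so $\phi(\alpha)=\alpha\,(e_{s(\alpha)}q)$ only constrains the block at $s(\alpha)$, a vertex that is typically not in $\Xset$ and is already controlled. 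Second, and more seriously, even the correct constraint in the weak form you use, $\beta\,(e_iqe_j)\in\A$, does not imply $e_iqe_j\in\A$, because left multiplication by an arrow does not have saturated image. Concretely, let $\A=T_2(\Rx)$ with arrows $\alpha\colon 1\to 2$, $\beta\colon 2\to 1$ and take the non-$\kappa$-stable set $\Xset=\{1\}$: the element $q=x^{-1}\beta$ satisfies $\alpha q=x^{-1}\alpha\beta=e_2\in\A$ and $I(\Xset)q\subseteq I(\Xset)$, yet $q\notin\A$; indeed $\End_{\A}(I(\{1\}))^{\op}\cong\Mat_{2\times 2}(\Rx)\not\cong\A$. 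So a correct argument must use the sharper containment $\beta\,(e_iqe_j)\in I(\Xset)e_j$ (which equals $\rad P_j$ precisely when $j\in\Xset$) and must exploit $\kappa$-stability in an essential way; this is exactly the ``propagation along threads'' that you leave unproven and flag as the obstacle. Your fallbacks do not repair it: splitting $\Xset$ into $\kappa$-orbits via Lemma~\ref{lem:ideals} leaves the same problem for a single orbit, and the Cartan-congruence variant would require knowing that a proper overorder cannot have a $\Z$-congruent Cartan matrix, which is neither proved nor clear.

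For comparison, the paper closes this step without any block computation: since $\Xset$ and $\Xset^{c}=Q_0^t\setminus\Xset$ are both $\kappa$-stable, Lemma~\ref{lem:ideals} gives the factorization $\omega=I(\Xset^{c})\,I(\Xset)\subseteq I(\Xset)$ (this is where $\kappa$-stability enters), so every left $\A$-linear endomorphism of $I(\Xset)$ preserves $\omega$; the restriction map $\Gamma\to\End_{\A}(\omega)^{\op}$ is injective because $I(\Xset)/\omega$ has finite length over $\Rx$ and $I(\Xset)$ is $\Rx$-torsion-free, and the composite $\A\to\End_{\A}(\omega)^{\op}$ is bijective by the proof of \cite{IK}*{Theorem~4.5}; sandwiching then forces $r$ to be bijective. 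Supplying either this restriction argument or a complete combinatorial propagation along forbidden threads is what your proposal is missing.
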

\begin{proof}
 For any $a \in \A$ let $r_a \colon I(\Xset) \to I(\Xset)$ denote the left $\A$-linear
	morphism given by
	right multiplication with $a$.
	We claim that the  $\Rx$-algebra morphism \begin{align*}
	\begin{td}
		r \colon \A \ar{r} \& \Gamma \colonequals \End_{\A}(I(\Xset))^{\op}
		\& a \ar[mapsto]{r}\&
		r_a
	\end{td}
\end{align*}
is bijective. 
There is a factorization
$\omega = I(\Xset^c) I(\Xset)$. Moreover, the $\Rx$-module $I(\Xset)/\omega$ has finite length.
This implies that the
restriction map
\begin{align*}
	\begin{td}
(\blank)|_{\omega}\colon
\Gamma
	\ar{r} \& \End_{\A}(\omega)^{\op}\&
	\phi \ar[mapsto]{r} \& \phi|_{\omega} 
	\end{td}
\end{align*}
is a well-defined monomorphism of $\Rx$-algebras.
The composition $r|_{\omega} \colonequals  (\blank)|_{\omega } \circ r$ maps any element $a \in \A$ to 
the right multiplication with $a$ on the bimodule $\omega$.
By the proof of \cite{IK}*{Theorem~4.5} 
the map $r|_{\omega}$ is bijective.
It follows that the maps $(\blank)|_{\omega}$ and $r$ are bijective as well. 	

The functor in the claim is isomorphic to the composition
\begin{align*}
	\begin{td}
	\D(\Md \A) \ar{rr}{\Gamma \lotimes \blank }[swap]{\sim} \& \&
	\D(\Md \Gamma)  \ar{rr}{ I(\Xset) \overset{\mathbf{L}}{\otimes}_{\Gamma} \blank}[swap]{\sim}\&\& \D(\Md \A) 
	\end{td}
	\end{align*}
where the first functor is an equivalence, since $r$ is an isomorphism, and the
second functor is an equivalence by Lemma~\ref{lem:tilt} and \cite{Keller2}*{8.1.4}.
\end{proof}

For any $\kappa$-stable subset we denote
by 
\begin{align*}
	\begin{td} \Tw_{\Xset} \colonequals I(\Xset) \lotimes \blank \colon \D^-(\md \A) \ar{r}{\sim} \& \D^-(\md \A) \end{td} \end{align*}
the restriction of the auto-equivalence of the previous statement to the right-bounded derived category.
For any complex $P \in \Hot^{-}(\proj \A)$ 
the complex
$\Tw_{\Xset}(P)$ is given by taking the radical at each indecomposable projective $P_i$ with $i \in \Xset$.

In Definition~\ref{dfn:inv}, an involution $\inv$ of the order $\A$ was introduced, which gives rise to an auto-equivalence of order two
$$\begin{td}\inv^* \colonequals \mathstrut_{\inv} \A \lotimes \blank  \colon \D^-(\md \A) \ar{r}{\sim} \& \D^-(\md \A)\end{td}.$$
where $\mathstrut_{\inv} \A $ denotes the $\A$-bimodule with underlying set $\A$, the left multiplication twisted by $\xi$ and regular right $\A$-module action.

\begin{lem}\label{lem:tw-comm}
	Let $\Xset$ and $\Yset$ be disjoint $\kappa$-stable subsets of $Q_0^t$.
	Then there are isomorphisms of auto-equivalences of $\D^-(\md \A)$
	\begin{align*}
	\inv^* \circ \Tw_{\Xset} \cong \Tw_{\Xset} \circ \inv^*&&
	\Tw_{\Xset \cup \Yset} \cong \Tw_{\Xset} \circ \Tw_{\Yset} \cong
	\Tw_{\Yset} \circ \Tw_{\Xset}
	\end{align*}
	\end{lem}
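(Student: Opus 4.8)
The plan is to establish both isomorphisms at the level of the $\A$-bimodules defining the functors, since tensoring with a bimodule is functorial and isomorphic bimodules induce isomorphic functors. For the first isomorphism I would show that $\mathstrut_{\inv}\A \otimes_{\A} I(\Xset) \cong I(\Xset) \otimes_{\A} \mathstrut_{\inv}\A$ as $\A$-bimodules, or more directly that both composites are given by the bimodule $\mathstrut_{\inv}(I(\Xset))$, i.e.\ the ideal $I(\Xset)$ with left action twisted by $\inv$. The key observation is that the involution $\inv$ of Definition~\ref{dfn:inv} fixes every idempotent $e_i$ and sends each arrow $\alpha$ to a scalar multiple of itself; consequently $\inv$ maps each generator of $I(\Xset)$ in \eqref{eq:gen} to a unit times the same generator, so $\inv\bigl(I(\Xset)\bigr) = I(\Xset)$ as a subset of $\A$, and $\inv$ restricts to an algebra automorphism of the subring and a bimodule automorphism compatible with the ideal. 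The required bimodule isomorphism then follows from the standard fact that for an algebra automorphism $\varphi$ fixing a two-sided ideal $J$ setwise one has $\mathstrut_{\varphi}\A \otimes_{\A} J \cong \mathstrut_{\varphi}J \cong J \otimes_{\A}\mathstrut_{\varphi}\A$; commutativity of $\inv^*$ and $\Tw_{\Xset}$ is immediate from this.

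For the second isomorphism I would use the factorization $I(\Xset\cup\Yset) = I(\Xset) I(\Yset) = I(\Yset) I(\Xset)$ from Lemma~\ref{lem:ideals}\eqref{ideals1}, available since $\Xset$ and $\Yset$ are disjoint. The point is then to upgrade this equality of ideals to an isomorphism of $\A$-bimodules $I(\Xset) \otimes_{\A} I(\Yset) \cong I(\Xset) I(\Yset)$, which amounts to showing that the multiplication map $I(\Xset)\otimes_{\A} I(\Yset) \to I(\Xset\cup\Yset)$ is an isomorphism. Both $I(\Xset)$ and $I(\Yset)$ are tilting modules of finite projective dimension by Lemma~\ref{lem:tilt}; in particular each $\operatorname{Tor}^{\A}_{i}$ vanishes in positive degrees between the relevant summands (using the decomposition \eqref{eq:left-dec} and that $\operatorname{Ext}$/$\operatorname{Tor}$ computations in \ref{eq:ex-rad} are explicit), so $I(\Xset)\lotimes I(\Yset)$ is concentrated in degree zero and equals $I(\Xset)\otimes_{\A} I(\Yset)$. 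One then checks the multiplication map is surjective (clear from the generators) and injective by a rank count over $\Rx$: both sides are $\Rx$-free and the localization at the zero ideal $\KK\otimes_{\Rx}(-)$ makes $\iota$ of Lemma~\ref{lem:rad-emb} an isomorphism, reducing the statement to the hereditary order $\B = \prod_j T_{\ell(j)}(\Rx)$ where the ideals $I(\Xset)$ are products of the standard column ideals and the factorization is transparent. Combining, $\Tw_{\Xset\cup\Yset}$ is isomorphic to both $\Tw_{\Xset}\circ\Tw_{\Yset}$ and $\Tw_{\Yset}\circ\Tw_{\Xset}$.

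The main obstacle I anticipate is the vanishing $\operatorname{Tor}^{\A}_{>0}\bigl(I(\Xset), I(\Yset)\bigr) = 0$ needed to identify the derived tensor product with the ordinary one: a priori $I(\Xset)$ is not projective, only tilting, so one must genuinely use the structure of $\Yset$. Here the disjointness and $\kappa$-stability are essential — the same observation used in the proof of Lemma~\ref{lem:ideals}\eqref{ideals1}, namely that for $\alpha$ with $s(\alpha)\in\Yset$ one has $t(\alpha)\notin\Xset$, ensures that the projective summands $P_{\kappa(i)}$ appearing as the left-hand terms of the resolutions \eqref{eq:ex-rad} for $i\in\Xset$ are not among the indecomposables on which $I(\Yset)$ acts by passing to a proper submodule, so applying $I(\Yset)\otimes_{\A}-$ to the resolution \eqref{eq:ex-rad} of $\rad P_i$ preserves exactness. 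Alternatively, one can sidestep this by noting that $\Tw_{\Xset}$ was already shown to be a standard equivalence whose effect on a complex $P\in\Hot^-(\proj\A)$ is the explicit termwise operation "replace $P_i$ by $\rad P_i$ for $i\in\Xset$"; since the sets of indecomposables $\{P_i : i\in\Xset\}$ and $\{P_i : i\in\Yset\}$ are disjoint and $\kappa$-stable, these two termwise operations commute on the nose, and their composite is the operation attached to $\Xset\cup\Yset$, giving the isomorphism of functors directly and making the first isomorphism $\inv^*\circ\Tw_{\Xset}\cong\Tw_{\Xset}\circ\inv^*$ similarly transparent from the termwise description in Theorem~\ref{thm:SD}.
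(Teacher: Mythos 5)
Your proposal is correct in substance and follows the same architecture as the paper's proof: both reduce everything to isomorphisms of the defining bimodules, both use Lemma~\ref{lem:ideals}~\eqref{ideals1} to identify $I(\Xset)I(\Yset)$ with $I(\Xset\cup\Yset)$, and both extract the crucial Tor-vanishing from disjointness and $\kappa$-stability via the resolution \eqref{eq:ex-rad} --- your observation that $I(\Yset)\otimes_{\A}P_j = I(\Yset)e_j = \A e_j$ for every $j\notin\Yset$, so that applying $I(\Yset)\otimes_{\A}\blank$ to \eqref{eq:ex-rad} leaves it unchanged, is exactly the content of the paper's step $S(\Yset)\lotimes\rad P_i\cong 0$ with $S(\Yset)=\A/I(\Yset)$. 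Where you genuinely diverge is the injectivity of the multiplication map $I(\Xset)\otimes_{\A}I(\Yset)\to I(\Xset)I(\Yset)$: the paper gets it from $\Tor_1^{\A}(S(\Yset),I(\Xset))=0$ and the short exact sequence $0\to I(\Yset)\to\A\to S(\Yset)\to 0$, whereas you localize at the generic point and count ranks. Your route works, but the assertion that $I(\Xset)\otimes_{\A}I(\Yset)$ is $\Rx$-free is not automatic and is precisely what has to be proved (only the image inside $\A$ is obviously torsion-free); to justify it you would decompose $I(\Yset)$ as a left module as in \eqref{eq:left-dec} and rerun your resolution computation to get $I(\Xset)\otimes_{\A}\rad P_j\cong\rad P_j$ for $j\in\Yset$, at which point the paper's $\Tor_1$ argument is simply shorter. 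Your handling of the first isomorphism is fine and, if anything, more careful than the paper's general bimodule chain: the essential point is indeed that $\inv$ fixes idempotents and rescales arrows, hence preserves $I(\Xset)$ setwise, giving $\mathstrut_{\inv}I(\Xset)\cong I(\Xset)_{\inv}$ and the commutation. One caution about your closing ``alternative'' argument: after applying $\Tw_{\Xset}$ the terms $\rad P_i$ with $i\in\Xset$ are no longer projective, so the termwise description of $\Tw_{\Yset}$ on $\Hot^-(\proj\A)$ cannot be applied ``on the nose'' to the resulting complex; making that shortcut rigorous requires exactly the Tor-vanishing you identified as the main obstacle, so it is a repackaging rather than a genuine bypass.
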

	\begin{proof}
	For any automorphism $\phi$ of the ring $\A$ 	and any
	$\A$-bimodule $M$ 
	it holds that
	$$
	\mathstrut_\phi \A \lotimes M 
	\cong \mathstrut_{\phi} \A \otimes M \cong \mathstrut_{\phi} M \cong M_{\phi^{-1}} \cong M \otimes \A_{\phi^{-1}} \cong M \lotimes \A_{\phi^{-1}}.
	$$
	This implies the first isomorphism using that $\xi$ is an involution. 
		
		It remains to show that $\Tw_{\Xset \cup \Yset} \cong \Tw_{\Yset} \circ \Tw_{\Xset}$ as the other isomorphism then follows by interchanging $\Xset$ and $\Yset$.
		Set $S(\Yset) \colonequals \A/I(\Yset)$.
	Using that $\Xset \cap (\Yset \cup Q_0^c) = \emptyset$ and the minimal projective resolution of $\rad P_i$
	it follows that
		$$S(\Yset) \lotimes \rad P_i \cong 0 \quad \text{for any $i \in \Xset$}.$$
		Using the decomposition of $I(\Xset)$ as left $\A$-module in \eqref{eq:left-dec}
		it holds that
		$$\Tor_n^\A(I(\Yset),I(\Xset) \otimes_{\A} P) \cong \Tor_{n+1}^\A(S(\Yset), I(\Xset)\otimes_{\A} P) = 0$$
		for any $n \in \N^+$ and any module $P \in \proj \A$.
%
		This yields the isomorphism of functors 
		$$ I(\Yset) \lotimes (I(\Xset) \lotimes \blank ) \cong (I(\Yset) \otimes_{\A} I(\Xset)) \lotimes \blank \colon 
		\begin{td} \D^-(\md \A) \ar{r}{\sim} \&  \D^-(\md \A). \end{td}$$
		The previous consideration implies also that $\Tor_1^\A(S(\Yset), I(\Xset)) = 0$, which yields an isomorphism of $\A$-bimodules
		$$ I(\Yset) \otimes_{\A} I(\Xset)
		\cong I(\Yset) I(\Xset) = I(\Xset \cup \Yset).$$
		The last equality holds by Lemma~\ref{lem:ideals}~\eqref{ideals1}.
		\end{proof}


Let $j_1, \ldots j_b$ denote a complete set of representatives of $\kappa$-orbits of transition vertices.
For each index $1 \leq a \leq b$ we consider the auto-equivalence 
\begin{align*}
	\begin{td} \Tw_a \colonequals I(\Xset_a) \lotimes \blank \colon \D^-(\md \A) \ar{r}{\sim} \& \D^-(\md \A) \end{td}
	\end{align*}
where $\Xset_a$ denotes the $\kappa$-orbit of the vertex $j_a$.

The first main result of this article is the following factorization of the derived Nakayama functor.
\begin{thm}\label{thm:nu-factors}
	In the notations above, there is an isomorphism of functors
	\begin{align}
		\begin{td}
		\nu \cong \inv^* \circ \Tw_b \circ \ldots \Tw_2 \circ \Tw_1 \colon \D^-(\md \A) \ar{r}{\sim} \& \D^-(\md \A) \end{td}
	\end{align}
	where any two functors on the right hand side commute with each other.
\end{thm}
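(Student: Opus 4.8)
The plan is to absorb the whole composite $\Tw_b\circ\cdots\circ\Tw_1$ into a single twist functor and then match it against the bimodule description of $\nu$ supplied by Theorem~\ref{thm:SD}. Write $Q_0^t=\Xset_1\sqcup\cdots\sqcup\Xset_b$ as the disjoint union of its $\kappa$-orbits, so that each partial union $\Xset_1\cup\cdots\cup\Xset_k$ is again $\kappa$-stable and disjoint from $\Xset_{k+1}$. Iterating the isomorphism $\Tw_{\Xset\cup\Yset}\cong\Tw_{\Xset}\circ\Tw_{\Yset}\cong\Tw_{\Yset}\circ\Tw_{\Xset}$ of Lemma~\ref{lem:tw-comm} then yields $\Tw_{Q_0^t}\cong\Tw_b\circ\cdots\circ\Tw_1$ with all factors pairwise commuting, and the first isomorphism of Lemma~\ref{lem:tw-comm} shows that $\inv^*$ commutes with each $\Tw_a$ as well. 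Hence it remains to establish the single isomorphism of auto-equivalences $\nu\cong\inv^*\circ\Tw_{Q_0^t}$ of $\D^-(\md\A)$, where $\Tw_{Q_0^t}=I(Q_0^t)\lotimes\blank$.

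The key point is a bimodule computation. Because $\A$ is a gentle order, $Q_0=Q_0^t\sqcup Q_0^c$ by Lemma~\ref{lem:go}, hence $Q_0\setminus Q_0^t=Q_0^c$, and the two-sided ideal $I(Q_0^t)$ of \eqref{eq:gen} is generated by exactly the same elements — the idempotents $e_i$ with $i\in Q_0^c$ and the arrows $\alpha$ with $s(\alpha)\in Q_0^t$ — as the sub-$\A$-bimodule $\mathstrut_{\inv}\A^{\circ}$ of $\mathstrut_{\inv}\A$. Since $\inv$ is a $\kk$-algebra automorphism fixing every idempotent, twisting the left action by $\inv$ does not change which subset of $\A$ these elements generate; thus $\mathstrut_{\inv}\A^{\circ}$ is the bimodule $I(Q_0^t)$ carrying the regular right action and the left action twisted by $\inv$. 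The map $a\otimes m\mapsto am$ — well defined because the right action on $\mathstrut_{\inv}\A$ is regular, with inverse $m\mapsto 1\otimes m$ — identifies this with $\mathstrut_{\inv}\A\otimes_{\A}I(Q_0^t)$, so Theorem~\ref{thm:SD} gives an isomorphism of $\A$-bimodules $\omega\cong\mathstrut_{\inv}\A\otimes_{\A}I(Q_0^t)$.

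It then remains to transport this along derived tensor products. Given $X\in\D^-(\md\A)$, choose a representative $Q_{\bullet}\in\Hot^-(\proj\A)$; then $I(Q_0^t)\lotimes X$ is computed by the complex $I(Q_0^t)\otimes_{\A}Q_{\bullet}$, and plain associativity of the tensor product gives $\nu(X)=\omega\otimes_{\A}Q_{\bullet}\cong(\mathstrut_{\inv}\A\otimes_{\A}I(Q_0^t))\otimes_{\A}Q_{\bullet}\cong\mathstrut_{\inv}\A\otimes_{\A}(I(Q_0^t)\otimes_{\A}Q_{\bullet})$. Since $\mathstrut_{\inv}\A$ is free as a right $\A$-module, the functor $\mathstrut_{\inv}\A\otimes_{\A}\blank$ needs no derived correction and preserves quasi-isomorphisms, so the right-hand side equals $\inv^*(I(Q_0^t)\lotimes X)=\inv^*(\Tw_{Q_0^t}(X))$; checking that all these identifications are natural in $X$ upgrades the pointwise isomorphism to $\nu\cong\inv^*\circ\Tw_{Q_0^t}$, which together with the first paragraph proves the theorem.

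The content-bearing step is the bimodule identification $\omega\cong\mathstrut_{\inv}\A^{\circ}\cong\mathstrut_{\inv}\A\otimes_{\A}I(Q_0^t)$ — that is, recognizing the canonical bimodule of Theorem~\ref{thm:SD} as a twist of the tilting ideal defining $\Tw_{Q_0^t}$; everything after it is bookkeeping with (derived) tensor products, and everything before it assembles Lemma~\ref{lem:tw-comm}. One should only keep in mind the degenerate case $Q_0^t=\emptyset$, when $\A$ is a ribbon graph order: there the composite $\Tw_b\circ\cdots\circ\Tw_1$ is empty, $I(Q_0^t)=\A$, and the statement reduces to $\nu\cong\inv^*$, consistent with $\mathstrut_{\inv}\A^{\circ}=\mathstrut_{\inv}\A$.
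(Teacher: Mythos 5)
Your proposal is correct and follows essentially the same route as the paper: the paper's proof likewise identifies $\omega$, after untwisting by $\inv$, with the ideal $I(Q_0^t)$ via Theorem~\ref{thm:SD} (written there as $\mathstrut_{\inv}\omega\cong \A^{\circ}\cong I(Q_0^t)$) and then invokes the decomposition $Q_0^t=\bigcup_a \Xset_a$ together with Lemma~\ref{lem:tw-comm}. You merely spell out the bimodule and derived-tensor bookkeeping that the paper leaves implicit.
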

\begin{proof}
	By 
	Theorem~\ref{thm:SD}
	there are isomorphisms of $\A$-bimodules
	$\mathstrut_{\xi}\omega\cong\A^{\circ} \cong I(Q_0^t)$.
		Using that $Q_0^t = \bigcup_{1 \leq a \leq b} \Xset_a$, Lemma~\ref{lem:tw-comm} implies the claims. 
	\end{proof}

\begin{rmk}
	The definition of the $\A$-bimodule $I(\Xcat)$ in~\eqref{eq:gen}
	is motivated by the isomorphism $\Tw_{\Scat(j_a)}^{-1}(\A) \cong I(\Xcat_a)$ in $\perfd \A$
	and \cite{IR}*{Theorem~6.14}.
%
\end{rmk}

\subsection{The square of the derived Nakayama functor}
In this subsection, we consider the full subcategory 
$	\Scat \colonequals \add \{ \ S_j [i] \mid j\in Q_0^t, \ i\in\Z \}$
of the category $\perfd \A$.
In the following, 
$\Scat * \Scat$ denotes the full subcategory of all objects in $\perfd \A$  which are isomorphic to
cones of morphisms of the subcategory $\Scat$.

\begin{prp}\label{prp:nu2}
For any $X \in \D^-(\md \A)$ there is a triangle
\begin{align}\label{eq:nu2}
	\begin{td}
	\nu^2(X) \ar{r}{\triangle_X} \& X \ar{r} \& C_X \ar{r} \& \nu^2(X)[1]
	\end{td}
	\end{align}
	with $C_X \in \Scat * \Scat$ which is functorial in $X$.
	\end{prp}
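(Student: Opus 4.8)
The plan is to reduce the square of the derived Nakayama functor to an iterated twist functor, and then to obtain the triangle by composing two copies of a single short exact sequence of $\A$-bimodules. Recall from the proof of Theorem~\ref{thm:nu-factors} that $\omega \cong \mathstrut_{\inv} I(Q_0^{t})$, so that $\nu \cong \inv^{*} \circ \Tw_{Q_0^{t}}$, where $\Tw_{Q_0^{t}} \colonequals I(Q_0^{t}) \lotimes \blank \cong \Tw_{b} \circ \cdots \circ \Tw_{1}$ (using $Q_0^{t} = \bigcup_{a} \Xset_{a}$ together with Lemmas~\ref{lem:ideals} and~\ref{lem:tw-comm}). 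Since $Q_0^{t}$ is $\kappa$-stable, $\inv^{*}$ commutes with $\Tw_{Q_0^{t}}$ by Lemma~\ref{lem:tw-comm}, and $(\inv^{*})^{2} \cong \id$ because $\inv$ is an involution of $\A$; hence $\nu^{2} \cong \Tw_{Q_0^{t}}^{2}$.

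Next I would consider the inclusion of $\A$-bimodules $I(Q_0^{t}) \hookrightarrow \A$. Its cokernel is $S(Q_0^{t}) \colonequals \A/I(Q_0^{t}) \cong \bigoplus_{i \in Q_0^{t}} S_{i}$: indeed, each arrow $\alpha$ of $Q$ either starts at a transition vertex, hence is a generator of $I(Q_0^{t})$, or starts at a crossing vertex $i$, in which case $\alpha = \alpha e_{i}$ with $e_{i} \in I(Q_0^{t})$, so all arrows, and all $e_{i}$ with $i \in Q_0^{c}$, vanish in the quotient. Applying $\blank \lotimes Y$ to the resulting triangle of bimodules gives, for every $Y \in \D^{-}(\md \A)$, a triangle $\Tw_{Q_0^{t}}(Y) \to Y \to S(Q_0^{t}) \lotimes Y \to \Tw_{Q_0^{t}}(Y)[1]$ natural in $Y$. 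Its third term lies in $\Scat$: if $C$ is a minimal complex in $\Hot^{-}(\proj \A)$ representing $Y$ (which exists by Remark~\ref{rmk:KRS}), then every differential of $C$ has entries in $\rad \A$, so—since $S_{i}\cdot \rad \A = 0$—the complex $S_{i} \lotimes Y$, computed as $S_{i} \otimes_{\A} C$, has vanishing differential and is a direct sum of shifts of $S_{i}$; summing over $i \in Q_0^{t}$ yields $S(Q_0^{t}) \lotimes Y \in \Scat$.

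Finally, I would apply the previous triangle twice, to $Y = \Tw_{Q_0^{t}}(X)$ and to $Y = X$, obtaining natural maps $\Tw_{Q_0^{t}}^{2}(X) \to \Tw_{Q_0^{t}}(X) \to X$ with respective cones $S(Q_0^{t}) \lotimes \Tw_{Q_0^{t}}(X)$ and $S(Q_0^{t}) \lotimes X$ in $\Scat$. The octahedral axiom applied to this composition produces a triangle $\Tw_{Q_0^{t}}^{2}(X) \to X \to C_{X} \to \Tw_{Q_0^{t}}^{2}(X)[1]$ together with a triangle $(S(Q_0^{t}) \lotimes \Tw_{Q_0^{t}}(X)) \to C_{X} \to (S(Q_0^{t}) \lotimes X) \to (S(Q_0^{t}) \lotimes \Tw_{Q_0^{t}}(X))[1]$, whose outer terms lie in $\Scat$, so that $C_{X} \in \Scat * \Scat$; transporting along $\nu^{2} \cong \Tw_{Q_0^{t}}^{2}$ then gives the asserted triangle, and functoriality in $X$ is inherited since every morphism used is obtained by deriving a fixed morphism of $\A$-bimodules. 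I expect the only step requiring genuine care to be the bimodule identification $\A/I(Q_0^{t}) \cong \bigoplus_{i \in Q_0^{t}} S_{i}$ together with the observation that tensoring such a semisimple bimodule with a minimal projective complex annihilates all differentials; the remainder is formal manipulation with Theorem~\ref{thm:nu-factors}, Lemma~\ref{lem:tw-comm} and the octahedral axiom. (One should read $\Scat$ as closed under the countable coproducts available via Remark~\ref{rmk:KRS}, since for $X \notin \perfd \A$ the object $S(Q_0^{t}) \lotimes X$ is an infinite coproduct of shifts of the $S_{i}$.)
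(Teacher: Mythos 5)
Your proposal is correct and takes essentially the same route as the paper's proof: both start from the bimodule triangle $I(Q_0^t)\to\A\to\A/I(Q_0^t)$, tensor it twice (over $X$ and over $I(Q_0^t)\lotimes X$), assemble the result with the octahedral axiom, identify $\nu^2\cong\Tw_{Q_0^t}^2$ via Theorem~\ref{thm:nu-factors} and Lemma~\ref{lem:tw-comm}, and deduce functoriality from naturality at the bimodule level. Your explicit check that $\A/I(Q_0^t)\lotimes Y$ lies in $\Scat$ using minimal projective complexes (together with the remark on countable coproducts for non-perfect $X$) merely fills in a step the paper asserts without proof.
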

\begin{proof}
Throughout this proof we abbreviate the derived tensor product $\lotimes$ by $\otimes$.
	The two-sided ideal $I \colonequals I(Q_0^t)$ gives rise to  
	the triangle 
	\begin{align*}
		\begin{td}
			I \ar{r} \& \A \ar{r} \& \A/I \ar{r} \& I[1]
			\end{td}
		\end{align*}
		in the derived category of $\A$-bimodules.
	Applying $\blank \otimes X$ and 
	$\blank \otimes I \otimes X$
	yields 
 the triangles in the first and the third row of the diagram
 \begin{align*}
 	\begin{td}
 I \otimes I \otimes X \ar[equal]{d} \ar{r}{\phi_{I \otimes X}} \& I \otimes X \ar{d}{\phi_X} \ar{r} \& \A/I \otimes I \otimes X \ar[dashed]{d} \ar{r} \& I \otimes I \otimes X [1] \ar[equal]{d}\\
 		 		 		I \otimes I \otimes X  \ar{d}{\phi_{I \otimes X}} \ar{r} \&  X \ar{r} \ar[equal]{d} \& C'_X \ar{r} \ar[dashed]{d} \& I \otimes I \otimes X [1] \ar{d}
 		\\
I \otimes X \ar{r}{\phi_X}  \& X  \ar{r} \& \A/I \otimes X \ar{r}  \& I \otimes X [1] 
 	\end{td}
 	\end{align*}
 in $\D^-(\md \A)$. 
 Using the octahedral axiom, the third column can be continued to a triangle.
 Since $\A/I \otimes Y \in \Scat$ for any $Y \in \D^-(\md \A)$ this shows that $C'_X \in \Scat * \Scat$.
 By Theorem~\ref{thm:nu-factors} and Lemma~\ref{lem:tw-comm},
 there is an isomorphism $I \otimes I \otimes \blank \cong \nu^2(\blank)$ of auto-equivalences of $\D^-(\md \A)$. Therefore, 
 the triangle in the second row 
 is isomorphic to the triangle in \eqref{eq:nu2}.
 As $\phi_X$ and $\phi_{I \otimes X}$ are functorial in $X$, so is the morphism $\triangle_X$.
	\end{proof}

\begin{lem}\label{lem:lvl2}
The following statements hold.
	\begin{enumerate}
		\item \label{lvl2a} 	For any $j \in Q_0^t$  the middle term $Y_j$ of the Auslander--Reiten triangle
		\begin{align*}
			\begin{td} \nu(S_j) \ar{r} \& Y_j \ar{r} \& S_j \ar{r}{\delta_j} \&  \nu(S_j)[1] \end{td}
		\end{align*}
		in $\perfd(\A)$ is indecomposable and satisfies	$\HomD(\nu(Y_j),Y_j) \neq 0$.
		\item The indecomposable objects of $\Scat * \Scat$ are given by  $\{  S_j[i], Y_j[i] \mid j \in Q_0^t, i \in \Z\}$.
	\end{enumerate}
\end{lem}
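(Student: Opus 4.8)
The plan is to determine the complex $Y_j$ explicitly, prove part~(1) from this, and then deduce part~(2).

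\textbf{Identifying $Y_j$ and its indecomposability.} Let $f_1=\beta_{\ell}\cdots\beta_1$ be the forbidden thread starting at $j$, with $\ell=\ell_j:=\ell(f_1)$, so that by Lemma~\ref{lem:sim} (recall $\dim\Rx=1$, hence $\SD=\nu\circ[1]$ and $\tau=\nu$) one has $\nu(S_j)\cong S_{\kappa(j)}[\ell_j-1]$, and the connecting morphism $\delta_j\colon S_j\to\nu(S_j)[1]$ of the Auslander--Reiten triangle spans $\HomD(S_j,\nu(S_j)[1])\cong\Ext^{\ell_j}_{\A}(S_j,S_{\kappa(j)})\cong\kk$. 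Rotating the triangle gives $Y_j\cong\cone\bigl(S_j[-1]\to\nu(S_j)\bigr)$. The minimal projective resolutions of $S_j$ and of $S_{\kappa(j)}$ overlap in exactly the single term $P_{\kappa(j)}$, and a chain-level representative of $\delta_j$ can be chosen to be the identity there and zero in every other degree; cancelling this contractible summand identifies $Y_j$, up to shift, with the minimal complex obtained by concatenating the two forbidden-thread resolutions along $P_{\kappa(j)}$, a complex of the form of Notation~\ref{not:lin-string} all of whose differentials are arrows except for the length-two path through $\kappa(j)$. Such a linear string complex has local endomorphism ring (this is routine, or invoke the Burban--Drozd classification of indecomposables in $\D^-(\md\A)$ together with the Krull--Remak--Schmidt property, Remark~\ref{rmk:KRS}), so $Y_j$ is indecomposable.

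\textbf{The action of $\nu$ and the non-vanishing.} Since $\nu$ restricts to an auto-equivalence of $\perfd(\A)$ it carries Auslander--Reiten triangles to Auslander--Reiten triangles; applied to the triangle ending at $S_j$ it yields the one ending at $\nu(S_j)\cong S_{\kappa(j)}[\ell_j-1]$, whose middle term is $Y_{\kappa(j)}[\ell_j-1]$ because Auslander--Reiten triangles commute with the shift. By uniqueness of Auslander--Reiten triangles, $\nu(Y_j)\cong Y_{\kappa(j)}[\ell_j-1]$, hence $\HomD(\nu(Y_j),Y_j)\cong\HomD(Y_{\kappa(j)},Y_j[1-\ell_j])$. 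Shifting the Auslander--Reiten triangle ending at $S_j$ by $[1-\ell_j]$ turns it into the Auslander--Reiten triangle $S_{\kappa(j)}\xrightarrow{\,a'\,}Y_j[1-\ell_j]\to S_j[1-\ell_j]\to S_{\kappa(j)}[1]$ starting at $S_{\kappa(j)}$, and applying $\HomD(Y_{\kappa(j)},-)$ gives an exact sequence $\HomD(Y_{\kappa(j)},S_j[-\ell_j])\to\HomD(Y_{\kappa(j)},S_{\kappa(j)})\xrightarrow{\,a'_{*}\,}\HomD(Y_{\kappa(j)},Y_j[1-\ell_j])$. The left-hand term vanishes: from the Auslander--Reiten triangle ending at $S_{\kappa(j)}$ it is squeezed between $\Ext^{-\ell_j}_{\A}(S_{\kappa(j)},S_j)$ and $\Ext^{1-\ell_j-\ell_{\kappa(j)}}_{\A}(S_{\kappa^2(j)},S_j)$, both $0$ as $\ell_j,\ell_{\kappa(j)}\ge 1$. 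Thus $a'_{*}$ is injective, and since the structure morphism $Y_{\kappa(j)}\to S_{\kappa(j)}$ of that Auslander--Reiten triangle is nonzero, its image under $a'_{*}$ is a nonzero element of $\HomD(\nu(Y_j),Y_j)$. This proves part~(1).

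\textbf{The objects of $\Scat * \Scat$.} For part~(2), note first that $S_j[i]=\cone(0\to S_j[i])$ and $Y_j[i]=\cone\bigl(S_j[i-1]\to\nu(S_j)[i]\bigr)$ lie in $\Scat*\Scat$ and are indecomposable by the above. Conversely, computing $\HomD(S_k,S_{k'}[i])=\Ext^i_{\A}(S_k,S_{k'})$ from the minimal resolution of $S_k$, which has $P_k$ in degree $0$, $P_{\kappa(k)}$ in degree $\ell_k$ and projectives at crossing vertices in between, shows this space is $\kk$ exactly for $(i,k')\in\{(0,k),(\ell_k,\kappa(k))\}$ and $0$ otherwise; in particular all $\HomD$-spaces between indecomposables of $\Scat$ are at most one-dimensional and $\rad^2\Scat=0$, since any composite $S_k\to S_{\kappa(k)}[\ell_k]\to S_{\kappa^2(k)}[\ell_k+\ell_{\kappa(k)}]$ lands in $\Ext^{n}_{\A}(S_k,-)$ with $n=\ell_k+\ell_{\kappa(k)}>\ell_k=\prdim S_k$, hence in $0$. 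Therefore, by repeatedly changing bases on source and target and cancelling contractible summands $\cone(\id)$, any $g\colon A\to B$ in $\Scat$ becomes a direct sum of zero morphisms $0\to S_k[i]$ and $S_k[i]\to 0$ and of nonzero ``elementary'' morphisms $S_k[i]\to S_{\kappa(k)}[i+\ell_k]$; an elementary morphism is a scalar multiple of a shift of $\delta_k$, so its cone is $Y_k[i+1]$, and thus $\cone(g)$ is a finite direct sum of shifts of the $S_k$'s and $Y_k$'s. By Krull--Remak--Schmidt (Remark~\ref{rmk:KRS}) the indecomposable objects of $\Scat*\Scat$ are precisely the $S_j[i]$ and $Y_j[i]$ with $j\in Q_0^t$, $i\in\Z$.

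\textbf{Main obstacle.} The delicate point is the Gauss-elimination reduction in the last paragraph: one must verify that cancelling contractible summands never produces components of $g$ outside the ``elementary'' ones --- exactly what $\rad^2\Scat=0$ guarantees --- and, in the first paragraph, make the cancellation explicit enough to read off that $Y_j$ is the stated concatenated string complex with endomorphism ring $\kk$.
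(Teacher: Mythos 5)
Your proposal is correct, and for part (2) it is essentially the paper's argument: you compute that the Hom spaces between indecomposables of $\Scat$ are at most one-dimensional, spanned by scalar identities and shifts of the connecting maps $\delta_k$, observe that composites of two radical maps die because they land in $\Ext$-degree $\ell_k+\ell_{\kappa(k)}>\prdim S_k$, and then reduce an arbitrary morphism in $\Scat$ to a direct sum of elementary ones by base change and read off the cones; the paper's explicit $2\times 2$ matrix identities are precisely the elementary steps of your Gaussian elimination, and your observation that $\rad^2\Scat=0$ is what keeps the reduction inside the allowed entries is the mechanism the paper uses implicitly when it ``extends these considerations to more summands''. Where you genuinely diverge is the non-vanishing $\HomD(\nu(Y_j),Y_j)\neq 0$ in part (1). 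The paper identifies $Y_j$ and $\nu^{-1}(Y_j)$ as explicit string complexes (the concatenated complex with the single length-two differential through $\kappa(j)$, exactly what your cancellation of the contractible $P_{\kappa(j)}$-summand produces) and exhibits a chain-level composition $Y_j\to S_j\to \nu^{-1}(Y_j)$ that is visibly not null-homotopic. You instead argue formally: since $\tau=\nu$ here, $\nu$ sends the Auslander--Reiten triangle ending at $S_j$ to the one ending at $\nu(S_j)\cong S_{\kappa(j)}[\ell_j-1]$, so by uniqueness $\nu(Y_j)\cong Y_{\kappa(j)}[\ell_j-1]$, and the long exact sequence together with vanishing of negative-degree $\Ext$ between the simples shows that the composite $Y_{\kappa(j)}\to S_{\kappa(j)}\to Y_j[1-\ell_j]$ is nonzero. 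Both work; the paper's route is self-contained at chain level and also displays $\nu^{-1}(Y_j)$ concretely (useful intuition for the later use in Proposition~\ref{prp:exc}), while yours trades the chain-level check for standard AR-triangle facts (preservation under exact autoequivalences and shift, uniqueness) and is arguably more conceptual.

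Two small points you should make explicit. First, the nonvanishing of the middle map $Y_{\kappa(j)}\to S_{\kappa(j)}$: if it were zero, rotating the triangle would give $\nu(S_{\kappa(j)})[1]\cong S_{\kappa(j)}\oplus Y_{\kappa(j)}[1]$, impossible because $\nu(S_{\kappa(j)})[1]\cong S_{\kappa^2(j)}[\ell_{\kappa(j)}]$ is indecomposable and not isomorphic to $S_{\kappa(j)}$ (alternatively, your explicit description of $Y_{\kappa(j)}$ gives a visibly nonzero map to $S_{\kappa(j)}$). Second, the indecomposability of $Y_j$, which you defer to a routine computation or to the Burban--Drozd classification: the paper does the same, asserting $\EndD(Y_j)\cong\kk$ by a homotopy-category computation, so this is at the accepted level of detail, but it is a point to be checked rather than quoted.
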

\begin{proof}
	\begin{enumerate}
		\item 
		Let $\alpha_{\ell} \ldots \alpha_2 \alpha_1$, $\beta_{m} \ldots \beta_2 \beta_1$ and $\gamma_{n} \ldots \gamma_2 \gamma_1$ denote the  forbidden threads starting at $\kappa^{-1}(j)$, $j$ and $\kappa(j)$ respectively.
		Because $\nu(S_j)[1] \cong S_j[m]$ and
		$\MD \Hom(S_j,\nu(S_j)[1])\cong  \End(S_j) \cong \kk$, 
		we may assume that the morphism $\delta_j$ has only one non-zero component which is given by the identity of $P_{\kappa(j)}$ at degree $m$.
		Using Notation~\ref{not:lin-string},
		it follows that $Y_j$ is
		isomorphic to the projective complex 
		depicted by the first row, the projective resolution of $S_j$ is given by the second row and 
		$\nu^{-1}(Y_j)$ corresponds to the last row in the diagram
		\begin{align*}
			\begin{td}
				\circ \ar{r}{\gamma_{n}} \& \bt \ar[densely dotted,-]{r}  \& \bt\ar{r}{\gamma_2} \&  \bt \ar{d}{\gamma_1} \ar{r}{\gamma_1 \beta_{m}} \& \bt \ar[equal]{d} \ar[densely dotted,-]{rr} \& \& \bt \ar[equal]{d} \ar{r}{\beta_1} \& \circ 
				 \ar[equal]{d}\\
 				 \&  \&  \& \circ \ar[equal]{d} \ar{r}{\beta_{m}} \& \bt \ar[densely dotted,-]{rr} \ar[equal]{d}\&  \& \bt \ar[equal]{d} \ar{r}{\beta_1} \& \circ \ar{d}{\alpha_\ell} \\
 				 \&  \&  \& \circ \ar{r}{\beta_{m}} \& \bt \ar[densely dotted,-]{rr}\& \& \bt \ar{r}{\beta_1 \alpha_\ell} \& \bt \ar{r}{\alpha_{\ell-1}} \& \bt \ar[densely dotted,-]{r} \& \bt \ar{r}{\alpha_1} \& \circ
			\end{td}
		\end{align*}
The vertical arrows of this diagram yield a composition of morphisms of complexes $Y_j \to S_j \to \nu^{-1}(Y_j)$ which is not homotopic to zero. Therefore, it holds that $\HomD(\nu(Y_j),Y_j) \neq 0$.
A computation in the homotopy category shows that $\EndD(Y_j) \cong \kk$, which yields that the complex $Y_j$ is indecomposable.
		\item
		Any non-zero morphism of indecomposable objects in $\Scat$ 
is either given by  $\phi \colonequals \lambda \id \colon S_j \to S_j$ 
		or $\psi \colonequals \mu \,\delta_j \colon S_j \to \nu(S_j)[1] = \SD(S_j)$ with $\lambda,\mu \in \kk^*$.
		Any morphism $S_j \to S_j \oplus \SD(S_j)$ or $S_j \oplus \SD(S_j) \to \SD(S_j)$ is decomposable since
		\begin{align*}
		\begin{bmatrix}
			\phi^{-1}  & 0 
			\\ -\psi & \phi
			\end{bmatrix}
			\begin{bmatrix}
				\phi \\ \psi
				\end{bmatrix}
				=
				\begin{bmatrix}
					\id \\ 0
				\end{bmatrix},
				&&
				\text{respectively}
				&&
				\begin{bmatrix}
					\psi & \phi
				\end{bmatrix}
				\begin{bmatrix}
					\phi  & 0 
					\\ -\psi & \phi^{-1}
				\end{bmatrix}
				=
				\begin{bmatrix}
					0 & \id
				\end{bmatrix}.
				\end{align*}
Using that $\HomD(\SD(S_j), S_j) = 0$, similarly, any endomorphism of $S_j \oplus \SD(S_j)$ decomposes as well.
Extending these considerations to more summands it follows that for any morphism 
		$f \colon X \to Y$ 
		of objects in $\Scat * \Scat$
		there are automorphisms $\alpha$ of $X$ and $\beta$ of $Y$ such that $\alpha  f\beta$ is the direct sum 
		of 
		shifts of morphisms of the form $S_j \to 0$, $0 \to S_j$,
		the identity of $S_j$ or the morphism $\delta_j$
		with $j \in Q_0^t$. 	
		Together with \eqref{lvl2a}, this yields the claim on indecomposables in $\Scat * \Scat$. \qedhere
	\end{enumerate}	
\end{proof}

%
%


\begin{prp}\label{prp:T1}
For an object $X$ in $\D^-(\md \A)$ we consider the following conditions.
				\begin{enumerate}
			\item \label{T1a} $X \in \mathstrut^{\perp}\Scat$.
			\item \label{T1b} The natural transformation $\Delta_X$ from \eqref{eq:nu2} is an isomorphism.
			\item \label{T1c} The object $X$ is $\nu^2$-invariant.
			\item \label{T1d} The object $X$ is $\nu$-periodic.
		\end{enumerate}
	Then the implications \eqref{T1a} $\Rightarrow$
	\eqref{T1b} $\Rightarrow$ \eqref{T1c} $\Rightarrow$ \eqref{T1d} hold. If the gentle order $\A$ is not hereditary, all four conditions are equivalent.
\end{prp}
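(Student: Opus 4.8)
The plan is to establish the chain $\eqref{T1a}\Rightarrow\eqref{T1b}\Rightarrow\eqref{T1c}\Rightarrow\eqref{T1d}$ unconditionally, and then to close the circle with $\eqref{T1d}\Rightarrow\eqref{T1a}$ under the extra hypothesis that $\A$ is not hereditary. Two of the links are formal: if $\Delta_X$ in the functorial triangle \eqref{eq:nu2} is an isomorphism, then that triangle forces $\nu^2(X)\cong X$, which is \eqref{T1c}; and \eqref{T1c}$\Rightarrow$\eqref{T1d} holds because $\nu^2$-invariance is a special case of $\nu$-periodicity. So the genuine content lies in the two remaining implications.

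For $\eqref{T1a}\Rightarrow\eqref{T1b}$ I would first record that $\Scat$ is stable under $\nu$ up to shift: by Lemma~\ref{lem:sim} the object $\nu(S_j)$ is a shift of $S_{\kappa(j)}$, and since $\kappa$ permutes $Q_0^t$ this gives $\nu^{\pm1}(\Scat)=\Scat$. As $\Scat$ is closed under shifts, $\mathstrut^{\perp}\Scat$ is a triangulated subcategory of $\D^-(\md\A)$, and it is stable under $\nu^{\pm1}$ because $\nu$ is an auto-equivalence. Hence, given $X\in\mathstrut^{\perp}\Scat$, also $\nu^2(X)\in\mathstrut^{\perp}\Scat$, and applying the two-out-of-three property to \eqref{eq:nu2} yields $C_X\in\mathstrut^{\perp}\Scat$. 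It then remains to see that $(\Scat*\Scat)\cap\mathstrut^{\perp}\Scat=0$: if $C$ is the cone of some $A\to B$ with $A,B\in\Scat$, apply the cohomological functor $\HomD(C,\blank)$ to the triangle $A\to B\to C\to A[1]$; since $B\in\Scat$ and $A[1]\in\Scat$, the outer terms $\HomD(C,B)$ and $\HomD(C,A[1])$ vanish whenever $C\in\mathstrut^{\perp}\Scat$, forcing $\HomD(C,C)=0$ and thus $C=0$. Applied to $C_X$ this shows $C_X=0$, i.e. $\Delta_X$ is an isomorphism.

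The heart of the argument is $\eqref{T1d}\Rightarrow\eqref{T1a}$, and this is where the non-hereditary hypothesis is needed. Let $X$ be $\nu$-periodic, say $\nu^n(X)\cong X$ with $n\geq1$ (the case $X=0$ is trivial). Since $\krdim=1$, we have $\SD=\nu\circ[1]$, so $\SD^n(X)\cong X[n]$ and $X$ is $(n,n)$-Calabi--Yau. By Lemma~\ref{lem:sim} each $S_j$ with $j\in Q_0^t$ is $(m(j),n(j))$-Calabi--Yau, and the same holds for every shift $S_j[i]$. Because $\A$ is not hereditary, Proposition~\ref{prp:hered-char} (equivalence of \eqref{hc1} and \eqref{hc3}) together with the inequality $m(j)\geq n(j)$ gives $m(j)>n(j)$ for all $j\in Q_0^t$, so for $X$ and $Y=S_j[i]$ the discriminant $\triangle=n\,m(j)-n\,n(j)=n\bigl(m(j)-n(j)\bigr)$ is nonzero. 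Moreover $\HomD(X,S_j[i][k])=0$ for $k\gg0$: as $\prdim S_j<\infty$ (Lemma~\ref{lem:sim}), $S_j[i]$ is represented by a bounded complex of projectives, and a right-bounded complex of projectives — which represents $X$, since $\D^-(\md\A)=\Hot^-(\proj\A)$ — admits only the zero chain map into a sufficiently positive shift of such a complex. Hence Lemma~\ref{lem:CY} applies and yields $\HomD(X,S_j[i])=0$; letting $j\in Q_0^t$ and $i\in\Z$ vary gives $X\in\mathstrut^{\perp}\Scat$. (If $Q_0^t=\emptyset$ then $\Scat=0$ and \eqref{T1a} is vacuous, which is consistent with the hypothesis since a ribbon graph order is not hereditary.)

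I expect the only delicate points to be bookkeeping ones: checking that Lemma~\ref{lem:CY} is legitimately applicable in the not-necessarily-Hom-finite category $\D^-(\md\A)$ with respect to the relative Serre functor (its proof uses only that $\SD$ is an auto-equivalence and the $\HomD$-vanishing for large shifts), and observing that the non-hereditary hypothesis is precisely what guarantees $\triangle\neq0$ — which explains why the four conditions may fail to be equivalent when $\A$ is hereditary, as there every object is $\nu$-periodic by Proposition~\ref{prp:hered-char} while $S_j\notin\mathstrut^{\perp}\Scat$.
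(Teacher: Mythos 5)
Your proof follows essentially the same route as the paper: the formal implications, the argument that $X\in\mathstrut^{\perp}\Scat$ forces $C_X=0$ (via $\nu$-stability of $\Scat$ and Proposition~\ref{prp:nu2}), and the use of Lemma~\ref{lem:sim}, Proposition~\ref{prp:hered-char} and Lemma~\ref{lem:CY} with the nonzero discriminant $n\bigl(m(j)-n(j)\bigr)$ for \eqref{T1d} $\Rightarrow$ \eqref{T1a}. The only slip is directional: with the paper's shift convention a right-bounded complex of projectives admits only zero maps into sufficiently \emph{negative} shifts of the perfect complex representing $S_j$, so the vanishing you get is $\HomD(X,S_j[i])=0$ for $i\ll 0$ (as in the paper), not $i\gg 0$; since Lemma~\ref{lem:CY} accepts vanishing in either direction, this does not affect the argument.
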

\begin{proof}
	Assume that \eqref{T1a} holds. Then $X \in \mathstrut^\perp \Scat = \mathstrut^{\perp }\tria(\Scat) \subseteq \mathstrut^{\perp} \Scat * \Scat$, and Proposition~\ref{prp:nu2} implies \eqref{T1b}. 
The implications \eqref{T1b} $\Rightarrow$ \eqref{T1c} $\Rightarrow$ \eqref{T1d} follow directly..

 To show \eqref{T1d} $\Rightarrow$ \eqref{T1a} assume that  $\A$ is not hereditary and that $X$ is $\nu$-periodic. Let $j \in Q_0^t$.
Then  $S_j \in \perfd(\A)$ is $(m(j), n(j))$-Calabi--Yau by Lemma~\ref{lem:sim} and $m(j) > n(j)$ by Proposition~\ref{prp:hered-char}.
In particular, $\HomD(X,S_j[i])=0$ for $i \ll 0$. 
As $X$ is $(p,p)$-Calabi--Yau for certain $p \in \N^+$, Lemma~\ref{lem:CY} yields that $X \in \mathstrut^\perp S_j$. 
%
\end{proof}

\section{Invariants on derived equivalent gentle orders}
\label{sec:invar}

As in the previous section, $\A$ denotes a gentle order such that its underlying quiver $(Q,I)$ is connected.

\subsection{A stable $t$-structure invariant under the Serre functor}

For the gentle order $\A$ we consider the idempotent determined by all crossing vertices, and its corresponding  projective $\A$-module,  idempotent subalgebra and quotient algebra
\begin{align}\label{eq:idem}
	e \colonequals \sum_{i \in Q_0^c} e_i, && B \colonequals \A e, && \begin{td} \Ainf \colonequals e \A e \ar[hookrightarrow]{r} \& \A \ar[->>]{r} \& \AI \colonequals \A/ \A e\A.\end{td}
\end{align}


\begin{rmk}
We make a few basic remarks.
\begin{enumerate}
	\item The subalgebra $\Ainf$ is a ribbon graph order or zero.
	\item The quotient algebra $\AI$ is hereditary and uniserial. 	
	More precisely, the quiver of $\AI$ is either a product of quivers of equioriented Dynkin type 
	$\mathbb{A}$ or a quiver of equioriented Euclidean type $\widetilde{\mathbb{A}}$.
	\item 
	If $e \neq 0$, 
	it holds that $\gldim \Ainf = \infty \geq \gldim \A > \gldim \AI = 1$.
	\item The module $B$ is a \emph{bijective $\A$-lattice}, that is, a projective-injective object in the \emph{category of $\A$-lattices}, which can be defined as the full subcategory given by submodules of finitely generated projective $\A$-modules.
\end{enumerate}
\end{rmk}
According to Proposition~\ref{prp:recol}
the two subcategories
$$
\begin{td}
\aisle  \colonequals	\Hot^-(\add B) 
 \ar[yshift=0pt,hookrightarrow]{r}
	\& 
	\D^-(\md \A)
	\ar[yshift=0pt,<-,hookleftarrow]{r} \&  
 \Dh(\md \A)\equalscolon \coaisle 
\end{td}$$
form a stable t-structure $(\aisle,\coaisle)$ of $\D^-(\md \A)$. 
	Let $\Tcat$ denote the full subcategory of $\nu$-periodic objects,
and $\Fcat$ the full subcategory of all non-$\nu$-periodic fractionally Calabi-Yau objects
in $\D^-(\md \A)$.
A priori, Lemma~\ref{lem:CY} yields that $\Hom(\Tcat,\Fcat)=0$.
Next, we compare the stable $t$-structure $(\aisle,\coaisle)$ with the pair $(\Tcat,\Fcat)$.

\begin{prp} \label{prp:recol4} 
In the notations above, the following statements hold.
	\begin{enumerate}
		\item It holds that  $\aisle = \Tau$. In particular, the relative Serre functor $\SD$ restricts to an auto-equivalence of $\aisle$ and the subcategory $\Tau$ is triangulated. 
		\item \label{recol4b} It holds that $\Fcat \subseteq \coaisle$ and the functor $\SD$ restricts to an auto-equivalence of $\coaisle$.
	\end{enumerate}
\end{prp}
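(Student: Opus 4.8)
The plan is to derive both parts from the two descriptions of the aisle and the coaisle recorded in Proposition~\ref{prp:recol}~\eqref{recol3b} — namely $\aisle=\mathstrut^{\perp}S$ with $S=\head\A f=\bigoplus_{j\in Q_0^t}S_j$, and $\coaisle=\aisle^{\perp}=\Dh(\md\A)$ — together with Proposition~\ref{prp:T1}. First I would set aside the hereditary case, which is degenerate: by Proposition~\ref{prp:hered-char} every object of $\D^-(\md\A)$ is then $\nu$-periodic, $\Fcat=\emptyset$, and the assertions reduce to trivialities. So I assume from now on that $\A$ has a crossing vertex — equivalently $\A$ is not hereditary, which is precisely the hypothesis under which Proposition~\ref{prp:T1} gives the equivalence of all four of its conditions.

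For part~(1) I would first upgrade $\aisle=\mathstrut^{\perp}S$ to $\aisle=\mathstrut^{\perp}\Scat$: since $\aisle$ is a triangulated, hence shift-stable, subcategory, $\Hom(\aisle,S[i])=0$ for every $i\in\Z$, so $\aisle\subseteq\mathstrut^{\perp}\Scat\subseteq\mathstrut^{\perp}S=\aisle$. The equivalence of conditions \eqref{T1a} and \eqref{T1d} in Proposition~\ref{prp:T1} is exactly $\mathstrut^{\perp}\Scat=\Tau$, whence $\aisle=\Tau$. Because $\Tau$ is the class of $\nu$-periodic objects and $\SD=\nu\circ[1]$, both $\SD$ and $\SD^{-1}$ carry $\Tau$ into itself; therefore $\SD(\aisle)=\aisle$, $\SD$ restricts to an auto-equivalence of $\aisle$, and $\Tau=\aisle$ is triangulated.

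For part~(2), note $\coaisle=\aisle^{\perp}=\Tau^{\perp}$ by part~(1) and Proposition~\ref{prp:recol}~\eqref{recol3b}. The $\SD$-stability of $\coaisle$ is then formal: for $V\in\coaisle$ and $T\in\aisle$ one has $\Hom(T,\SD V)\cong\Hom(\SD^{-1}T,V)=0$ because $\SD^{-1}T\in\aisle$, and symmetrically for $\SD^{-1}$, so $\SD(\coaisle)=\coaisle$ and $\SD$ restricts to an auto-equivalence of $\coaisle$. It remains to prove $\Fcat\subseteq\coaisle$. Under the identification $\coaisle=\Tau^{\perp}$ this is literally the vanishing $\Hom(\Tcat,\Fcat)=0$ noted before the proposition via Lemma~\ref{lem:CY}; but I would prefer to give a self-contained argument that avoids the Hom-vanishing hypothesis on which that use of Lemma~\ref{lem:CY} rests. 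Let $X\in\Fcat$ with $\CYdim(X)=(m,n)$; then $n\geq 1$ (as $X\neq 0$), and $m\neq n$, since $m=n$ would give $\SD^n(X)\cong X[n]$, i.e. $\nu^n(X)\cong X$, contradicting $X\notin\Tau$. Let $U\to X\to V\to U[1]$ be the truncation triangle of the stable $t$-structure $(\aisle,\coaisle)$, with $U\in\aisle$ and $V\in\coaisle$. Applying the exact functor $\SD^n$ and using that it preserves $\aisle$ (part~(1)) and $\coaisle$ (just shown), the triangle $\SD^nU\to\SD^nX\to\SD^nV\to\SD^nU[1]$ has its outer terms again in $\aisle$ and $\coaisle$; comparing it with the truncation triangle $U[m]\to X[m]\to V[m]\to U[m][1]$ of $X[m]\cong\SD^n X$ and invoking the uniqueness in Proposition~\ref{prp:recol}~\eqref{recol3a} yields $\SD^nU\cong U[m]$. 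Thus, if $U\neq 0$, then $U$ is $(m,n)$-Calabi--Yau; but $U\in\aisle=\Tau$ is $\nu$-periodic, say $\nu^p(U)\cong U$ with $p\geq 1$, equivalently $\SD^pU\cong U[p]$, so that $U[pm]\cong\SD^{np}U\cong U[np]$, i.e. $U[p(m-n)]\cong U$ with $p(m-n)\neq 0$ — impossible for a nonzero object of $\D^-(\md\A)$, whose homology is bounded below. Hence $U=0$ and $X\cong V\in\coaisle$.

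Everything except the last inclusion is bookkeeping with the stable $t$-structure and the recollement of Proposition~\ref{prp:recol}; the single genuine subtlety is the Hom-vanishing hypothesis required to invoke Lemma~\ref{lem:CY} directly, and the truncation-triangle argument above is designed precisely to bypass it, so that is the form in which I would write out the proof of $\Fcat\subseteq\coaisle$ in detail.
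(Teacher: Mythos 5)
Your argument is correct for non-hereditary $\A$, and it diverges from the paper's proof in one genuine place. For part (1) the paper does not route through Proposition~\ref{prp:T1}: it proves $\aisle\subseteq\Tau$ directly from Theorem~\ref{thm:SD} ($\nu$ preserves $\Hot^-(\add B)$ and acts as an involution on its objects), and for $\Tau\subseteq\aisle$ it re-runs precisely the argument you delegate to Proposition~\ref{prp:T1} (Lemma~\ref{lem:sim}, Proposition~\ref{prp:hered-char} and Lemma~\ref{lem:CY} against the simples $S_j$, then Proposition~\ref{prp:recol}~\eqref{recol3b}); so your part (1) is the same mathematics in different packaging, and your upgrade $\mathstrut^{\perp}S=\mathstrut^{\perp}\Scat$ via shift-stability is a needed and correct bookkeeping step. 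The $\SD$-stability of $\coaisle$ is essentially identical (the paper uses $\nu^{\pm1}(B)\cong B$ and $\coaisle=B^{\perp}$ where you use $\coaisle=\aisle^{\perp}$). The real difference is the inclusion $\Fcat\subseteq\coaisle$: the paper gets it in one line from Lemma~\ref{lem:CY} applied to the $(1,1)$-Calabi--Yau object $B\in\per(\A)$ and $X\in\Fcat$, the Hom-vanishing hypothesis being available because $\HomD(B,X[i])\cong e\Ho_{-i}(X)$ vanishes for $i\gg0$ as the homology of $X$ is bounded below; you instead apply $\SD^{n}$ to the truncation triangle $U\to X\to V\to U[1]$, use the uniqueness statement of Proposition~\ref{prp:recol}~\eqref{recol3a} to get $\SD^{n}U\cong U[m]$, and kill $U$ by playing $(m,n)$ against $\nu$-periodicity and boundedness of homology. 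Your variant is valid and self-contained, at the cost of being longer; the paper's is shorter but rests on checking the vanishing hypothesis of Lemma~\ref{lem:CY}, which you deliberately avoid.

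One caveat, which you inherit from the paper rather than introduce: the hereditary case is not a triviality for part (1). There $e=0$, so $\aisle=\Hot^-(\add \A e)=0$, while by Lemma~\ref{lem:hered-CY} every object of $\D^-(\md\A)$ is $\nu$-periodic, so the literal equality $\aisle=\Tau$ does not hold; the paper's own proof asserts $X\in\D^-(\md\A)=\aisle$ at this point and has the same defect. Your claims that $\Fcat=\emptyset$ and that part (2) is trivial in that case are fine, but "the assertions reduce to trivialities" is not accurate for part (1); if you keep the case split, say explicitly that part (1) requires excluding (or separately interpreting) hereditary orders.
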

\begin{proof}
	Theorem~\ref{thm:SD} implies that 
	$\nu$ preserves 
	$\Hot^-(\add B)$
	and acts as an involution on its objects. 
	This shows that $\SD(\aisle) = \nu(\aisle)[1] = \aisle \subseteq \Tcat$.
	To show $\aisle \supseteq \Tcat$, let $X$ be $\nu$-periodic.
	\begin{itemize}
		\item If $\A$ is hereditary, then $e = 0$ and $X \in \D^-(\md \A) = \aisle$.
		\item Assume that $\A$ is not hereditary. 
Let $j \in Q_0^t$.
Then  $S_j \in \perfd(\A)$ is $(m(j), n(j))$-Calabi--Yau with $m(j) \geq n(j)$ by Lemma~\ref{lem:sim}.   In particular, $\HomD(X,S_j[i])=0$ for $i \ll 0$. Proposition~\ref{prp:hered-char} ensures that $m(j) \neq n(j)$.
As $X$ is $(p,p)$-Calabi--Yau for certain $p \in \N^+$, Lemma~\ref{lem:CY} yields that $X \in \mathstrut^\perp S_j$. 
It follows that $X \in \mathstrut^{\perp} \Scat$ which is equal to $\aisle$ by Proposition~\ref{prp:recol}~\eqref{recol3b}.  
	\end{itemize}
		Since $\nu^{\pm 1}(B) \cong B $ and $\coaisle = B^\perp$ it follows that $\SD^{\pm 1}(\coaisle) \subseteq \coaisle$, and thus $\SD(\coaisle) = \coaisle$.
	As $B \in \per(\A)$, it follows that  $\Fcat \subseteq B^{\perp} = \coaisle$ by Lemma~\ref{lem:CY}.
\end{proof}

\begin{prp}\label{prp:der-gen}
Let $\B$ be a gentle order derived equivalent to the gentle order $\A$. Let $f$ denote the sum of the idempotents of the crossing vertices in $Q^{\B}_0$, set $\B_f \colonequals f \B f$ and $B \colonequals \B/\B f \B$.
Then the following statements hold.
\begin{enumerate}
	\item \label{der-gen1} There is a diagram of left recollements
\begin{align*}
	\begin{td} 
		\aisle_{\A} = \D^-(\md \Ainf)
		\ar[dashed]{d}{\Phi_{\aisle}}
		  \& \ar[yshift=5pt,twoheadrightarrow]{r}{\itop_{\A}} \D^-(\md\A) \ar{d}{\Phi}
		\ar[yshift=-5pt,twoheadrightarrow]{l}{\jmid_{\A}} \ar[yshift=5pt,leftarrowtail]{l}[swap]{\jtop_{\A}}
		\& \ar[yshift=-5pt,rightarrowtail]{l}{\imid_{\A}}  \Dh(\md\A) =   \coaisle_{\A}  \ar[dashed]{d}{\Phi_{\coaisle}} \\
				\aisle_{\B} = \D^-(\md \B_f)
		\& \ar[yshift=5pt,twoheadrightarrow]{r}{\itop_{\B}} \D^-(\md\B) 
		\ar[yshift=-5pt,twoheadrightarrow]{l}{\jmid_{\B}} \ar[yshift=5pt,leftarrowtail]{l}[swap]{\jtop_{\B}}
		\& \ar[yshift=-5pt,rightarrowtail]{l}{\imid_{\B}}  \D_B^{-}(\md\B) =   \coaisle_{\B}   
	\end{td}
\end{align*}
such that $\Phi_{\aisle}$, $\Phi$ and $\Phi_{\coaisle}$ are equivalences
and there are isomorphisms of functors
\begin{align*}
	\Phi \circ\jtop_{\A} \cong \jtop_{\B}\circ \Phi_{\aisle},
	&&
	\jmid_{\B} \circ \Phi \cong \Phi_{\aisle} \circ \jmid_{\A},&&
	\imid_{\B} \circ\Phi_{\coaisle} \cong \Phi \circ \imid_{\A},&&	
	\Phi_{\coaisle}\circ \itop_{\A} \cong  \itop_{\B}\circ \Phi.&&
\end{align*}
	\item \label{der-gen2} $\A$ is hereditary if and only if $\B$ is hereditary.
	\item \label{der-gen3} $\A$ is a ribbon graph order if and only if $\B$ is a ribbon graph order.
\end{enumerate} 
\end{prp}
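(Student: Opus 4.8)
The plan is to prove the three statements in the order \eqref{der-gen2}, \eqref{der-gen1}, \eqref{der-gen3}: the proof of \eqref{der-gen1} will invoke \eqref{der-gen2} in order to know that $\B$ is hereditary when $\A$ is, and \eqref{der-gen3} will be read off from \eqref{der-gen1}. Throughout I would use that $\B$ is again a \emph{connected} gentle order, since it is ring-indecomposable (derived equivalence preserves the centre of the derived category). For \eqref{der-gen2} I would combine Proposition~\ref{prp:hered-char} with the derived invariance of the two conditions appearing there: $\A$ is hereditary iff every object of $\D^-(\md\A)$ is $\nu$-periodic and $\gldim\A<\infty$. The first condition transports along $\Phi$, because $\Phi$ is an equivalence with $\Phi\,\nu_{\A}\cong\nu_{\B}\,\Phi$ by Proposition~\ref{prp:der-ord}~\eqref{der-ord1}, so $\Phi$ and $\Phi^{-1}$ both preserve $\nu$-periodicity; the second is equivalent to $\Dsg(\A)=0$, which is a derived invariant by Theorem~\ref{thm:der-eq}. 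Hence $\A$ is hereditary iff $\B$ is.

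For \eqref{der-gen1}, if $\A$ and hence $\B$ is hereditary then $e=f=0$, the rings $\Ainf$ and $\B_f$ vanish, the aisles are $0$, the coaisles are the full derived categories, and the diagram together with all stated isomorphisms is immediate (with $\Phi_{\coaisle}=\Phi$). Otherwise $\A$ and $\B$ are non-hereditary, and Proposition~\ref{prp:recol4} identifies $\aisle_{\A}$ with the subcategory of $\nu$-periodic objects of $\D^-(\md\A)$, and likewise $\aisle_{\B}$ for $\B$; since $\Phi$ and $\Phi^{-1}$ preserve $\nu$-periodicity this yields $\Phi(\aisle_{\A})=\aisle_{\B}$. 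As $\coaisle_{\A}=\aisle_{\A}^{\perp}$ and $\coaisle_{\B}=\aisle_{\B}^{\perp}$ by Proposition~\ref{prp:recol}~\eqref{recol3b}, and an equivalence preserves right orthogonals, also $\Phi(\coaisle_{\A})=\coaisle_{\B}$. Both $\aisle_{\A}$ and $\coaisle_{\A}$ are triangulated, so $\Phi$ restricts to exact equivalences on them; composing the restriction $\aisle_{\A}\to\aisle_{\B}$ with the fully faithful Morita functors $\jtop_{\A}\colon\D^-(\md\Ainf)\xrightarrow{\,\sim\,}\aisle_{\A}$ and $\jtop_{\B}$ defines $\Phi_{\aisle}$, and I would put $\Phi_{\coaisle}\colonequals\Phi|_{\coaisle_{\A}}$.

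The four functor isomorphisms would then follow from uniqueness of adjoints. Up to the Morita identifications, $\jtop_{\A}$ is the inclusion of $\aisle_{\A}$ and $\imid_{\A}$ is the inclusion of $\coaisle_{\A}$; since $\Phi$ takes aisle to aisle and coaisle to coaisle compatibly with these inclusions, $\Phi\circ\jtop_{\A}\cong\jtop_{\B}\circ\Phi_{\aisle}$ and $\Phi\circ\imid_{\A}\cong\imid_{\B}\circ\Phi_{\coaisle}$ hold directly. The functors $\jmid_{\A}$ and $\itop_{\A}$ are the right adjoint of $\jtop_{\A}$ and the left adjoint of $\imid_{\A}$, so applying the right (resp.\ left) adjoint construction to the two isomorphisms above — using that the quasi-inverse of an equivalence is both a left and a right adjoint — gives $\jmid_{\B}\circ\Phi\cong\Phi_{\aisle}\circ\jmid_{\A}$ and $\Phi_{\coaisle}\circ\itop_{\A}\cong\itop_{\B}\circ\Phi$. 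For \eqref{der-gen3} I would observe that a connected gentle order $\A$ is a ribbon graph order exactly when it has no transition vertices, i.e.\ $\AI=\A/\A e\A=0$, i.e.\ $\coaisle_{\A}=\Dh(\md\A)=0$ (if $\AI\neq0$, any nonzero $\AI$-module placed in degree $0$ lies in $\coaisle_{\A}$). Since $\Phi_{\coaisle}$ is an equivalence $\coaisle_{\A}\xrightarrow{\,\sim\,}\coaisle_{\B}$ by \eqref{der-gen1}, we get $\coaisle_{\A}=0$ iff $\coaisle_{\B}=0$, whence $\A$ is a ribbon graph order iff $\B$ is.

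The substantive input is already contained in Propositions~\ref{prp:recol4} and \ref{prp:der-ord}; the one place requiring care is the degenerate behaviour at the two extreme classes — hereditary orders, where the aisle collapses to $0$, and ribbon graph orders, where the coaisle collapses to $0$ — and in particular making sure that the intrinsic description of the aisle from Proposition~\ref{prp:recol4} is used only once $\A$, equivalently $\B$, is known to be non-hereditary. Everything else is a routine adjunction chase.
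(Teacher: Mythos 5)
Your proposal is correct and follows essentially the same route as the paper's proof: derived invariance of $\nu$-periodicity (Proposition~\ref{prp:der-ord}) plus Proposition~\ref{prp:hered-char} for the hereditary criterion, the identification $\aisle=\Tau$ from Proposition~\ref{prp:recol4} in the non-hereditary case, transport of the coaisle as $\aisle^{\perp}$, uniqueness of adjoints for the $\jmid$- and $\itop$-compatibilities, and vanishing of the coaisle for the ribbon graph statement. The only differences are cosmetic: you prove \eqref{der-gen2} first and justify the derived invariance of $\gldim<\infty$ via $\Dsg(\blank)=0$, which the paper simply asserts.
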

\begin{proof}
	The top and bottom recollement exist by Proposition~\ref{prp:recol}, and the functor $\Phi$ exists by Theorem~\ref{thm:der-eq}.
Proposition~\ref{prp:der-ord}~\eqref{der-ord1}
	implies that $\Phi$ preserves $\nu$-periodic objects.
Using the identification of such objects with aisle subcategories in Proposition~\ref{prp:recol4} it follows that
	$\Phi(\Ucat_{\A}) = \Ucat_{\B}$.
	Since $\gldim \A$ is finite if and only if so is $\gldim \B$, 
	Proposition~\ref{prp:hered-char} implies that $\A$ is hereditary if and only if $\B$ is hereditary.
	
	\begin{itemize}
		\item If $\A$ and $\B$ are hereditary, it holds that $e = f = 0$, the functors  $\itop_{\A}$, $\imid_{\A}$ 
		$\itop_{\B}$, and $\imid_{\B}$
		are identity functors, and $\Phi = \Phi_{\coaisle}$.
		\item Assume that neither $\A$ nor $\B$ is hereditary.
 Proposition~\ref{prp:recol4} yields that
		$\aisle_{\A} = \im F_{\A} = \Tau$ and similarly $\aisle_{\B} = \Tau$.
		The previous consideration shows that
	 $\Phi(\aisle_{\A}) = \aisle_{\B}$.
		Therefore, $\Phi$ restricts to an equivalence $\Phi_{\aisle}$ such that $\Phi  \circ \jtop_{\A} \cong \jtop_{\B} \circ \Phi_{\aisle}$.
		Since $\jmid_{\A} \vdash \jtop_{\A}$ 
		and $\jtop_{\B} \dashv \jmid_{\B}$,
		it holds that
		$\Phi_{\aisle} \jmid_{\A} \vdash \jtop_{\A} \Phi_{\aisle}^{-1} \cong \Phi^{-1} \jtop_{\B} \dashv \jmid_{\B}\Phi$.
		As left adjoints are unique it follows that $\Phi_{\aisle} \jmid_{\A} \cong \jmid_{\B} \Phi$.

		Because $\Phi$ is an equivalence it holds that $\Phi(\coaisle_{\A}) = \Phi(\aisle_{\A}^{\perp}) = \Phi(\aisle_{\A})^{\perp} = \aisle_{\B}^{\perp} = \coaisle_{\B}$. 
		Therefore, $\Phi$ restricts also to an equivalence $\Phi_{\coaisle}$ with $\Phi \circ \imid_{\A} \cong \jmid_{\B} \circ \Phi_{\coaisle}$.
		Similar to the previous argument, the adjointness properties
	 $\itop_{\A} \dashv \imid_{\A}$ 
		and $\imid_{\B} \vdash \itop_{\B}$ yield that 
		$\Phi_{\coaisle} \itop_{\A} \dashv \imid_{\A} \Phi^{-1}_{\coaisle} \cong \Phi^{-1} \imid_{\B} \vdash \itop_{\B} \Phi$, and thus $\Phi_{\coaisle} \itop_{\A} \cong \itop_{\B} \Phi$.
		\end{itemize}
	This shows all isomorphisms of compositions.
	To show the remaining claim, assume that $\A$ is a ribbon graph order. Then $e=1$ and $\coaisle_{\A} = 0$.
	Since $\Phi_{\coaisle}$ is an equivalence, it follows that $\coaisle_{\B}=0$, $f=1$ and $\B$ is a ribbon graph order.
	\end{proof}
\begin{rmk}
Proposition \ref{prp:der-gen}~\eqref{der-gen2} recovers an observation by König and Zimmermann \cite{KZ}. 
		\eqref{der-gen3} confirms an expectation by Kauer and Roggenkamp \cite{KR}. In a forthcoming work \cite{G} it will be shown that \eqref{der-gen3} is true without assuming that $\A$ and $\B$ are $\Rx$-orders.
		\end{rmk}
%

\subsection{Homological interpretation of AG-invariants}

\begin{prp}\label{prp:exc}
	An object $X \in \D^-(\md \A)$ appears in an exceptional cycle 
	if and only if 
$X$ is isomorphic to a shift of a simple module
$S_j$
for certain
$j \in Q_0^t$,
or
		$X \in \aisle$ and $X$ is $1$-spherical or induces an exceptional $2$-cycle.
\end{prp}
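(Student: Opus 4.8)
The plan is to dispose of the hereditary case first, then treat sufficiency by a direct appeal to Lemma~\ref{lem:sim} and the definitions, and finally split necessity into the two halves $X\in\aisle$ and $X\in\coaisle$ of the stable $t$-structure. If $\A$ is hereditary then $e=0$, $\aisle=0$, $\coaisle=\D^-(\md\A)$ and $Q_0=Q_0^t$, so the assertion is exactly Lemma~\ref{prp:hered}; hence I would assume $\A$ is not hereditary, so that $m(j)>n(j)\geq1$ for all $j\in Q_0^t$ by Proposition~\ref{prp:hered-char}. For sufficiency: if $X\cong S_j[i]$ with $j\in Q_0^t$ then $S_j$, hence $X$ up to the equivalence of Definition~\ref{dfn:eq-exc}, sits in the exceptional $n(j)$-cycle $\mathcal{S}(j)$ of Lemma~\ref{lem:sim}; a $1$-spherical object is an exceptional $1$-cycle by \ref{E1}--\ref{E2}; and an object inducing an exceptional $2$-cycle appears in one by definition. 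For necessity, if $X$ appears in an exceptional cycle then $X$ is fractionally Calabi--Yau and satisfies \ref{E2}, so $X\in\aisle\cup\coaisle$ by Theorem~\ref{thm:B}(1) (Proposition~\ref{prp:recol4}); and since $\SD$ preserves each of $\aisle$, $\coaisle$, the entire cycle lies in one of them.

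In the case $X\in\aisle$, Theorem~\ref{thm:SD} shows that $\nu$ restricts to an involution on $\aisle$, whence $\SD^2(X)=\nu^2(X)[2]\cong X[2]$. As $X$ is fractionally Calabi--Yau, the group $\{(a,b)\in\Z^2\mid X[a]\cong\SD^b(X)\}$ is infinite cyclic on $\CYdim X$, and $(2,2)$ lying in it forces $\CYdim X\in\{(1,1),(2,2)\}$. If $\CYdim X=(1,1)$, Serre duality gives $\dim\HomDc(X,X[i])=\dim\HomDc(X,X[1-i])$ for all $i$; together with \ref{E2} for $n=1$ and $\id_X\ne0$ this pins down $\HomDc(X,X)\cong\HomDc(X,X[1])\cong\kk$ and all other self-extensions vanishing, i.e.\ $X$ is $1$-spherical. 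If $\CYdim X=(2,2)$, put $Y\colonequals\SD(X)$, so that $\SD(Y)\cong X[2]$; then \ref{E2} for $n=2$ says precisely that $(X,Y)$ is an exceptional $2$-cycle, so $X$ induces an exceptional $2$-cycle.

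The case $X\in\coaisle$ is the main obstacle. I must show that the only objects of $\coaisle$ occurring in exceptional cycles are the shifts $S_j[i]$ with $j\in Q_0^t$, equivalently that every exceptional cycle inside $\coaisle$ is equivalent to some $\mathcal{S}(j)$. First I would record that such an $X$ lies in the Hom-finite category $\perfd(\A)$ with finite-dimensional homology, and that by Lemma~\ref{lem:frac-bound} (with $\krdim=1$) its Calabi--Yau dimension $(m_0,n_0)$ obeys $m_0\geq n_0\geq1$. The reduction I envisage exploits that every object of $\coaisle=\Dh(\md\A)$ has homology in $\md\AI$, with $\AI=\A/\A e\A$ hereditary and uniserial (a product of equioriented type-$\mathbb{A}$ path algebras): using the explicit formula for $\nu$ on complexes of projectives from Theorem~\ref{thm:SD}, the bijection between forbidden threads and simple modules at transition vertices established in the proof of Lemma~\ref{lem:sim}, and the resulting identity $\SD^{n(j)}(S_j)\cong S_j[m(j)]$ on each $\kappa$-orbit, one argues that an indecomposable $X\in\coaisle$ appearing in an exceptional cycle is forced to be a shift of a simple module. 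Indeed, for a non-simple uniserial module in $\coaisle$ one produces a nonzero morphism $\tau^{q}(X)\to X$ for a suitable $q\ne0$, exactly as in the proof of Lemma~\ref{prp:hered}, and then Lemma~\ref{lem:vanish} — with $(m_0,n_0)=(m(j),n(j))$ and $m(j)>n(j)$ — gives a contradiction. The delicate point, and the step I expect to cost the most work, is controlling precisely which complexes in $\coaisle$ can be fractionally Calabi--Yau at all; here I would either invoke the Burban--Drozd classification of indecomposable objects of $\D^-(\md\A)$ to reduce to string and band complexes, or set up an induction on the number of indecomposable summands of $X$ in each cohomological degree, using the triangle \eqref{eq:triangle0} of the stable $t$-structure together with Lemma~\ref{lem:CY} to peel off the pieces not of the form $S_j[i]$.
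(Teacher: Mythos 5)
Your treatment of the hereditary case, of the ``if'' direction, and of the case $X \in \aisle$ is correct (indeed your explicit check that an aisle object of Calabi--Yau dimension $(1,1)$, resp.\ $(2,2)$, is $1$-spherical, resp.\ yields an exceptional $2$-cycle, is a detail the paper leaves implicit). The genuine gap is the case $X \in \coaisle$, which you flag yourself but do not close: your argument produces a nonzero morphism $\tau^{q}(X)\to X$ only when $X$ is a non-simple uniserial \emph{module}, whereas an object of $\coaisle=\Dh(\md\A)$ appearing in an exceptional cycle is in general a complex with several nonvanishing homologies and need not be a shift of any module (the objects $Y_j$ of Lemma~\ref{lem:lvl2} lie in $\coaisle$ and are of this kind), so ``exactly as in the proof of Lemma~\ref{prp:hered}'' does not apply. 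Neither fallback is carried out, and the second is doubtful as stated: for $X\in\coaisle$ the truncation triangle \eqref{eq:triangle0} is trivial, ``peeling off'' degreewise summands is not a triangulated operation, and Lemma~\ref{lem:CY} cannot be applied to pieces that are not known to be fractionally Calabi--Yau.

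The paper closes precisely this case by a mechanism your proposal does not anticipate, and which replaces any classification of objects of $\coaisle$: Proposition~\ref{prp:nu2} gives a functorial triangle $\nu^2(X)\to X\to C_X\to\nu^2(X)[1]$ with $C_X\in\Scat*\Scat$, and Lemma~\ref{lem:lvl2} shows that the indecomposables of $\Scat*\Scat$ are exactly the shifts of the $S_j$ and of the $Y_j$, the latter satisfying $\HomD(\nu(Y_j),Y_j)\neq 0$. If $\CYdim X\notin\{(1,1),(2,2)\}$, then (using $\tau=\nu$ and $m\geq n$ from Lemma~\ref{lem:frac-bound}) Lemma~\ref{lem:vanish} yields $\HomD(\nu^{q}(X),X)=0$ for $q=1,2$; the vanishing for $q=2$ splits the triangle, so the indecomposable $X$ is a direct summand of $C_X$, and the vanishing for $q=1$ rules out the $Y_j[i]$, forcing $X\cong S_j[i]$. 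The remaining dimensions $(1,1)$ and $(2,2)$ are handled via Proposition~\ref{prp:hered} in the hereditary case and via Proposition~\ref{prp:T1} ($\nu$-periodicity gives $X\in\mathstrut^{\perp}\Scat=\aisle$) otherwise. Until you supply an argument of comparable strength for your coaisle case, the proposal is incomplete.
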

\begin{proof}
As above, let 
$	\Scat \colonequals \add \{ \ S_j [i] \mid j\in Q_0^t, \ i\in\Z \}$.
The `if'-implication follows from Lemma~\ref{lem:sim}.
To show the converse, assume that $X$ appears in an exceptional cycle. Set $(m,n) \colonequals \CYdim(X)$.
\begin{itemize}[label=--]
	\item Assume that $(m,n) \notin \{(1,1),(2,2)\}$. 
	By Lemma~\ref{lem:frac-bound} it holds that $m \geq n$. In particular, it holds that $(m,n) \neq (0,1)$ and $X$ is indecomposable.
	Lemma~\ref{lem:vanish} implies that $\HomD(\nu^q(X),X)=0$ for each $q \in \{1,2\}$.
	Proposition~\ref{prp:nu2} and Lemma~\ref{lem:lvl2} yield that $X \in \ind \Scat$.
	\item Assume that $(m,n) \in \{(1,1),(2,2)\}$.
	\begin{itemize}[label=--]
		\item If $\A$ is hereditary, 
		then
		$X \in \simp \A \subseteq \Scat$ by Proposition~\ref{prp:hered}.
		\item Otherwise, it follows that $X \in \mathstrut^{\perp} \Scat = \aisle$ by Propositions~\ref{prp:T1} and \ref{prp:recol} \eqref{recol3b}. \qedhere
	\end{itemize}
\end{itemize}

\end{proof}

Let us recall that two exceptional cycles are equivalent if they are the same up to shift and rotation, see Definition~\ref{dfn:eq-exc}.


\begin{cor}\label{cor:AG1}
The following statements hold.
\begin{enumerate}
	\item \label{AG1a} There is a bijection between the set
$Q_0^t/\!\sim_{\kappa}$ of $\kappa$-orbits in $Q_0^t$
and the set $\exc(\coaisle)/\!\sim$ of equivalence classes of repetition-free exceptional cycles in $\coaisle$  
given by
\begin{align}
	\label{eq:AG1-bij}
	\begin{td}
	Q_0^t/\!\sim_{\kappa} \ar{r}{\sim} \& \exc(\coaisle)/\!\sim 
	\&
	\overline{j} \ar[mapsto]{r} \& \overline{\mathcal{S}(j)} = \overline{\left(S_j,S_{\kappa(j)}, \ldots S_{\kappa^{n(j)-1}(j)}\right)}
	\end{td}
	\end{align}
\item \label{AG1b} There is an equality of multisets
\begin{align*}\AG_1(\A) = 
	\left\{\CYdim \Exc \mid \overline{\Exc} \in \exc(\coaisle)/\!\sim\right\}.
\end{align*}
\end{enumerate}
\end{cor}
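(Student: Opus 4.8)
The plan is to deduce both statements from the combinatorial analysis already carried out in Lemma~\ref{lem:sim} together with the characterization of exceptional cycles in $\coaisle$ supplied by Proposition~\ref{prp:exc}. For part~\eqref{AG1a}, I would first observe that the assignment $\overline{j} \mapsto \overline{\mathcal{S}(j)}$ is well-defined: Lemma~\ref{lem:sim} shows that $\mathcal{S}(j)$ is a repetition-free exceptional $n(j)$-cycle, and each of its terms $S_{\kappa^p(j)}$ lies in $\coaisle$ because a simple module at a transition vertex satisfies $e\,S_{\kappa^p(j)} = 0$ (its support is a single transition vertex, which is not a crossing vertex), so $\mathcal{S}(j) \in \exc(\coaisle)$. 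If $j' \sim_\kappa j$, then $j' = \kappa^p(j)$ for some $p$, and by construction $\mathcal{S}(j')$ is the cyclic rotation of $\mathcal{S}(j)$ starting at $S_{j'}$; hence $\overline{\mathcal{S}(j')} = \overline{\mathcal{S}(j)}$ in the sense of Definition~\ref{dfn:eq-exc}, so the map descends to $\kappa$-orbits. Injectivity is immediate: if $\overline{\mathcal{S}(j)} = \overline{\mathcal{S}(j')}$, then $S_{j'} \cong S_j[i]$ for some $i$, which forces $i = 0$ and $j = j'$ (simple modules are concentrated in degree zero and pairwise non-isomorphic since $\A$ is basic).

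For surjectivity, suppose $\overline{\Exc}$ is an equivalence class of a repetition-free exceptional cycle $\Exc = (E_1, \ldots, E_n)$ with all $E_i \in \coaisle$. By Proposition~\ref{prp:exc}, each $E_i$ is either a shift of a simple module $S_j$ with $j \in Q_0^t$, or lies in $\aisle$. But $\coaisle \cap \aisle = 0$ by the stable $t$-structure (the Hom-orthogonality $\Hom(\aisle,\coaisle) = 0$ forces any object in the intersection to have zero identity), so the second alternative is excluded, and each $E_i$ is a shift of some $S_{j_i}$ with $j_i \in Q_0^t$. Condition~\ref{E1} together with Lemma~\ref{lem:sim}'s computation $\SD(S_{j}) \cong S_{\kappa(j)}[\ell(f)]$ (where $f$ is the forbidden thread at $j$) shows that $j_{i+1} = \kappa(j_i)$ for all $i$, cyclically; hence $\Exc$ is (up to shift and rotation) exactly $\mathcal{S}(j_1)$, and $n = n(j_1)$. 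Therefore $\overline{\Exc} = \overline{\mathcal{S}(j_1)}$ lies in the image, proving the bijection.

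For part~\eqref{AG1b}, I would use that under the bijection of~\eqref{AG1a}, the Calabi--Yau dimension of the exceptional cycle $\mathcal{S}(j)$ is exactly $\CYdim(S_j) = (m(j), n(j))$, which is recorded in Lemma~\ref{lem:sim} (and is consistent with the general fact stated after Definition~\ref{dfn:ex-cycle} that all objects in an exceptional cycle share the same Calabi--Yau dimension, equal to $\CYdim\Exc$). Since $\AG_1(\A)$ is by definition the multiset $\{(m(j_a), n(j_a)) \mid 1 \le a \le b\}$ indexed by a complete set of representatives $j_1, \ldots, j_b$ of $\kappa$-orbits in $Q_0^t$, and these orbits are in bijection with the equivalence classes $\overline{\mathcal{S}(j_a)}$ of repetition-free exceptional cycles in $\coaisle$ via~\eqref{eq:AG1-bij}, the two multisets $\AG_1(\A)$ and $\{\CYdim\Exc \mid \overline{\Exc} \in \exc(\coaisle)/\!\sim\}$ agree term by term. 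The main obstacle, and the only point requiring genuine care, is the surjectivity argument in~\eqref{AG1a}: one must rule out that an exceptional cycle in $\coaisle$ could involve objects of $\aisle$ or "mixed" objects, and for this the clean separation provided by Proposition~\ref{prp:exc} combined with $\aisle \cap \coaisle = 0$ is exactly what is needed; the remaining verifications are bookkeeping with Definition~\ref{dfn:eq-exc}.
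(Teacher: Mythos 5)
Your proposal is correct and follows essentially the same route as the paper: well-definedness and injectivity on $\kappa$-orbits via Lemma~\ref{lem:sim}, surjectivity via Proposition~\ref{prp:exc} (using $\aisle \cap \coaisle = 0$ to exclude the second alternative), and part~\eqref{AG1b} from $\CYdim S_j = (m(j),n(j))$. The only nitpick is in your injectivity step: equivalence of cycles gives $S_{j'} \cong S_{\kappa^p(j)}[i]$ for some $p$, not necessarily $S_j[i]$, so the correct conclusion is $j' \sim_\kappa j$ rather than $j'=j$ — which is exactly what injectivity on orbits requires.
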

\begin{proof}
By Lemma~\ref{lem:sim}
the map $ Q_0^t \to  \exc(\coaisle)$, $j \mapsto 
\mathcal{S}(j)$
is well-defined.
For any $i,j \in Q_0^t$
it holds that
$i \sim_{\kappa} j$ if and only if $S_i \cong S_{\kappa^p(j)}[q]$ for some $p,q \in \Z$
if and only if $\mathcal{S}(i) \sim \mathcal{S}(j)$. 
Because of this, the induced map in \eqref{eq:AG1-bij} is well-defined and injective.
It is is surjective by Proposition~\ref{prp:exc}.
which shows
\ref{AG1a}.
The equality of multi-sets in \eqref{AG1b} follows then using that
 $\CYdim S_j = (m(j),n(j)$
for any $j \in Q_0^t$ from Lemma~\ref{lem:sim}.
\end{proof}

To give a similar interpretation of AG-invariants of second type of the gentle order $\A$, 
we recall the description of its singularity category.
By the general considerations in \ref{subsec:dsg}, the singularity category $\Dsg(\A)$ is Hom-finite and has a Serre functor $\overline{\nu}$, which is induced by the derived Nakayama functor.

\begin{thm}[\cite{GIK}*{Theorem~10.13}] \label{thm:dsg} 
For the gentle order $\A$ the following statements hold.

\begin{enumerate}
	\item \label{dsg1} 
	There is a bijection between the set $Q_1^{fc}$ of arrows in $Q$ which appear in forbidden cycles 
	and the set of isomorphism classes of indecomposable objects in the category $\Dsg(\A)$ given by
	$$ \begin{td} Q_1^{fc} \ar{r}{\sim} \& \ind \Dsg(\A), \& \beta \ar[mapsto]{r} \& L_{\beta} \colonequals \A \beta \end{td} $$
	where $L_{\beta}$ is viewed as an object in the category $\Dsg(\A)$.
	\item \label{dsg2} For any arrow $\alpha \in Q_1^{fc}$ there are isomorphisms
\begin{align*}
	\overline{\nu}(L_{\beta}) \colonequals \omega \lotimes L_{\beta} \cong L_{\beta} \quad
	\text{and} \quad L_{\beta}[-1] \cong L_{\varrho(\beta)}.
\end{align*}
	where $\varrho(\beta)$ denotes the unique arrow with $s(\varrho(\beta))=t(\beta)$ and $\varrho(\beta)\beta \in I$.
	\item \label{dsg3} For any arrows $\alpha, \beta \in Q_1^{fc}$ it holds that $\dim \Hom_{\Dsg(\A)}(L_{\alpha},L_{\beta}) = \delta_{\alpha \beta}$.
\end{enumerate}
\end{thm}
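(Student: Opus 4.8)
The plan is to move the whole question into the stable category of Cohen--Macaulay $\A$-modules ($=\A$-lattices) via Buchweitz's equivalence $\Dsg(\A)\simeq\underline{\mathrm{CM}}(\A)$ \cite{Buchweitz}, which is available because $\A$ is a Gorenstein $\Rx$-order; under it $[-1]$ is computed by the syzygy $\syz$. Each $L_\beta=\A\beta$ is a submodule of the projective $P_{s(\beta)}$, hence an $\A$-lattice, and $\rho_\beta\colon P_{t(\beta)}\to L_\beta$, $a\mapsto a\beta$, is its projective cover. The first step I would carry out is the computation of $\ker\rho_\beta$: for $\beta\in Q_1^{fc}$ the vertex $t(\beta)$ is a crossing vertex, so by gentleness there is exactly one arrow $\gamma$ with $s(\gamma)=t(\beta)$ and $\gamma\beta\in I$, namely $\gamma=\varrho(\beta)$, and a path $p$ starting at $t(\beta)$ satisfies $p\beta\in I$ precisely when its first arrow is $\varrho(\beta)$; hence $\ker\rho_\beta=L_{\varrho(\beta)}$ and there is a short exact sequence $0\to L_{\varrho(\beta)}\to P_{t(\beta)}\to L_\beta\to 0$. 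This immediately gives the second isomorphism of \eqref{dsg2} ($L_\beta[-1]\cong L_{\varrho(\beta)}$ in $\Dsg(\A)$, since $P_{t(\beta)}$ is perfect), and iterating it exhibits the minimal projective resolution of $L_\beta$ as a complex that is periodic of period $m(\beta)=\ell(fc(\beta))$ and whose terms are all projectives $P_j$ with $j\in Q_0^c$ (using that $s(\gamma),t(\gamma)\in Q_0^c$ for every $\gamma\in Q_1^{fc}$). Running the same analysis for an arrow lying in a forbidden thread shows the resolution then terminates at a transition vertex, so such $L_\beta$ has finite projective dimension, hence is projective and zero in $\Dsg(\A)$; this is why only $Q_1^{fc}$ enters.

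For the first isomorphism of \eqref{dsg2} I would compute $\nu(L_\beta)$ directly from that periodic resolution. By Theorem~\ref{thm:SD} (and the bimodule isomorphism $\omega\cong\mathstrut_{\inv}\A^{\circ}$) one has $\nu(P_j)\cong P_j$ for $j\in Q_0^c$, and $\nu$ carries each differential $\cdot\gamma$ between crossing-vertex projectives to $\cdot\inv(\gamma)$, which is a $\pm1$-multiple of $\cdot\gamma$; rescaling the terms of the complex by suitable units then yields an isomorphism of complexes $\nu(P_\bullet)\cong P_\bullet$, so $\nu(L_\beta)\cong L_\beta$ already in $\D^-(\md\A)$ and a fortiori $\overline{\nu}(L_\beta)\cong L_\beta$ in $\Dsg(\A)$. (Once \eqref{dsg1} and \eqref{dsg3} are available this also drops out of Serre duality for $\Dsg(\A)$, whose Serre functor is $\overline{\nu}$ because $\krdim=1$, but the direct argument keeps the logic linear.)

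The real work is \eqref{dsg3}. Because $\syz$ is an autoequivalence and $\syz L_{\varrho^{-1}(\alpha)}\cong L_\alpha$ (the same short exact sequence for $\varrho^{-1}(\alpha)$), stable Hom reads $\Hom_{\Dsg(\A)}(L_\alpha,L_\beta)\cong\Ext^1_\A(L_{\varrho^{-1}(\alpha)},L_\beta)$, and the periodic resolution of $L_{\varrho^{-1}(\alpha)}$ identifies the right-hand side with the cohomology at the middle term of $e_{s(\alpha)}L_\beta\xrightarrow{\alpha\cdot}e_{t(\alpha)}L_\beta\xrightarrow{\varrho(\alpha)\cdot}e_{t(\varrho(\alpha))}L_\beta$ (left multiplication by $\alpha$, then by $\varrho(\alpha)$; this is a complex since $\varrho(\alpha)\alpha\in I$). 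Expanding $e_iL_\beta$ in the path basis of $\A\beta$ and using gentleness, I would show this cohomology vanishes when $\alpha\neq\beta$ and is one-dimensional when $\alpha=\beta$, spanned by the class of $\beta$ (a cocycle because $\varrho(\beta)\beta\in I$, not a coboundary because every element of $\mathrm{im}(\beta\cdot)$ is a combination of strictly longer paths). Verifying that, off the diagonal, every element killed by $\varrho(\alpha)$ is hit by $\alpha$ is the step I expect to be the genuine obstacle — a finite but case-heavy bookkeeping with paths and zero relations.

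Given \eqref{dsg2} and \eqref{dsg3}, statement \eqref{dsg1} is essentially formal. Taking $\alpha=\beta$ in \eqref{dsg3} gives $\underline{\End}(L_\beta)\cong\kk$, so each $L_\beta$ is a nonzero indecomposable; taking $\alpha\neq\beta$ gives that $\beta\mapsto L_\beta$ is injective. For surjectivity I would use that $\Dsg(\A)$ is generated as a triangulated category by the images of the simple $\A$-modules (a standard fact in this setting; alternatively one invokes the classification of indecomposables in \cite{BD} and discards those of finite projective dimension): the simple at a transition vertex is zero in $\Dsg(\A)$, while at a crossing vertex $i$ one has $\syz S_i=\rad P_i\cong L_{\beta_1}\oplus L_{\beta_2}$ for the two arrows $\beta_1,\beta_2$ leaving $i$, so $\Dsg(\A)$ is generated by $\{L_\beta\mid\beta\in Q_1^{fc}\}$. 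Finally, \eqref{dsg3} shows that any morphism between finite direct sums of the $L_\beta$ is, up to isomorphism, a direct sum of identity and zero maps, whose cones are again such sums (using $L_\beta[1]\cong L_{\varrho^{-1}(\beta)}$ from \eqref{dsg2}); hence $\add\{L_\beta\mid\beta\in Q_1^{fc}\}$ is a triangulated subcategory, therefore all of $\Dsg(\A)$, and Krull--Remak--Schmidt identifies its indecomposables with $\{L_\beta\mid\beta\in Q_1^{fc}\}$, yielding the asserted bijection.
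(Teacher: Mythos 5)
The paper does not prove Theorem~\ref{thm:dsg} at all: it is imported verbatim from \cite{GIK}*{Theorem~10.13}, so there is no internal argument to compare against, and your proposal is by necessity an independent, self-contained route. Its structural reductions are all correct. Every vertex on a forbidden cycle is a crossing vertex, so $\varrho(\beta)$ exists and the kernel of $P_{t(\beta)}\to L_{\beta}$, $a\mapsto a\beta$, is indeed $L_{\varrho(\beta)}$; this yields the periodic resolution, the second isomorphism in \eqref{dsg2}, the vanishing in $\Dsg(\A)$ of $L_{\beta}$ for arrows on forbidden threads, and, via Theorem~\ref{thm:SD} together with your sign rescaling (which is unobstructed precisely because the resolution is bounded on the right), the first isomorphism in \eqref{dsg2}. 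The deduction of \eqref{dsg1} from \eqref{dsg2} and \eqref{dsg3} is also sound, granted two facts you should make explicit: that a lattice of finite projective dimension over the one-dimensional Gorenstein order $\A$ is projective (a depth/Auslander--Buchsbaum argument), and that $\Dsg(\A)$ is generated by the simple modules. The latter is \emph{not} the finite-dimensional generation statement (for a Noetherian $\Rx$-algebra the simples only generate complexes with finite-dimensional homology); it follows instead from isolatedness: morphism spaces in $\Dsg(\A)$ have finite length over $\Rx$, so some power $x^{N}$ annihilates the stable endomorphisms of $M$, and the triangle on $M\xrightarrow{\,x^{N}\,}M$ exhibits $M$ as a direct summand of $M/x^{N}M$, which lies in the thick subcategory generated by the simples.

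The one genuine gap is the one you flag yourself: the off-diagonal vanishing in \eqref{dsg3} is asserted, not proved. It does close, and with less bookkeeping than you fear, because $L_{\beta}$ has the topological $\kk$-basis $c_{k}:=\sigma^{k}(\beta)\cdots\sigma(\beta)\beta$, $k\ge 0$, and left multiplication by an arrow sends each $c_{k}$ either to $0$ or injectively to the basis vector $c_{k+1}$, so all kernels and images in your complex are closed spans of basis vectors. If $\alpha\neq\beta$, then $c_{k}\in e_{t(\alpha)}L_{\beta}$ lies in $\ker(\varrho(\alpha)\cdot)$ only if $\sigma^{k}(\beta)=\alpha$ (the unique incoming arrow at $t(\alpha)$ in relation with $\varrho(\alpha)$ is $\alpha$ itself), whence $k\ge 1$ and $c_{k}=\alpha c_{k-1}\in\im(\alpha\cdot)$, so the cohomology vanishes; if $\alpha=\beta$, the kernel is spanned by the $c_{m\ell}$ with $m\ge 0$ (where $\ell$ is the length of the permitted cycle through $\beta$) and the image by those with $m\ge 1$, leaving exactly $\kk\cdot[\beta]$. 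Two smaller points: the identification $\Hom_{\Dsg(\A)}(L_{\alpha},L_{\beta})\cong\Ext^{1}_{\A}(L_{\varrho^{-1}(\alpha)},L_{\beta})$ should be attributed to Buchweitz's comparison of stable morphisms of maximal Cohen--Macaulay modules with $\Ext$ in positive degrees, not merely to $\syz$ being an equivalence; and "finite projective dimension, hence projective" is correct for lattices but is the statement that needs the depth argument, the relevant conclusion for your purposes being only that such $L_{\beta}$ vanish in $\Dsg(\A)$.
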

We refer to \cite{Kalck} for the counterpart of Theorem~\ref{thm:dsg} for finite-dimensional gentle algebras, and to 
\cite{Bennett-Tennenhaus} for a statement equivalent to \eqref{dsg1}.

\begin{rmk}
	By arguments due to Buchweitz \cite{Buchweitz}*{Section~6},
the singularity category $\Dsg(\A)$ is equivalent to the homotopy category $\Hotac(\proj \A)$ of acyclic projective complexes.
Using Notation~\ref{not:lin-string},
the acyclic complex corresponding to the left ideal $L_{\beta}$ of an arrow $\beta \in Q_1^{fc}$ can be represented by the periodic diagram
\begin{align*}
	\begin{td}
		\mathstrut \ar[densely dotted,-]{rr} \& \& \bt \ar{r}{\beta_2} \& \bt \ar{r}{\beta_1} \& \bt \ar{r}{\beta_m} \& \bt \ar[densely dotted,-]{r}  \& 
		\bt \ar{r}{\beta_2} \& \bt \ar{r}{\beta_1} \& \bt \ar{r}{\beta_m} \& \bt \ar[densely dotted,-]{rr} \& \& \mathstrut
	\end{td}
\end{align*}
with $\beta_{p+1} \colonequals \varrho^{p}(\beta)$ for each index $0 \leq p < m$, where $m \colonequals m(\beta) = \ell(fc(\beta))$.
\end{rmk}


\begin{cor}\label{cor:AG2}
	The following statements hold.
	\begin{enumerate}
		\item \label{AG2a}
There is a bijection between the set of $\varrho$-orbits in $Q_1^{fc}$
and the set of shift orbits of indecomposable objects in the category $\Dsg(\A)$ given by
$$ \begin{td} Q_1^{fc}/\sim_{\varrho} \ar{r}{\sim} \& \ind \Dsg(\A)/[1]\end{td}$$
\item \label{AG2b} There is an equality of multisets
\begin{align*}
	\AG_{2}(\A)
	=  \{\CYdim X \mid 
	X \in \ind \Dsg(\A)/[1]
	\}.
\end{align*}
	\end{enumerate}
\end{cor}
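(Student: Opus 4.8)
The plan is to prove Corollary~\ref{cor:AG2} by combining the structural description of $\Dsg(\A)$ from Theorem~\ref{thm:dsg} with the general Serre-functor formalism recalled in Subsections~\ref{subsec:Serre} and \ref{subsec:dsg}. The key point is that $\Dsg(\A)$ is Hom-finite with Serre functor $\SD_{\mathrm{sg}} \colonequals \bar\nu \circ [d-1] = \bar\nu \circ [0]$, since $d = \dim \Rx = 1$; thus $\SD_{\mathrm{sg}} \cong \bar\nu$, and by Theorem~\ref{thm:dsg}~\eqref{dsg2} the Serre functor acts as the identity on each indecomposable $L_\beta$. The Auslander--Reiten translation is therefore $\tau = \SD_{\mathrm{sg}} \circ [-1] = \bar\nu \circ [-1]$, which on objects sends $L_\beta[-1]$ to $L_{\varrho(\beta)}$, i.e. $\tau(L_\beta) \cong L_{\varrho(\beta)}[1]$ up to tracking the shift carefully.

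For part~\eqref{AG2a}: by Theorem~\ref{thm:dsg}~\eqref{dsg1} the map $\beta \mapsto L_\beta$ is a bijection $Q_1^{fc} \overset{\sim}{\to} \ind \Dsg(\A)$. First I would check that this bijection is $[1]$-equivariant in the sense that $L_\beta[-1] \cong L_{\varrho(\beta)}$, which is precisely Theorem~\ref{thm:dsg}~\eqref{dsg2}; iterating, $L_\beta[-m] \cong L_{\varrho^m(\beta)}$ for all $m \geq 0$, and since $\varrho$ has finite order on $Q_1^{fc}$ every indecomposable is shift-periodic. Hence two indecomposables $L_\alpha$, $L_\beta$ lie in the same shift orbit if and only if $\alpha$ and $\beta$ lie in the same $\varrho$-orbit, which gives the claimed bijection $Q_1^{fc}/\!\sim_\varrho \;\overset{\sim}{\to}\; \ind \Dsg(\A)/[1]$.

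For part~\eqref{AG2b}: fix an arrow $\beta \in Q_1^{fc}$ with forbidden cycle $fc(\beta)$ of length $m = m(\beta)$. Since $\varrho$ has order $m$ on the $\varrho$-orbit of $\beta$ (the arrows of the forbidden cycle are distinct), iterating $L_\beta[-1] \cong L_{\varrho(\beta)}$ gives $L_\beta[-m] \cong L_{\varrho^m(\beta)} = L_\beta$, hence $L_\beta[-m] \cong L_\beta$. Combined with $\SD_{\mathrm{sg}}(L_\beta) = \bar\nu(L_\beta) \cong L_\beta$, this shows $L_\beta[m] \cong \SD_{\mathrm{sg}}^0(L_\beta) = L_\beta$; more precisely $L_\beta$ is $(m,0)$-Calabi--Yau, that is $L_\beta[m] \cong \SD_{\mathrm{sg}}^0(L_\beta)$, using that $\SD_{\mathrm{sg}}$ is the identity on $L_\beta$ and $\SD_{\mathrm{sg}}^n(L_\beta) \cong L_\beta$ for every $n$. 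I would then verify minimality: $m = m(\beta)$ is the minimal positive integer with $L_\beta[m] \cong L_\beta$ because the periodic complex description of $L_\beta$ in the preceding remark has minimal period exactly $m$ (the arrows $\varrho^0(\beta), \ldots, \varrho^{m-1}(\beta)$ are pairwise distinct, so no smaller shift can identify the complex with itself). By Definition~\ref{dfn:frac-CY} this yields $\CYdim L_\beta = (m(\beta), 0)$, which is exactly the AG-invariant of $\beta$. Running over a complete set of $\varrho$-orbit representatives and using part~\eqref{AG2a} gives the equality of multisets $\AG_2(\A) = \{\CYdim X \mid X \in \ind \Dsg(\A)/[1]\}$.

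The main obstacle I anticipate is pinning down the exact shift bookkeeping so that the minimality clause in the definition of $\CYdim$ is respected: one must argue that $n = 0$ is genuinely the minimal non-negative integer appearing (which is automatic once $L_\beta$ is $[m]$-periodic, since then $\SD_{\mathrm{sg}}^0(L_\beta)[m] \cong L_\beta$ with $n = 0$), and that $m(\beta)$ rather than a proper divisor is the minimal positive shift period — this rests on the distinctness of the arrows in the forbidden cycle, so that the periodic acyclic complex representing $L_\beta$ in $\Hotac(\proj\A)$ has no smaller period. Everything else is a direct transcription of Theorem~\ref{thm:dsg} together with the fact $d=1$.
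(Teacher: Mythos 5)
Your argument is correct and essentially the paper's own proof: part~\eqref{AG2a} is read off from Theorem~\ref{thm:dsg}~\eqref{dsg1} and \eqref{dsg2}, and part~\eqref{AG2b} from the identity $m(\beta) = \min\{p \in \N^+ \mid L_\beta[p] \cong L_\beta\}$ together with $\bar{\nu}(L_\beta) \cong L_\beta$ (the Serre functor being $\bar\nu\circ[d-1]=\bar\nu$ since $d=1$), which gives $\CYdim L_\beta = (m(\beta),0)$. Two cosmetic points: the minimality of $m(\beta)$ is most cleanly justified by the injectivity of $\beta \mapsto L_\beta$ from Theorem~\ref{thm:dsg}~\eqref{dsg1}, since $L_\beta[p] \cong L_{\varrho^p(\beta)}$ and an isomorphism $L_{\varrho^p(\beta)} \cong L_\beta$ forces $\varrho^p(\beta)=\beta$, rather than by the period of the representing acyclic complex alone; and the Auslander--Reiten aside should read $\tau(L_\beta) \cong L_{\varrho(\beta)}$ without the extra shift, though it plays no role in your argument.
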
 
\begin{proof}
	\eqref{AG2a} follows directly from Theorem~\ref{thm:dsg} \eqref{dsg1} and \eqref{dsg2}.
For any arrow $\beta \in Q_1^{fc}$, Theorem~\ref{thm:dsg}~\ref{dsg2}	implies  that
	$m(\beta) = \ell(fc(\beta)) = \min\{ p \in \N^+ \mid L_{\beta}[p] = L_{\beta}\}$, and thus $(m(\beta),0) = \CYdim L_{\beta}$, which shows \eqref{AG2b}.	
	\end{proof}
\label{rewrite}

\subsection{Combinatorial derived invariants}

\begin{thm}\label{thm:der-inv}
	Assume that $\A$ and $\B$ are derived equivalent ring-indecomposable gentle orders.
Then there are equalities
	\begin{align*}
	\AG_1(\A) = \AG_1(\B), && \AG_{2}(\A) = \AG_{2}(\B),
 &&\pc_{\A} = \pc_{\B}, && \bc_{\A} = \bc_\B,
	\end{align*}
that is, $\A$ and $\B$ have the same multi-sets of AG-invariants, the same number of permitted cycles, and 
the same bicolorability parameter.
\end{thm}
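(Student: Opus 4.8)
The plan is to split the four asserted equalities into two groups and treat each via the homological characterisations established earlier: the AG-invariant equalities via the descriptions in Corollaries~\ref{cor:AG1} and \ref{cor:AG2}, and $\pc_\A=\pc_\B$, $\bc_\A=\bc_\B$ via the rational hull together with the Cartan matrix. Throughout, fix a triangle equivalence $\Phi\colon\D(\Md\A)\overset{\sim}{\to}\D(\Md\B)$; by Theorem~\ref{thm:der-eq} it restricts to equivalences of $\D^-(\md\blank)$, $\perfd(\blank)$ and $\Dsg(\blank)$, and by Proposition~\ref{prp:der-ord}~\eqref{der-ord1} it intertwines the derived Nakayama functors, hence the relative Serre functors $\SD=\nu\circ[1]$ (recall $\krdim=1$ since $\Rx=\kk\llbracket x\rrbracket$). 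Ring-indecomposability of $\A$ and $\B$ guarantees that the underlying quivers are connected, so all the results of Sections~\ref{sec:cartan}--\ref{sec:invar} apply.

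For the two AG-equalities I would transport the relevant categorical data along $\Phi$. By Proposition~\ref{prp:der-gen}~\eqref{der-gen1}, $\Phi$ restricts to an equivalence $\coaisle_\A\overset{\sim}{\to}\coaisle_\B$; since conditions \ref{E1} and \ref{E2} of Definition~\ref{dfn:ex-cycle}, the property of being repetition-free, the equivalence relation of Definition~\ref{dfn:eq-exc}, and the Calabi--Yau dimension are all formulated solely in terms of $\SD$, the shift and dimensions of morphism spaces, an equivalence commuting with $\SD$ carries exceptional cycles in $\coaisle_\A$ to exceptional cycles in $\coaisle_\B$, respects the relation $\sim$, and preserves $\CYdim$. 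Hence $\Phi$ induces a $\CYdim$-preserving bijection between $\exc(\coaisle_\A)/\!\sim$ and $\exc(\coaisle_\B)/\!\sim$, and Corollary~\ref{cor:AG1} yields $\AG_1(\A)=\AG_1(\B)$. Similarly, $\Phi$ induces an equivalence $\overline{\Phi}\colon\Dsg(\A)\overset{\sim}{\to}\Dsg(\B)$ (Theorem~\ref{thm:der-eq}); being a triangle equivalence it preserves indecomposability, commutes with the shift, and commutes with the Serre functor---either because the isomorphism $\Phi\,\nu_\A\cong\nu_\B\,\Phi$ descends to the Verdier quotient, or by uniqueness of the Serre functor on the Hom-finite category $\Dsg$ (which here equals $\overline{\nu}$, as $\krdim-1=0$). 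Thus $\overline{\Phi}$ induces a $\CYdim$-preserving bijection between $\ind\Dsg(\A)/[1]$ and $\ind\Dsg(\B)/[1]$, and Corollary~\ref{cor:AG2} gives $\AG_2(\A)=\AG_2(\B)$.

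For $\pc$ and $\bc$ I would proceed as follows. By Corollary~\ref{cor:rhull} the finite-dimensional $\KK$-algebra $\KK\otimes_\Rx\A$ is Morita equivalent to $\KK^{\times\pc_\A}$, and likewise $\KK\otimes_\Rx\B$ to $\KK^{\times\pc_\B}$; by Proposition~\ref{prp:der-ord}~\eqref{der-ord2} these two $\KK$-algebras are derived equivalent. Since a triangle equivalence induces an isomorphism of Grothendieck groups and the number of isomorphism classes of simple modules of a finite-dimensional algebra equals the rank of its Grothendieck group, it follows that $\pc_\A=\pc_\B$. Finally, Proposition~\ref{prp:der-ord}~\eqref{der-ord3} shows that the Cartan matrices $C_\A$ and $C_\B$ are congruent over $\Z$, hence have equal rank, so Corollary~\ref{cor:rk-det2} gives $\pc_\A-\bc_\A=\rk C_\A=\rk C_\B=\pc_\B-\bc_\B$; combined with $\pc_\A=\pc_\B$ this forces $\bc_\A=\bc_\B$.

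The step I expect to require the most care is the separation of $\pc$ from $\bc$: congruence of Cartan matrices only controls the rank (and, up to sign, the determinant), and by the formula~\eqref{eq:det} these data do not recover $\pc_\A$ and $\bc_\A$ individually in every case, so the argument genuinely needs the additional derived invariant supplied by the rational hull. By contrast, once Corollaries~\ref{cor:AG1} and \ref{cor:AG2} are in hand, the AG-invariant equalities reduce to a routine transport of structure along $\Phi$, the only point to check being its compatibility with $\nu$ (equivalently $\SD$) on $\D^-(\md\blank)$, $\perfd(\blank)$ and $\Dsg(\blank)$, which is recorded in Theorem~\ref{thm:der-eq} and Proposition~\ref{prp:der-ord}.
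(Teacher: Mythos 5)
Your proposal is correct and follows essentially the same route as the paper: the coaisle equivalence from Proposition~\ref{prp:der-gen} together with Corollary~\ref{cor:AG1} (and preservation of $\CYdim$ as in Corollary~\ref{cor:CYdim}) gives $\AG_1$, the singularity-category equivalence from Theorem~\ref{thm:der-eq} with Corollary~\ref{cor:AG2} gives $\AG_2$, Corollary~\ref{cor:rhull} with Proposition~\ref{prp:der-ord} gives $\pc$, and the Cartan-rank formula of Corollary~\ref{cor:rk-det2} then forces $\bc_\A=\bc_\B$. Your write-up simply makes explicit the transport-of-structure checks that the paper leaves implicit.
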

\begin{proof}
Because $\A$ and $\B$ are derived equivalent, 
		their coaisles $\coaisle_{\A}$ and $\coaisle_{\B}$
		are equivalent by Proposition~\ref{prp:der-gen},
		their singularity categories are equivalent 
		by Theorem~\ref{thm:der-eq}, and 
		their semisimple rational hulls $\KK\otimes \A$ and $\KK \otimes_{\Rx} \B$ are derived equivalent 
and their Cartan matrices have the same rank and
		by Proposition~\ref{prp:der-ord}.
		Now the equalities follow from Corollaries~\ref{cor:AG1}~\eqref{AG1b} with~\ref{cor:CYdim}, \ref{cor:AG2}~\eqref{AG2b}, \ref{cor:rhull} and \ref{cor:rk-det2}.
	\end{proof}

\section{Truncated ribbon graphs of gentle orders}
\label{sec:tru}
In this section we introduce truncated ribbon graphs and review the combinatorial invariants associated to the quiver of a gentle order in graph-theoretic terms.

%

\subsection{Truncated ribbon graphs}
\label{subsec:trRG}

Let $\sigma, \theta \colon H \to H$ be permutations of a finite set $H$ such that $\theta$ is an involution. 
The pair $(\sigma,\theta)$ gives rise to graph-theoretic data $\Gr_{(\sigma,\theta)} = (\sigma,\theta,\phi, V,E,F,i)$ as follows.

The composition $\phi \colonequals \theta \sigma$ defines a third permutation on the set $H$.
Elements of the set $H$ are called \emph{half-edges}.
The sets $V,E,F$ of \emph{vertices}, \emph{edges} and \emph{faces} are defined as the sets of $\sigma$-orbits, $\theta$-orbits and $\phi$-orbits in $H$, respectively. The \emph{incidence map} $i \colon H \to V$ assigns to each half-edge $h$ its $\sigma$-orbit, denoted by $v_h$.

The data $\Gr_{(\sigma,\theta)}$ will be called a \emph{truncated ribbon graph}.
\subsubsection*{Refined notions of edges and faces}
It will be convenient to use the following terminology.
For any half-edge $h \in H$ its $\theta$-orbit $e_h$ will be called a \emph{truncated edge} if $\theta(h) = h$,
and a \emph{glued edge} otherwise.
A glued edge $e_h$ is a \emph{loop} if $h \neq \theta(h)$ and $v_h = v_{\theta(h)}$,
and otherwise an \emph{ordinary edge} .  
The set of truncated edges and that of glued edges will be denoted by $E_{t}$ and $E_{g}$.

For $h \in H$ its $\phi$-orbit $f_h$ will be called a \emph{boundary face} if 
$f_h$ contains a fixed point of $\theta$, and a \emph{punctured face} otherwise.

For $h \in H$ the permutation $\sigma$ yields a cyclic order on the set of half-edges incident to $v_h$, which is given by
$h < \sigma(h) < \ldots \sigma^{p-1}(h) < \sigma^{p}(h) = h$, where $p$ is the cardinality $|v_h|$ of the $\sigma$-orbit of $v_h$.
The face $f_h$ may be viewed as a polygon enclosed by edges $e_h, e_{\phi(h)}, \ldots, e_{\phi^{m-1}(h)}$ with $m \colonequals f_h$. 
In case $f_h$ is a boundary face, the corresponding polygon has $m$ ordinary edges, otherwise the polygon contains vertices incident to two glued edges and one truncated edge. The set of boundary faces and that of punctured faces will be denoted by $F_{p}$ and $F_{b}$, respectively.

The involution $\alpha$ has no fixed points if and only if $\Gr_{(\sigma,\alpha)}$ has no truncated edges if and only if each face of $\Gr_{(\sigma,\alpha)}$ is punctured. In this case, $\Gr_{(\sigma,\alpha)}$ is called a \emph{ribbon graph}. 
To simplify the terminology, we call $\Gr_{(\sigma,\alpha)}$ a truncated ribbon graph even if it has no truncated edges.


\begin{rmk}
A vertex in a ribbon graph is called a \emph{leaf} if it is adjacent to only one glued edge and no loop.
Any truncated ribbon graph $\Gr_{(\sigma,\theta)}$ can be obtained from a ribbon graph ${\Gr}_{(\hat{\sigma},\hat{\theta})}$ by deleting any choice of the leaves of ${\Gr}_{(\hat{\sigma},\hat{\theta})}$ together with their incident half-edges, which results in truncated edges in the sense above.
\end{rmk}

\subsubsection*{The underlying truncated graph}

Let $\Gr_{(\sigma,\theta)} = (\sigma,\theta,V,E,F,H,i)$
be a truncated ribbon graph.
$\Gr_{(\sigma,\theta)}$ has an underlying truncated graph $\Gr = (V,E,\mu)$ in the sense of Subsection~\ref{subsec:trunc-gr}, 
which is defined by setting $\mu(e)(v) \colonequals \lvert i^{-1}(v) \rvert$ 
for any edge $e \in E$ and any vertex $v$. 
The terminology of ordinary edges, truncated edges and loops
of the truncated ribbon graph $\Gr_{(\sigma,\theta)}$ of the previous subsection is consistent with the terminology of these types of edges in the truncated graph $\Gr$.

\subsubsection*{The surface of a truncated ribbon graph}

Any truncated ribbon graph $\Gr_{(\sigma,\theta)}$ can be embedded in an oriented surface
$\Surf \colonequals \Surf_{(\sigma,\theta)}$ with non-empty boundary and a finite set of marked points $M \colonequals M_{(\sigma,\theta)}$ as follows.
\begin{itemize}[label=--]
	\item The vertices of the truncated ribbon graph are viewed as marked points in the interior of the surface.
	\item Each punctured face of $\Gr_{(\sigma,\theta)}$ corresponds as a punctured polygon. 
	More precisely, a punctured face $f_h$ is viewed as a polygon which is enclosed by edges $e_h, e_{\phi(h)}, \ldots, e_{\phi^{m-1}(h)}$ with $m \colonequals \lvert f_h \rvert$ and has a boundary component without marked points in its interior.
	\item Each boundary face of  $\Gr_{(\sigma,\theta)}$ corresponds to a
	a polygon enclosing a boundary component with marked points.
	More precisely, the glued edges of any boundary face of $\Gr_{(\sigma,\theta)}$ are viewed as the outer edges of the polygon, while each of its truncated edges is incident to a vertex and an associated marked point on the inner boundary of the polygon.
\end{itemize}
The genus $g(\Surf)$ of the surface $\Surf$ is determined by the Euler-Poincar\'{e} formula
\begin{align}
	\label{eq:EP}
	2 - 2 g(\Surf) = \lvert V \rvert - \lvert E \rvert + \lvert F \rvert
	\end{align}
The graph $\Gr_{(\sigma,\theta)}$ is a ribbon graph if and only if its associated surface $\Surf$ has no boundary components with marked points.

\subsection{The truncated ribbon graph of a gentle order}
Let $\A$ be a gentle order and $(Q,I)$ its underlying quiver.

\subsubsection{Pair of permutations}
The quiver $(Q,I)$ gives rise to a pair $(\sigma,\theta)=(\sigma,\theta)_{(Q,I)}$ as follows.
Let $\alpha \in Q_1$. As noted previously,  there is a unique arrow $\sigma(\alpha)$ with $\sigma(\alpha) \alpha \notin I$. 
If $s(\alpha) \in Q_0^c$, there is a unique arrow $\theta(\alpha) \neq \alpha$ with $s(\theta(\alpha)) = s(\alpha)$.
If $s(\alpha) \in Q_0^t$, let $\theta(\alpha) \colonequals \alpha$. 
These prescriptions define a permutation $\sigma$ and involution $\theta$ on the set of arrows $Q_1$.
For formal reasons, it will be convenient to introduce the set $H \colonequals \{h_{\alpha} \mid \alpha \in Q_1\}$
whose elements are permuted via $h_{\alpha} \mapsto h_{\sigma(\alpha)}$ and $h_{\alpha} \mapsto h_{\theta(\alpha)}$. 
We denote these permutations again by $\sigma,\theta \colon H \to H$ and consider 
the associated truncated ribbon graph
$\Gr_{(\sigma,\theta)}$.

\subsubsection{Identifications}\label{subsec:ident}
Next, we consider the truncated ribbon graph  $\Gr_{(\sigma,\theta)}$ associated to the pair $(\sigma,\theta)=(\sigma,\theta)_{(Q,I)}$.
By definition there are bijections
$$
\begin{td}
	Q_1 \ar{r}{\sim} \& H, \& Q_1/\!\sim_{\sigma} \ar{r}{\sim} \& V \&
	Q_0 \ar{r}{\sim} \& Q_1/\!\sim_{\theta} \ar{r}{\sim} \& E
\end{td}
$$
where the last composition restricts to bijections 
$Q_0^t \overset{\sim}{\to} E_{t}$ and $Q_0^c \overset{\sim}{\to} E_{g}$.
By definition there is a bijection 
$$ \begin{td} Q_1/\!\sim_{\phi} \ar{r}{\sim} \& F,
	\& {[\alpha]_{\phi}} \ar[mapsto]{r} \& f_{\alpha} \end{td}$$
For any arrow $\alpha \in Q_1$
one of the following occurs.
\begin{itemize}[label=--]
	\item If $j \colonequals s(\alpha) \in Q_0^t$, the arrow $\alpha$ is contained in a forbidden thread starting  at $j$, its $\phi$-orbit is given by the arrows in the AG-cycle $c(j)$  and $f_{\alpha} \in F_{b}$.
	\item	If $j \in Q_0^c$, the arrow	$\alpha$ is contained in a forbidden cycle $fc(\alpha)$ starting at $j$ and $f_{\alpha} \in F_{p}$.
\end{itemize}
In each case, the cardinality $|f_{\alpha}|$ of the $\phi$-orbit matches the length of a cycle naturally associated with $\alpha$.
Let $Q_1^{ft}$ denote the set of arrows appearing in forbidden threads.
Since  $\phi$ restricts to the permutation $\rho$ on $Q_1^{fc}$ and preserves $Q_1^{ft}$, it follows that
there are bijections
$$
\begin{td}
	Q_0^t/\!\sim_{\kappa} \ar{r}{\sim} \& Q_1^{ft}/\!\sim_{\phi} \ar{r}{\sim} \& F_{b} \&
	Q_1^{fc}/\!\sim_{\rho} \ar{r}{\sim} \& F_{p}
\end{td}
$$
Summarized, the sets of rotation classes of permitted cycles, AG-cycles and forbidden cycles in $(Q,I)$
have graph-theoretic interpretations as vertices, boundary faces and punctured faces of $\Gr_{(\sigma,\theta)}$.

	\begin{table}
				\caption{Dictionary of invariants}
		\label{tab:dict}
		\begin{center}
		$
	\renewcommand{\arraystretch}{1.5}
	\begin{array}{|@{}w{c}{0.115\textwidth}|@{}w{c}{0.115\textwidth}|@{}w{c}{0.22\textwidth}|
		|@{}w{c}{0.125\textwidth}|@{}w{c}{0.115\textwidth}|@{}w{c}{0.22\textwidth}|}
		\text{quiver} & 	\text{graph} &\text{derived} & 
		\text{quiver}  & 	\text{graph} &\text{derived}
		\\[-0.25cm]
		\text{ invariant} &				\text{ invariant} &		\text{ invariant} &		\text{ invariant} &		\text{ invariant} &		\text{ invariant} 
		\\
		\hline
				\lvert Q_1\rvert & \lvert H\rvert & \lvert \rad \A\rvert 
				&
				\lvert Q_1/\!\sim_\sigma \rvert &  \lvert V\rvert & \rk K_0(\A_{\KK})  \\
		\lvert Q_0\rvert & \lvert E\rvert & \rk K_0(\A)  &						\bc_{\A} & \bc_\Gr & \ \rk C_{\A} - \rk K_0(\A_{\KK}) \\
		\lvert Q_0^t\rvert & 
\lvert E_{t} \rvert &  
\rk K_0(\AI)&
		\lvert Q_0^c\rvert & \lvert E_{g}\rvert&	  \rk K_0 (\Ainf) 
		\\
		\lvert Q_0^t/\!\sim_{\kappa} \rvert & \lvert F_{b}\rvert & \lvert\exc(\coaisle)/\!\sim\rvert & 
		\lvert Q^{fc}_1/\!\sim_{\rho}\rvert & \lvert F_{p}\rvert & \lvert \ind \Dsg(\A)/[1]\rvert  \\		
		\lvert Q_1^{ft}\rvert & 
	{\displaystyle			 \sum_{\mathclap{f \in F_b}} \lvert f\rvert }& 
	{\displaystyle  \sum_{\mathclap{{\Exc \in \exc(\coaisle)/\!\sim}}} m({\Exc})} &		
		\lvert Q_1^{fc}\rvert & 
	{\displaystyle  \sum_{\mathclap{f \in F_p}} \lvert f\rvert }&
		\lvert \ind \Dsg(\A) \rvert 
	\end{array}$
\end{center}
\end{table}
	{\footnotesize
\noindent {\it Remarks on notation}.
In the table above, $(Q,I)$ is the quiver underlying a gentle order $\A$ and $\Gr_{(\sigma,\theta)}$ 
its associated truncated ribbon graph.
	All quiver invariants have been defined in Subsection~\ref{subsec:gentle}.
	All ribbon graph invariants were introduced in Subsection~\ref{subsec:trRG}.
	In the column of derived invariants, $\lvert \rad \A \rvert$ denotes the number of isomorphism classes of indecomposable summands of the $\A$-module $\rad \A$.
	The overring $\A_{\KK}$ of the gentle order $\A$ was defined in Subsection~\ref{subsec:main-setup},
	the subring $\A_e$ and the quotient ring $\AI$ were introduced in \eqref{eq:idem}, 	the Cartan matrix $C_{\A}$ in Definition~\ref{dfn:Cartan}.
	If $R$ denotes any of the rings above, $\rk K_0(R)$ denotes the rank of its Grothendieck group, which is equal to $\lvert R\rvert$ as well as $\lvert \head R \rvert$.
	The notion $\exc{\coaisle}/\!\sim$
	was defined in 	Corollary~\ref{cor:AG1}, notions involving 
	$\Dsg(\A)$ in Corollary~\ref{cor:AG2}.
	For an equivalence class $\Exc \in \exc{\coaisle}/\!\sim$, the notion $m(\Exc)$ denotes the first entry in the pair $\CYdim \Exc$. 	}	

\begin{prp}\label{prp:dict}
Let $\A$ and $\B$ be derived equivalent gentle orders.
\begin{enumerate}
	\item
	Any three invariants in the same row of Table~\ref{tab:dict} are equal.
	\item
	The gentle orders $\A$ and $\B$ share any invariant listed in Table~\ref{tab:dict}. In particular, it holds that $\lvert \ind \rad \A\rvert = \lvert \rad \B\rvert$. 
	\item The associated surfaces $\Surf_{\A}$ and $\Surf_\B$ are homeomorphic.
\end{enumerate}
\end{prp}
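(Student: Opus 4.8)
The plan is to prove the three claims in turn, each by collecting results already available in Sections~\ref{sec:cartan}--\ref{subsec:ident}. For part~(1), the equality of the quiver invariant with the graph invariant in each row of Table~\ref{tab:dict} is one of the bijections set up in Subsection~\ref{subsec:ident}: the bijections $Q_1 \overset{\sim}{\to} H$, $Q_0 \overset{\sim}{\to} E$ (restricting to $Q_0^t \overset{\sim}{\to} E_t$ and $Q_0^c \overset{\sim}{\to} E_g$), $Q_1/\!\sim_\sigma \overset{\sim}{\to} V$, $Q_0^t/\!\sim_\kappa \overset{\sim}{\to} F_b$ and $Q_1^{fc}/\!\sim_\rho \overset{\sim}{\to} F_p$, together with the identities $\sum_{f\in F_b}\lvert f\rvert = \lvert Q_1^{ft}\rvert$ and $\sum_{f\in F_p}\lvert f\rvert = \lvert Q_1^{fc}\rvert$, which hold because the cardinality of a $\phi$-orbit equals the length of the associated AG-cycle, resp.\ forbidden cycle. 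The equality with the derived invariant in the same row is then read off: $\rk K_0(\A) = \lvert Q_0\rvert$, $\rk K_0(\Ainf) = \lvert Q_0^c\rvert$, $\rk K_0(\AI) = \lvert Q_0^t\rvert$ since $\A$, $\Ainf$, $\AI$ are basic with the indicated vertex sets; $\rk K_0(\A_\KK) = \pc_\A$ by Corollary~\ref{cor:rhull}; the relation between $\bc_\A$, $\rk C_\A$ and $\rk K_0(\A_\KK)$ by Corollary~\ref{cor:rk-det2}; $\lvert\exc(\coaisle)/\!\sim\rvert = \lvert Q_0^t/\!\sim_\kappa\rvert$ and $\sum_{\Exc} m(\Exc) = \sum_j m(j) = \lvert Q_1^{ft}\rvert$ by Corollary~\ref{cor:AG1}; $\lvert\ind\Dsg(\A)/[1]\rvert = \lvert Q_1^{fc}/\!\sim_\rho\rvert$ and $\lvert\ind\Dsg(\A)\rvert = \lvert Q_1^{fc}\rvert$ by Corollary~\ref{cor:AG2}; and $\lvert\rad\A\rvert = \lvert Q_1\rvert$ because the indecomposable summands of $\rad\A$, namely $\rad P_i = \A\beta$ for transition vertices $i$ and $\rad P_i = \A\alpha^{(1)}_i \oplus \A\alpha^{(2)}_i$ for crossing vertices $i$, are in bijection with the arrows of $Q$.

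For part~(2), by part~(1) it is enough to check that every derived-invariant column entry depends only on the derived category of $\A$. The quantities $\rk K_0(\A) = \rk K_0(\Db(\md\A))$, $\lvert\ind\Dsg(\A)\rvert$ and $\lvert\ind\Dsg(\A)/[1]\rvert$ are preserved by Theorem~\ref{thm:der-eq}; $\rk K_0(\A_\KK)$ and $\rk C_\A$, hence $\bc_\A$, by Proposition~\ref{prp:der-ord}. By Proposition~\ref{prp:der-gen} a derived equivalence restricts to an equivalence $\coaisle_\A \simeq \coaisle_\B$, and since an equivalence preserves exceptional cycles, equivalence of exceptional cycles, and Calabi--Yau dimensions (Corollary~\ref{cor:CYdim}), the numbers $\lvert\exc(\coaisle)/\!\sim\rvert$, $\sum_\Exc m(\Exc)$ and $\sum_\Exc n(\Exc)$ are preserved; Corollary~\ref{cor:AG1} identifies the last of these with $\rk K_0(\AI) = \lvert Q_0^t\rvert$. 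Finally $\rk K_0(\Ainf) = \rk K_0(\A) - \rk K_0(\AI)$ and $\lvert\rad\A\rvert = \lvert Q_1\rvert = 2\rk K_0(\Ainf) + \rk K_0(\AI)$ are derived invariants as integer combinations of the above, which in particular gives $\lvert\rad\A\rvert = \lvert\rad\B\rvert$.

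For part~(3), apply the Euler--Poincar\'{e} formula \eqref{eq:EP}, which reads $2 - 2g(\Surf_\A) = \lvert V\rvert - \lvert E\rvert + \lvert F_p\rvert + \lvert F_b\rvert$; the right-hand side is a derived invariant by part~(2), so $g(\Surf_\A) = g(\Surf_\B)$. The surface $\Surf_\A$ has $\lvert F_p\rvert + \lvert F_b\rvert$ boundary components, of which the $\lvert F_p\rvert$ coming from punctured faces carry no boundary marked point and the one coming from the AG-cycle $c(j)$ carries $n(j)$ boundary marked points, and it has $\lvert V\rvert$ interior marked points. By part~(2) the numbers $\lvert V\rvert$, $\lvert F_p\rvert$, $\lvert F_b\rvert$ agree for $\A$ and $\B$, and by Theorem~\ref{thm:der-inv} the multiset $\{n(j) : (m(j),n(j)) \in \AG_1(\A)\}$ agrees too; together these determine the multiset of numbers of boundary marked points over all boundary components. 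Since an oriented compact surface with marked points in the interior and on the boundary is classified up to homeomorphism by its genus, its number of interior marked points, and that multiset, it follows that $\Surf_\A$ and $\Surf_\B$ are homeomorphic, even as surfaces carrying their full sets of marked points.

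The work is essentially bookkeeping, but the step requiring genuine care — the main obstacle — is that three of the derived-invariant entries, $\rk K_0(\AI)$, $\rk K_0(\Ainf)$ and $\lvert\rad\A\rvert$, are not invariants of any one canonically attached triangulated category of $\A$: they must be recovered indirectly, $\rk K_0(\AI)$ through the coaisle $\coaisle_\A$ and the second component of Calabi--Yau dimensions of its exceptional cycles, and the other two as integer combinations of invariants already shown to be derived-invariant. Once this part of the dictionary is arranged, parts~(2) and~(3) follow directly from the homological descriptions established in Sections~\ref{sec:cartan}--\ref{sec:invar}.
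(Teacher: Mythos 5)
Your proof is correct and follows essentially the same route as the paper: the identifications of Subsection~\ref{subsec:ident} together with Corollaries~\ref{cor:AG1}, \ref{cor:AG2}, \ref{cor:rhull} and \ref{cor:rk-det2} for the equalities within each row of Table~\ref{tab:dict}, Theorem~\ref{thm:der-eq}, Propositions~\ref{prp:der-ord}, \ref{prp:der-gen} and Theorem~\ref{thm:der-inv} for the derived invariance, and the Euler--Poincar\'e formula~\eqref{eq:EP} for the surfaces. Your recovery of $\rk K_0(\AI)$ via $\sum_{\Exc} n(\Exc)$ (rather than the paper's direct appeal to Proposition~\ref{prp:der-gen}) and your explicit count of boundary components and marked points in part~(3) are only minor, slightly more detailed variants of the paper's terser argument.
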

\begin{proof}
\begin{enumerate}
	\item

	The equalities of quiver-theoretic invariants and invariants from ribbon graph data follow from the identifications in Subsection~\ref{subsec:ident}.
	The homological interpretations of
	$Q_0$, $Q_0^c$ and $Q_1$ are straightforward. It holds that $|Q_0^t| = \rk K_0(\AI)$ by definition of $e$.  		  
	Corollaries~\ref{cor:AG1} 
	and~\ref{cor:AG2} imply that
	\begin{align*}
		(\lvert Q_1^{ft}\rvert,\lvert Q_0^t \rvert) = \sum_{\mathclap{(m,n) \in \AG_1(\A)}} (m,n) 
		= \sum_{\mathclap{\Exc \in \exc(\coaisle)/\!\sim}} \CYdim \Exc, &&
		\lvert Q_1^{fc}\rvert = \sum_{\mathclap{(m,0)\in\AG_2(\A)}} m = 
		\lvert\ind \Dsg(\A)\rvert.
		\end{align*}
	Proposition~\ref{prp:rkC} and Lemma~\ref{cor:rhull}
	yield the homological expressions for $\bc_{\Gr}$   and $|V|$, respectively.			
	\item
	The derived invariance of $|Q_0^c|$  and $|Q_0^t|$ in the sense above 
	follows from Proposition~\ref{prp:der-gen}.
	Using that $|Q_1| = 2|Q_0^c| + |Q_0^t|$ 
	 it follows that $|Q_1|$ is derived invariant as well. 
	The remaining derived invariance results follow from Theorem~\ref{thm:der-inv}.
	\item	The genus $g(\Surf)$ of the surface $\Surf \colonequals \Surf_{\A}$ associated to $\A$  is determined by the formula \eqref{eq:EP}.
\qedhere
\end{enumerate}
\end{proof}

\end{document}